\newcommand{\END}{\mathcal{E}nd}
\newcommand{\clr}{rgb:black,1;blue,4;red,1}
\newcommand{\bdot}{ node[circle, draw, fill=\clr, thick, inner sep=0pt, minimum width=4pt]{}}
\newcommand{\rdot}{ node[circle, draw=darkred, fill=darkred, thick, inner sep=0pt, minimum width=4pt]{}}
\newcommand{\ob}[1]{\mathsf{#1}}
\newcommand{\down}{\downarrow}
\newcommand{\AOB}{\mathcal{AOB}}
\newcommand{\OB}{\mathcal{OB}}
\newcommand{\lcap}{
\begin{tikzpicture}[baseline = 3pt, scale=0.5, color=\clr]
        \draw[-,thick] (1,0) to[out=up, in=right] (0.53,0.5) to[out=left, in=right] (0.47,0.5);
        \draw[->,thick] (0.49,0.5) to[out=left,in=up] (0,0);
\end{tikzpicture}
}
\newcommand{\lcup}{
\begin{tikzpicture}[baseline = 6pt, scale=0.5, color=\clr]
        \draw[-,thick] (1,1) to[out=down, in=right] (0.53,0.5) to[out=left, in=right] (0.47,0.5);
        \draw[->,thick] (0.49,0.5) to[out=left,in=down] (0,1);
\end{tikzpicture}
}
\newcommand{\lcapl}{
\begin{tikzpicture}[baseline = 3pt, scale=0.5, color=\clr]
        \draw[<-,thick] (1,0) to[out=up, in=right] (0.53,0.5);
         \draw[-,thick]  (0.53,0.5) to[out=left, in=right] (0.47,0.5);
        \draw[-,thick] (0.49,0.5) to[out=left,in=up] (0,0);
\end{tikzpicture}
}
\newcommand{\lcupl}{
\begin{tikzpicture}[baseline = 6pt, scale=0.5, color=\clr]
        \draw[<-,thick] (1,1) to[out=down, in=right] (0.53,0.5);
        \draw[-,thick](0.53,0.5) to[out=left, in=right] (0.47,0.5);
        \draw[-,thick] (0.49,0.5) to[out=left,in=down] (0,1);
\end{tikzpicture}
}
\newcommand{\rcap}{
\begin{tikzpicture}[baseline = 3pt, scale=0.5, color=\clr]
        \draw[<-,thick] (1,0) to[out=up, in=right] (0.53,0.5) to[out=left, in=right] (0.47,0.5);
        \draw[-,thick] (0.49,0.5) to[out=left,in=up] (0,0);
\end{tikzpicture}
}
\newcommand{\xd}{
\begin{tikzpicture}[baseline = 3pt, scale=0.5, color=\clr]

\draw[->,thick] (0,0) to[out=up, in=down] (0,1);
\draw(0,0.5) \bdot;
\end{tikzpicture}
}
\newcommand{\xdx}{
\begin{tikzpicture}[baseline = 3pt, scale=0.5, color=\clr]
\draw[<-,thick] (0,0) to[out=up, in=down] (0,1);
\draw(0,0.5) \bdot;\end{tikzpicture}
}
\newcommand{\xli}{
\begin{tikzpicture}[baseline = 3pt, scale=0.5, color=\clr]
        \draw[<-,thick] (0,0) to[out=up, in=down] (0,1);
\end{tikzpicture}
}
\newcommand{\sli}{
\begin{tikzpicture}[baseline = 3pt, scale=0.5, color=\clr]
        \draw[->,thick] (0,0) to[out=up, in=down] (0,1);
\end{tikzpicture}
}
 \providecommand{\og}{``}
\providecommand{\fg}{''} \providecommand{\smfandname}{and}
\def\crulefill{\leavevmode\leaders\hrule height 1pt\hfill\kern 0pt}
\long\def\QUERY#1{%
\leavevmode\newline%
\noindent$\star\star\star$\thinspace\textsf{Comment/Query}\crulefill\newline%
   \space #1\newline\hbox to 120mm{\crulefill}$\star\star\star$\newline}
\newtheorem{Theorem}{Theorem}[section]
\newtheorem{Lemma}[Theorem]{Lemma}
\newtheorem{Cor}[Theorem]{Corollary}
\newtheorem{Prop}[Theorem]{Proposition}
\theoremstyle{definition}
\newtheorem{Defn}[Theorem]{Definition}
\newtheorem{rem}[Theorem]{Remark}
\numberwithin{equation}{section}
\theoremstyle{definition}
\def\enumerate{\begingroup\ifnum\@enumdepth>3\@toodeep\else
      \advance\@enumdepth\@ne
      \edef\@enumctr{enum\romannumeral\the\@enumdepth}%
      \topsep\z@\parskip\z@
      \list{\csname label\@enumctr\endcsname}
        {\@nmbrlisttrue\let\@listctr\@enumctr
         \parsep\z@\itemsep\z@\topsep\z@
         \setcounter{\@enumctr}{0}
         \def\makelabel##1{\hss\llap{\rm ##1}}
       }\fi}
\let\bar=\overline
\let\epsilon=\varepsilon
\def\({\big(}
\def\){\big)}
\def\0{\underline{0}}
\DeclareMathOperator{\End}{End}
\def\Hom{\text{Hom}}
  \gdef\set#1{\mathinner{\lbrace\,{\mathcode`\|"8000%
                                   \let|\midvert #1}\,\rbrace}}
  \gdef\seT#1{\mathinner{\Big\lbrace\,{\mathcode`\|"8000%
                                   \let|\midverT #1}\,\Big\rbrace}}
\def\midvert{\egroup\mid\bgroup}
\def\midverT{\egroup\,\Big|\,\bgroup}
\def\Set[#1]#2|#3|{\Big\{\ #2\ \Big| \
           \vcenter{\hsize #1mm\centering #3}\Big\}}
\def\qed{\hfill\mbox{$\Box$}}
\def\Hom{{\rm Hom}}
\def\Set{{\rm Set}}
\def\Hom{\text{Hom}}%
\def\textsf#1{{\textit{#1}}}%
\definecolor{white}{HTML}{FFFFFF}
\definecolor{darkblue}{HTML}{111199}
\definecolor{darkgreen}{HTML}{336633}
\definecolor{darkred}{HTML}{993333}
\definecolor{darkpurple}{HTML}{995599}
\begin{document}
\title[{\tiny  Representations of cyclotomic oriented Brauer categories}]{Representations of cyclotomic oriented Brauer categories}
\author{Mengmeng Gao, Hebing Rui, Linliang Song}
\address{M.G.  School of Mathematical Science, Tongji University,  Shanghai, 200092, China}\email{g19920119@163.com}
\address{H.R.  School of Mathematical Science, Tongji University,  Shanghai, 200092, China}\email{hbrui@tongji.edu.cn}
\address{L.S.  School of Mathematical Science, Tongji University,  Shanghai, 200092, China}\email{llsong@tongji.edu.cn}

\thanks{ M. Gao and H. Rui is supported  partially by NSFC (grant No.  11971351).  L. Song is supported  partially by NSFC (grant No.  11501368). }
\sloppy
\maketitle\begin{abstract} Let $A$ be  the locally unital  algebra associated to a cyclotomic oriented Brauer category over an arbitrary algebraically closed field $\Bbbk$ of   characteristic $p\ge 0$. The category of locally finite dimensional representations of  $A $ is  used to   give   the  tensor product categorification (in the general sense of Losev and Webster) for an integrable  lowest  weight with  an    integrable  highest weight representation of the same level  for the Lie algebra $\mathfrak g$,  where $\mathfrak g$ is a direct sum of copies of  $   \mathfrak {sl}_\infty$ (resp.,    $  \hat{\mathfrak {sl}}_p$ ) if $p=0$ (resp., $p>0$). Such a result  was  expected  in \cite{BCNR} when $\Bbbk=\mathbb C$ and  proved previously in \cite{Br} when the level is $1$.
\end{abstract}
\maketitle

\section{Introduction}
Throughout this paper,  $\Bbbk$ is an arbitrary algebraically closed field of characteristic $p\ge 0$. Unless otherwise stated, all algebras,  categories and functors  are assumed to be $\Bbbk$-linear.

 Fix $\ell\in\mathbb Z^{>0}$, $\mathbf u, \mathbf u'\in \Bbbk^\ell$ and set \begin{equation}\label{mathbfu} \mathbf u=(u_1,\ldots, u_\ell), \quad  \mathbf u'=(u_1',\ldots, u_\ell').\end{equation} In \cite{BCNR}, Brundan et al. introduced the notion of the cyclotomic oriented Brauer category $\OB(\mathbf u, \mathbf u')$.  When $\ell=1$,  $\OB(\mathbf u, \mathbf u')$,  called  the oriented Brauer category,
 is monoidally equivalent to the  category    in \cite[Example~1.26]{DM}. The aim of this paper is to give a tensor product categorification of any integrable lowest weight with  any integrable  highest weight module of the same level  for  $\mathfrak{g}$   by using locally finite dimensional representations of cyclotomic oriented Brauer categories, where $\mathfrak{g}$ is
a direct sum of copies of $\mathfrak{sl}_\infty$ (resp., $\widehat{\mathfrak{sl}}_p$)  if $p=0$ (resp., $p>0$).

 Let $A_\ell$ be   the locally unital algebra associated to $\OB(\mathbf u, \mathbf u')$.  Reynolds proved that $A_1$ admits a triangular decomposition~\cite{Re}.
   Brundan  further showed that the locally unital algebra $B$  associated to  the oriented skein category (which  was introduced by   Turaev  and was called  HOMFLY-PT skein category\cite{Tur}) also admits a triangular decomposition. For any locally unital and locally finite dimensional algebra $C$,
 let $C$-lfdmod be the category of all  locally finite dimensional left $C$-modules. The (upper finite) triangular decomposition property is the key in the proof  that   both $A_1$-lfdmod and $B$-lfdmod   are  upper finite fully stratified categories in the sense of Brundan and Stroppel~\cite{Br, BS}. Brundan also constructed  certain  endofunctors of $A_1$-lfdmod and  $B$-lfdmod and its associated graded categories so as  to give a tensor product categorification of an integrable  lowest weight with  an integrable highest weight module of the same level $1$ for $\mathfrak{g}$.
    However, it is unclear to us whether $A_\ell$ admits an upper finite triangular decomposition in general. So, one could  not use arguments on upper finite triangular decompositions to show that  $A_\ell$-lfdmod is an upper finite fully stratified category.

   In \cite{GRS3}, we introduced and studied the upper finite  weakly triangular categories.
   It is easy to see that any category whose associated locally unital algebra  admits an upper finite  triangular decomposition is an  upper finite weakly triangular category. Moreover,
  cyclotomic oriented Brauer categories, cyclotomic Brauer categories~\cite{RS3} and  cyclotomic Kauffman categories~\cite{GRS2} are  upper finite weakly triangular categories~\cite{GRS3}.  We  established in \cite{GRS3} that $C$-lfdmod is an upper finite fully stratified category if $C$ is the  locally  unital algebra  associated to an upper finite weakly triangular category. In particular, $A_\ell$-lfdmod is an upper finite fully stratified category. 

Set \begin{equation}\label{iu} \mathbb I=\mathbb I_\mathbf u \bigcup \mathbb I_{\mathbf u'}, \end{equation} where   $\mathbb I_\mathbf u=\{u_j+n| 1\leq j\leq \ell, n\in\mathbb Z\}$ and  $   \mathbb I_{\mathbf u'}=\{u'_j+n| 1\leq j\leq \ell, n\in\mathbb Z\}$, and  $u_1, \ldots, u_\ell$ and $u_1', \ldots, u_\ell'$ are given in \eqref{mathbfu}.
Let $\mathfrak g$ be the complex Kac-Moody Lie algebra   associated to   Cartan matrix $(a_{i,j})_{i,j\in \mathbb I}$, where  \begin{equation}\label{lie} a_{i,j}=\begin{cases}
    2, & \text{if $i=j$;} \\
    -1, & \text{if $i=j\pm 1$ and $p\neq 2$;} \\
     -2, & \text{if $i=j-1$ and $p= 2$;} \\
    0, & \hbox{ otherwise.}
  \end{cases}
  \end{equation}
  Then $\mathfrak g$ is isomorphic to  a direct sum of copies of  $\mathfrak {sl}_\infty$ (resp., $\hat{\mathfrak {sl}}_p$) if $p=0$ (resp., $p>0$) depending on the number of  $\mathbb Z$-orbits of $\{u_1, \ldots, u_\ell, u_1', \ldots, u_\ell'\}$ in the sense that, for any $x, y\in \{u_1, \ldots, u_\ell, u_1', \ldots, u_\ell'\}$,  $x$ and $y$ are in the same $\mathbb Z$-orbit if and only if  $x-y\in \mathbb Z1_\Bbbk$.
 Let  $V(\omega_{\mathbf u})$  be the integrable highest weight $\mathfrak g$-module of weight
 \begin{equation}\label{param}  \omega_{\mathbf u} =\sum_{i=1}^\ell \omega_{u_i},\end{equation}  where $\omega_{u_i}$'s are fundamental weights and $\ell$ is said to be the level.
Similarly, let    $\tilde V(-\omega_{\mathbf u'})$  be the
integrable lowest weight $\mathfrak g$-module of weight $-\omega_{\mathbf u'}$.  The following is the main result  of this paper. It was obtained previously in \cite{Br} when $\ell=1$.

  \begin{Theorem}\label{main} Let $A$ be the locally unital  $\Bbbk$-algebra associated to the cyclotomic oriented Brauer category $\OB(\mathbf u, \mathbf u')$. Then  $A$-lfdmod   admits the structure of a tensor product categorification of the $\mathfrak g$-module
$\tilde V(-\omega_{\mathbf u'})\otimes V(\omega_\mathbf u)$ in the general sense of Losev and Webster (e.g., Definition~\ref{defioftpc}).\end{Theorem}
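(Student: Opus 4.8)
The plan is to endow $A$-lfdmod with a categorical $\mathfrak g$-action in the sense of Khovanov--Lauda and Rouquier, and then to check that, together with the stratified structure, it satisfies the axioms of Definition~\ref{defioftpc}. From \cite{GRS3} we already know that $A$-lfdmod is an upper finite fully stratified category, with standard objects $\Delta(\bla)$ indexed by the combinatorial data of $\OB(\mathbf u,\mathbf u')$, essentially pairs of $\ell$-multipartitions, one recording the $\down$-part (governed by $\mathbf u'$) and one the $\up$-part (governed by $\mathbf u$). Thus the ``stratified / highest weight'' half of the definition is already in place; what remains is to build the categorical action and to verify its compatibility with this stratification.

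\textbf{Step 1 (the action).} Tensoring on the right with the two generating objects $\up$ and $\down$ of $\OB(\mathbf u,\mathbf u')$ defines a biadjoint pair of exact endofunctors of $A$-lfdmod. The natural endomorphism given by placing a single dot on the newly added strand has, on any object of $A$-lfdmod, only finitely many eigenvalues, all lying in the set $\mathbb I$ of \eqref{iu}, and is locally nilpotent after subtracting them; decomposing into generalized eigenspaces yields exact endofunctors $F_i$ (from $\up$) and $E_i$ (from $\down$), $i\in\mathbb I$. The dot and the oriented crossings supply natural transformations on $E=\bigoplus_i E_i$ and $F=\bigoplus_i F_i$, and the defining relations of $\OB(\mathbf u,\mathbf u')$ from \cite{BCNR}, being local and hence independent of $\ell$, translate into the relations of a categorical $\mathfrak g$-action: the degenerate affine Hecke algebra relations governing the action of dots and crossings on $F^n$, together with the $\mathfrak{sl}_2$-relations expressing $E_iF_i$ and $F_iE_i$ in terms of one another, exactly as in the level-one analysis of \cite{Br}. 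This produces a categorical $\mathfrak g$-action on $A$-lfdmod.

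\textbf{Step 2 (the Grothendieck group).} Next, identify the complexified Grothendieck group of $A$-lfdmod, as a $\mathfrak g$-module under $[E_i]\mapsto e_i$ and $[F_i]\mapsto f_i$, with $\tilde V(-\omega_{\mathbf u'})\otimes V(\omega_{\mathbf u})$, sending the classes of the standard objects to the standard monomial basis of the tensor product. The weight-space decomposition is read off from the block decomposition of $A$-lfdmod, and the action of $E_i,F_i$ on the classes $[\Delta(\bla)]$ is computed via the standardization functors of \cite{GRS3}, which reduce matters to restriction and induction for cyclotomic walled Brauer algebras and their known branching behaviour; both the weights and the structure constants come out as the tensor product of the corresponding data for the lowest-weight factor $\tilde V(-\omega_{\mathbf u'})$ and the highest-weight factor $V(\omega_{\mathbf u})$.

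\textbf{Step 3 (compatibility, and the main obstacle).} It remains to verify the genuine tensor-product axioms: the stratification poset yields a filtration of $A$-lfdmod that is preserved up to shift by the $E_i$ and $F_i$, and on the associated graded category the induced action is the external one categorifying the coproduct; equivalently, $F_i[\Delta(\bla)]$ is a sum of $[\Delta(\bmu)]$'s with $\bmu$ no lower than the term predicted by the tensor-product rule, whose coefficient is precisely that leading term. This is the crux. For $\ell>1$ there is no upper finite triangular decomposition of $A$ (as noted in the introduction), so Brundan's level-one argument does not transfer directly; instead one must control the interaction of $E_i,F_i$ with the standardization functors using only the upper finite \emph{weakly} triangular structure of \cite{GRS3} and the explicit diagrammatics of $\OB(\mathbf u,\mathbf u')$ --- for instance by tracking how cups, caps and dots act on standardly filtered objects. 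Once this triangularity is established, all hypotheses of Definition~\ref{defioftpc} are met, and Theorem~\ref{main} follows.
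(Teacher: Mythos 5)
Your outline follows the same overall route as the paper (build biadjoint endofunctors from tensoring with $\up$ and $\down$, split them into $E_i,F_i$ by generalized dot-eigenvalues, and verify the axioms of Definition~\ref{defioftpc}), but at the step you yourself call the crux you give no argument, and that step is exactly where the paper's real work lies. The compatibility (TPC5) is the pair of short exact sequences $0\rightarrow\Delta\circ E_i^\uparrow\rightarrow E_i\circ\Delta\rightarrow\Delta\circ E_i^\downarrow\rightarrow 0$ and $0\rightarrow\Delta\circ F_i^\downarrow\rightarrow F_i\circ\Delta\rightarrow\Delta\circ F_i^\uparrow\rightarrow 0$, and since $A$ admits no upper finite triangular decomposition when $\ell>1$ these cannot be imported from \cite{Br}. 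The paper establishes them in Lemma~\ref{intailh1} and Theorem~\ref{usuactifuc1} by writing down explicit $(A,\bar A^\circ)$-bimodule maps $\varphi,\psi,\eta,\varepsilon$ in \eqref{shoretmor1}, checking $\psi\varphi=0$, injectivity and surjectivity diagrammatically, and then proving exactness by a dimension count against the basis theorem (Theorem~\ref{bcnr}, Lemma~\ref{cellbasis}); Lemma~\ref{msikdjde1} then allows passage to generalized eigenspaces to obtain the $i$-refined sequences \eqref{keyses}. Saying one must ``track how cups, caps and dots act on standardly filtered objects'' describes the task but is not a proof, and without this construction your Steps 2 and 3 have no foundation: the branching computation for $E_i\tilde\Delta(\lambda)$, the character argument, and even the fact that the dot-eigenvalues lie in $\mathbb I$ are all deduced in the paper from Lemma~\ref{bran} together with \eqref{keyses}.

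A second, smaller gap concerns (TPC2): a nilpotent categorical action of the Kac--Moody $2$-category requires a quiver Hecke category action, not merely the degenerate affine Hecke relations plus biadjointness. The paper obtains it by factoring the affine Hecke action through completions of cyclotomic quotients --- using that $E$ is sweet, hence preserves finitely generated projectives, so $\End_A(E^dP)$ is finite dimensional and the dot has a minimal polynomial --- and then invoking the Brundan--Kleshchev isomorphism $\widehat{QH}_d\cong\widehat{AH}_d$ and \cite[Theorem~4.27]{BD} (Lemma~\ref{quvier hecke action} and the verification of condition (5) in the proof of the theorem). Your Step 1 asserts the finitely-many-eigenvalues and local-nilpotency claims without these finiteness arguments, which are not automatic for a locally unital algebra. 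Finally, a terminological slip in Step 2: the subquotient algebras $\bar A_{\downarrow^s\uparrow^r}$ are tensor products $H_{\ell,s}(-\mathbf u')\otimes H_{\ell,r}(\mathbf u)$ of degenerate cyclotomic Hecke algebras (Lemma~\ref{Aa}), not cyclotomic walled Brauer algebras, so the branching rules you need are those of cyclotomic Hecke algebras.
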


If $\Bbbk=
\mathbb C$ and  $\mathbf u\bigcup\mathbf u'$ consists of a unique  orbit, then  the category $A$-lfdmod  admits the structure of a tensor product categorification of $\mathfrak {sl}_\infty$-module
$\tilde V(-\omega_{\mathbf u'})\otimes V(\omega_\mathbf u)$. Such a result was expected in \cite{Br}.

 In order to prove Theorem~\ref{main},   we study representations of cyclotomic oriented Brauer categories. Our results include  a classification of simple $A$-modules, a criterion on the complete reducibility of the category of left $A$-modules, certain partial results on blocks, and certain endofunctors of $A$-lfdmod and its associated graded categories and so on.

After posting the paper on ArXiv we were informed that the category $\OB(\mathbf u, \mathbf u')$ is a  generalized cyclotomic quotient (GCQ) of the Heisenberg category (of central charge zero) \cite{Br1}.
Brundan, Savage and Webster proved that GCQs of the Heisenberg category are isomorphic to GCQs of a corresponding Kac-Moody 2-category\cite{BSW}.  Webster showed in \cite{We1}  that  for all GCQs of all Kac-Moody 2-categories, the category of locally finite-dimensional modules is a tensor product categorification, in particular, this category is upper finite fully stratified.  So Theorem ~\ref{main}  could  also be obtained by applying the isomorphism between GCQS of Heisenberg and Kac-Moody 2-category and tensor product categorification for GCQS of Kac-Moody 2-category. We are also told recently by Webster  that  even if one  knows that two results are equivalent under a theorem that translates between two contexts, it can still make more sense to prove them in the other context rather than trying to translate when the translation is sufficiently complicated. Moreover, as pointed in \cite{BS}, GCQs of the Heisenberg category of central charge different from zero should possess  upper finite weakly triangular decompositions and hence   Theorem~\ref{main} can be generalized to those more general GCQs of Heisenberg category by using the theory of upper finite weakly triangular category in \cite{GRS3}. However, it seems hard to prove the expected basis theorem for a GCQ of Heisenberg category with central charge different from zero.

 We organize  this paper as follows. In section~2, we recall some elementary results on
cyclotomic oriented Brauer categories in \cite{BCNR, GRS3}. We study representations of cyclotomic oriented Brauer categories  in section~3 and prove  Theorem~\ref{main}  in section~4.

\textbf{Acknowledgements.}  We would like to thanks Jonathan Brundan and Ben Webster for helpful comments and detailed  explanation  on the results in \cite{BSW,We1}.

\section{Cyclotomic oriented Brauer categories}
In this section, we recall some  results on cyclotomic oriented Brauer categories.
In  the usual string calculus for strict monoidal categories,  the composition $g\circ h$  of two morphisms $g$ and $h$   is given by vertical   stacking and the tensor product $g\otimes h $ is given by   horizontal concatenation. More explicitly, \begin{equation}\label{com1}
      g\circ h= \begin{tikzpicture}[baseline = 19pt,scale=0.5,color=\clr,inner sep=0pt, minimum width=11pt]
        \draw[-,thick] (0,0) to (0,3);
        \draw (0,2.2) node[circle,draw,thick,fill=white]{$g$};
        \draw (0,0.8) node[circle,draw,thick,fill=white]{$h$};
    \end{tikzpicture}
    ~,~ \ \ \ \ \ g\otimes h=\begin{tikzpicture}[baseline = 19pt,scale=0.5,color=\clr,inner sep=0pt, minimum width=11pt]
        \draw[-,thick] (0,0) to (0,3);
        \draw[-,thick] (2,0) to (2,3);
        \draw (0,1.5) node[circle,draw,thick,fill=white]{$g$};
        \draw (2,1.5) node[circle,draw,thick,fill=white]{$h$};
    \end{tikzpicture}~.
\end{equation}

\subsection{The category $\OB$} \textsf{The oriented Brauer category} $\OB$ was introduced in \cite{BCNR}. It is  the strict monoidal category generated by two generating objects  $\uparrow, \down$ and four elementary morphisms
\begin{equation} \label{gen}
 \lcup:\emptyset\rightarrow \uparrow\otimes \downarrow, \ \ \  \lcap: \downarrow\otimes \uparrow\rightarrow\emptyset, \ \ \ \text{ \begin{tikzpicture}[baseline = 2.5mm]
	\draw[->,thick,darkblue] (0.28,0) to[out=90,in=-90] (-0.28,.6);
	\draw[->,thick,darkblue] (-0.28,0) to[out=90,in=-90] (0.28,.6);
\end{tikzpicture}}: \uparrow\otimes\uparrow\rightarrow\uparrow\otimes\uparrow, \ \ \ \text{ \begin{tikzpicture}[baseline = 2.5mm]
	\draw[<-,thick,darkblue] (0.28,0) to[out=90,in=-90] (-0.28,.6);
	\draw[->,thick,darkblue] (-0.28,0) to[out=90,in=-90] (0.28,.6);
\end{tikzpicture} }:\uparrow\otimes\downarrow\rightarrow\downarrow\otimes\uparrow,  \end{equation}
which  satisfy the  relations~\cite[(1.4)-(1.8)]{BCNR} :

\begin{equation}\label{relation 1}\begin{tikzpicture}[baseline = -0.5mm]
\draw[->,thick,darkblue] (0,0) to (0,0.3);
\draw[-,thick,darkblue] (0,0) to[out=down,in=left] (0.25,-0.25) to[out=right,in=down] (0.5,0);
\draw[-,thick,darkblue] (0.5,0) to[out=up,in=left] (0.75,0.25) to[out=right,in=up] (1,0);
\draw[-,thick,darkblue] (0,0) to (0,0.3);
\draw[-,thick,darkblue] (1,0) to (1,-0.3);
  \end{tikzpicture}
  ~=~
  \begin{tikzpicture}[baseline = -0.5mm]
  \draw[->,thick,darkblue] (0,-0.25) to (0,0.25);
  \end{tikzpicture},
  \end{equation}

  \begin{equation} \label{relation 2}
   \begin{tikzpicture}[baseline = -0.5mm]
\draw[-,thick,darkblue] (0.5,0) to[out=down,in=left] (0.75,-0.25) to[out=right,in=down] (1,0);
\draw[-,thick,darkblue] (0,0) to[out=up,in=left] (0.25,0.25) to[out=right,in=up] (0.5,0);
\draw[-,thick,darkblue] (1,0) to (1,0.3);
\draw[->,thick,darkblue] (0,0) to (0,-0.3);
  \end{tikzpicture}~=~\begin{tikzpicture}[baseline = -0.5mm]
  \draw[<-,thick,darkblue] (0,-0.25) to (0,0.25);
  \end{tikzpicture},\end{equation}
 \begin{equation}\label{relation 3}\mathord{
\begin{tikzpicture}[baseline = -1mm]
	\draw[-,thick,darkblue] (0.28,-.6) to[out=90,in=-90] (-0.28,0);
	\draw[->,thick,darkblue] (-0.28,0) to[out=90,in=-90] (0.28,.6);
	\draw[-,thick,darkblue] (-0.28,-.6) to[out=90,in=-90] (0.28,0);
	\draw[->,thick,darkblue] (0.28,0) to[out=90,in=-90] (-0.28,.6);
\end{tikzpicture}
}=
\mathord{
\begin{tikzpicture}[baseline = -1mm]
	\draw[->,thick,darkblue] (0.18,-.6) to (0.18,.6);
	\draw[->,thick,darkblue] (-0.18,-.6) to (-0.18,.6);
\end{tikzpicture},
}\end{equation}

\begin{equation} \label{relation 4}
\mathord{
\begin{tikzpicture}[baseline = -1mm]
	\draw[->,thick,darkblue] (0.45,-.6) to (-0.45,.6);
        \draw[-,thick,darkblue] (0,-.6) to[out=90,in=-90] (-.45,0);
        \draw[->,thick,darkblue] (-0.45,0) to[out=90,in=-90] (0,0.6);
	\draw[<-,thick,darkblue] (0.45,.6) to (-0.45,-.6);
\end{tikzpicture}
}=
\mathord{
\begin{tikzpicture}[baseline = -1mm]
	\draw[->,thick,darkblue] (0.45,-.6) to (-0.45,.6);
        \draw[-,thick,darkblue] (0,-.6) to[out=90,in=-90] (.45,0);
        \draw[->,thick,darkblue] (0.45,0) to[out=90,in=-90] (0,0.6);
	\draw[<-,thick,darkblue] (0.45,.6) to (-0.45,-.6);
\end{tikzpicture}},\end{equation}
\begin{equation}\label{relation 5} \text{\begin{tikzpicture}[baseline = 2.5mm]
	\draw[<-,thick,darkblue] (0.28,0) to[out=90,in=-90] (-0.28,.6);
	\draw[->,thick,darkblue] (-0.28,0) to[out=90,in=-90] (0.28,.6);
\end{tikzpicture} is the two-sided  inverse to \begin{tikzpicture}[baseline = -0.5mm]
\draw[-,thick,darkblue]  (0.75,-0.25) to[out=right,in=down] (1,0);
\draw[-,thick,darkblue] (0,0) to[out=up,in=left] (0.25,0.25) ;
\draw[-,thick,darkblue] (1,0) to (1,0.25);
\draw[->,thick,darkblue] (0,0) to (0,-0.25);
\draw[->,thick,darkblue] (0.25,-0.25) to[out=90,in=-90] (0.75,0.25);
\draw[-,thick,darkblue] (0.25,0.25) to[out=0,in=180] (0.75,-0.25);
  \end{tikzpicture}.}\end{equation}
Here,  \begin{tikzpicture}[baseline = -0.5mm]
  \draw[<-,thick,darkblue] (0,-0.25) to (0,0.25);
  \end{tikzpicture} is  the identity morphism from \begin{tikzpicture}[baseline = -0.5mm]
  \draw[<-,thick,darkblue] (0,-0.25) to (0,0.25);
  \end{tikzpicture} to \begin{tikzpicture}[baseline = -0.5mm]
  \draw[<-,thick,darkblue] (0,-0.25) to (0,0.25);
  \end{tikzpicture}, and
\begin{tikzpicture}[baseline = -0.5mm]
  \draw[->,thick,darkblue] (0,-0.25) to (0,0.25);
  \end{tikzpicture} is  the identity morphism from \begin{tikzpicture}[baseline = -0.5mm]
  \draw[->,thick,darkblue] (0,-0.25) to (0,0.25);
  \end{tikzpicture} to \begin{tikzpicture}[baseline = -0.5mm]
  \draw[->,thick,darkblue] (0,-0.25) to (0,0.25);
  \end{tikzpicture}.
In the following, the inverse of \begin{tikzpicture}[baseline = 2.5mm]
	\draw[<-,thick,darkblue] (0.28,0) to[out=90,in=-90] (-0.28,.6);
	\draw[->,thick,darkblue] (-0.28,0) to[out=90,in=-90] (0.28,.6);
\end{tikzpicture}  in \eqref{relation 5}  will be denoted by \begin{tikzpicture}[baseline = 2.5mm]
	\draw[->,thick,darkblue] (0.28,0) to[out=90,in=-90] (-0.28,.6);
	\draw[<-,thick,darkblue] (-0.28,0) to[out=90,in=-90] (0.28,.6);
\end{tikzpicture}.
For any $\delta_1\in\Bbbk$, let $\OB(\delta_1)$ be the category obtained from $\OB$ by adding the  additional relation
\begin{equation}\label{deltaa}  \begin{tikzpicture}[baseline = -0.5mm]
\draw[-,thick,darkblue] (0,0) to[out=down,in=left] (0.25,-0.25) to[out=right,in=down] (0.5,0);
  \draw[-,thick,darkblue] (0,0) to[out=up,in=left] (0.25,0.25) to[out=right,in=up] (0.5,0);
  \draw[->,thick,darkblue]  (0.25,0.25) to[out=right,in=up] (0.5,0);
  \end{tikzpicture}=\delta_{1}.\end{equation}
\subsection{The category $\AOB$}\label{affK} \textsf{The affine oriented Brauer category} $\AOB$ was also introduced in \cite{BCNR}. It is the strict  monoidal category generated by two generating objects  $\uparrow, \down$ and
the morphism \begin{tikzpicture}[baseline=-.5mm]
\draw[->,thick,darkblue] (0,-.3) to (0,.3);
      \draw (0,0) \bdot;
\end{tikzpicture} together with four morphisms in \eqref{gen} subject to the relations in \eqref{relation 1}--\eqref{relation 5} and one extra relation:
\begin{equation}\label{relation 6}
\begin{tikzpicture}[baseline = -1mm]
	\draw[->,thick,darkblue] (0.28,-0.3) to[out=90,in=-90] (-0.28,.3);
	\draw[->,thick,darkblue] (-0.28,-.3) to[out=90,in=-90] (0.28,.3);
 \draw (0.22,.11) \bdot;
\end{tikzpicture}~=~
\begin{tikzpicture}[baseline = -7mm]

\draw(-0.25,-.75) \bdot;
\draw[->,thick,darkblue] (0.28,-0.9) to[out=90,in=-90] (-0.28,-.3);
	\draw[->,thick,darkblue] (-0.28,-.9) to[out=90,in=-90] (0.28,-.3);
  \end{tikzpicture}
  ~+~\begin{tikzpicture}[baseline = -1mm]
 \draw[<-,thick,darkblue] (0,0.3) to (0,-.3);
 \draw[<-,thick,darkblue] (0.56,0.3) to (0.56,-.3);
  \end{tikzpicture}.
\end{equation}
Thanks to  \cite[(3.3)-(3.6), (3.8)]{BCNR},
\begin{equation}\label{relation 7}\begin{tikzpicture}[baseline = -0.5mm]
\draw[-,thick,darkblue] (0,0) to[out=down,in=left] (0.25,-0.25) to[out=right,in=down] (0.5,0);
\draw[-,thick,darkblue] (0.5,0) to[out=up,in=left] (0.75,0.25) to[out=right,in=up] (1,0);
\draw[-,thick,darkblue] (0,0) to (0,0.3);
\draw[->,thick,darkblue] (1,0) to (1,-0.3);
  \end{tikzpicture}
  ~=~
  \begin{tikzpicture}[baseline = -0.5mm]
  \draw[<-,thick,darkblue] (0,-0.25) to (0,0.25);
  \end{tikzpicture},\end{equation}

  \begin{equation} \label{relation 8} \begin{tikzpicture}[baseline = -0.5mm]
\draw[-,thick,darkblue] (0.5,0) to[out=down,in=left] (0.75,-0.25) to[out=right,in=down] (1,0);
\draw[-,thick,darkblue] (0,0) to[out=up,in=left] (0.25,0.25) to[out=right,in=up] (0.5,0);
\draw[->,thick,darkblue] (1,0) to (1,0.3);
\draw[-,thick,darkblue] (0,0) to (0,-0.3);
  \end{tikzpicture}~=~\begin{tikzpicture}[baseline = -0.5mm]
  \draw[->,thick,darkblue] (0,-0.25) to (0,0.25);
  \end{tikzpicture},\end{equation}

   \begin{equation}\label{relation 9}\mathord{
\begin{tikzpicture}[baseline = -1mm]
	\draw[<-,thick,darkblue] (0.28,-.6) to[out=90,in=-90] (-0.28,0);
	\draw[-,thick,darkblue] (-0.28,0) to[out=90,in=-90] (0.28,.6);
	\draw[<-,thick,darkblue] (-0.28,-.6) to[out=90,in=-90] (0.28,0);
	\draw[-,thick,darkblue] (0.28,0) to[out=90,in=-90] (-0.28,.6);
\end{tikzpicture}
}=
\mathord{
\begin{tikzpicture}[baseline = -1mm]
	\draw[<-,thick,darkblue] (0.18,-.6) to (0.18,.6);
	\draw[<-,thick,darkblue] (-0.18,-.6) to (-0.18,.6);
\end{tikzpicture},}\end{equation}

\begin{equation} \label{relation 10}  \mathord{
\begin{tikzpicture}[baseline = -1mm]
	\draw[<-,thick,darkblue] (0.45,-.6) to (-0.45,.6);
        \draw[<-,thick,darkblue] (0,-.6) to[out=90,in=-90] (-.45,0);
        \draw[-,thick,darkblue] (-0.45,0) to[out=90,in=-90] (0,0.6);
	\draw[->,thick,darkblue] (0.45,.6) to (-0.45,-.6);
\end{tikzpicture}
}=
\mathord{
\begin{tikzpicture}[baseline = -1mm]
	\draw[<-,thick,darkblue] (0.45,-.6) to (-0.45,.6);
        \draw[<-,thick,darkblue] (0,-.6) to[out=90,in=-90] (.45,0);
        \draw[-,thick,darkblue] (0.45,0) to[out=90,in=-90] (0,0.6);
	\draw[->,thick,darkblue] (0.45,.6) to (-0.45,-.6);
\end{tikzpicture}},\end{equation}

\begin{equation}\label{relation 11}
\begin{tikzpicture}[baseline = -7mm]
\draw (0.25,-.45) \bdot;
\draw[<-,thick,darkblue] (0.28,-0.9) to[out=90,in=-90] (-0.28,-.3);
	\draw[<-,thick,darkblue] (-0.28,-.9) to[out=90,in=-90] (0.28,-.3);
  \end{tikzpicture}~=~\begin{tikzpicture}[baseline = -1mm]
	\draw[<-,thick,darkblue] (0.28,-0.3) to[out=90,in=-90] (-0.28,.3);
	\draw[<-,thick,darkblue] (-0.28,-.3) to[out=90,in=-90] (0.28,.3);
\draw(-0.24,-.13) \bdot;
\end{tikzpicture}
~-~\begin{tikzpicture}[baseline = -1mm]
 \draw[->,thick,darkblue] (0,0.3) to (0,-.3);
 \draw[->,thick,darkblue] (0.56,0.3) to (0.56,-.3);
  \end{tikzpicture},
\end{equation}where
\begin{equation}\label{gen1} \text{ $\lcupl:=\begin{tikzpicture}[baseline = 2.5mm]
	\draw[-,thick,darkblue] (0.28,0) to[out=90,in=-90] (-0.28,.6);
	\draw[->,thick,darkblue] (-0.28,0) to[out=90,in=-90] (0.28,.6);
\draw[-,thick,darkblue] (-0.28,0) to[out=down,in=left] (0,-0.25) to[out=right,in=down] (0.28,0);
\end{tikzpicture}, \quad\quad  \lcapl:=\begin{tikzpicture}[baseline = 2.5mm]
	\draw[<-,thick,darkblue] (0.28,0) to[out=90,in=-90] (-0.28,.6);
\draw[-,thick,darkblue] (-0.28,0.6) to[out=up,in=left] (0,0.85) to[out=right,in=up] (0.28,0.6);
	\draw[-,thick,darkblue] (-0.28,0) to[out=90,in=-90] (0.28,.6);
\end{tikzpicture},\quad\quad \begin{tikzpicture}[baseline=-.5mm]
\draw[<-,thick,darkblue] (0,-.3) to (0,.3);
     \draw (0,0) \bdot;
\end{tikzpicture}:=\begin{tikzpicture}[baseline = -0.5mm]
\draw[-,thick,darkblue] (0.5,0) to[out=down,in=left] (0.75,-0.25) to[out=right,in=down] (1,0);
 \draw (0.5,0)\bdot;
\draw[-,thick,darkblue] (0,0) to[out=up,in=left] (0.25,0.25) to[out=right,in=up] (0.5,0);
\draw[-,thick,darkblue] (1,0) to (1,0.3);
\draw[->,thick,darkblue] (0,0) to (0,-0.3);
  \end{tikzpicture},\quad\quad   \begin{tikzpicture}[baseline = -7mm]
\draw[<-,thick,darkblue] (0.28,-0.9) to[out=90,in=-90] (-0.28,-.3);
	\draw[<-,thick,darkblue] (-0.28,-.9) to[out=90,in=-90] (0.28,-.3);
  \end{tikzpicture}:=\begin{tikzpicture}[baseline = -0.5mm]
\draw[-,thick,darkblue]  (0.75,-0.25) to[out=right,in=down] (1,0);
\draw[-,thick,darkblue] (0,0) to[out=up,in=left] (0.25,0.25) ;
\draw[-,thick,darkblue] (1,0) to (1,0.25);
\draw[-,thick,darkblue] (-0.25,0.25) to[out=up,in=left] (0.25,0.5) to[out=right,in=up] (0.75,0.25);
\draw[-,thick,darkblue] (0.25,-0.25) to[out=down,in=left] (0.75,-0.5) to[out=right,in=down] (1.25,-0.25);
\draw[->,thick,darkblue] (0,0) to (0,-0.25);
\draw[->,thick,darkblue] (-.25,0.25) to (-.25,-0.25);
\draw[-,thick,darkblue] (1.25,-0.25) to (1.25,0.25);
\draw[-,thick,darkblue] (0.25,0.25) to[out=0,in=180] (0.75,-0.25);
\draw[-,thick,darkblue] (0.25,-0.25) to[out=90,in=-90] (0.75,0.25);
  \end{tikzpicture}$. }\end{equation}

   \begin{Lemma}\label{sigma} There is a $\Bbbk$-linear  monoidal contravariant functor $\tau: \AOB\rightarrow \AOB$, which fixes generating objects and  both \begin{tikzpicture}[baseline = 2.5mm]
	\draw[->,thick,darkblue] (0.28,0) to[out=90,in=-90] (-0.28,.6);
	\draw[->,thick,darkblue] (-0.28,0) to[out=90,in=-90] (0.28,.6);
\end{tikzpicture} and \begin{tikzpicture}[baseline=-.5mm]
\draw[->,thick,darkblue] (0,-.3) to (0,.3);
      \node at (0,0) {$\color{darkblue}\scriptstyle\bullet$};
\end{tikzpicture} and switches  $\lcup$ (resp., $\lcap$, resp.,  \begin{tikzpicture}[baseline = 2.5mm]
	\draw[<-,thick,darkblue] (0.28,0) to[out=90,in=-90] (-0.28,.6);
	\draw[->,thick,darkblue] (-0.28,0) to[out=90,in=-90] (0.28,.6);
\end{tikzpicture}) to $\lcapl$
(resp., $\lcupl$, resp.,\begin{tikzpicture}[baseline = 2.5mm]
	\draw[->,thick,darkblue] (0.28,0) to[out=90,in=-90] (-0.28,.6);
	\draw[<-,thick,darkblue] (-0.28,0) to[out=90,in=-90] (0.28,.6);
\end{tikzpicture}).
Furthermore,  $\tau^2=\text{Id}$.
\end{Lemma}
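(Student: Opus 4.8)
The plan is to present $\tau$ on the generating data of $\AOB$ and then check that it respects the defining relations. Recall that $\AOB$ is the strict monoidal category generated by the objects $\up,\down$ together with the four morphisms of \eqref{gen} and the dot $\xd$, subject to \eqref{relation 1}--\eqref{relation 6}. First I would declare $\tau$ to be the identity on objects (so that $\tau$ is monoidal in the order-preserving sense), to fix the upward crossing of \eqref{gen} and the dot $\xd$, and to send $\lcup\mapsto\lcapl$, $\lcap\mapsto\lcupl$, and the leftward mixed crossing of \eqref{gen} to the inverse mixed crossing introduced in \eqref{relation 5}; then I would extend $\tau$ to arbitrary morphisms by the rules of a contravariant monoidal functor, namely $\tau(g\circ h)=\tau(h)\circ\tau(g)$ and $\tau(g\otimes h)=\tau(g)\otimes\tau(h)$. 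Pictorially, $\tau$ is the operation of reflecting a string diagram in a horizontal axis and simultaneously reversing all of its orientations, and it is this picture that makes the composition and tensor identities visibly compatible.

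To see that $\tau$ is well defined it suffices, by the above presentation, to verify that applying $\tau$ to each of \eqref{relation 1}--\eqref{relation 6} yields an identity already valid in $\AOB$. Reflecting and reversing orientations carries the zig-zag relations \eqref{relation 1}, \eqref{relation 2} to \eqref{relation 7}, \eqref{relation 8}; the braid-type relations \eqref{relation 3}, \eqref{relation 4} to \eqref{relation 9}, \eqref{relation 10}; the invertibility relation \eqref{relation 5} to the corresponding invertibility statement for the inverse crossing; and the dot--crossing relation \eqref{relation 6} to \eqref{relation 11}. All of these hold in $\AOB$ by \cite[(3.3)--(3.6),(3.8)]{BCNR}, once one has rewritten $\tau$ of the derived morphisms $\lcapl$, $\lcupl$, and the inverse mixed crossing in terms of the generators using \eqref{gen1} and matched the resulting pictures against \eqref{relation 7}--\eqref{relation 11}. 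Monoidality of $\tau$ is immediate from the construction.

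Finally, $\tau^2$ is a covariant monoidal endofunctor of $\AOB$ fixing every object, so $\tau^2=\text{Id}$ follows once it is checked on the five generators. This is obvious for the upward crossing and for $\xd$; for the remaining three one must show $\tau(\lcapl)=\lcup$, $\tau(\lcupl)=\lcap$, and that $\tau$ of the inverse mixed crossing is the leftward crossing of \eqref{gen}, each of which follows by expanding the relevant primed morphism via \eqref{gen1}, applying $\tau$ termwise, and simplifying using \eqref{relation 1}--\eqref{relation 11}. I expect the main obstacle to be exactly this bookkeeping: since the chosen generators of $\AOB$ are not themselves stable under the reflect-and-reverse operation, one has to move repeatedly between the generators and their primed counterparts in \eqref{gen1} while keeping careful track of orientations in order to see both that $\tau$ preserves \eqref{relation 4}--\eqref{relation 6} and that it squares to the identity. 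None of these steps is deep, but the diagrammatic accounting is where the content lies.
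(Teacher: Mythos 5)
Your proposal is correct and is essentially the paper's own proof (which is just the one-line observation that the claim follows from the defining relations of $\AOB$), spelled out: define $\tau$ on generators, extend contravariantly and monoidally, check the images of \eqref{relation 1}--\eqref{relation 6} against the derived relations, and verify $\tau^2=\text{Id}$ on generators via \eqref{gen1}. One harmless bookkeeping slip: since $\tau$ fixes the upward crossing and the upward dot, the $\tau$-images of \eqref{relation 3}, \eqref{relation 4} and \eqref{relation 6} are again relations among upward strands (equivalent to themselves using \eqref{relation 3}), not \eqref{relation 9}--\eqref{relation 11}; only \eqref{relation 1}, \eqref{relation 2} genuinely land on \eqref{relation 8}, \eqref{relation 7}, so the verification still goes through exactly as you describe.
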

\begin{proof}The  result follows immediately from the defining relations of $\AOB$ above.\end{proof}
\begin{Lemma}\label{dots} As morphisms  in $\AOB$, we have
\begin{multicols}{2}\item[(1)]$
\begin{tikzpicture}[baseline = 2.5mm]
	\draw[->,thick,darkblue] (0.28,0) to[out=90,in=-90] (-0.28,.6);
	\draw[<-,thick,darkblue] (-0.28,0) to[out=90,in=-90] (0.28,.6);
   \draw (-0.21,0.18) \bdot;
\end{tikzpicture}
~=~
\begin{tikzpicture}[baseline = 2.5mm]
	\draw[<-,thick,darkblue] (-0.28,-.0) to[out=90,in=-90] (0.28,0.6);
	\draw[->,thick,darkblue] (0.28,-.0) to[out=90,in=-90] (-0.28,0.6);
\draw(0.24,0.45) \bdot;
\end{tikzpicture}~-~\begin{tikzpicture}[baseline = 2.5mm]\draw[<-,thick,darkblue] (0,0.6) to[out=down,in=left] (0.28,0.35) to[out=right,in=down] (0.56,0.6);
  \draw[<-,thick,darkblue] (0,0) to[out=up,in=left] (0.28,0.25) to[out=right,in=up] (0.56,0);
         \end{tikzpicture}.$
         \item[(2)]$
\begin{tikzpicture}[baseline = 2.5mm]
	\draw[->,thick,darkblue] (0.28,0) to[out=90,in=-90] (-0.28,.6);
	\draw[<-,thick,darkblue] (-0.28,0) to[out=90,in=-90] (0.28,.6);
   \draw (0.24,0.16) \bdot;
\end{tikzpicture}
~=~
\begin{tikzpicture}[baseline = 2.5mm]
	\draw[<-,thick,darkblue] (-0.28,-.0) to[out=90,in=-90] (0.28,0.6);
	\draw[->,thick,darkblue] (0.28,-.0) to[out=90,in=-90] (-0.28,0.6);
\draw (-0.2,0.42) \bdot;
\end{tikzpicture}
~-~\begin{tikzpicture}[baseline = 2.5mm]\draw[<-,thick,darkblue] (0,0.6) to[out=down,in=left] (0.28,0.35) to[out=right,in=down] (0.56,0.6);
  \draw[<-,thick,darkblue] (0,0) to[out=up,in=left] (0.28,0.25) to[out=right,in=up] (0.56,0);
         \end{tikzpicture}$.
\item[(3)]
$
\begin{tikzpicture}[baseline = 2.5mm]
	\draw[<-,thick,darkblue] (0.28,0) to[out=90,in=-90] (-0.28,.6);
	\draw[->,thick,darkblue] (-0.28,0) to[out=90,in=-90] (0.28,.6);
   \draw (-0.21,0.18) \bdot;
\end{tikzpicture}
~=~
\begin{tikzpicture}[baseline = 2.5mm]
	\draw[->,thick,darkblue] (-0.28,-.0) to[out=90,in=-90] (0.28,0.6);
	\draw[<-,thick,darkblue] (0.28,-.0) to[out=90,in=-90] (-0.28,0.6);
\draw(0.23,0.42) \bdot;
\end{tikzpicture}~+~\begin{tikzpicture}[baseline = 2.5mm]\draw[->,thick,darkblue] (0,0.6) to[out=down,in=left] (0.28,0.35) to[out=right,in=down] (0.56,0.6);
  \draw[->,thick,darkblue] (0,0) to[out=up,in=left] (0.28,0.25) to[out=right,in=up] (0.56,0);
         \end{tikzpicture}.$
         \item[(4)]$
\begin{tikzpicture}[baseline = 2.5mm]
	\draw[<-,thick,darkblue] (0.28,0) to[out=90,in=-90] (-0.28,.6);
	\draw[->,thick,darkblue] (-0.28,0) to[out=90,in=-90] (0.28,.6);
   \draw(0.22,0.18)\bdot;
\end{tikzpicture}
~=~
\begin{tikzpicture}[baseline = 2.5mm]
	\draw[->,thick,darkblue] (-0.28,-.0) to[out=90,in=-90] (0.28,0.6);
	\draw[<-,thick,darkblue] (0.28,-.0) to[out=90,in=-90] (-0.28,0.6);
\draw(-0.22,0.45) \bdot;
\end{tikzpicture}
~+~\begin{tikzpicture}[baseline = 2.5mm]\draw[->,thick,darkblue] (0,0.6) to[out=down,in=left] (0.28,0.35) to[out=right,in=down] (0.56,0.6);
  \draw[->,thick,darkblue] (0,0) to[out=up,in=left] (0.28,0.25) to[out=right,in=up] (0.56,0);
         \end{tikzpicture}$.
  \item[(5)]\begin{tikzpicture}[baseline = -0.5mm]
 \draw[-,thick,darkblue] (0,0) to[out=down,in=left] (0.28,-0.28) to[out=right,in=down] (0.56,0);
\draw (0,0) \bdot;
  \draw[<-,thick,darkblue] (0,0.2) to (0,0);
   \draw[-,thick,darkblue] (0.56,0) to (0.56,.2);
 \end{tikzpicture}
 ~=~\:
 \begin{tikzpicture}[baseline = -0.5mm]
 \draw[-,thick,darkblue] (0,0) to[out=down,in=left] (0.28,-0.28) to[out=right,in=down] (0.56,0);
 \draw (0.56,0) \bdot;
  \draw[<-,thick,darkblue] (0,0.2) to (0,0);
   \draw[-,thick,darkblue] (0.56,0) to (0.56,.2);
 \end{tikzpicture}.
 \item[(6)] \begin{tikzpicture}[baseline = -0.5mm]
  \draw[-,thick,darkblue] (0,0) to[out=up,in=left] (0.28,0.28) to[out=right,in=up] (0.56,0);
  \draw[<-,thick,darkblue] (0,-0.2) to (0,-0);
 \draw (0,0)\bdot;
 \draw[-,thick,darkblue] (0.56,0) to (0.56,-.2);
  \end{tikzpicture}
  ~=~\:\begin{tikzpicture}[baseline = -0.5mm]
\draw[-,thick,darkblue] (0,0) to[out=up,in=left] (0.28,0.28) to[out=right,in=up] (0.56,0);
 \draw[-,thick,darkblue] (0.56,0) to (0.56,-.2);
 \draw[<-,thick,darkblue] (0,-0.2) to (0,-0);
  \draw (0.56,0)\bdot;
  \end{tikzpicture}.
    \item[(7)]\begin{tikzpicture}[baseline = -0.5mm]
 \draw[-,thick,darkblue] (0,0) to[out=down,in=left] (0.28,-0.28) to[out=right,in=down] (0.56,0);
\draw (0,0) \bdot;
  \draw[-,thick,darkblue] (0,0) to (0,.2);
   \draw[->,thick,darkblue] (0.56,0) to (0.56,.2);
 \end{tikzpicture}
 ~=~\:
 \begin{tikzpicture}[baseline = -0.5mm]
 \draw[-,thick,darkblue] (0,0) to[out=down,in=left] (0.28,-0.28) to[out=right,in=down] (0.56,0);
 \draw (0.56,0) \bdot;
  \draw[-,thick,darkblue] (0,0) to (0,.2);
   \draw[->,thick,darkblue] (0.56,0) to (0.56,.2);
 \end{tikzpicture}.
 \item[(8)] \begin{tikzpicture}[baseline = -0.5mm]
  \draw[-,thick,darkblue] (0,0) to[out=up,in=left] (0.28,0.28) to[out=right,in=up] (0.56,0);
  \draw[-,thick,darkblue] (0,0) to (0,-.2);
 \draw (0,0)\bdot;
 \draw[->,thick,darkblue] (0.56,0) to (0.56,-.2);
  \end{tikzpicture}
  ~=~\:\begin{tikzpicture}[baseline = -0.5mm]
\draw[-,thick,darkblue] (0,0) to[out=up,in=left] (0.28,0.28) to[out=right,in=up] (0.56,0);
 \draw[->,thick,darkblue] (0.56,0) to (0.56,-.2);
 \draw[-,thick,darkblue] (0,0) to (0,-.2);
  \draw (0.56,0)\bdot;
  \end{tikzpicture}.
\end{multicols}
\end{Lemma}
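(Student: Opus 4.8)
The plan is to derive the eight identities from the two ``dot--past--crossing'' relations \eqref{relation 6} and \eqref{relation 11}, the four straightening relations \eqref{relation 1}, \eqref{relation 2}, \eqref{relation 7}, \eqref{relation 8}, the strand--slide relations \eqref{relation 4} and \eqref{relation 10}, the definition \eqref{gen1} of the downward dot, and the contravariant monoidal functor $\tau$ of Lemma~\ref{sigma}. The organizing observation is that a crossing of two oppositely oriented strands, $\rswap$ or $\lswap$, is obtained from the crossing $\swapo$ of two $\up$--strands by ``bending'' one of its legs through a $\lcup$ or a $\lcap$, and that under such a bend the identity summand on the right--hand side of \eqref{relation 6} is converted into a cup--cap composite — precisely the shape of the correction term in parts (1)--(4); conversely, dots travelling around a turnback allow a cup--cap carrying a dot to collapse onto a single strand with a dot on the opposite leg, which is exactly parts (5)--(8).

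First I would prove (1). Composing \eqref{relation 6} with $\lcup$ along the bottom (on the appropriate tensor factor) so that the right $\up$--leg of $\swapo$ is henceforth read as a $\down$--strand, sliding the newly created turnback through the crossing by \eqref{relation 4}, and removing the resulting zigzag by \eqref{relation 1}, \eqref{relation 7}, \eqref{relation 8}, the left--hand side of \eqref{relation 6} becomes the left--hand side of (1), the first summand on the right becomes $\lswap$ with the dot carried across the crossing, and the identity summand becomes the cup--cap appearing in (1), its sign being the one forced by the normalization \eqref{gen1} of the $\down$--dot. Part (2) is the same computation with the mirror bend, applied to the other leg. Then (3) and (4) follow by applying $\tau$ to (1) and (2): by Lemma~\ref{sigma}, $\tau$ fixes $\swapo$ and the $\up$--dot and sends $\rswap\mapsto\lswap$, $\lcup\mapsto\lcapl$, $\lcap\mapsto\lcupl$, and it also fixes the $\down$--dot, which is checked directly from \eqref{gen1}; alternatively one reruns the bending argument starting from \eqref{relation 11} in place of \eqref{relation 6}.

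For (5)--(8) I would expand, in each case, the $\down$--dot (and, where needed, the turnback itself) by its definition \eqref{gen1}, push the resulting $\up$--dot through whatever crossings appear in that picture using (1)--(4), and straighten all turnbacks by \eqref{relation 1}, \eqref{relation 2}, \eqref{relation 7}, \eqref{relation 8}. The correction terms produced by (1)--(4) cancel in pairs, and what survives is the $\up$--dot sitting on the other leg of the turnback. As before $\tau$ exchanges $\lcup\leftrightarrow\lcapl$ and $\lcap\leftrightarrow\lcupl$, so it pairs (5) with (8) and (6) with (7), and only two of these need to be carried out by hand.

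The only genuine difficulty is bookkeeping: tracking strand orientations, which strand lies over the other, and — most delicately — the signs of the cup--cap correction terms, which are dictated by the choice of normalization of the $\down$--dot in \eqref{gen1}; a single sign slip there would destroy the match between the plus in \eqref{relation 6} and the minus in (1)--(2). There is no conceptual obstruction: each identity is forced by the defining relations of $\AOB$, and the proof is a finite string of diagrammatic moves, so the write--up is essentially a matter of drawing the intermediate pictures carefully.
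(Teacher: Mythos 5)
The paper gives no argument for this lemma (it is left as an ``easy exercise''), so your sketch can only be judged on its own terms, and in outline it is the right one: bending the up--up relation \eqref{relation 6} (and its conjugate under \eqref{relation 3}) by a leftward cup at the bottom and a leftward cap at the top, sliding the turnback through the crossing with \eqref{relation 4} and straightening with \eqref{relation 1}--\eqref{relation 2}, \eqref{relation 7}--\eqref{relation 8}, does turn the identity summand of \eqref{relation 6} into the cup--cap composite and yields the mixed-crossing identities; (3) then follows from (2) by $\tau$ exactly as you say, since (2) involves only the upward dot, which $\tau$ fixes.

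Two points in your plan need repair, though. First, the order of deduction: to prove (1) by bending you must recognize the bent upward dot, sitting next to the leftward cup (resp.\ cap), as the downward dot of \eqref{gen1} on the $\downarrow$-leg --- and that identification \emph{is} precisely items (5) and (6). So (5) and (6) should come first; fortunately each is a one-line consequence of \eqref{gen1}, the interchange law and \eqref{relation 1} (resp.\ \eqref{relation 2}), with no crossings and hence no use of (1)--(4) at all. As written, your plan proves (1)--(4) first and then (5)--(8) ``using (1)--(4)'', which is circular for the leftward cases. Second, the claim that $\tau$ fixes the downward dot is not a direct check from \eqref{gen1}: applying $\tau$ to the zigzag in \eqref{gen1} produces the \emph{opposite} rotation of the dot, through the rightward cup and cap, and the equality of the two rotations is a statement of exactly the same kind as (7)--(8); it needs its own short argument (or must be avoided, as your alternative routes via \eqref{relation 11} and via direct expansion of the rightward cup and cap do). Finally, a cosmetic correction: the minus signs in (1)--(2) are not ``forced by the normalization in \eqref{gen1}''; they arise simply because the bent identity term of \eqref{relation 6} is transposed to the other side of the equation, while \eqref{relation 11} carries the opposite sign and produces the plus signs in (3)--(4). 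With these adjustments your strategy goes through.
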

\begin{proof}Easy exercise. \end{proof}

\subsection{The category  $\OB(\mathbf u, \mathbf u')$ }\label{OB}
Recall $\mathbf u,  \mathbf u'$ in \eqref{mathbfu} and define
  \begin{equation} \label{ff} f(u)=\prod_{i=1}^\ell (u-u_i), \quad
f'(u)=\prod_{i=1}^\ell (u-u'_i). \end{equation}
Thanks to    \cite[(1.13)]{BCNR}, there are scalars $\delta_i \in\Bbbk$, $i\in \mathbb N\setminus \{0\}$, such that
 \begin{equation}\label{jidjedd}
 1+\sum_{i\geq 1}\delta_iu^{-i}=f'(u)/f(u)\in \Bbbk[[u^{-1}]] .
 \end{equation}
 In \cite{BCNR}, Brundan et.~al consider  the right tensor ideal $K$ of $\AOB$ generated by $f(\begin{tikzpicture}[baseline=-.5mm]
 \draw[->,thick,darkblue] (0,-.3) to (0,.3);
      \draw (0,0) \bdot;
\end{tikzpicture})$ together with  $\begin{tikzpicture}[baseline = -0.5mm]
\draw[-,thick,darkblue] (0,0) to[out=down,in=left] (0.25,-0.25) to[out=right,in=down] (0.5,0);
  \draw[-,thick,darkblue] (0,0) to[out=up,in=left] (0.25,0.25) to[out=right,in=up] (0.5,0);
  \draw[->,thick,darkblue]  (0.25,0.25) to[out=right,in=up] (0.5,0);
  \draw (0,0) \bdot;
  \node at (-0.2,0) {$\color{darkblue}\scriptstyle i$};
  \end{tikzpicture}-\delta_{i+1}$ for all  $i\in \mathbb N$, where $\begin{tikzpicture}[baseline=-.5mm]
 \draw[->,thick,darkblue] (0,-.3) to (0,.3);
     \draw (0,0) \bdot;
      \node at (-0.2,0) {$\color{darkblue}\scriptstyle i$};
\end{tikzpicture}$ is the $i$th power of   $\begin{tikzpicture}[baseline=-.5mm]
 \draw[->,thick,darkblue] (0,-.3) to (0,.3);
      \draw (0,0) \bdot;
\end{tikzpicture}$. \textsf{The cyclotomic oriented Brauer category}~\cite{BCNR} is defined to be the quotient category
\begin{equation}\label{cobc} \OB(\mathbf u, \mathbf u' )=\AOB/K,\end{equation}.
\begin{rem}\label{anothre}
By \cite[(1.14)]{BCNR}, there are
$\delta_j'$ $\in \Bbbk$, $j\in \mathbb N\setminus \{0\}$,   such that
$$(1+\sum_{i\geq 1}\delta_iu^{-i}) (1-\sum_{j\geq 1}\delta_j'u^{-j})   =1. $$
It is proved in  \cite[Remark~1.6]{BCNR} that the previous $K$ is also generated by  $f'(\begin{tikzpicture}[baseline=-.5mm]
 \draw[<-,thick,darkblue] (0,-.3) to (0,.3);
     \draw (0,0) \bdot;
\end{tikzpicture})$ together with  $\begin{tikzpicture}[baseline = -0.5mm]
\draw[-,thick,darkblue] (0,0) to[out=down,in=left] (0.25,-0.25) to[out=right,in=down] (0.5,0);
  \draw[<-,thick,darkblue] (0,0) to[out=up,in=left] (0.25,0.25) to[out=right,in=up] (0.5,0);
  \draw[-,thick,darkblue]  (0.25,0.25) to[out=right,in=up] (0.5,0);
 \draw (0.5,0) \bdot;
  \node at (0.7,0) {$\color{darkblue}\scriptstyle i$};
  \end{tikzpicture}-\delta_{i+1}'$ for all  $i\in \mathbb N$.
  \end{rem}
 By \eqref{jidjedd}, $\delta_1=u_1-u_1'$ and $\OB(\mathbf u, \mathbf u' )$ is $\OB(\delta_1)$ if $\ell=1$.
  We are going to discuss  $\OB(\mathbf u, \mathbf u' ) $  no matter whether $\ell=1$ or not. In any case, the set of  objects in  $\OB(\mathbf u, \mathbf u' ) $
 is
 \begin{equation}\label{oobj} J=\langle \uparrow, \down\rangle, \end{equation}  the set of   finite sequence of the symbols $\uparrow, \down$, including the empty word $\emptyset$. Let
\begin{equation}\label {Ba1} A=\bigoplus_{ a,  b\in J}\text{Hom}_{ \OB(\mathbf u, \mathbf u' ) }( a, b). \end{equation}
Since the contravariant functor $\tau$ in Lemma~\ref{sigma}
stabilizes the right tensor ideal $K$, it induces an anti-involution $\tau_A$ on $A$.

 For any subspace $B\subseteq A$ and any $a, b\in J$,  let $B_{a, b}=1_a B 1_b$, where $1_a$ is the identity morphism from $a$ to $a$.
 When $b=a$,  $B_{a, b}$  is also denoted  by $B_a$. Then $A_{a, b} =\Hom_{ \OB(\mathbf u, \mathbf u' ) }( b, a)$ and \begin{equation}\label {Bal2} A= \bigoplus_{ a,  b \in J} A_{a, b}.\end{equation}
 So,  $A$ is  a \textsf{locally unital   $\Bbbk$-algebra} and  the set  $\{1_{ a} \mid  a \in J\}$  serves as the system of  mutually orthogonal idempotents of $A$. In this paper $C$-mod is the category of left $C$-modules  $M$ such that  $M=\bigoplus_{a\in H} 1_{a} M$   for any locally unital  algebra $C=\bigoplus_{a,b\in H}1_a C1_b $. Let
$C$-lfdmod be the subcategory of $C$-mod consisting  of modules  $M$ such that any $1_{ a} M$
is  finite dimensional.
Let  $C$-fdmod (resp., $C$-pmod) be the category of finite dimensional (resp., finitely generated projective) left $C$-modules.

   We are going to  recall the notion of  \textsf{normally ordered oriented Brauer diagrams} in \cite{BCNR}. For any $a, b \in \mathbb N$,   an   $(a, b)$-\textsf{Brauer diagram}  is a diagram on which $a+b$ points are placed on  two parallel horizontal lines, and $a$  points on the lower line and $b$ points on the upper line, and each point joins precisely to one other point.
  If two points at the upper (resp., lower) line join each other, then this strand  is called a cup (resp.,
cap). Otherwise, it is called a vertical strand.

Any $(a, b)$-Brauer diagram can also  be considered as a partitioning of the set $\{1,2,\ldots, a+b\}$ into disjoint union of pairs. If $a+b$ is odd, then  there is no $(a, b)$-Brauer diagram. Two $(a, b)$-Brauer diagrams are said to be equivalent if they give the same partitioning of
$\{1,2,\ldots, a+b\}$ into disjoint union of pairs.

 An \textsf{oriented Brauer diagram} is   obtained by adding consistent orientation to each strand in  a    Brauer diagram as above.
 Given any oriented Brauer diagram $d$, let $a$ (resp., $b$) be the element in $J$ which is indicated
 from the orientation of the endpoints of the lower line (resp., upper line) and $d$ is called of type $a\rightarrow b$.
 For example, the following diagram is of type  $\uparrow\uparrow\down \down \rightarrow  \down\uparrow$:

 \begin{center}\begin{tikzpicture}[baseline = 25pt, scale=0.35, color=\clr]
        \draw[->,thick] (5,0) to[out=up,in=down] (8,4);
        \draw[<-,thick] (10,0) to[out=up,in=down] (6,4);
        \draw[->,thick] (2,0) to[out=up,in=left] (4,1.5) to[out=right,in=up] (6,0);
           \end{tikzpicture}.
\end{center}
Two oriented Brauer diagrams of type $a\rightarrow b$ are said to be  equivalent if their underlying Brauer diagrams are equivalent.

A \textsf{dotted  oriented Brauer diagram} of type $a\rightarrow b$ is an oriented Brauer diagram of type $a\rightarrow b$ such that each segment is decorated  with some non-negative number of  $\bullet$'s (called dots), where a segment means a connected component of the diagram obtained when all crossings are deleted.
A \textsf{normally ordered dotted oriented Brauer diagram}  of type $a\rightarrow b$ is  a dotted oriented Brauer diagram  of type $a\rightarrow b$ such that:
 \begin{itemize}\item  whenever a dot appears on a strand, it is on the outward-pointing boundary,
\item there are at most $ \ell-1$ dots on each strand.\end{itemize}
     Suppose $\ell=4$. In the following pair of diagrams,  the  right one is a normally ordered dotted oriented Brauer diagram of  type  $\uparrow\uparrow\down \down \rightarrow \down\uparrow$  and the left one is not:
\begin{center}
     \begin{tikzpicture}[baseline = 25pt, scale=0.35, color=\clr]
        \draw[->,thick] (5,0) to[out=up,in=down] (8,4);
         \draw[<-,thick] (10,0) to[out=up,in=down] (6,4);
         \draw  (2.1,0.3) \bdot;
         \draw[->,thick] (2,0) to[out=up,in=left] (4,1.5) to[out=right,in=up] (6,0);
           \end{tikzpicture}~,
    \qquad\qquad\qquad
     \begin{tikzpicture}[baseline = 25pt, scale=0.35, color=\clr]
        \draw[->,thick] (5,0) to[out=up,in=down] (8,4);
         \draw[<-,thick] (10,0) to[out=up,in=down] (6,4);
         \draw[->,thick] (2,0) to[out=up,in=left] (4,1.5) to[out=right,in=up] (6,0);
          \draw  (6,0.48) \bdot;
           \draw  (7.8,3.2) \bdot;
            \draw (8.5,3.2) node{$3$};
           \end{tikzpicture}.
\end{center}
 Two  normally ordered dotted oriented Brauer diagrams are said to be equivalent if the underlying oriented Brauer diagrams are equivalent and there are the same number of dots on their corresponding strands.

\begin{Theorem}\label{bcnr}  \cite[Theorem~1.5]{BCNR} Suppose  $a, b\in J$.\begin{itemize}\item[(1)]  Two equivalent normally ordered  dotted oriented  diagrams represent the same morphism in   $\OB(\mathbf u, \mathbf u' ) $.\item [(2)]
   $\Hom_{ \OB(\mathbf u, \mathbf u' )} ( a,   b)$ has  basis given by  the set of all equivalence classes of   normally ordered dotted  oriented Brauer diagrams  of type $ a\rightarrow b$.\end{itemize}  \end{Theorem}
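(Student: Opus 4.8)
The plan is to follow the standard two-step pattern for a diagram basis theorem: first a straightening argument producing a spanning set consisting of normally ordered dotted oriented Brauer diagrams, then a linear independence argument coming from a sufficiently faithful representation. For part~(1) (well-definedness), I would argue that the relations \eqref{relation 1}--\eqref{relation 11} of $\AOB$, together with the dot-sliding identities of Lemma~\ref{dots}, already force any two string diagrams with the same underlying partition of endpoints and the same dot counts on corresponding strands to be equal in $\AOB$, hence a fortiori in the quotient $\OB(\mathbf u,\mathbf u')=\AOB/K$. Concretely this is the diagrammatic isotopy invariance encoded by: rotation of crossings via \eqref{relation 4} and \eqref{relation 10}, straightening of zig-zags via \eqref{relation 1}, \eqref{relation 2}, \eqref{relation 7}, \eqref{relation 8}, and moving dots past crossings and past cups/caps via Lemma~\ref{dots} and \eqref{relation 11}.

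For the spanning half of part~(2), I would start from the basis theorem for the affine oriented Brauer category, \cite[Theorem~1.2]{BCNR}, which gives $\Hom_{\AOB}(a,b)$ a basis of equivalence classes of dotted oriented Brauer diagrams of type $a\to b$ with all dots on outward-pointing boundaries and no bound on the number of dots. Passing to $\OB(\mathbf u,\mathbf u')=\AOB/K$, I would run a reduction algorithm: whenever a strand carries $\ell$ or more dots, slide those dots (using Lemma~\ref{dots} and \eqref{relation 11}) to an outward-pointing segment where the cyclotomic relation (the vanishing of $f$, from \eqref{ff}, evaluated at a dot on an upward strand; equivalently the vanishing of $f'$ by Remark~\ref{anothre}) applies, rewriting the diagram as a $\Bbbk$-combination of diagrams with fewer dots on that strand, at the possible cost of creating closed loops carrying dots. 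Such loops are scalars by \eqref{deltaa} and its dotted analogues, the relevant scalars being exactly the $\delta_i$ of \eqref{jidjedd}. Since this strictly lowers a complexity measure (lexicographically: total number of dots, then number of closed components), the procedure terminates, so the finitely many equivalence classes of normally ordered dotted oriented Brauer diagrams of type $a\to b$ span $\Hom_{\OB(\mathbf u,\mathbf u')}(a,b)$; in particular $\dim\Hom_{\OB(\mathbf u,\mathbf u')}(a,b)$ is at most that number.

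The substantive step is linear independence, providing the matching lower bound. Here I would produce a representation of $\OB(\mathbf u,\mathbf u')$ on which the normally ordered diagrams of a fixed type act by linearly independent operators. One route identifies $\End_{\OB(\mathbf u,\mathbf u')}(\up^{\otimes r}\otimes\down^{\otimes s})$ with the cyclotomic walled Brauer algebra attached to $(\mathbf u,\mathbf u')$, invokes its known basis theorem, and then transports independence to an arbitrary $\Hom(a,b)$ by composing with the cup/cap morphisms relating the objects $a,b$ to such ``normally ordered'' objects. A cleaner route, which also handles all parameters uniformly, builds a monoidal functor from $\OB(\mathbf u,\mathbf u')$ to a category of modules over $U(\fgl_N)$ or $U(\fgl_{m|n})$ (with a level-$\ell$ truncation), sending $\up$ to a natural module, $\down$ to its dual, the dot to a Jucys--Murphy-type operator, and crossings and cups/caps to the evident intertwiners, and then checks on mixed tensor space that the normally ordered diagrams have linearly independent images: for generic $\mathbf u,\mathbf u'$ this follows from a Schur--Weyl-type double centralizer statement, and for all $\mathbf u,\mathbf u'$ from a flatness/specialization argument over the parameter affine space. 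Combined with the spanning bound, this forces equality of dimensions, so the spanning set of part~(2) is a basis.

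I expect the linear independence step to be the main obstacle: the cyclotomic relations could a priori cause collapses that are invisible at the diagrammatic level, and ruling this out requires an honest external model. Two points need care there. First, one must verify that the bubble scalars $\delta_i$ dictated by \eqref{jidjedd} are precisely the ones making the straightening consistent; the alternative generating set for $K$ in Remark~\ref{anothre}, phrased via $f'$ and the $\delta_i'$, is a useful internal consistency check. Second, one must ensure the chosen representation is ``large enough'', i.e.\ that for the given type no normally ordered diagram is sent to zero or into the span of the others, which is exactly where the genericity and specialization argument does the real work.
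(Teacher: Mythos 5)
The paper does not prove this statement at all: Theorem~\ref{bcnr} is imported verbatim from \cite[Theorem~1.5]{BCNR}, so there is no internal argument to compare against, and the only meaningful comparison is with the proof in \cite{BCNR}. Your outline does match the broad strategy there: spanning is obtained by a diagrammatic straightening/reduction argument in which dotted bubbles are evaluated to the scalars $\delta_i$ of \eqref{jidjedd} (with the $f'$-description of $K$ in Remark~\ref{anothre} used exactly as the consistency device you mention), and the substantive content is linear independence, which \cite{BCNR} establish not by a Schur--Weyl double-centralizer plus flatness argument on mixed tensor space, but by making $\OB(\mathbf u,\mathbf u')$ (and more general ``generalized cyclotomic quotients'') act on categories of $\mathfrak{gl}_n$-modules of parabolic category $\mathcal O$ type, where the images of the normally ordered diagrams can be seen to be independent, together with a reduction of general parameters to this situation. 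So your second route is in the same spirit as the actual proof, while your first route (quoting a basis theorem for cyclotomic walled Brauer algebras) is problematic: those basis theorems in the literature hold only for admissible choices of parameters and are essentially equivalent in content to the statement being proved, so invoking them risks circularity.

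Two further points where your sketch is thinner than what an actual proof requires. First, $K$ is a \emph{right} tensor ideal generated by $f$ applied to a dot on an upward strand together with the dotted-bubble relations; your reduction step ``slide the dots to an outward-pointing segment where the cyclotomic relation applies'' needs an argument that every offending configuration can in fact be rewritten, by composition and right tensoring only, into one where a generator of $K$ is visible, and that the rewriting strictly decreases your complexity measure even when the dot-sliding relations of Lemma~\ref{dots} and \eqref{relation 6}, \eqref{relation 11} produce correction terms with crossings resolved, not only closed loops. Second, the decisive independence step is asserted rather than executed: the genericity-plus-specialization argument must in particular confirm that the specific scalars $\delta_i$ forced by \eqref{jidjedd} are the ones realized in the chosen module category, which in \cite{BCNR} is done by explicit computation of the dotted bubbles acting on suitable highest weight objects. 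As a plan your proposal is reasonable and close in outline to the source, but it is a plan; the hard half of the theorem is delegated to an external model that is either not constructed or potentially circular.
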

 Thanks to Theorem~\ref{bcnr},  $A$ is  locally finite dimensional in the sense that
 $$\dim 1_a A 1_b <\infty$$ for all $a, b\in J$. We are going to explain that $A$ admits an upper finite  weakly triangular decomposition~\cite[Proposition~2.2]{GRS3}.

Suppose    $ a= a_1a_2\cdots  a_h$, where $a_i\in \{\uparrow, \downarrow\}$, $1\le i\le h$.
Define
 $$\ell_\down( a)=|\{i:  a_i=\down\}|, \ \ \   \ell_\uparrow( a)=|\{i:  a_i=\uparrow\}| ,$$  where $|D|$ is the cardinality of  a set $D$.
 When $h=0$, i.e.,  $a$ is the empty word,  $\ell_\down( a)= \ell_\uparrow( a)=0$.
For any $a, b\in J$,   write  $ a\sim  b$ if $(\ell_\downarrow( a),\ell_\uparrow(a))=( \ell_\downarrow( b),\ell_\uparrow(b))$.
Then $\sim$ is an equivalence relation on $J$. Let  $I= J/\sim$. As sets, \begin{equation}\label{I} I \cong \mathbb N^2 .\end{equation}

\begin{Defn} \label{ocbap}  For any  $(r,s), (r_1,s_1)\in I$,  define  $(r,s)\preceq (r_1,s_1)$ if $r=r_1+k$ and $s=s_1+k$ for some $k\in\mathbb N$.\end{Defn}
 Then $\preceq$  is a   partial order on $I$. Later on, we also use $\ob a, \ob b$ etc. to denote elements in $I$. The partial order $\preceq$ on $I$ is upper finite in the sense that  $\{\ob b\in I\mid \ob a\preceq \ob b\}$ is finite for all $\ob a\in I$. It  induces a partial order on $J$
such that $a\prec b$ if $ a\in\ob a$, $b\in \ob b$  for $\ob a, \ob b\in I$ and   $\ob a\prec \ob b$.
If $\ob a\in I$, define $$1_{\ob a}=\sum_{b\in \ob a} 1_b $$ and    $B_{\ob a, \ob b}=\bigoplus_{c\in \ob a, d\in \ob b} 1_{c} B 1_{d}$ for any   $\ob a, \ob b\in I$ and any subspace $B$ of $A$.  Then $B_{\ob a, \ob b}=1_{\ob a} B 1_{\ob b}$.

\begin{Defn} For any $a, b, c\in J$ and $b\sim c$, define
\begin{itemize}\item[(a)]$Y(a,b):$ the set of all normally ordered dotted oriented Brauer diagrams of type $b\rightarrow a$ on which there are  neither  caps  nor crossings among vertical strands,  and there are no dots on vertical strands,
\item [(b)] $H(b,c):$ the set of all normally ordered dotted oriented Brauer diagrams of type $c\rightarrow b$ on which there are neither  cups nor caps,
 \item [(c)] $X(b, a)$ : the set of all normally ordered dotted oriented Brauer diagrams of type $a\rightarrow b$ on which there are neither cups  nor crossings among vertical strands,  and  there are no dots on vertical strands.
\end{itemize}\end{Defn}

\begin{Lemma}\label{nonzero} Suppose $a, b\in J$. Then  $X(b, a)\neq \emptyset$ if and only if $Y(a, b)\neq \emptyset $, and  $X(b, a)=Y(a,b)=\emptyset$  unless  $a\preceq b$. Furthermore,
 \begin{itemize}\item[(1)]if  $a=\uparrow^s \downarrow^r$ for some $r,s$,  then   $X(b, a)\neq \emptyset$  if and only if $b=\uparrow^m \downarrow^n$ for some $m,n$ and $a\preceq b$;
    \item[(2)] if  $a=\downarrow^r \uparrow^s$ for some $r,s$, then   $X(b, a)\neq \emptyset$ if and only if   $b=\downarrow^m \uparrow^n$ for some $m,n$ and $a\preceq b$.\end{itemize}
\end{Lemma}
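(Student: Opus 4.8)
The plan is to reduce the whole statement to the existence of suitable \emph{dot-free} oriented Brauer diagrams. Indeed, erasing all dots from a member of $X(b,a)$ (resp.\ $Y(a,b)$) produces another member, and a dot-free diagram is automatically normally ordered; so $X(b,a)\neq\emptyset$ (resp.\ $Y(a,b)\neq\emptyset$) is equivalent to the existence of a dot-free diagram of the appropriate type. By Lemma~\ref{sigma}, $\tau$ is an involutive contravariant functor fixing objects and interchanging cups and caps while fixing vertical strands; applied to a dot-free diagram of type $b\to a$ having no caps it yields a dot-free diagram of type $a\to b$ having no cups, and it clearly preserves the condition of having no crossings among vertical strands. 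Hence $\tau$ restricts to a bijection between the dot-free members of $Y(a,b)$ and those of $X(b,a)$, which gives $X(b,a)\neq\emptyset\iff Y(a,b)\neq\emptyset$. It then remains to analyse $X(b,a)$.

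Next I would prove that $X(b,a)=\emptyset$ unless $a\preceq b$. Let $d$ be a diagram of type $a\to b$ with no cups. Since there are no cups, every point of the top row $b$ is an endpoint of a vertical strand, and a vertical strand, carrying a single orientation, joins a point of $a$ to a point of $b$ bearing the same label. Thus $\ell_\uparrow(a)\ge\ell_\uparrow(b)$ and $\ell_\downarrow(a)\ge\ell_\downarrow(b)$, and the points of $a$ not met by a vertical strand are paired off in caps, each cap joining exactly one $\uparrow$-point to one $\downarrow$-point; hence $\ell_\uparrow(a)-\ell_\uparrow(b)=\ell_\downarrow(a)-\ell_\downarrow(b)=:k$ with $k\in\mathbb N$, which is precisely the relation $a\preceq b$ of Definition~\ref{ocbap}. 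By the bijection of the first paragraph the same holds for $Y(a,b)$.

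For part~(1), suppose $a=\uparrow^s\downarrow^r$. If $d\in X(b,a)$ then $a\preceq b$ by the previous step, and I claim $b=\uparrow^m\downarrow^n$. Write $b=b_1\cdots b_q$ and suppose for contradiction that $b_i=\downarrow$ and $b_j=\uparrow$ for some $i<j$. The vertical strand at the $i$-th point of $b$ ends in the $\downarrow$-block of $a$, i.e.\ in one of positions $s+1,\dots,s+r$, while the vertical strand at the $j$-th point of $b$ ends in the $\uparrow$-block, i.e.\ in one of positions $1,\dots,s$; so these two vertical strands have their top endpoints in one order along the upper edge but their bottom endpoints in the opposite order along the lower edge, hence interleaved endpoints on the boundary of the bounding rectangle, and so they must intersect, contradicting the definition of $X(b,a)$ (the caps, being arcs with both endpoints on the lower edge, do not affect this). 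Thus $b=\uparrow^m\downarrow^n$. Conversely, if $a=\uparrow^s\downarrow^r$, $b=\uparrow^m\downarrow^n$ and $a\preceq b$, write $s=m+k$, $r=n+k$ with $k\in\mathbb N$, and take the diagram that connects the first $m$ points of $a$ by vertical strands up to the $m$ up-points of $b$, connects the last $n$ points of $a$ by vertical strands up to the $n$ down-points of $b$, and joins the remaining $k$ up-points and $k$ down-points of $a$ (which lie consecutively in the middle) by $k$ nested caps; this has no cups, no dots and no crossings, so it lies in $X(b,a)$.

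Finally, part~(2) follows from part~(1) by applying the left-right reflection of diagrams: this is a bijection on normally ordered dotted oriented Brauer diagrams that reverses each of $a,b$ as words (in particular $\uparrow^s\downarrow^r\leftrightarrow\downarrow^r\uparrow^s$), fixes orientations, and preserves cups, caps, vertical strands, crossings and dots, while it leaves $\preceq$ unchanged because $\preceq$ depends only on $(\ell_\downarrow,\ell_\uparrow)$. The one genuinely non-formal ingredient here is the crossing claim used in part~(1) --- that two vertical strands with interleaved boundary endpoints must intersect regardless of which caps are present --- and it is precisely there that the hypothesis that $a$ be sorted is used; the orientation bookkeeping, the explicit diagram, and the reflection symmetry are all routine.
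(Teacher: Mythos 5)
Your proof is correct, and since the paper dismisses this lemma with ``Easy exercise,'' your argument is exactly the intended elementary one: orientation bookkeeping (vertical strands preserve the letter, each cap/cup absorbs one $\uparrow$ and one $\downarrow$) gives the constraint $a\preceq b$, the up--down flip of dot-free diagrams (your use of $\tau$) gives $X(b,a)\neq\emptyset\iff Y(a,b)\neq\emptyset$, the interleaved-endpoints/Jordan-curve observation forces $b$ to be sorted in parts (1)--(2), and the explicit nested-cap diagram plus left--right reflection settles the converses. The only point worth making explicit is the one you already flag: the crossing claim must hold for every planar representative of the equivalence class, which your interleaving argument does deliver since it ignores the caps entirely; restricting to dot-free diagrams also neatly avoids any normal-ordering issues when flipping.
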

\begin{proof}Easy exercise. \end{proof}
In \cite{GRS3}, we define three subspaces $A^+$, $A^-$ and $A^\circ$ (not subalgebras!) of $A$ such that
 $$A^\pm =\bigoplus_{   b, c \in J}  A_{b,c}^\pm, \ \  A^\circ =\bigoplus_{\ob a\in I}\bigoplus_{b, c\in \ob a}   A_{b, c}^\circ ,$$
  where $A_{b,c}^-$ (resp., $A_{b,c}^+$, resp., $A_{b,c}^\circ$) is the $\Bbbk$-space with basis $Y(b, c)$ (resp., $X(b, c)$, resp., $H(b, c)$).
\begin{Prop}\label{WT} \cite[Proposition~2.2]{GRS3} The data $(I, A^-, A^\circ, A^+)$ satisfies the following conditions:
\begin{itemize} \item[(1)]  $(I, \preceq)$ is upper finite, where $I$ is given in Definition~\ref{ocbap}.
\item [(2)] $ A^-_{\ob a,  \ob b}=0$ and   $A^+_{\ob b,    \ob a}=0$ unless $    \ob a\preceq   \ob b$.  Furthermore,  $ A^-_{\ob a }=A^+_{\ob a}=\bigoplus_{c\in  \ob a}\Bbbk 1_{c}$.
    \item[(3)]  $A^-\otimes_{\mathbb K }  A^\circ \otimes_{\mathbb K}  A^+\cong A$
as  $\Bbbk$-spaces where $\mathbb K=\oplus_{  b\in J}  \Bbbk 1_{ b}$.   The required isomorphism  is given by the multiplication on $A$.\end{itemize}\end{Prop}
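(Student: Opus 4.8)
Part~(1) is almost immediate: reflexivity, transitivity and antisymmetry of $\preceq$ hold because the witnessing integers are non-negative (antisymmetry uses that two non-negative integers summing to $0$ vanish), and under $I\cong\mathbb N^2$ one has $\{\ob b\mid \ob a\preceq\ob b\}=\{(r-k,s-k)\mid 0\le k\le\min(r,s)\}$ for $\ob a=(r,s)$, a finite set. For part~(2), $A^-_{\ob a,\ob b}$ has $\Bbbk$-basis $\bigsqcup_{c\in\ob a,\,d\in\ob b}Y(c,d)$ and $A^+_{\ob b,\ob a}$ has $\Bbbk$-basis $\bigsqcup_{c\in\ob b,\,d\in\ob a}X(c,d)$; by Lemma~\ref{nonzero}, $Y(c,d)=\emptyset$ unless $c\preceq d$ and $X(c,d)=\emptyset$ unless $d\preceq c$, so both modules vanish unless $\ob a\preceq\ob b$. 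When $c\sim d$, a diagram in $Y(c,d)$ has no caps, hence every point of the source word $d$ lies on a through strand; since $|c|=|d|$ there are then no cups either, so the diagram is a crossingless, dotless union of through strands, forcing it to be $1_c$ and $c=d$. Thus $A^-_{\ob a}=\bigoplus_{c\in\ob a}\Bbbk1_c$, and the symmetric argument with caps replacing cups gives $A^+_{\ob a}=\bigoplus_{c\in\ob a}\Bbbk1_c$.

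The real content is part~(3). By Theorem~\ref{bcnr}, for $a,d\in J$ the space $A_{a,d}$ has $\Bbbk$-basis the set $\mathcal D_{a,d}$ of equivalence classes of normally ordered dotted oriented Brauer diagrams of type $d\to a$; such a class is the same thing as an oriented perfect matching $\mu$ between the points of $d$ and those of $a$ together with a dot multiplicity in $\{0,1,\dots,\ell-1\}$ attached to each strand of $\mu$. The matching $\mu$ splits canonically into its caps $\kappa$ (pairs inside $d$), its cups $\nu$ (pairs inside $a$) and the orientation-preserving bijection $\pi$ between the uncapped points of $d$ and the uncupped points of $a$. On the other hand, since $\mathbb K=\bigoplus_{b\in J}\Bbbk1_b$,
\[
1_a\bigl(A^-\otimes_{\mathbb K}A^\circ\otimes_{\mathbb K}A^+\bigr)1_d=\bigoplus_{b,c\in J}A^-_{a,b}\otimes_\Bbbk A^\circ_{b,c}\otimes_\Bbbk A^+_{c,d},
\]
with $\Bbbk$-basis $\mathcal B_{a,d}:=\bigsqcup_{b,c}Y(a,b)\times H(b,c)\times X(c,d)$ (only terms with $b\sim c$ survive). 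At the level of equivalence classes, an element of $X(c,d)$ is exactly an oriented cap matching on $d$ whose uncapped points, read in order, spell $c$, together with a dot multiplicity on each cap --- the through strands then being dotless and uniquely routed by the no-crossing condition --- and dually $Y(a,b)$ is an oriented cup matching on $a$ with uncupped word $b$ plus dots on the cups, while $H(b,c)$ is an orientation-preserving bijection $c\to b$ plus a dot multiplicity on each of its strands.

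I would then define $\phi:\mathcal D_{a,d}\to\mathcal B_{a,d}$ by sending a class $(\mu,\text{dots})$ to the triple $(y,h,x)$ in which $x\in X(c,d)$ carries the caps $\kappa$ with their dots, $y\in Y(a,b)$ carries the cups $\nu$ with their dots, $h\in H(b,c)$ is the bijection $\pi$ (transported along the order-preserving identifications) carrying the dots that $\mu$ puts on the through strands, and $c$ (resp.\ $b$) is the uncapped word of $d$ (resp.\ the uncupped word of $a$). Reading the caps off $x$, the cups off $y$ and the bijection off $h$ recovers $(\mu,\text{dots})$, so $\phi$ is a bijection. It remains to see that the multiplication map sends $\phi^{-1}(D)=(y,h,x)$ to $D$: stacking $x$ below $h$ below $y$ yields a dotted oriented Brauer diagram with no closed components (cups occur only at the very top, caps only at the very bottom), whose caps are $\kappa$, whose cups are $\nu$, whose through part is again $\pi$ (the through strands of $x$ and $y$ are order-preserving identifications), and whose dot multiplicities are those of $\kappa$, $\nu$ and $\pi$ because the through strands of $x$ and $y$ carry no dots; sliding the dots coming from $h$ along these dotless, crossingless through strands out to the boundary puts the stacked diagram into normally ordered form, and it is visibly the class $D$. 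Hence the multiplication map carries the basis $\mathcal B_{a,d}$ bijectively onto the basis $\mathcal D_{a,d}$, so it is a $\Bbbk$-linear isomorphism $1_a(A^-\otimes_{\mathbb K}A^\circ\otimes_{\mathbb K}A^+)1_d\xrightarrow{\ \sim\ }A_{a,d}$; summing over $a,d\in J$ gives $A^-\otimes_{\mathbb K}A^\circ\otimes_{\mathbb K}A^+\cong A$.

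The step I expect to be the main obstacle is making the dictionary between $X(c,d),Y(a,b),H(b,c)$ and the combinatorial data above precise: one must check that an equivalence class in $X(c,d)$ is genuinely determined by its cap matching and the dots on the caps (so that $c$ is forced and the remaining through strands carry no choices), and that forming the composite $y\circ h\circ x$ never creates a closed loop or a dot multiplicity $\ge\ell$, so that the cyclotomic relations defining $\OB(\mathbf u,\mathbf u')$ and the dot-past-crossing relation~\eqref{relation 6} are never invoked and the product is exactly $D$, with no lower-order corrections. Granting this bookkeeping, the proof is the ``straighten into caps, then a dotted permutation, then cups'' argument just sketched.
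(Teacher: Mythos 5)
Parts (1) and (2) of your sketch are fine, and so is the purely combinatorial bijection $\phi$ between normally ordered diagrams of type $d\to a$ and triples $(y,h,x)$. The gap is exactly at the step you flagged: the claim that the product $y\circ h\circ x$ equals the normally ordered diagram $D$ on the nose, ``with relation \eqref{relation 6} never invoked,'' is false as soon as $\ell\ge 2$. The vertical strands of $y$ (resp.\ $x$) are crossingless among themselves, but they can and do cross cups of $y$ (resp.\ caps of $x$); a dot transferred to $h$ therefore sits \emph{below} (resp.\ above) such a crossing in the stacked diagram, whereas in the normally ordered representative $D$ it must sit at the outward-pointing endpoint, beyond all crossings. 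Moving it there uses \eqref{relation 6} or Lemma~\ref{dots}(1)--(4) and produces correction terms. Concretely, take $\ell\ge 2$, $d=\up$, $a=\up\up\down$, and let $D$ have a cup joining the first and third top points and a through strand to the second top point carrying one dot. Your triple is $x=1_{\up}$, $h$ a single upward strand with one dot, and $y$ the dotless diagram with that cup and through strand; the through strand of $y$ crosses a leg of the cup, and \eqref{relation 6} (or Lemma~\ref{dots}) gives $y\circ h=D\pm C$, where $C$ is the dotless diagram whose through strand ends at the other cup endpoint. By Theorem~\ref{bcnr}, $C$ is a nonzero basis element, so $y\circ h\ne D$ and multiplication does not carry your set of triples bijectively onto the diagram basis.

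What does survive, and what the actual proof (given in [GRS3]; the present paper only cites that result) rests on, is a triangularity statement: $y\circ h\circ x$ equals $D$ plus a $\Bbbk$-linear combination of normally ordered diagrams with strictly fewer dots, because the correction terms in \eqref{relation 6} and Lemma~\ref{dots}(1)--(4) delete a dot, and none of the defining relations (including the cyclotomic one, which rewrites $\ell$ dots in terms of fewer) increases the total dot count. Since each $1_aA1_d$ is finite dimensional and your bijection $\phi$ matches the two indexing sets, a unitriangular change-of-basis argument with respect to the dot-count filtration shows that the products $yhx$ form a basis of $A_{a,d}$ (this is Lemma~\ref{cellbasis}(1)), and part (3) follows. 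So your route is viable, but only after replacing ``the product is exactly $D$'' by this filtration argument; note that for $\ell=1$, where no dots occur, your exactness claim is in fact correct.
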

In other words, $(I, A^-, A^\circ, A^+)$ is an upper finite weakly triangular decomposition in the sense of \cite[Definition~2.1]{GRS3}.
When $\ell=1$,  $(I, A^-, A^\circ, A^+)$ is the same as that for the oriented Brauer category $\OB(\delta_1)$ in \cite{Re}. In fact, it gives  a triangular decomposition in the sense of \cite{BS}.

\subsection {Quotient algebras }\label{cba}
For any $\ob a\in I$, define  $$A_{\preceq \ob a}= A/A^{\not\preceq \ob a}, \quad   \bar A_{\ob a}=\bar 1_{ \ob a } A_{\preceq \ob a}\bar 1_{ \ob a}, $$
where $A^{\diamond \ob a}$  is   the two-sided ideal of $A$ generated by $I^{\diamond\ob a}=\{1_\ob b\mid \ob b\diamond \ob a\}$,  $\diamond\in \{\succ, \not\preceq, \nprec\}$. For any $x\in A$, let $\bar x$ be its image in any quotient algebra of $A$.
\begin{Lemma}\label{cellbasis}\cite[Lemma~2.6, Proposition~2.10]{GRS3} Suppose $e, c\in J$ and $\ob a\in I$.
\begin{itemize}\item[(1)]$  A_{e, c}$ has basis
 $   \{ yhx\mid (y, h, x)\in \bigcup_{  d\succeq  c,   b\succeq e, b\sim d} Y(  e,   b)\times H(  b,d)\times X(d,  c)\}$.
 \item[(2)]  $ \bar 1_eA_{\preceq  \ob a}\bar 1_c$ has basis
 $\{\bar y\bar h\bar x\mid (y,h,x)\in \bigcup_{b,d\in \ob b, \ob b\preceq \ob a} Y(  e, b)\times H(  b,d)\times X(d,c)\}$.

\end{itemize}
\end{Lemma}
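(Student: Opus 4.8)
The plan is to obtain both parts directly from the upper finite weakly triangular decomposition $(I,A^-,A^\circ,A^+)$ of Proposition~\ref{WT}, so that Lemma~\ref{cellbasis} becomes the specialization of the general basis theorems for such decompositions; concretely, it is \cite[Lemma~2.6, Proposition~2.10]{GRS3} applied to this data. Let me indicate the two steps and isolate the one nontrivial point.

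For part~(1), I would restrict the multiplication isomorphism $A^-\otimes_{\mathbb K}A^\circ\otimes_{\mathbb K}A^+\cong A$ of Proposition~\ref{WT}(3) to the component $1_eA1_c$. Since $\mathbb K=\bigoplus_{b\in J}\Bbbk1_b$ with the $1_b$ orthogonal idempotents, this gives $1_eA1_c\cong\bigoplus_{b,d\in J}(1_eA^-1_b)\otimes_{\Bbbk}(1_bA^\circ1_d)\otimes_{\Bbbk}(1_dA^+1_c)$, with isomorphism $(y,h,x)\mapsto yhx$. By construction $1_eA^-1_b=A^-_{e,b}$ has basis $Y(e,b)$, $1_bA^\circ1_d=A^\circ_{b,d}$ has basis $H(b,d)$ and vanishes unless $b\sim d$, and $1_dA^+1_c=A^+_{d,c}$ has basis $X(d,c)$; hence $\{\,yhx\mid(y,h,x)\in\bigcup_{b\sim d}Y(e,b)\times H(b,d)\times X(d,c)\,\}$ is a basis of $A_{e,c}=1_eA1_c$. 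Finally, Lemma~\ref{nonzero} forces $Y(e,b)=\emptyset$ unless $e\preceq b$ and $X(d,c)=\emptyset$ unless $c\preceq d$, so the index set shrinks to $b\succeq e$, $d\succeq c$, $b\sim d$, which is the asserted basis.

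For part~(2), recall $A_{\preceq\ob a}=A/A^{\not\preceq\ob a}$, where $A^{\not\preceq\ob a}$ is the two-sided ideal generated by all $1_b$ with $\ob b\not\preceq\ob a$ (using $1_{\ob b}=\sum_{b\in\ob b}1_b$ and orthogonality of the idempotents). I would let $N$ be the span, over all pairs of objects, of those basis elements $yhx$ from part~(1) whose middle class $\ob b$ (the common class of the source of $y$ and the target of $x$) satisfies $\ob b\not\preceq\ob a$. Then $N\subseteq A^{\not\preceq\ob a}$ is immediate, since $yhx=y\,1_b\,hx$ factors through $1_b$; and each generator $1_b$ with $\ob b\not\preceq\ob a$ is itself such a $yhx$ (take $y=h=x=1_b$), so it remains to see that $N$ is a two-sided ideal, which then forces $N=A^{\not\preceq\ob a}$. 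Granting this, intersecting with $1_eA1_c$ shows that $1_eA1_c\cap A^{\not\preceq\ob a}$ is spanned by the $yhx\in 1_eA1_c$ with middle class $\not\preceq\ob a$, and passing to the quotient leaves exactly the images of those $yhx$ with $b,d\in\ob b$ for some $\ob b\preceq\ob a$ as a basis of $\bar1_eA_{\preceq\ob a}\bar1_c$, as stated.

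That $N$ is an ideal is the main obstacle. It reduces to the claim that multiplying any basis element $yhx$ on either side by an arbitrary morphism and re-expanding via part~(1) produces only terms whose middle class is $\succeq$ the original one; together with the upward-closedness of $\{\ob b\mid\ob b\not\preceq\ob a\}$ in $(I,\preceq)$ (transitivity of $\preceq$), this closes $N$ under multiplication. Equivalently, one must show that if $f\in\Hom(c,e)$ factors through an object $b_0$, then every basis element occurring in the part~(1) expansion of $f$ has middle class $\succeq\ob{b_0}$: straightening a sixfold composite of $X$-, $H$- and $Y$-diagrams back into normal form only removes cup–cap pairs and never creates additional strands in the middle layer. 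Making this precise is exactly the compatibility/associativity content of the weakly triangular decomposition proved in \cite[Lemma~2.6, Proposition~2.10]{GRS3}, so in the paper one simply invokes those results for the decomposition of Proposition~\ref{WT}.
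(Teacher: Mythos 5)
Your proposal is correct and matches the paper's approach: the paper offers no independent argument here, simply citing \cite[Lemma~2.6, Proposition~2.10]{GRS3}, and your part~(1) is just a valid unpacking of the multiplication isomorphism in Proposition~\ref{WT}(3) (itself quoted from the same source) together with Lemma~\ref{nonzero}, while your part~(2) defers the one genuinely nontrivial point---that the ideal $A^{\not\preceq \ob a}$ is spanned by the basis elements with middle class $\not\preceq\ob a$---to exactly the GRS3 results the paper invokes. So there is nothing to correct, only the observation that a self-contained proof of (2) would still require reproducing that straightening argument from \cite{GRS3}.
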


 Suppose $a, b\in J$ and $a\sim b$.
Following \cite{Re}, let $_a\sigma _b$ be the unique element in $H(a,b)$  on which there are neither crossings among  strands of the same orientation (up or down) nor dots on each strand. For example,
$$_{\downarrow\downarrow\uparrow}\sigma _{\uparrow\downarrow\downarrow}=\begin{tikzpicture}[baseline = -1mm]
	\draw[<-,thick,darkblue] (0.45,-.6) to (-0.45,.6);
        \draw[<-,thick,darkblue] (0.9,-.6) to (0,0.6);
	\draw[<-,thick,darkblue] (0.45,.6) to (-0.45,-.6);
\end{tikzpicture}.$$
The following result follows immediately from the definition of $_a\sigma _b$.
\begin{Lemma}\label{unie} Suppose $a, b, c\in J$ such that $a\sim b\sim c$. Then
$_a\sigma_b \circ {_b\sigma_c}= {_a\sigma_c}$ and $_a\sigma_a=1_a$.
\end{Lemma}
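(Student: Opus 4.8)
The plan is to deduce the lemma from the basis theorem (Theorem~\ref{bcnr}) together with the combinatorial meaning of the diagrams $_a\sigma_b$. First I would record what $_a\sigma_b$ is: since $a\sim b$, every strand of $_a\sigma_b$ is vertical (there are no cups or caps, by the definition of $H(a,b)$), so $_a\sigma_b$ induces a bijection $\sigma_{a,b}$ from the points on the lower ($b$-)line to the points on the upper ($a$-)line sending up-endpoints to up-endpoints and down-endpoints to down-endpoints. The conditions "no crossings among strands of the same orientation, no dots" say exactly that $\sigma_{a,b}$ is the bijection of this type that is order-preserving on the up-strands and on the down-strands separately and that $_a\sigma_b$ carries no dots; indeed such a bijection always admits a drawing in which the only crossings are between an up-strand and a down-strand, while two up-strands (resp.\ two down-strands) are forced to cross whenever their endpoints are out of order. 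Thus $_a\sigma_b$ is the unique basis element of $H(a,b)$ (in the sense of Theorem~\ref{bcnr}) whose underlying dotless Brauer diagram is this colour-order-preserving matching, and there is exactly one such matching because $a\sim b$.

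With this in hand $_a\sigma_a=1_a$ is immediate: the identity diagram $1_a$ lies in $H(a,a)$, its strands are vertical, and it has no crossings and no dots, so it satisfies the defining conditions of $_a\sigma_a$; uniqueness gives $_a\sigma_a=1_a$.

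For the composition I would argue as follows. The morphism $_a\sigma_b\circ {_b\sigma_c}$ is obtained by stacking $_a\sigma_b$ (of type $b\to a$) on top of $_b\sigma_c$ (of type $c\to b$) as in \eqref{com1}; after rescaling the box this is a single diagram of type $c\to a$ whose strands are all vertical (composites of vertical strands — in particular there are no closed loops) and which carries no dots, hence is trivially normally ordered. By Theorem~\ref{bcnr}(2) it therefore represents the basis element of $H(a,c)$ attached to its underlying Brauer diagram, and that diagram realises the bijection $\sigma_{a,b}\circ\sigma_{b,c}$ from the lower $c$-line to the upper $a$-line. A composite of two bijections each order-preserving on up-strands and on down-strands is again order-preserving on up-strands and on down-strands, so this underlying matching is precisely the colour-order-preserving matching $c\to a$ (here one uses $a\sim c$, which follows from $a\sim b\sim c$, so that $H(a,c)$ and $_a\sigma_c$ make sense). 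Hence $_a\sigma_b\circ {_b\sigma_c}$ and $_a\sigma_c$ are dotless diagrams with the same underlying Brauer diagram, and by Theorem~\ref{bcnr}(1) they represent the same morphism, which is the claim.

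The only step I expect to require any care is the bookkeeping in passing from a concrete drawing — in particular the possibly non-reduced drawing produced by stacking, whose "extra" mutually cancelling crossings are irrelevant — to the basis element it represents; this is exactly where Theorem~\ref{bcnr} is used, to ensure the morphism depends only on the matching and the dot data, not on the drawing. One could instead avoid the basis theorem and rewrite the stacked diagram into reduced form by hand, repeatedly using the braid-type relations \eqref{relation 4} and \eqref{relation 10} to bring a same-orientation pair of crossings together and then cancelling it via \eqref{relation 3} or \eqref{relation 9}, and cancelling a redundant mixed double-crossing via \eqref{relation 5}; but that route needs a termination/confluence argument, so invoking Theorem~\ref{bcnr} is much cleaner.
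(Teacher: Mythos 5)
Your proof is correct, and it is essentially the paper's argument spelled out in full: the paper simply asserts that the lemma "follows immediately from the definition of $_a\sigma_b$", and your expansion — identifying $_a\sigma_b$ with the unique dotless colour-order-preserving matching and invoking Theorem~\ref{bcnr} to see that the stacked composite, which is again a dotless normally ordered diagram with that matching, equals $_a\sigma_c$ — is exactly the intended justification. No gaps.
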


 For any $a, b\in\ob a$, let $\bar A_{a, b}=\bar 1_a\bar A_\ob a\bar 1_b$. Then   $\bar A_{a, a}$, which will be denoted by $\bar A_a$, is a subalgebra of $\bar A_\ob a$.
 For any $a,b, c,d\in\ob a$, thanks to Lemma~\ref{unie}, there is a   $\Bbbk$-linear isomorphism
 \begin{equation}\label{isomor}\bar A_{a, b}\overset{\sim}\longrightarrow \bar A_{c, d}, \quad \bar g\mapsto (\bar {_c\sigma_a})\bar g(\bar {_b\sigma_d}), \quad \forall g\in H(a, b).\end{equation}
 When $a=b$ and $c=d$, the   isomorphism in \eqref{isomor} is  an algebra isomorphism.
  \begin{Lemma}\label{isomhecke} Let $\bar A^\circ:=\bigoplus_{\ob a\in I}\bar A_\ob a$.
\begin{itemize} \item[(1)] If   $\ob a=(r,s)\in I$, then there  is an algebra isomorphism
  $\phi: \text{Mat}_{({_{\text{ }r}^{r+s}})}(\bar A_{\downarrow^r\uparrow^s})\cong\bar A_{\ob a}$, where the rows and columns of matrices are indexed by $b\in \ob a$.
  \item [(2)]  $\bar A^\circ\text{-fdmod}  \overset {Morita}\sim  \bigoplus_{r, s\in \mathbb N}\bar A_{\downarrow^r \uparrow^s}\text{-fdmod}$.\end{itemize}
  \end{Lemma}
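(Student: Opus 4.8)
The plan is to prove (1) by exhibiting a complete set of matrix units inside the (unital) algebra $\bar A_{\ob a}$ and then quoting the standard structure theorem for algebras with matrix units; part (2) is then a formal consequence of Morita invariance applied blockwise.

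For (1), write $\ob a=(r,s)$ and fix the representative $a_0:=\downarrow^r\uparrow^s\in\ob a$. The words in $J$ belonging to $\ob a$ are precisely the length-$(r+s)$ words in $\{\uparrow,\downarrow\}$ having $r$ letters $\downarrow$, so $|\ob a|=\binom{r+s}{r}$; this will be the size of the matrix algebra. For $b,c\in\ob a$ set $e_{bc}:=\bar{_b\sigma_c}\in\bar A_{b,c}\subseteq\bar A_{\ob a}$, where $_b\sigma_c\in H(b,c)$ is the distinguished element introduced just before Lemma~\ref{unie}. Since $_b\sigma_c\in A_{b,c}=1_bA1_c$ and the $1_d$ $(d\in J)$ are mutually orthogonal idempotents, Lemma~\ref{unie} yields at once
\[
e_{bc}\,e_{de}=\delta_{c,d}\,e_{be},\qquad e_{bb}=\bar 1_b,\qquad \sum_{b\in\ob a}e_{bb}=\bar 1_{\ob a}\qquad(b,c,d,e\in\ob a),
\]
and $\bar 1_{\ob a}$ is the identity of $\bar A_{\ob a}$ by the definition $\bar A_{\ob a}=\bar 1_{\ob a}A_{\preceq\ob a}\bar 1_{\ob a}$. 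Thus $\{e_{bc}\mid b,c\in\ob a\}$ is a full set of matrix units in $\bar A_{\ob a}$, and the usual Peirce decomposition with respect to the orthogonal idempotents $e_{bb}=\bar 1_b$ gives the algebra isomorphism
\[
\phi\colon\ \text{Mat}_{\ob a}\!\big(\bar 1_{a_0}\bar A_{\ob a}\bar 1_{a_0}\big)\ \overset{\sim}{\longrightarrow}\ \bar A_{\ob a},\qquad E_{bc}\otimes\bar h\ \longmapsto\ \bar{_b\sigma_{a_0}}\,\bar h\,\bar{_{a_0}\sigma_c},
\]
with rows and columns indexed by $b\in\ob a$. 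As $\bar 1_{a_0}\bar A_{\ob a}\bar 1_{a_0}=\bar A_{a_0}=\bar A_{\downarrow^r\uparrow^s}$, this is exactly the asserted isomorphism. (Alternatively, one may verify directly from Lemma~\ref{unie} that $\phi$ is a well-defined algebra homomorphism, use the transport isomorphisms in \eqref{isomor} to see that the image of $\phi$ contains every block $\bar A_{b,c}$, $b,c\in\ob a$, hence that $\phi$ is onto, and then finish by a dimension count, since by Lemma~\ref{cellbasis}(2) the block $\bar A_{b,c}$ has basis $H(b,c)$, whose cardinality $r!\,s!\,\ell^{\,r+s}$ is independent of $b,c\in\ob a$.)

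For (2), $\bar A^\circ=\bigoplus_{\ob a\in I}\bar A_{\ob a}$ is a direct sum of the unital algebras $\bar A_{\ob a}$, so every finite-dimensional $\bar A^\circ$-module is supported on finitely many of them and $\bar A^\circ\text{-fdmod}=\bigoplus_{\ob a\in I}\bar A_{\ob a}\text{-fdmod}$. By part (1), for $\ob a=(r,s)$ the algebra $\bar A_{\ob a}$ is a matrix algebra over $\bar A_{\downarrow^r\uparrow^s}$, hence $\bar A_{\ob a}\text{-fdmod}\overset{Morita}{\sim}\bar A_{\downarrow^r\uparrow^s}\text{-fdmod}$; summing over $I\cong\mathbb N^2$ (see \eqref{I}) gives $\bar A^\circ\text{-fdmod}\overset{Morita}{\sim}\bigoplus_{r,s\in\mathbb N}\bar A_{\downarrow^r\uparrow^s}\text{-fdmod}$. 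Equivalently, the idempotent $\varepsilon:=\sum_{r,s\in\mathbb N}\bar 1_{\downarrow^r\uparrow^s}$ of $\bar A^\circ$ is full with corner $\varepsilon\bar A^\circ\varepsilon=\bigoplus_{r,s}\bar A_{\downarrow^r\uparrow^s}$, which gives the same conclusion.

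The only step that is not entirely formal is the verification of the matrix-unit relations above; in particular one needs that $\bar 1_{\ob a}=\sum_{b\in\ob a}\bar 1_b$ really is the unit of $\bar A_{\ob a}$ (immediate from the definition of $\bar A_{\ob a}$) and that the elements $e_{bc}$ do not collapse in the quotient (immediate from Lemma~\ref{cellbasis}(2), which shows $\bar{_b\sigma_c}$ is a basis vector of $\bar A_{b,c}$). I therefore anticipate no substantial obstacle: the content of the lemma is simply that the ``middle part'' $\bar A^\circ$ of the weakly triangular decomposition decomposes into honest matrix algebras over the distinguished blocks $\bar A_{\downarrow^r\uparrow^s}$.
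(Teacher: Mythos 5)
Your proof is correct and follows essentially the same route as the paper: the isomorphism $\phi$ you construct from the matrix units $e_{bc}=\bar{_b\sigma_c}$ (via Lemma~\ref{unie} and the transport maps in \eqref{isomor}) is literally the map the paper writes down, and your blockwise Morita argument for (2) amounts to the paper's explicit functors $\alpha=\bigoplus_{r,s}\bar 1_{\downarrow^r\uparrow^s}(?)$ and $\beta=\bigoplus_{r,s}\bar A^\circ\bar 1_{\downarrow^r\uparrow^s}\otimes_{\bar A_{\downarrow^r\uparrow^s}}?$ in \eqref{isomhecke2}. The extra details you supply (matrix-unit relations, the dimension count $|H(b,c)|=r!\,s!\,\ell^{r+s}$) are correct and simply make explicit what the paper leaves implicit.
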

  \begin{proof} Suppose  $\ob a=(r,s)$. Thanks to \eqref{isomor}, the required algebra isomorphism $\phi$ in (1) satisfies
$$\phi (\sum_{a,b\in\ob a}\tau_{a, b}e_{a,b}) = \sum_{a,b\in\ob a}( \bar{_a\sigma_{\downarrow^r\uparrow^s}}) \tau_{a, b}( \bar{_{\downarrow^r\uparrow^s}\sigma_b}), \forall \tau_{a, b}\in \bar A_{\downarrow^r\uparrow^s}, $$  where $e_{a,b}$'s  are the corresponding matrix
units. Now, (2) immediately follows from (1). The required functor $\alpha:\bar A^\circ\text{-fdmod} \rightarrow  \bigoplus_{r, s\in \mathbb N}\bar A_{\downarrow^r \uparrow^s}\text{-fdmod}$ and  its inverse $\beta$ are
\begin{equation}\label{isomhecke2} \alpha = \bigoplus_{r, s\in \mathbb N} \bar 1_{\downarrow^r \uparrow^s}(?), \quad \beta =\bigoplus_{r, s\in \mathbb N} ( \bar A^\circ \bar1_{\downarrow^r \uparrow^s} \otimes _{\bar A_{\downarrow^r\uparrow^s}} ?) \end{equation}

\end{proof}

For any endofunctor $\mathcal F$ of $\bar A^\circ\text{-fdmod}$ and any endofunctor $\mathcal G$ of $\bigoplus_{r, s\in \mathbb N}\bar A_{\downarrow^r \uparrow^s}\text{-fdmod}$, set \begin{equation}\label{morif} \mathcal F\sim \mathcal G\end{equation} if there is a natural isomorphism between $\mathcal F$ and $\beta \mathcal G\alpha$ where $\alpha$ and $\beta$ are given \eqref{isomhecke2}. Obviously, $\mathcal F$ is exact if and only if $\mathcal G$ is exact.

\subsection{Degenerate cyclotomic Hecke algebras}\label{hecke}
Given  $\mathbf e=(e_1,\ldots, e_\ell)$, the degenerate  cyclotomic Hecke algebra $ H_{\ell, n}(\mathbf e)$ is the associative  $\Bbbk$-algebra generated by $L_1, S_1, \ldots, S_{n-1}$ subject to the relations:
$$\begin{cases} S_i^2=1, & \text{$1\le i\le n-1$,}\\
 S_iS_j=S_jS_i, & \text{$1\le i<j-1\le n-2$,}\\
 S_iS_{i+1}S_i=S_{i+1}S_iS_{i+1} , & \text{$1\le i\le n-2$,}\\
  L_1S_i=S_iL_1, &\text{$2\le i\le n-1$,}\\
 (S_1L_1S_1+S_1)L_1=L_1(S_1L_1S_1+S_1), &\\
 (L_1-e_1)(L_1-e_2)\cdots (L_1-e_\ell)=0. &\\
\end{cases}
$$

Let   $L_i=S_{i-1}L_{i-1}S_{i-1}+S_{i-1}$ for  $ 2\le i\le n$. Then $\{L_i\mid 1\le i\le n \}$  generates a commutative subalgebra of $ H_{\ell, n}(\mathbf e)$.
Thanks to  \cite[Lemma~6.6]{AMR}, all generalized eigenvalues of $L_i$,  $1\le i\le n$,   are  of forms $e_j+k$,    $1\le j\le \ell $ and $1-n\le k\le n-1$.

\begin{Lemma}\label{Aa} For any  $r, s\in\mathbb N$, $\bar A_{\uparrow^r\downarrow^s}\cong  H_{\ell, r}(\mathbf u)\bigotimes H_{\ell, s}(-\mathbf u')$ and
 $\bar A_{\downarrow^s\uparrow^r}\cong  H_{\ell, s}(-\mathbf u') \bigotimes H_{\ell, r}(\mathbf u)$.
\end{Lemma}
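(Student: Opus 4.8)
The plan is to exhibit an explicit algebra homomorphism $\Phi\colon H_{\ell,r}(\mathbf u)\otimes H_{\ell,s}(-\mathbf u')\to\bar A_{\uparrow^r\downarrow^s}$, prove it is surjective, and then close the argument by a dimension count; the second isomorphism $\bar A_{\downarrow^s\uparrow^r}\cong H_{\ell,s}(-\mathbf u')\otimes H_{\ell,r}(\mathbf u)$ is obtained in exactly the same way with the two blocks of strands interchanged (equivalently, by transporting the first isomorphism through the anti-involution $\tau_A$).

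\emph{Step 1: the size of $\bar A_{\uparrow^r\downarrow^s}$.} Put $e=\uparrow^r\downarrow^s$ and let $\ob e\in I$ be its class. In the basis of $\bar A_e$ furnished by Lemma~\ref{cellbasis}(2) the triples $(y,h,x)$ run over $Y(e,b)\times H(b,d)\times X(d,e)$ with $b,d\in\ob b\preceq\ob e$; by Lemma~\ref{nonzero}, $Y(e,b)$ and $X(d,e)$ are nonempty only when $\ob e\preceq\ob b$, which forces $\ob b=\ob e$, and since a through-strand diagram with no crossings among verticals is planar it must be order-preserving, so in fact $b=d=e$ and $Y(e,e)=X(e,e)=\{1_e\}$. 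Hence $\bar A_{\uparrow^r\downarrow^s}$ has basis the images of the diagrams in $H(e,e)$: these are the permutation diagrams $e\to e$ (the $r$ up-strands permuted among themselves by some $w\in\mathfrak S_r$, the $s$ down-strands by some $v\in\mathfrak S_s$) decorated by at most $\ell-1$ dots on each of the $r+s$ strands, so $\dim\bar A_{\uparrow^r\downarrow^s}=(\ell^r r!)(\ell^s s!)$. By the basis theorem for degenerate cyclotomic Hecke algebras \cite{AMR}, $\dim H_{\ell,r}(\mathbf u)=\ell^r r!$ and $\dim H_{\ell,s}(-\mathbf u')=\ell^s s!$, so the source and target of $\Phi$ have equal dimension.

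\emph{Step 2: the homomorphism.} Inside $\Hom_{\OB(\mathbf u,\mathbf u')}(e,e)$ I single out the crossing of the $i$-th and $(i{+}1)$-st up-strands ($1\le i\le r-1$), the dot $\xd$ on the first up-strand, the crossing of the $j$-th and $(j{+}1)$-st down-strands ($1\le j\le s-1$), and the dot $\xdx$ on the first down-strand, and set $\Phi(S_i\otimes1)$, $\Phi(L_1\otimes1)$, $\Phi(1\otimes S_j)$ and $\Phi(1\otimes L_1)$ equal to the first three of these and to $-\xdx$, respectively. One then verifies that every defining relation of $H_{\ell,r}(\mathbf u)\otimes H_{\ell,s}(-\mathbf u')$ maps to a relation valid in $\OB(\mathbf u,\mathbf u')$: $S_i^2=1$ and commutation of distant crossings come from \eqref{relation 3} and \eqref{relation 9}; the braid relations hold because the up-strands (resp.\ down-strands) of $\OB(\mathbf u,\mathbf u')$ carry a symmetric group action, a consequence of \eqref{relation 1}--\eqref{relation 5}; the relation $L_1(S_1L_1S_1+S_1)=(S_1L_1S_1+S_1)L_1$ follows from the interchange law \eqref{com1} once \eqref{relation 6} is used to identify $S_1L_1S_1+S_1$ with the dot on the second up-strand; the two tensor factors commute because up- and down-strand morphisms occupy disjoint tensor positions; and $f(L_1\otimes1)=0$ holds because $f(\xd)$ lies in the ideal $K$ with $\OB(\mathbf u,\mathbf u')=\AOB/K$. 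For the down block, Remark~\ref{anothre} gives $f'(\xdx)=0$, hence $-\xdx$ satisfies $\prod_{j=1}^{\ell}\big((-\xdx)-(-u_j')\big)=0$, the cyclotomic relation for the parameters $-\mathbf u'$; and \eqref{relation 11} carries precisely the sign needed so that, under the substitution $L_i\mapsto-(\text{dot on the }i\text{-th down-strand})$, the recursion $L_i=S_{i-1}L_{i-1}S_{i-1}+S_{i-1}$ still holds.

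\emph{Step 3: conclusion.} The image of $\Phi$ is a subalgebra of $\bar A_{\uparrow^r\downarrow^s}$ containing every elementary up- and down-crossing and, via the elements $\Phi(L_i\otimes1)$ and $\Phi(1\otimes L_i)$, a dot on every strand; hence it contains every permutation diagram with dots, which by Step~1 span $\bar A_{\uparrow^r\downarrow^s}$, so $\Phi$ is surjective. A surjective algebra homomorphism between finite-dimensional $\Bbbk$-spaces of equal dimension is an isomorphism, giving $\bar A_{\uparrow^r\downarrow^s}\cong H_{\ell,r}(\mathbf u)\otimes H_{\ell,s}(-\mathbf u')$, and the same argument with the down block placed first yields the companion statement. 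The step I expect to be the main obstacle is Step~2: checking that all relations of the degenerate cyclotomic Hecke algebra are respected, in particular keeping track of the shift $\mathbf u'\rightsquigarrow-\mathbf u'$ together with the sign discrepancies between the up- and down-strand relations (the $+1$ in \eqref{relation 6} versus the $-1$ in \eqref{relation 11}); the linear-algebra content of Step~1 is where Lemma~\ref{cellbasis} does the real work.
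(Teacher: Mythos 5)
Your construction is the same as the paper's (same images of the generators, including the sign on $1\otimes L_1$, then a comparison of sizes), and your Step~1 is a correct reading of Lemma~\ref{cellbasis}(2) together with Lemma~\ref{nonzero} and Proposition~\ref{WT}(2): $\bar A_{\uparrow^r\downarrow^s}$ has basis $\bar H(\uparrow^r\downarrow^s,\uparrow^r\downarrow^s)$, of dimension $\ell^rr!\,\ell^ss!$, so finishing by ``surjective plus equal dimensions'' instead of the paper's ``basis goes to basis'' is a harmless variant.

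The gap is in Step~2, at the down-block cyclotomic relation, and it is exactly the point the paper's proof takes care to write out. You assert that ``every defining relation maps to a relation valid in $\OB(\mathbf u,\mathbf u')$'' and in particular that Remark~\ref{anothre} gives $f'(\xdx)=0$, hence that the dot on the \emph{first down strand} of $\uparrow^r\downarrow^s$ satisfies the cyclotomic polynomial for $-\mathbf u'$. But $K$ is only a \emph{right} tensor ideal, so the quotient $\OB(\mathbf u,\mathbf u')$ is not monoidal and you cannot tensor the relation $f'(\xdx)=0$ on the left by $1_{\uparrow^r}$: the dot you must kill sits on strand $r+1$, not on the leftmost strand, and the identity you invoke is false in $A$ once $r\ge 1$. (The up-strand analogue already shows the mechanism: for $\ell=1$, relation~\eqref{relation 6} gives $1_\uparrow\otimes\xd=s\,(\xd\otimes 1_\uparrow)\,s+s=u_1+s$ in $\End_{\OB(\mathbf u,\mathbf u')}(\uparrow\uparrow)$, so a dot on a non-leftmost strand does not satisfy the cyclotomic polynomial; similarly $f'$ of the dot on strand $r+1$ is a nonzero combination of diagrams containing cups and caps.) The relation you need holds only in the quotient $\bar A_{\uparrow^r\downarrow^s}$, and to see it there you must first slide the dotted down strand leftwards past the $r$ up strands using Lemma~\ref{dots}(1)--(2): the cup--cap correction terms factor through objects with fewer strands, hence vanish in $\bar A_{\uparrow^r\downarrow^s}$, and once the dot sits on the leftmost boundary the right-tensor-ideal property together with Remark~\ref{anothre} yields $\tilde{f'}(1\otimes L_1^\downarrow)=0$ with $\tilde{f'}(u)=f'(-u)$. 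This sliding step is the one computation displayed in the paper's proof; without it (the sign bookkeeping you worried about is fine) the verification that your $\Phi$ is well defined is incomplete.
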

\begin{proof}
Define the $\Bbbk$-algebra homomorphism  $\phi: H_{\ell, r}(\mathbf u)\bigotimes H_{\ell, s}(-\mathbf u')\rightarrow  \bar A_{\uparrow^r\downarrow^s}$ such that
$$ 1\otimes g\mapsto 1\otimes g^{\downarrow}, h\otimes 1\mapsto h^\uparrow \otimes 1,  $$ for any generators $1\otimes g$ and $h\otimes 1$ of $H_{\ell, r}(\mathbf u)\bigotimes H_{\ell, s}(-\mathbf u')$,
where$$\begin{aligned}&1\otimes L^\downarrow_1:=-\bar{\begin{tikzpicture}[baseline = -1mm,  color=\clr]
	\draw[->,thick,darkblue] (0.45,-.6) to (0.45,.6);
        \draw[->,thick,darkblue] (0,-.6) to (0,0.6);
           \draw (0.8,0) \bdot;
           \draw(0.25,-0.5) node{$ \cdots$};
           \draw(0.25,0.45) node{$ \cdots$};
           \draw (0.45,-0.75) node{$r$};
            \draw(1.1,-0.5) node{$ \cdots$};
           \draw(1.1,0.45) node{$ \cdots$};
            \draw[<-,thick,darkblue] (0.8,-.6) to (0.8,0.6);
        \draw[<-,thick,darkblue] (1.5,-.6) to (1.5,0.6);
         \draw (1.5,0.75) node{$r+s$};
\end{tikzpicture}}, L^\uparrow_1\otimes 1:=\bar{\begin{tikzpicture}[baseline = -1mm,  color=\clr]
	\draw[->,thick,darkblue] (0.45,-.6) to (0.45,.6);
        \draw[->,thick,darkblue] (0,-.6) to (0,0.6);
           \draw (0,0) \bdot;
           \draw(0.25,-0.5) node{$ \cdots$};
           \draw(0.25,0.45) node{$ \cdots$};
           \draw (0.45,-0.75) node{$r$};
            \draw(1.1,-0.5) node{$ \cdots$};
           \draw(1.1,0.45) node{$ \cdots$};
            \draw[<-,thick,darkblue] (0.8,-.6) to (0.8,0.6);
        \draw[<-,thick,darkblue] (1.5,-.6) to (1.5,0.6);
         \draw (1.5,0.75) node{$r+s$};\end{tikzpicture}},
         \\& S_i^\uparrow\otimes 1:=\bar{\begin{tikzpicture}[baseline = 25pt, scale=0.35, color=\clr]
       \draw[->,thick](0,1.1)to(0,3.9);
       \draw(0.75,1.4) node{$ \cdots$}; \draw(0.75,3.7) node{$ \cdots$};
       \draw[->,thick](1.5,1.1)to(1.5,3.9);
       \draw[->,thick] (2,1) to[out=up, in=down] (3,3.9);
       \draw(2,4.5)node{\tiny$i$}; \draw(3.2,4.5)node{\tiny$i+1$};
        \draw[->,thick] (3,1) to[out=up, in=down] (2,3.9);
         \draw[->,thick](4,1.1)to(4,3.9);
         \draw[->,thick](5.5,1.1)to(5.5,3.9);
          \draw(4.8,1.4) node{$ \cdots$}; \draw(4.8,3.7) node{$ \cdots$};
 \draw(5.5,4.5)node{\tiny$r$};
         \draw[<-,thick](4.5+1.5,1.1)to(4.5+1.5,3.9);
         \draw(4.75+0.5+1.5,1.4) node{$ \cdots$}; \draw(4.75+0.5+1.5,3.7) node{$ \cdots$};
         \draw[<-,thick](5.5+0.5+1.5,1.1)to(5.5+0.5+1.5,3.9);
         \draw(5.5+0.5+1.5,4.5)node{\tiny$r+s$};
           \end{tikzpicture}}\text{,}\ \ 1\otimes S_j^\downarrow:=\bar{\begin{tikzpicture}[baseline = 25pt, scale=0.35, color=\clr]
       \draw[<-,thick](0,1.1)to(0,3.9);
       \draw(0.75,1.4) node{$ \cdots$}; \draw(0.75,3.7) node{$ \cdots$};
       \draw[<-,thick](1.5,1.1)to(1.5,3.9);
       \draw[<-,thick] (2,1) to[out=up, in=down] (3,3.9);
       \draw(2,4.5)node{\tiny$r+j$}; \draw(3.2,-0.1)node{\tiny$r+j+1$};
        \draw[<-,thick] (3,1) to[out=up, in=down] (2,3.9);
         \draw[<-,thick](4,1.1)to(4,3.9);
         \draw[->,thick](-2.5,1.1)to(-2.5,3.9);
         \draw(-1.5,1.4) node{$ \cdots$}; \draw(-1.5,3.7) node{$ \cdots$};
         \draw[->,thick](-0.5,1.1)to(-0.5,3.9);
          \draw(4.8,1.4) node{$ \cdots$}; \draw(4.8,3.7) node{$ \cdots$};
          \draw[<-,thick](6,1.1)to(6,3.9);
         \draw(-0.5,4.5)node{\tiny$r$};
           \end{tikzpicture}}.
         \end{aligned}$$
In order to verify that $\phi$ is well-defined, it suffices to verify that the images of generators of $H_{\ell, r}(\mathbf u)\bigotimes H_{\ell, s}(-\mathbf u')$ above satisfy the defining relations for $H_{\ell, r}(\mathbf u)\bigotimes H_{\ell, s}(-\mathbf u')$. This can be verified directly by using \eqref{relation 3}-\eqref{relation 4}, \eqref{relation 6} and \eqref{relation 9}-\eqref{relation 11} together with the fact that  dots can be slide freely in any dotted oriented Brauer diagram if we consider it as element in   $\bar A_{\uparrow^r\downarrow^s}$ (see
 Lemma~\ref{dots}). We  give an example  and leave other details to the reader.
 By  Lemma~\ref{dots}(1)-(2),
$$1\otimes L^\downarrow_1=-\bar{\begin{tikzpicture}[baseline = -1mm,  color=\clr]
	\draw[->,thick,darkblue] (0.45,-.6) to (0.45,.6);
        \draw[->,thick,darkblue] (0,-.6) to (0,0.6);
        \draw[-,thick, darkblue] (-0.45,0) to[out=up,in=down] (0.6,0.6);
          \draw[<-,thick, darkblue](0.6,-0.6) to[out=up,in=down] (-0.45,0);
           \draw (-0.45,0) \bdot;
           \draw(0.25,-0.5) node{$ \cdots$};
           \draw(0.25,0.45) node{$ \cdots$};
           \draw (0.45,-0.75) node{$r$};
            \draw(1.1,-0.5) node{$ \cdots$};
           \draw(1.1,0.45) node{$ \cdots$};
            \draw[<-,thick,darkblue] (0.8,-.6) to (0.8,0.6);
        \draw[<-,thick,darkblue] (1.5,-.6) to (1.5,0.6);
         \draw (1.5,0.75) node{$r+s$};
\end{tikzpicture}}.$$ So, by Remark~\ref{anothre}, $\tilde{f'}(1\otimes L^\downarrow_1)=0$, where $\tilde {f'}(u)=f'(-u)$, and   $f'(u)$  is given in \eqref{ff}.
Thanks to Lemma~\ref{cellbasis},  $\bar H(  \uparrow^r\downarrow^s , \uparrow^r\downarrow^s)=\{\overline{g\otimes h}| (g, h)\in H(\uparrow^r, \uparrow^r)\times H(\downarrow^s, \downarrow^s)\}$ is a basis of $\bar A_{\uparrow^r\downarrow^s}$. Since $\phi$ sends the well-known basis of $H_{\ell, r}(\mathbf u)\bigotimes H_{\ell, s}(-\mathbf u')$ to $\bar H(\uparrow^r\downarrow^s, \uparrow^r\downarrow^s)$, it  is an isomorphism. The last isomorphism can be proved similarly. In fact, the required  $\Bbbk$-algebra isomorphism is   $\phi': H_{\ell, s}(-\mathbf u') \bigotimes H_{\ell, r}(\mathbf u)\rightarrow  \bar A_{\downarrow^s\uparrow^r}$ such that
$$ 1\otimes g\mapsto 1\otimes g^{\uparrow}, h\otimes 1\mapsto h^\downarrow \otimes 1,  $$ for any generators $1\otimes g$ and $h\otimes 1$ of $H_{\ell, s}(-\mathbf u') \bigotimes H_{\ell, r}(\mathbf u)$,
where \begin{equation}\label{inde}\begin{aligned}&1\otimes L^\uparrow_1:=\bar{\begin{tikzpicture}[baseline = -1mm,  color=\clr]
	\draw[<-,thick,darkblue] (0.45,-.6) to (0.45,.6);
        \draw[<-,thick,darkblue] (0,-.6) to (0,0.6);
           \draw (0.8,0) \bdot;
           \draw(0.25,-0.5) node{$ \cdots$};
           \draw(0.25,0.45) node{$ \cdots$};
           \draw (0.45,-0.75) node{$s$};
            \draw(1.1,-0.5) node{$ \cdots$};
           \draw(1.1,0.45) node{$ \cdots$};
            \draw[->,thick,darkblue] (0.8,-.6) to (0.8,0.6);
        \draw[->,thick,darkblue] (1.5,-.6) to (1.5,0.6);
         \draw (1.5,0.75) node{$r+s$};
\end{tikzpicture}}, L^\downarrow_1\otimes 1:=-\bar{\begin{tikzpicture}[baseline = -1mm,  color=\clr]
	\draw[<-,thick,darkblue] (0.45,-.6) to (0.45,.6);
        \draw[<-,thick,darkblue] (0,-.6) to (0,0.6);
           \draw (0,0) \bdot;
           \draw(0.25,-0.5) node{$ \cdots$};
           \draw(0.25,0.45) node{$ \cdots$};
           \draw (0.45,-0.75) node{$s$};
            \draw(1.1,-0.5) node{$ \cdots$};
           \draw(1.1,0.45) node{$ \cdots$};
            \draw[->,thick,darkblue] (0.8,-.6) to (0.8,0.6);
        \draw[->,thick,darkblue] (1.5,-.6) to (1.5,0.6);
         \draw (1.5,0.75) node{$r+s$};\end{tikzpicture}},
         \\& S_i^\downarrow\otimes 1:=\bar{\begin{tikzpicture}[baseline = 25pt, scale=0.35, color=\clr]
       \draw[<-,thick](0,1.1)to(0,3.9);
       \draw(0.75,1.4) node{$ \cdots$}; \draw(0.75,3.7) node{$ \cdots$};
       \draw[<-,thick](1.5,1.1)to(1.5,3.9);
       \draw[<-,thick] (2,1) to[out=up, in=down] (3,3.9);
       \draw(2,4.5)node{\tiny$i$}; \draw(3.2,4.5)node{\tiny$i+1$};
        \draw[<-,thick] (3,1) to[out=up, in=down] (2,3.9);
         \draw[<-,thick](4,1.1)to(4,3.9);
         \draw[<-,thick](5.5,1.1)to(5.5,3.9);
         \draw(5.5,4.5)node{\tiny$s$};
          \draw(4.8,1.4) node{$ \cdots$}; \draw(4.8,3.7) node{$ \cdots$};
         \draw[->,thick](4.5+1.5,1.1)to(4.5+1.5,3.9);
         \draw(4.75+0.5+1.5,1.4) node{$ \cdots$}; \draw(4.75+0.5+1.5,3.7) node{$ \cdots$};
         \draw[->,thick](5.5+0.5+1.5,1.1)to(5.5+0.5+1.5,3.9);
         \draw(5.5+0.5+1.5,4.5)node{\tiny$r+s$};
           \end{tikzpicture}}\text{ and }1\otimes S_j^\uparrow:=\bar{\begin{tikzpicture}[baseline = 25pt, scale=0.35, color=\clr]
       \draw[->,thick](0,1.1)to(0,3.9);
       \draw(0.75,1.4) node{$ \cdots$}; \draw(0.75,3.7) node{$ \cdots$};
       \draw[->,thick](1.5,1.1)to(1.5,3.9);
       \draw[->,thick] (2,1) to[out=up, in=down] (3,3.9);
       \draw(2,4.5)node{\tiny$s+j$}; \draw(3.2,-0.1)node{\tiny$s+j+1$};
        \draw[->,thick] (3,1) to[out=up, in=down] (2,3.9);
         \draw[->,thick](4,1.1)to(4,3.9);
         \draw[<-,thick](-2.5,1.1)to(-2.5,3.9);
         \draw(-1.5,1.4) node{$ \cdots$}; \draw(-1.5,3.7) node{$ \cdots$};
         \draw[<-,thick](-0.5,1.1)to(-0.5,3.9);
          \draw(4.8,1.4) node{$ \cdots$}; \draw(4.8,3.7) node{$ \cdots$};
          \draw[->,thick](6,1.1)to(6,3.9);
         \draw(-0.5,4.5)node{\tiny$s$};
           \end{tikzpicture}}.
         \end{aligned}\end{equation}
  \end{proof}

  The algebra  $H_{\ell, n}(\mathbf e)$ is a cellular algebra in the sense of \cite{GL} with certain cellular basis given in \cite[Theorem~6.3]{AMR}.  The corresponding cell modules are denoted by   $S(\lambda)$, $\lambda\in \Lambda_{\ell, n}$, where  $\Lambda_{\ell, n}$
is the set of all $\ell$-partitions $(\lambda^{(1)}, \lambda^{(2)}, \ldots, \lambda^{(\ell)})$  of $n$. When all   $e_j$'s  are in the same $\Bbb Z$-orbit in the sense that $e_i-e_j\in {\mathbb Z} 1_\Bbbk$ for all $1\le i< j\le \ell$, the complete set of pairwise inequivalent irreducible modules are given by
$$\{D(\lambda)\mid \lambda\in \bar\Lambda_{\ell, n} \}, $$
where $\bar\Lambda_{\ell, n}$ is the set of $\mathbf e$-restricted $\ell$-partitions in the sense of \cite[(3.14)]{K}.
Moreover,
    $D(\lambda)$ appears as the  simple head of $S(\lambda)$ for all $\lambda\in \bar\Lambda_{\ell, n}$ (e.g., \cite{K}). If $\mathbf e$ is  a disjoint union of certain orbits, then the above result on the classification of simple modules  is still available (see \cite[Remark 6.2]{GRS3}).

\subsection{Irreducible $\bar A_{\ob a}$-modules }\label{irrddjs} Suppose  $\ob a=(r,s)\in I$, where  $r, s\in\mathbb N$. Recall  $\mathbf u, \mathbf u'$ in \eqref{mathbfu}.
Let $\Lambda^\uparrow_{\ell, s}$ (resp., $\Lambda^\downarrow_{\ell, r}$, resp., $\bar\Lambda^\uparrow_{\ell, s}$, resp., $\bar\Lambda^\downarrow_{\ell, r}$) be the set of $\ell$-partitions of $s$ (resp., $\ell$-partitions of $r$, resp., $\mathbf u$-restricted $\ell$-partitions of $s$, resp.,  $-\mathbf u'$-restricted $\ell$-partitions of $r$).
 Define  \begin{equation}\label{bipar} \Lambda_{\ob a}=\Lambda^\downarrow_{\ell, r}\times  \Lambda^\uparrow_{\ell, s}, \quad  \bar \Lambda_{\ob a}=\bar\Lambda^\downarrow_{\ell, r} \times  \bar\Lambda^\uparrow_{\ell, s}.\end{equation}
 Thanks to Lemma~\ref{Aa},   $\bar A_{\downarrow^r\uparrow^s}$  is a cellular algebra with a cellular basis given  by those of $H_{\ell, r}(-\mathbf u') \bigotimes H_{\ell, s}(\mathbf u)$. In this case, the
 cell modules can be considered as $S(\lambda^\downarrow)\boxtimes S(\lambda^\uparrow)$'s, where $\lambda=(\lambda^\downarrow,\lambda^\uparrow)\in \Lambda_{\ob a}$. Furthermore, $\{D(\lambda^\downarrow)\boxtimes D(\lambda^\uparrow)|\lambda\in \bar\Lambda_\ob a\}$ gives a complete set of pairwise inequivalent irreducible  $\bar A_{\downarrow^r\uparrow^s}$-modules. As proved in \cite[\S~6.2]{GRS3}, each indecomposable projective  module of degenerate cyclotomic Hecke algebras is also the injective hull of the same irreducible  module. Let $P(\lambda^\downarrow)\boxtimes P(\lambda^\uparrow)$ be the projective cover (injective hull) of $D(\lambda^\downarrow)\boxtimes D(\lambda^\uparrow)$. For any $N\in \bar A_{\downarrow^r\uparrow^s}$-fdmod, $\beta(N)$ is an $\bar A_{\ob a }$-module where  $\beta$ is the functor given in \eqref{isomhecke2}. For any $\lambda=(\lambda^\downarrow,\lambda^\uparrow)\in \Lambda_\ob a$ and $\mu=(\mu^\downarrow,\mu^\uparrow)\in \bar\Lambda_\ob a$, define
  $$S(\lambda)=\beta( S(\lambda^\downarrow)\boxtimes S(\lambda^\uparrow)),  \ \  D(\mu)=\beta( D(\mu^\downarrow)\boxtimes D(\mu^\uparrow)), \ \
  P(\mu)=\beta(P(\mu^\downarrow)\boxtimes P(\mu^\uparrow)).$$
  Then   $P(\mu)$ is the  projective cover ( and injective hull) of the irreducible $\bar A_{\ob a}$-module  $D(\mu)$.

   Recall the anti-involution $\tau_A$ in subsection~\ref{OB}. Mimicking arguments in \cite{Re, GRS3}, we see that there is an exact contravariant duality functor $\circledast$  on $A\text{-lfdmod}$ (resp., $\bar A_\ob a\text{-fdmod}$) such that for any $V\in A$-lfdmod and $W\in \bar A_{\ob a}$-fdmod,  \begin{equation}
 V^\circledast=\bigoplus_{a\in J}\Hom_{\Bbbk}(1_{a}V,\Bbbk),\qquad W^\circledast=\Hom_{\Bbbk}(W,\Bbbk).
 \end{equation}
Thanks to Lemmas~\ref{isomhecke}(1),~\ref{Aa},  it is not difficult to verify that $\bar A_{\ob a}$ is a cellular algebra with a suitable cellular basis such that $S(\lambda)$'s are the corresponding cell modules. By \cite[Chapter~2, Exercise ~7]{Ma}, \begin{equation}\label{key1234}D(\lambda)^\circledast\cong D(\lambda)\end{equation} for all $\lambda\in\bar\Lambda_\ob a$.
Since $P(\lambda)$ is the projective cover and injective hull of $D(\lambda)$,
  \begin{equation}\label{pro}P(\lambda)^\circledast\cong P(\lambda).\end{equation}

\subsection{Induction and restriction functors}

Suppose  $a=\downarrow^r \uparrow^s$, $b=\downarrow^{r+1} \uparrow^{s}$ and $c=\downarrow^{r} \uparrow^{s+1}$, where $r, s\in \mathbb N$. For $ 2\le i\le r$ and  $2\le j\le s$,
define   $$\begin{aligned} L_i^\downarrow\otimes 1& =(S_{i-1}^\downarrow\otimes 1) (L_{i-1}^\downarrow \otimes 1)(S_{i-1}^\downarrow\otimes 1)+S_{i-1}^\downarrow\otimes 1,\\ 1\otimes L_j^\uparrow & =(1\otimes S_{j-1}^\uparrow) (1\otimes L_{j-1}^\uparrow  )(1\otimes S_{j-1}^\uparrow)+1\otimes S_{j-1}^\uparrow, \\
\end{aligned}$$
 where $S_{i-1}^\downarrow\otimes 1$, $L_{1}^\downarrow \otimes 1$, $1\otimes S_{j-1}^\uparrow$ and $1\otimes L_{1}^\uparrow $
  are given in \eqref{inde}.
 Then     $\{L_i^\downarrow\otimes 1, 1\otimes L_j^\uparrow\mid 1\le i\le r, 1\le j\le s\}$ generates a   commutative subalgebra of  $ \bar A_{\downarrow^r \uparrow^s}$. By Lemma~\ref{Aa} and  the well-known results on bases of cyclotomic Hecke algebras (e.g., \cite[Theorem~6.1]{AMR}), we have four exact functors: $$\text{res}_{r,s}^{r+1, s}:=\bar A_b\otimes_{\bar A_b} ?, \quad \text{ind}_{r,s}^{r+1, s}:=\bar A_b\otimes_{\bar A_a} ?, \quad \text{res}_{r,s}^{r, s+1}:=\bar A_c\otimes_{\bar A_c} ?, \quad \text{ind}_{r,s}^{r, s+1}:=\bar A_c\otimes_{\bar A_a} ?.$$
  Suppose $\phi: M\rightarrow M$ is a $\Bbbk$-linear map. For any $i\in \Bbbk$, the $i$-generalized eigenspace of $\phi$ is
 \begin{equation} \label{gesp} M_{i}=\{m\in M\mid (\phi-i)^n m=0, \forall n\gg 0\}.\end{equation}
 Then
 \begin{equation}\label{fen}\begin{aligned}&\text{res}_{r,s}^{r+1, s}=\bigoplus_{i\in\Bbbk}i\text{-res}_{r,s}^{r+1, s}, \quad \text{ind}_{r,s}^{r+1, s}=\bigoplus_{i\in\Bbbk}i\text{-ind}_{r,s}^{r+1, s}, \\&
 \text{res}_{r,s}^{r, s+1}=\bigoplus_{i\in\Bbbk}i\text{-res}_{r,s}^{r, s+1} \quad \text{ind}_{r,s}^{r, s+1}=\bigoplus_{i\in\Bbbk}i\text{-ind}_{r,s}^{r, s+1},\end{aligned}
 \end{equation}
 where $$\begin{aligned}&i\text{-res}_{r,s}^{r+1, s}(M) \text{ and } i\text{-ind}_{r,s}^{r+1, s}(N)\text{ are the generalized
$i$-eigenspaces of $-L_{r+1}^\downarrow\otimes 1$  on $M$ and $N$},\\&i\text{-res}_{r,s}^{r, s+1}(M') \text{ and } i\text{-ind}_{r,s}^{r, s+1}(N)\text{ are the generalized
$i$-eigenspaces of $1\otimes L_{s+1}^\uparrow$  on $M'$ and $N$},\end{aligned}$$ for any $(M, M', N)\in  \bar A_b\text{-fdmod}\times \bar A_c\text{-fdmod}\times  \bar A_a\text{-fdmod}$.

\subsection{Irreducible $A$-modules}\label{bi}
Following \cite[(3.1)-(3.2)]{GRS3},  there are  exact functors
\begin{equation}\label{exa}   \Delta=\bigoplus_{\ob a\in I}j^{\ob a}_!, ~~ \text{   $ \nabla=\bigoplus_{\ob a\in I}j^{\ob a}_*$,}\end{equation}
from  $ \bigoplus_{\ob a \in I} \bar A_{\ob a}\text{-fdmod}$ to $ A\text{-lfdmod}$ where
       $ j^{ \ob a}_!:= A_{\preceq \ob a}\bar 1_{ \ob a}\otimes_{\bar A_{\ob a}} ?$ and   $ j^{ \ob a}_*:=\bigoplus_{ \ob b\in I}\Hom_{\bar A_{\ob a}}(\bar 1_{ \ob a} A_{\preceq \ob a}\bar 1_{\ob b},?)$. Let
 \begin{equation}\label{lambdada} \Lambda= \bigcup_{\ob b\in I}\Lambda_\ob b, \quad \quad  ~~ \bar\Lambda=\bigcup_{\ob b\in I}\bar\Lambda_\ob b.\end{equation}

\begin{Defn}\label{stanpro}
For  any $\lambda\in \Lambda $ and $\mu\in \bar\Lambda $, let $\tilde\Delta(\lambda)=\Delta(S (\lambda))$,
 $\Delta(\mu)=\Delta(P (\mu))$, $\bar\Delta(\mu)= \Delta(D(\mu))$, $\nabla(\mu)=\nabla(P(\mu))$ and $\bar \nabla(\mu)= \nabla(D(\mu))$. \end{Defn}
Following \cite{LW}, $\Delta(\mu)$, $\bar\Delta(\mu)$, $\nabla(\mu)$ and $\bar \nabla(\mu)$ are called  the standard, proper standard, costandard and proper costandard  modules, respectively.

\begin{Cor}\label{irr}Suppose $\ob a=(r, s)$ and $\lambda\in\bar  \Lambda_\ob a$.
\begin{itemize}\item [(1)]$\Delta(\lambda)$
has a unique irreducible quotient  $L(\lambda)$ such that $\bar 1_\ob aL(\lambda)=D(\lambda)$ as $\bar A_\ob a$-modules.
\item  [(2)]  $\{L(\mu)\mid \mu\in \bar \Lambda\}$ is a complete set of pairwise inequivalent
irreducible $A$-modules.
\end{itemize}
\end{Cor}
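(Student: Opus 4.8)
The plan is to derive this directly from the abstract theory of upper finite fully stratified categories in \cite{GRS3}, once the simple $\bar A_\ob a$-modules have been pinned down. By Proposition~\ref{WT}, $(I,A^-,A^\circ,A^+)$ is an upper finite weakly triangular decomposition of $A$; hence by \cite{GRS3} the category $A$-lfdmod is upper finite fully stratified and the $j^{\ob a}_!$ of~\eqref{exa} are its standardization functors. The general theory then produces, for every $\ob a\in I$ and every simple $\bar A_\ob a$-module $T$ with projective cover $P_T$, a unique simple quotient $L_T$ of $j^{\ob a}_!(P_T)$ satisfying $\bar 1_\ob a L_T\cong T$, and it asserts that $\{L_T\}$ is a complete irredundant list of simple objects of $A$-lfdmod as $(\ob a,T)$ runs over all such pairs. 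So everything reduces to enumerating the simple $\bar A_\ob a$-modules.

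That enumeration is exactly the content of Subsection~\ref{irrddjs}: for $\ob a=(r,s)$, Lemma~\ref{isomhecke}(2) gives a Morita equivalence $\bar A_\ob a\text{-fdmod}\sim\bar A_{\downarrow^r\uparrow^s}\text{-fdmod}$, Lemma~\ref{Aa} identifies $\bar A_{\downarrow^r\uparrow^s}$ with $H_{\ell,r}(-\mathbf u')\bigotimes H_{\ell,s}(\mathbf u)$, and since $\Bbbk$ is algebraically closed the simples of a tensor product of finite dimensional algebras are outer tensor products of simples of the factors; feeding in the classification of simple modules of a degenerate cyclotomic Hecke algebra recalled in~\S\ref{hecke} shows that $\{D(\mu)\mid\mu\in\bar\Lambda_\ob a\}$ is a complete set of pairwise inequivalent simple $\bar A_\ob a$-modules, with projective covers $P(\mu)$. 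Taking $T=D(\lambda)$, $P_T=P(\lambda)$ for $\lambda\in\bar\Lambda_\ob a$, the first paragraph yields the unique simple quotient $L(\lambda)$ of $\Delta(\lambda)=j^{\ob a}_!(P(\lambda))$ with $\bar 1_\ob a L(\lambda)=D(\lambda)$, which is part~(1); and running $\ob a$ over $I$, with $\bar\Lambda=\bigcup_{\ob b\in I}\bar\Lambda_\ob b$ from~\eqref{lambdada}, gives part~(2) provided the $L(\mu)$ are pairwise inequivalent.

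If one prefers to see the mechanics rather than cite \cite{GRS3} wholesale: $\Delta(\lambda)$ surjects onto the proper standard module $\bar\Delta(\lambda)=j^{\ob a}_!(D(\lambda))$, and since $\bar 1_\ob a A_{\preceq\ob a}\bar 1_\ob a=\bar A_\ob a$ one has $\bar 1_\ob a\bar\Delta(\lambda)=D(\lambda)$ and $\bar\Delta(\lambda)=A_{\preceq\ob a}\cdot\bar 1_\ob a\bar\Delta(\lambda)$; because any proper submodule of $\bar\Delta(\lambda)$ has zero $\ob a$-truncation (a submodule whose truncation is all of $D(\lambda)$ contains the generating set, hence is everything) and $\bar 1_\ob a(-)$ is exact, the sum of all proper submodules is again proper, so $\bar\Delta(\lambda)$ has simple head $L(\lambda)$ with $\bar 1_\ob a L(\lambda)=D(\lambda)$. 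For inequivalence, $\Delta(\lambda)$, being an $A_{\preceq\ob a}$-module, is supported on $\{\ob b\in I\mid\ob b\preceq\ob a\}$, whose unique $\preceq$-maximal element is $\ob a$; thus $\ob a$ is the unique $\preceq$-maximal index with $\bar 1_\ob a L(\lambda)\ne 0$, so $\ob a$, and then $\lambda$ (by injectivity of $\mu\mapsto D(\mu)$ on $\bar\Lambda_\ob a$), is recovered from $L(\lambda)$. The point that genuinely needs the stratified structure — and is the main obstacle — is the uniqueness of the simple quotient of $\Delta(\lambda)$, equivalently that every simple quotient of $\Delta(\lambda)$ has simple nonzero $\ob a$-truncation equal to $D(\lambda)$; for this I would use the adjunction $\Hom_A(j^{\ob a}_!(P(\lambda)),-)\cong\Hom_{\bar A_\ob a}(P(\lambda),\bar 1_\ob a(-))$ together with the general properties of standard modules in an upper finite fully stratified category established in \cite{GRS3}.
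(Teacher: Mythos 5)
Your proposal follows essentially the same route as the paper: Corollary~\ref{irr} is deduced as a special case of the general classification theorem of \cite{GRS3} for algebras with an upper finite weakly triangular decomposition (supplied by Proposition~\ref{WT}), combined with the identification of the simple $\bar A_{\ob a}$-modules via Lemmas~\ref{isomhecke}(2) and~\ref{Aa}. Your extra paragraph unpacking the head-of-$\Delta(\lambda)$ and inequivalence arguments is a correct elaboration of what the cited result of \cite{GRS3} provides, not a different method.
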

\begin{proof} This result is a special case of \cite[Theorem~3.4(2)-(3)]{GRS3} which is available for any locally unital $\Bbbk$-algebra admitting an upper finite weakly triangular decomposition. Now, Proposition~\ref{WT} says that $A$ admits such a decomposition and hence   (1)-(2) follow from     Lemmas~\ref{isomhecke}(2) and \ref{Aa}.\end{proof}

\subsection{Stratified categories} A left  $A$-module $V$  has a finite $\Delta$-flag if it has a finite filtration  such that its  sections are  isomorphic to $\Delta (\lambda)$ for various  $\lambda\in \bar \Lambda$.
Let  \begin{equation}\label{rho}  \rho : \bar \Lambda\rightarrow I\end{equation}
 such that $ \rho(\lambda)=\ob a$  for any  $\lambda\in \bar\Lambda_\ob a$, where $I$ is given in Definition~\ref{ocbap}. Following \cite{BS}, define
 \begin{equation}\label{sgn}\Delta_\varepsilon(\lambda)=\left\{
                       \begin{array}{ll}
                         \Delta(\lambda), & \hbox{if $\varepsilon(\rho(\lambda))=+$,} \\
                         \bar\Delta(\lambda), & \hbox{if $\varepsilon(\rho(\lambda))=-$,}
                       \end{array}
                     \right.
 \end{equation}
 for  any given sign function $\varepsilon: I \rightarrow \{\pm\}$. Similarly we have the notion of finite $\Delta_\varepsilon$-flag.

\begin{Theorem}\label{COBMW1}\cite[Theorem~3.7]{GRS3} The
  $A$-lfdmod is an upper finite fully  stratified category in the sense of \cite[Definition~3.36]{BS} with respect to the stratification $\rho$ in \eqref{rho}. \end{Theorem}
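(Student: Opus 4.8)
The plan is to deduce Theorem~\ref{COBMW1} from the general theory of upper finite weakly triangular decompositions developed in \cite{GRS3}, applied to the concrete decomposition of $A$ recorded in Proposition~\ref{WT}. The abstract input is \cite[Theorem~3.7]{GRS3}: for any locally unital, locally finite dimensional $\Bbbk$-algebra $C$ carrying an upper finite weakly triangular decomposition $(I,C^{-},C^{\circ},C^{+})$ in the sense of \cite[Definition~2.1]{GRS3}, the category $C$-lfdmod is upper finite fully stratified in the sense of \cite[Definition~3.36]{BS}, the stratification function sending the label set of each stratum to the corresponding element of $I$. Since Proposition~\ref{WT} asserts that $(I,A^{-},A^{\circ},A^{+})$ is exactly such a decomposition, with $(I,\preceq)$ upper finite, and since the map $\rho$ of \eqref{rho} is the resulting stratification function, the theorem will follow once the hypotheses of \cite[Theorem~3.7]{GRS3} have been matched.

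First I would check those hypotheses. Items (1)--(3) of Proposition~\ref{WT} are precisely the axioms: upper finiteness of $(I,\preceq)$; the vanishing $A^{-}_{\ob a,\ob b}=A^{+}_{\ob b,\ob a}=0$ for $\ob a\not\preceq\ob b$ together with $A^{-}_{\ob a}=A^{+}_{\ob a}=\bigoplus_{c\in\ob a}\Bbbk 1_{c}$; and the multiplication isomorphism $A^{-}\otimes_{\mathbb K}A^{\circ}\otimes_{\mathbb K}A^{+}\cong A$. I would also confirm that each stratum is the module category of a finite dimensional algebra: by Lemma~\ref{isomhecke}(1) and Lemma~\ref{Aa}, the stratum algebra $\bar A_{\ob a}$ for $\ob a=(r,s)$ is isomorphic to a matrix algebra over $H_{\ell,r}(-\mathbf u')\otimes H_{\ell,s}(\mathbf u)$, hence finite dimensional, and its cell- and simple-module theory recalled in \S\ref{hecke}--\S\ref{irrddjs} supplies the data that make the stratum itself a category of highest-weight type in the sense of \cite{BS}. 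The standardization functors $j^{\ob a}_{!}$ and $j^{\ob a}_{*}$ of \eqref{exa}, and the standard, proper standard, costandard and proper costandard modules of Definition~\ref{stanpro}, are then the objects through which the stratification is expressed, while Corollary~\ref{irr} identifies the simples $L(\lambda)$ and records that $\rho$ is compatible with the labelling.

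The content of the word \emph{fully} is that the $\varepsilon$-stratified property of \eqref{sgn} must hold for \emph{every} sign function $\varepsilon\colon I\to\{\pm\}$ at once, that is, every projective of $A$-lfdmod must carry a finite $\Delta_{\varepsilon}$-flag, together with the dual statement for the $\nabla_{\varepsilon}$-filtrations (defined by the analogous recipe with $\nabla,\bar\nabla$ in place of $\Delta,\bar\Delta$). In the weakly triangular framework this is arranged by running the two families $\Delta,\bar\Delta$ and $\nabla,\bar\nabla$ in tandem and interchanging the standard-type and costandard-type objects by the exact contravariant duality $\circledast$ of \S\ref{irrddjs} (induced by the anti-involution $\tau_{A}$ of \S\ref{OB}), which fixes each $L(\lambda)$; see \eqref{key1234}--\eqref{pro} for its effect on the stratum. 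The homological facts that power the argument --- $\operatorname{Hom}$- and $\operatorname{Ext}^{1}$-orthogonality between $\{\tilde\Delta(\lambda)\}$ and $\{\bar\nabla(\mu)\}$, and the existence of finite $\Delta$-flags on the projective covers $P(\lambda)$ with $\tilde\Delta(\lambda)$ at the top --- are all contained in \cite[Theorems~3.4 and~3.7]{GRS3}. Assembling these gives every piece of the definition of an upper finite fully stratified category.

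Because the genuinely hard homological work has already been carried out once and for all in \cite{GRS3} at the abstract level, I expect no real obstacle in this particular proof beyond organizing the reductions above; the substantive investment has been made in Proposition~\ref{WT}, which in turn rests on the basis theorem for normally ordered dotted oriented Brauer diagrams (Theorem~\ref{bcnr}). Were one instead to seek a self-contained argument, the main obstacle would be to re-prove directly from the decomposition $A\cong A^{-}\otimes_{\mathbb K}A^{\circ}\otimes_{\mathbb K}A^{+}$ the $\operatorname{Ext}^{1}$-vanishing between standard and costandard modules across the poset $I$ and the existence of $\Delta$-flags on the indecomposable projectives --- the standard but delicate homological core of any stratification theorem.
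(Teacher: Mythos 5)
Your proposal is correct and follows essentially the same route as the paper: Theorem~\ref{COBMW1} is obtained there simply by invoking the general result \cite[Theorem~3.7]{GRS3} for locally unital algebras with an upper finite weakly triangular decomposition, with Proposition~\ref{WT} supplying the hypothesis for $A$ and $\rho$ the induced stratification. Your additional remarks on the stratum algebras and the duality $\circledast$ are consistent with the paper's use of Lemmas~\ref{isomhecke},~\ref{Aa} and \eqref{key1234}--\eqref{pro}, but they are not needed beyond what \cite[Theorem~3.7]{GRS3} already provides.
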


In other words,  for each $\lambda\in \bar \Lambda$, there exists a projective object $P_\lambda$ admitting a finite  $\Delta_\varepsilon$-flag
with $\Delta_\varepsilon(\lambda)$ at the top and other sections $\Delta_\varepsilon(\mu)$ for $\mu\in\bar\Lambda$ with $\rho(\mu)\succeq \rho(\lambda)$.
Let $A$-mod$^{\Delta}$ be the category of all left $A$-modules with a finite $\Delta$-flag. Since $\Delta(\lambda)\in A\text{-lfdmod}$ for any $\lambda\in\bar \Lambda$, $A$-mod$^{\Delta}$ is a subcategory
of  $A\text{-lfdmod}$. For any $V\in A$-{\rm mod}$^{\Delta}$, let $(V:\Delta(\lambda))$  be    the   multiplicity
of $\Delta(\lambda)$ in a $\Delta$-flag of $V$.
  For any  simple $A$-module $L$ and any  $A$-module $V$, define  $$[V:L]=\text{sup}|\{i\mid V_{i+1}/V_i\cong L\}|$$ the supremum being taken over all filtrations by submodules $0=V_0\subset \cdots \subset V_n=V$.

\begin{Cor}\label{ijxxexeu} For any  $\lambda\in\bar\Lambda_\ob a$, let $\mathbf P(\lambda)$ be the projective cover of $L(\lambda)$.
  \begin{itemize}
  \item [(1)] $\mathbf P(\lambda)\in A$-mod$^{\Delta}$, and $(\mathbf P(\lambda): \Delta(\mu))=[\bar\Delta(\mu): L(\lambda)]$, which is  non-zero   for  $\mu\neq \lambda$  only if  $\mu\in \bigcup_{\ob c\succ \ob a} \bar \Lambda_\ob c$.
 In particular, $(\mathbf P(\lambda):\Delta(\lambda))=1$.
 \item[(2)] $\mathbf P(\lambda)$ has a finite $\tilde\Delta$-flag.     If $\tilde \Delta( \mu)$ appears as a section, then $\mu\in \bigcup_{\ob b\succeq \ob a}  \Lambda_\ob b$. Furthermore, the  multiplicity of $\tilde \Delta( \mu)$ in this flag is  $[\tilde \Delta( \mu):L(\lambda)]$.
\end{itemize}\end{Cor}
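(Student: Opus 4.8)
The plan is to read off both parts from the fully stratified structure of $A\text{-lfdmod}$ (Theorem~\ref{COBMW1}), the cellular structure of the stratum algebras $\bar A_{\ob a}$ (Lemmas~\ref{isomhecke} and~\ref{Aa}), and the duality $\circledast$ of subsection~\ref{irrddjs}.

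For part~(1), I would first identify $\mathbf P(\lambda)$ with the projective object $P_\lambda$ produced by Theorem~\ref{COBMW1} for the sign function $\varepsilon\equiv+$: $P_\lambda$ has a $\Delta$-flag with $\Delta(\lambda)$ at the top, and since $\Delta(\lambda)$ has simple top $L(\lambda)$ (Corollary~\ref{irr}), $P_\lambda$ is the projective cover of $L(\lambda)$. The flag is finite because $\{\ob c\in I\mid\ob c\succeq\ob a\}$ is finite (Proposition~\ref{WT}(1)) and each $\bar\Lambda_{\ob c}$ is finite, so $\mathbf P(\lambda)\in A\text{-mod}^{\Delta}$. Next, by the BGG reciprocity for upper finite fully stratified categories (see \cite{BS,GRS3}), $(\mathbf P(\lambda):\Delta(\mu))=[\bar\nabla(\mu):L(\lambda)]$. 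To rewrite this I use that the anti-involution $\tau_A$ fixes every $1_a$, hence every $\bar 1_{\ob a}$, so it carries $A_{\preceq\ob a}\bar 1_{\ob a}$ to $\bar 1_{\ob a}A_{\preceq\ob a}$ and therefore $(j^{\ob a}_!(N))^\circledast\cong j^{\ob a}_*(N^\circledast)$ for $N\in\bar A_{\ob a}\text{-fdmod}$; taking $N=D(\mu)$ and using \eqref{key1234} gives $\bar\Delta(\mu)^\circledast\cong\bar\nabla(\mu)$, and since $\circledast$ is exact and fixes $L(\lambda)$ we get $[\bar\nabla(\mu):L(\lambda)]=[\bar\Delta(\mu):L(\lambda)]$. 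Finally, every composition factor $L(\nu)$ of $\bar\Delta(\mu)=j^{\rho(\mu)}_!(D(\mu))$ has $\rho(\nu)\preceq\rho(\mu)$, and since $\bar 1_{\rho(\mu)}\bar\Delta(\mu)=D(\mu)$ the only such factor with $\rho(\nu)=\rho(\mu)$ is $L(\mu)$, with multiplicity $1$; hence $[\bar\Delta(\mu):L(\lambda)]\ne0$ forces $\ob a\preceq\rho(\mu)$ with equality only when $\mu=\lambda$, which is the support statement, and $(\mathbf P(\lambda):\Delta(\lambda))=[\bar\Delta(\lambda):L(\lambda)]=1$. This proves part~(1).

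For part~(2), the idea is to refine the $\Delta$-flag of $\mathbf P(\lambda)$ into a $\tilde\Delta$-flag. Each section is $\Delta(\mu)=\Delta(P(\mu))=j^{\rho(\mu)}_!(P(\mu))$, and by Lemmas~\ref{isomhecke} and~\ref{Aa} the stratum algebra $\bar A_{\rho(\mu)}$ is Morita equivalent to a tensor product of degenerate cyclotomic Hecke algebras, a cellular algebra with cell modules $S(\nu)$, $\nu\in\Lambda_{\rho(\mu)}$. Invoking the cell filtration of projective modules over such algebras and the ensuing reciprocity $(P(\mu):S(\nu))=[S(\nu):D(\mu)]$ (these algebras are cellular and self-injective; see subsection~\ref{irrddjs} and \cite{AMR,GRS3}), $P(\mu)$ has a finite filtration with sections $S(\nu)$, and applying the exact functor $j^{\rho(\mu)}_!$ gives a filtration of $\Delta(\mu)$ with sections $\tilde\Delta(\nu)=\Delta(S(\nu))$, $\nu\in\Lambda_{\rho(\mu)}$, in which $\tilde\Delta(\nu)$ occurs $[S(\nu):D(\mu)]$ times. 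Splicing these into the $\Delta$-flag of $\mathbf P(\lambda)$ produces a finite $\tilde\Delta$-flag; a section $\tilde\Delta(\nu)$ can only arise from some $\Delta(\mu)$ with $\rho(\mu)\succeq\ob a$, whence $\nu\in\Lambda_{\rho(\mu)}\subseteq\bigcup_{\ob b\succeq\ob a}\Lambda_{\ob b}$.

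It remains to count. In the flag just built, $\tilde\Delta(\nu)$ occurs $\sum_{\mu\,:\,\rho(\mu)=\rho(\nu)}(\mathbf P(\lambda):\Delta(\mu))(P(\mu):S(\nu))=\sum_{\mu}[\bar\Delta(\mu):L(\lambda)]\,[S(\nu):D(\mu)]$ times, by part~(1) and the reciprocity above. On the other hand $\tilde\Delta(\nu)=j^{\rho(\nu)}_!(S(\nu))$ with $j^{\rho(\nu)}_!$ exact, so a composition series of the $\bar A_{\rho(\nu)}$-module $S(\nu)$ yields a $\bar\Delta$-flag of $\tilde\Delta(\nu)$ in which $\bar\Delta(\mu)$ occurs $[S(\nu):D(\mu)]$ times ($\mu\in\bar\Lambda_{\rho(\nu)}$); hence $[\tilde\Delta(\nu):L(\lambda)]=\sum_{\mu}[S(\nu):D(\mu)]\,[\bar\Delta(\mu):L(\lambda)]$, matching the count. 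The step I expect to be the real obstacle is precisely this external input — the cell filtration of projective modules over (tensor products of) degenerate cyclotomic Hecke algebras and the reciprocity $(P(\mu):S(\nu))=[S(\nu):D(\mu)]$ — since everything else is formal manipulation inside the fully stratified framework; one should also note that the multiplicity assertion in~(2) is stated for the flag just constructed, so no independence of the multiplicities from the chosen flag needs to be addressed.
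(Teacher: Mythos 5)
Your argument is correct, but it is a self-contained derivation where the paper simply cites general machinery: the paper's proof of Corollary~\ref{ijxxexeu} only observes that \eqref{key1234} and \eqref{pro} verify \cite[Assumption~3.12]{GRS3} and then quotes \cite[Proposition~3.9(2), Lemma~3.13(2)]{GRS3} for (1) and \cite[Corollary~4.4(2)]{GRS3} for (2), results proved there for any locally unital algebra with an upper finite weakly triangular decomposition. You instead re-prove that machinery in this instance: for (1), BGG reciprocity $(\mathbf P(\lambda):\Delta(\mu))=[\bar\nabla(\mu):L(\lambda)]$ in the upper finite fully stratified setting, converted to $[\bar\Delta(\mu):L(\lambda)]$ via the duality swap $\bar\Delta(\mu)^\circledast\cong\bar\nabla(\mu)$ (using that $\tau_A$ fixes the idempotents, hence intertwines $j^{\ob a}_!$ and $j^{\ob a}_*$) together with \eqref{key1234}; for (2), the cell filtration of the projective $P(\mu)$ over the cellular stratum algebra with multiplicities $(P(\mu):S(\nu))=[S(\nu):D(\mu)]$, pushed through the exact functor $j^{\rho(\mu)}_!$ and then a matching count against $[\tilde\Delta(\nu):L(\lambda)]$. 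This agrees with the content of the cited results, so the mathematics is sound; what your version buys is transparency about exactly which inputs (reciprocity, the self-dualities \eqref{key1234}--\eqref{pro}, cellularity of $\bar A_{\ob a}$) are used, at the cost of redoing what the paper outsources. Two small refinements: the identification of the projective $P_\lambda$ of Theorem~\ref{COBMW1} with the projective cover $\mathbf P(\lambda)$ is part of \cite[Definition~3.36]{BS} (otherwise one needs closure of $\Delta$-filtered objects under direct summands), so it should not be deduced merely from $\Delta(\lambda)$ having simple head; and self-injectivity is not needed for your key input in (2), since for any cellular algebra over a field the bimodule cell filtration of the regular module, localized at a primitive idempotent, already gives a cell filtration of $P(\mu)$ with multiplicities $[S(\nu):D(\mu)]$.
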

\begin{proof}  Thanks to \eqref{key1234} and \eqref{pro} (i.e. \cite[Assumption~3.12]{GRS3} holds for $A$), (1) is a
special case of \cite[Proposition 3.9(2), Lemma~3.13(2)]{GRS3} and (2) is a special case of \cite[Corollary 4.4(2)]{GRS3}.
\end{proof}

\section{ Endofunctors and categorical actions }\label{indkkk}
Motivated by  \cite{Br,Re, GRS3}, we study certain endofunctors so as to  give   a categorical action on   $A $-lfdmod.

\subsection{Endofunctors} For any $\diamond\in \{\uparrow, \downarrow\}$, define
 $A_\diamond=\bigoplus_{a, b\in J} 1_{ a} A_\diamond1_{b} $ and $_\diamond A=\bigoplus_{a, b\in J} 1_{ a} ({_\diamond }A)1_{ b}$, where
\begin{equation}\label{weights123} 1_{ a}  ({_\diamond A})1_{ b}=(1_{a \diamond })A1_{ b}, \quad  1_{ a} A_\diamond1_{ b}=1_{a}A1_{ b \diamond} .\end{equation}
 Then both  $_\diamond A$ and  $A_\diamond$ are   $(A,A)$-bimodules
 such that the right (resp., left) action of  $A$  on $_\diamond A$ (resp., $A_\diamond$) is given by the usual multiplication, whereas the left (resp., right) action of $A$ on   $_\diamond A$ (resp., $A_\diamond$) is given as follows:
\begin{equation}\begin{aligned}\label{act123}& a\cdot m= \begin{tikzpicture}[baseline = 3mm, color=\clr]
                \draw (-0.1,0.4) rectangle (1.2,0);
                \draw(0.5, 0.2) node{$m$};
                \draw (0.9,0.55) rectangle (-0.1,0.85); \draw[-,thick] (0, 0.4)to[out=up,in=down](0,0.55); \draw(0.3, 0.48) node{$ \cdots$};   \draw[-,thick] (0.6,0.4)to[out=up,in=down](0.6,0.55);
                \draw(0.5, 0.7) node{$a$}; \draw[<-,thick] (1.06,0.4)to[out=up,in=down](1.06,0.85);
    \end{tikzpicture}, \quad  g\cdot a=\begin{tikzpicture}[baseline = 3mm, color=\clr]
                \draw (-0.1,0.4) rectangle (0.9,0);
                \draw(0.4, 0.2) node{$a$};
                \draw[-<,thick] (1.06,0.53)to[out=up,in=down](1.06,0.01);
                \draw (1.2,0.55) rectangle (-0.1,0.85); \draw[-,thick] (0, 0.4)to[out=up,in=down](0,0.55); \draw(0.3, 0.48) node{$ \cdots$};  \draw[-,thick] (0.6,0.4)to[out=up,in=down](0.6,0.55);
                \draw(0.6, 0.7) node{$g$};
    \end{tikzpicture},\text{   for all   $ (m, g, a)\in 1_{b\downarrow} A\times   A1_{c\downarrow}\times  1_cA1_b $; }
    \\&
    a\cdot m= \begin{tikzpicture}[baseline = 3mm, color=\clr]
                \draw (-0.1,0.4) rectangle (1.2,0);
                \draw(0.5, 0.2) node{$m$};
                \draw (0.9,0.55) rectangle (-0.1,0.85); \draw[-,thick] (0, 0.4)to[out=up,in=down](0,0.55); \draw(0.3, 0.48) node{$ \cdots$};   \draw[-,thick] (0.6,0.4)to[out=up,in=down](0.6,0.55);
                \draw(0.5, 0.7) node{$a$}; \draw[->,thick] (1.06,0.4)to[out=up,in=down](1.06,0.85);
    \end{tikzpicture}, \quad  g\cdot a=\begin{tikzpicture}[baseline = 3mm, color=\clr]
                \draw (-0.1,0.4) rectangle (0.9,0);
                \draw(0.4, 0.2) node{$a$};
                \draw[->,thick] (1.06,0.01)to[out=up,in=down](1.06,0.53);
                \draw (1.2,0.55) rectangle (-0.1,0.85); \draw[-,thick] (0, 0.4)to[out=up,in=down](0,0.55); \draw(0.3, 0.48) node{$ \cdots$};  \draw[-,thick] (0.6,0.4)to[out=up,in=down](0.6,0.55);
                \draw(0.6, 0.7) node{$g$};
    \end{tikzpicture},\text{   for all   $ (m, g, a)\in 1_{b\uparrow} A\times   A1_{c\uparrow}\times  1_cA1_b $. }\end{aligned}\end{equation}
  Similarly, we have    $(\bar A^\circ, \bar A^\circ)$-bimodules
   $\bar A^\circ_\diamond=\bigoplus_{a, b\in J}\bar 1_{ a} \bar A^\circ_\diamond\bar1_{b} $ and $_\diamond \bar A^\circ=\bigoplus_{a, b\in J}\bar 1_{ a} ({_\diamond }\bar A^\circ)\bar1_{ b}$    such that
\begin{equation}\label{weights123circ} \bar1_{ a}  ({_\diamond \bar A^\circ})\bar1_{ b}=(\bar1_{a \diamond })\bar A^\circ\bar1_{ b},\quad \text{ $\bar1_{ a} \bar A^\circ_\diamond\bar1_{ b}=\bar1_{a}\bar A^\circ\bar1_{ b \diamond} $,}\end{equation} where $\bar A^\circ$ is given  in Lemma~\ref{isomhecke}.

    \begin{Prop}\label{isob} As $(A,A)$-bimodules, $A_\uparrow\cong {_\downarrow A}$ and $A_\downarrow\cong {_\uparrow A}$.
    \end{Prop}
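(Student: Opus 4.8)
The content of the statement is that $\uparrow$ and $\downarrow$ are biadjoint objects of $\OB(\mathbf u,\mathbf u')$, with the cup and cap morphisms furnishing the units and counits; the plan is to exhibit the isomorphisms explicitly and check everything by hand. Unwinding \eqref{weights123} together with the identification $1_xA1_y=\Hom_{\OB(\mathbf u,\mathbf u')}(y,x)$, the $(a,b)$-component of $A_\uparrow$ is $\Hom(b\otimes\uparrow,a)$ while that of ${_\downarrow A}$ is $\Hom(b,a\otimes\downarrow)$. For all $a,b\in J$ I would define the $\Bbbk$-linear map $\Phi_{a,b}\colon 1_aA1_{b\uparrow}\to 1_{a\downarrow}A1_b$ by $\Phi_{a,b}(f)=(f\otimes 1_\downarrow)\circ(1_b\otimes\lcup)$, with candidate inverse $\Psi_{a,b}(g)=(1_a\otimes\lcap)\circ(g\otimes 1_\uparrow)$, and set $\Phi=\bigoplus_{a,b\in J}\Phi_{a,b}$. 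It then remains to show that each $\Phi_{a,b}$ is a bijection and that $\Phi$ intertwines the two $(A,A)$-bimodule structures.

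The bijectivity I would read off the straightening relations. Expanding $\Psi_{a,b}(\Phi_{a,b}(f))$ and simplifying with the interchange law, the composite reduces to $f$ with its $\uparrow$-leg precomposed by the zig-zag $(1_\uparrow\otimes\lcap)\circ(\lcup\otimes 1_\uparrow)$, which equals $1_\uparrow$ by \eqref{relation 1}; hence $\Psi_{a,b}\circ\Phi_{a,b}=\id$, and symmetrically $\Phi_{a,b}\circ\Psi_{a,b}=\id$ by \eqref{relation 2}. For the bimodule-compatibility, note that after unwinding the two $A$-actions in \eqref{act123} amount to ordinary composition in $A$ on one side and, on the other side, composition after tensoring the argument with $1_\uparrow$ or $1_\downarrow$. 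The left-module check is immediate: for $c\in 1_{a'}A1_a$ one has $\Phi(cf)=((cf)\otimes 1_\downarrow)\circ(1_b\otimes\lcup)=(c\otimes 1_\downarrow)\circ\Phi(f)$, which is $c\cdot\Phi(f)$ inside ${_\downarrow A}$. For the right-module check, for $a'\in 1_bA1_{b'}$ the interchange law (naturality of $\lcup$) gives $(a'\otimes 1_{\uparrow\downarrow})\circ(1_{b'}\otimes\lcup)=(1_b\otimes\lcup)\circ a'$, whence $\Phi(f\circ(a'\otimes 1_\uparrow))=\Phi(f)\circ a'$, which is the (ordinary) right action on ${_\downarrow A}$. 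This yields $A_\uparrow\cong{_\downarrow A}$ as $(A,A)$-bimodules.

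The isomorphism $A_\downarrow\cong{_\uparrow A}$ is proved by running the identical argument with $\lcup,\lcap$ replaced by the rotated cup and cap $\lcupl,\lcapl$ of \eqref{gen1} and with \eqref{relation 1}, \eqref{relation 2} replaced by the corresponding straightening relations \eqref{relation 7}, \eqref{relation 8}; alternatively it follows from the first case by applying the anti-involution $\tau_A$ induced by the functor $\tau$ of Lemma~\ref{sigma}, which fixes objects and swaps $\lcup\leftrightarrow\lcapl$ and $\lcap\leftrightarrow\lcupl$. Since all the relations used hold already in $\AOB$ and $\OB(\mathbf u,\mathbf u')$ is a quotient of $\AOB$, nothing has to be re-examined in the cyclotomic quotient. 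I expect no genuine obstacle here: the only part requiring real care is the bookkeeping that matches the somewhat elaborate bimodule formulas of \eqref{act123} with the ``tensor with an identity strand, then compose'' operations used above --- the mathematical substance is entirely the zig-zag relations.
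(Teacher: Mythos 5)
Your proposal is correct and follows essentially the same route as the paper: one attaches a cup (resp.\ cap) to the extra strand to define mutually inverse maps, invertibility comes from the zig-zag relations \eqref{relation 1}--\eqref{relation 2} (and \eqref{relation 7}--\eqref{relation 8}, or equivalently the anti-involution $\tau_A$, for the second isomorphism), and the $(A,A)$-bimodule compatibility is a direct check with the interchange law, exactly as in the paper's construction of $\phi,\psi,\phi',\psi'$. The only cosmetic difference is that you define the maps categorically on all of $1_aA1_{b\uparrow}$ rather than on the basis elements of Lemma~\ref{cellbasis}(1), which changes nothing of substance.
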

    \begin{proof} Thanks to Lemma~\ref{cellbasis}(1) and \eqref{weights123}, there are four $\Bbbk$-linear maps
    $\phi: {_\uparrow A}\rightarrow A_\downarrow$, $\psi: A_\downarrow\rightarrow {_\uparrow A}$, $\phi': {_\downarrow A}\rightarrow A_\uparrow$ and $\psi': A_\uparrow\rightarrow {_\downarrow A}$ such that  $$\phi(m)=\begin{tikzpicture}[baseline = 1mm, color=\clr]
                \draw[-,thick] (0,0.5)to[out=up,in=down](0,0.8); \draw(0.2, 0.6) node{\tiny$ \cdots$};  \draw[-,thick] (0.4,0.5)to[out=up,in=down](0.4,.8);  \draw[-,thick,darkblue] (0.5,0.5) to[out=up,in=left] (0.78,0.78) to[out=right,in=up] (1.06,0.5);\draw (-0.1,0.5) rectangle (0.9,0);
                \draw(0.4, 0.25) node{$m$}; \draw[-<,thick] (1.06,0.5)to[out=up,in=down](1.06,0.01);
    \end{tikzpicture},\qquad\psi(g)=\begin{tikzpicture}[baseline = 1mm, color=\clr]
                \draw[-,thick] (0,-0.3)to[out=up,in=down](0,0); \draw(0.2, -0.2) node{\tiny$ \cdots$};  \draw[-,thick] (0.4,-0.3)to[out=up,in=down](0.4,0);  \draw[-,thick,darkblue] (0.5,0) to[out=down,in=left] (0.78,-0.28) to[out=right,in=down] (1.06,0);\draw (-0.1,0.5) rectangle (0.9,0);
                \draw(0.4, 0.25) node{$g$}; \draw[->,thick] (1.06,0.01)to[out=up,in=down](1.06,0.5);
    \end{tikzpicture},\qquad
    \phi'(m')=\begin{tikzpicture}[baseline = 1mm, color=\clr]
                \draw[-,thick] (0,0.5)to[out=up,in=down](0,0.8); \draw(0.2, 0.6) node{\tiny$ \cdots$};  \draw[-,thick] (0.4,0.5)to[out=up,in=down](0.4,.8);  \draw[-,thick,darkblue] (0.5,0.5) to[out=up,in=left] (0.78,0.78) to[out=right,in=up] (1.06,0.5);\draw (-0.1,0.5) rectangle (0.9,0);
                \draw(0.4, 0.25) node{$m'$}; \draw[->,thick] (1.06,0.01)to[out=up,in=down](1.06,0.5);
    \end{tikzpicture},\qquad\psi'(g')=\begin{tikzpicture}[baseline = 1mm, color=\clr]
                \draw[-,thick] (0,-0.3)to[out=up,in=down](0,0); \draw(0.2, -0.2) node{\tiny$ \cdots$};  \draw[-,thick] (0.4,-0.3)to[out=up,in=down](0.4,0);  \draw[-,thick,darkblue] (0.5,0) to[out=down,in=left] (0.78,-0.28) to[out=right,in=down] (1.06,0);\draw (-0.1,0.5) rectangle (0.9,0);
                \draw(0.4, 0.25) node{$g'$}; \draw[<-,thick] (1.06,0.01)to[out=up,in=down](1.06,0.5);
    \end{tikzpicture}~,
    $$ for all basis elements  $m, g, m', g'$ of ${_\uparrow A}$, $ A_\downarrow$, ${_\downarrow A}$ and $ A_\uparrow$ given in Lemma~\ref{cellbasis}(1), respectively.
By \eqref{relation 7}-\eqref{relation 8} (resp., \eqref{relation 1}-\eqref{relation 2}), $\phi^{-1}=\psi$ (resp., $\phi'^{-1}=\psi'$).
Finally, it is easy to verify  that  both $\phi$ and  $\phi'$ are $(A,A)$-homomorphisms.
    \end{proof}
Thanks to Proposition~\ref{isob},  ${_\uparrow A}\otimes _{A}?\cong {A_\downarrow}\otimes _{A}?$ and ${_\downarrow A}\otimes _{A}?\cong {A_\uparrow}\otimes _{A}?$ as functors. Define
 \begin{equation}\label{EF} E={_\uparrow A}\otimes _{A}?, \quad   F={_\downarrow A}\otimes _{A}?.\end{equation}

\begin{Defn}\label{etaep} Suppose that $E$ and $F$ are two functors in \eqref{EF}. Define four natural transformations
  $$\eta: Id_{A\text{-mod}}\rightarrow FE,\ \  \eta': Id_{A\text{-mod}}\rightarrow EF,\  \ \varepsilon: EF\rightarrow Id_{A\text{-mod}}, \ \ \varepsilon': FE \rightarrow Id_{A\text{-mod}}$$
such that
\begin{itemize}\item[(1)] $\eta$ and $\eta'$ are induced  by the $(A, A)$-homomorphisms $\alpha: A\rightarrow {_\downarrow A} \otimes_A ({_\uparrow A}) $ and  $\alpha': A\rightarrow {_\uparrow A} \otimes_A ({_\downarrow A})  $, respectively, where \begin{equation}\label{alphaa}
\alpha(f)=f\text{ } \xli\otimes 1_a\text{ }\lcupl,\quad \quad  ~ \alpha'(f)=f\text{ } \sli\otimes 1_a\text{ }\lcup,\qquad \forall f\in 1_a A \text{ and $a\in J$}.
\end{equation}
\item [(2)]   $\varepsilon$ and $\varepsilon'$ are  induced by the $(A, A)$-homomorphisms $\beta: {_\uparrow A} \otimes_A ({_\downarrow A})\rightarrow A$ and  $\beta': {_\downarrow A} \otimes_A  ({_\uparrow A}) \rightarrow A$, respectively,  where
 \begin{equation}
 \beta(f\otimes g)=\begin{tikzpicture}[baseline = 3mm, color=\clr]
                \draw (-0.1,0.4) rectangle (1.2,0);
                \draw(0.5, 0.2) node{$g$};
                \draw (0.9,0.55) rectangle (-0.1,0.85); \draw[-,thick] (0, 0.4)to[out=up,in=down](0,0.55); \draw(0.3, 0.48) node{$ \cdots$};   \draw[-,thick] (0.6,0.4)to[out=up,in=down](0.6,0.55);
                \draw(0.45, 0.7) node{$f$}; \draw[<-,thick] (1.06,0.4)to[out=up,in=down](1.06,0.85);
                 \draw[-,thick,darkblue] (0.7,0.85) to[out=up,in=left] (0.88,1.05) to[out=right,in=up] (1.06,0.85);
    \end{tikzpicture},~ \beta'(f_1\otimes g_1)
    =\begin{tikzpicture}[baseline = 3mm, color=\clr]
                \draw (-0.1,0.4) rectangle (1.2,0);
                \draw(0.5, 0.2) node{$g_1$};
                \draw (0.9,0.55) rectangle (-0.1,0.85); \draw[-,thick] (0, 0.4)to[out=up,in=down](0,0.55); \draw(0.3, 0.48) node{$ \cdots$};   \draw[-,thick] (0.6,0.4)to[out=up,in=down](0.6,0.55);
                \draw(0.45, 0.7) node{$f_1$}; \draw[-,thick] (1.06,0.4)to[out=up,in=down](1.06,0.85);
                 \draw[<-,thick,darkblue] (0.7,0.85) to[out=up,in=left] (0.88,1.05) to[out=right,in=up] (1.06,0.85);
    \end{tikzpicture}
    \end{equation}
for all $ (f, g, f_1, g_1)\in {_\uparrow A}1_a\times 1_{a\downarrow}A\times  {_\downarrow A}1_a\times 1_{a\uparrow}A$ and all $a\in J$.\end{itemize} \end{Defn}

\begin{Lemma}\label{bijiont}  $E$ and $F$ are biadjoint to each other.\end{Lemma}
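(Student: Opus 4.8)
The plan is to prove separately that $(E,F)$ is an adjoint pair with unit $\eta$ and counit $\varepsilon$, and that $(F,E)$ is an adjoint pair with unit $\eta'$ and counit $\varepsilon'$; together these two statements are exactly biadjointness. By the unit--counit characterisation of an adjunction it suffices, for each pair, to verify the two triangle (``zig--zag'') identities, i.e.\ in total
\[
(\varepsilon E)(E\eta)=\mathrm{id}_E,\qquad (F\varepsilon)(\eta F)=\mathrm{id}_F,\qquad (\varepsilon' F)(F\eta')=\mathrm{id}_F,\qquad (E\varepsilon')(\eta' E)=\mathrm{id}_E .
\]
Since $E={_\uparrow A}\otimes_A?$ and $F={_\downarrow A}\otimes_A?$ are tensoring with the $(A,A)$-bimodules ${_\uparrow A}$, ${_\downarrow A}$, every composite of $E$'s and $F$'s is again tensoring with the corresponding tensor product of these bimodules, and a natural transformation between two such functors is the same datum as an $(A,A)$-bimodule homomorphism between the bimodules in question. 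Under this dictionary, together with the canonical isomorphisms $A\otimes_A(-)\cong(-)$, each of the four triangle identities becomes an identity of $(A,A)$-bimodule homomorphisms of ${_\downarrow A}$ or of ${_\uparrow A}$ obtained by composing two of the maps $\alpha,\alpha',\beta,\beta'$ of Definition~\ref{etaep} (with an identity on the remaining tensor factor); for instance $(F\varepsilon)(\eta F)=\mathrm{id}_F$ asserts that ${_\downarrow A}\xrightarrow{\,\alpha\otimes\mathrm{id}\,}{_\downarrow A}\otimes_A{_\uparrow A}\otimes_A{_\downarrow A}\xrightarrow{\,\mathrm{id}\otimes\beta\,}{_\downarrow A}$ is the identity.

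To verify these bimodule identities, use the explicit description of ${_\uparrow A}$, ${_\downarrow A}$ from \eqref{weights123} together with the diagram basis of Theorem~\ref{bcnr}: by \eqref{alphaa} and Definition~\ref{etaep}, the maps $\alpha,\alpha',\beta,\beta'$ act on a basis diagram by horizontally adjoining a cup or a cap in one of the two available orientations ($\lcup,\lcap$ or $\lcupl,\lcapl$) at the prescribed boundary point. Hence evaluating a composite such as the one displayed above on a basis diagram $d$ returns $d$ with a cup glued below a cap along a single strand, and one of the rigidity (``snake'') relations \eqref{relation 1}, \eqref{relation 2}, \eqref{relation 7}, \eqref{relation 8} --- in the cases where the two halves come from different dualities, after first rotating a crossing through the bent strand via \eqref{relation 4} (resp.\ \eqref{relation 10}) --- straightens this cup--cap pair to the identity strand, recovering $d$. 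Thus all four composites equal the identity, which is what we need. One should also record that $\alpha,\alpha',\beta,\beta'$ really are $(A,A)$-bimodule homomorphisms, so that they do induce the stated natural transformations; this is part of Definition~\ref{etaep} and follows from the interchange law for string diagrams together with \eqref{relation 3}, \eqref{relation 4}, \eqref{relation 9}, \eqref{relation 10}, which allow strings and dots to be slid past crossings.

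The only genuine point demanding care is the locally unital framework: $A$ has no global identity, so $\alpha,\alpha'$ must be read componentwise, as the families $\{\alpha(1_a)\}_{a\in J}$, $\{\alpha'(1_a)\}_{a\in J}$ specified in \eqref{alphaa}, and in every diagram one must keep track of the boundary word in $\uparrow,\downarrow$ at which one is working, so that all tensor products over $A$ and all identifications $A\otimes_A(-)\cong(-)$ are formed over the correct idempotents. With that bookkeeping in place, the verification is a finite, elementary diagram manipulation, and I expect the only (routine) obstacle to be exactly this organisation of orientations and boundary objects --- deciding which cup--cap pair occurs in which triangle identity and confirming that a ``mixed'' snake does collapse, via \eqref{relation 4}/\eqref{relation 10}, to one of \eqref{relation 1}, \eqref{relation 2}, \eqref{relation 7}, \eqref{relation 8}. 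Note that the cyclotomic ideal $K$ plays no role here: the whole computation takes place inside $\AOB$, so the conclusion is insensitive to $\mathbf u,\mathbf u'$.
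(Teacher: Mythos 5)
Correct, and essentially the paper's own argument: the paper likewise proves biadjointness by verifying the four zig-zag identities for the unit/counit data of Definition~\ref{etaep}, using \eqref{relation 7}--\eqref{relation 8} for the pair $(\eta,\varepsilon)$ and \eqref{relation 1}--\eqref{relation 2} for $(\eta',\varepsilon')$. The only inessential difference is your caveat about rotating crossings via \eqref{relation 4} or \eqref{relation 10}: this never arises, since in each triangle identity the cup and cap attached by $\alpha,\alpha',\beta,\beta'$ belong to the same dual pair, so each composite is literally one of the snakes \eqref{relation 1}, \eqref{relation 2}, \eqref{relation 7}, \eqref{relation 8}.
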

\begin{proof}
    Using \eqref{relation 7}-\eqref{relation 8} (resp., \eqref{relation 1}-\eqref{relation 2}) yields $
    \varepsilon E  \circ  E  \eta =\text{Id}_{ E}$, $F\varepsilon\circ \eta F=  \text{Id}_F$,    $\varepsilon' F  \circ  F  \eta' =\text{Id}_{ F}$ and $ E\varepsilon'\circ \eta' E=  \text{Id}_E$. So, $E$ and $F$ are biadjoint to each other.
\end{proof}

\begin{Lemma}\label{jme} Suppose  $a, b\in J$. \begin{itemize} \item [(1)]   $(1_a \xd)\circ (m \text{ }\sli)=(m\text{ }\sli)\circ(1_b \xd)$ and $(1_a \xdx)\circ (m\text{ } \xli)=(m\text{ } \xli)\circ(1_b \xdx)$ for all  $ m\in 1_aA1_b$.
 \item [(2)] For any $\diamond\in \{\uparrow, \downarrow\}$, the linear map  $x^\diamond$ is $(A,A)$-homomorphism, where
  $x^\diamond\in\End_\Bbbk({_\diamond A})$  such that
    $x^\uparrow (m)=(1_a \xd)\circ m$ and $x^\downarrow (m')=(1_a \xdx)\circ m'$ for all   $(m ,m')\in 1_{a \uparrow} A\times 1_{a\downarrow} A$.\item[(3)]  For any $\diamond\in \{\uparrow, \downarrow\}$,  both
     $x^\diamond_L$ and $x^\diamond_R$ are $(\bar A^\circ, \bar A^\circ)$-homomorphisms,  where
    $x^\diamond_L\in\End_\Bbbk({_\diamond \bar A^\circ})$ and $x^\diamond_R\in\End_\Bbbk({ \bar A^\circ_\diamond})$ such that
    $x^\uparrow_L (m)=(\bar{1_a \xd})\circ m$, $ x^\downarrow_L (m')=(\bar{1_a \xdx})\circ m'$,
    $x^\uparrow_R (n)=n\circ (\bar{1_a \xd})$ and  $ x^\downarrow_R (n')=n'\circ (\bar{1_a \xdx})$
 for any $(m ,m', n, n')\in \bar1_{a \uparrow}\bar A^\circ\times \bar1_{a\downarrow}  \bar A^\circ\times \bar A^\circ\bar1_{a \uparrow} \times \bar A^\circ \bar 1_{a\downarrow}$.
    \end{itemize}\end{Lemma}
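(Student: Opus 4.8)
The plan is to deduce all three parts from part~(1), which is nothing but the interchange law (bifunctoriality of $\otimes$) in the strict monoidal category $\AOB$, and hence in its quotient $\OB(\mathbf u,\mathbf u')$. Writing $1_a\,\xd = 1_a\otimes\xd$ and $m\,\sli = m\otimes 1_\uparrow$, bifunctoriality gives $(1_a\,\xd)\circ(m\,\sli) = (1_a\circ m)\otimes(\xd\circ 1_\uparrow) = m\otimes\xd = (m\circ 1_b)\otimes(1_\uparrow\circ\xd) = (m\,\sli)\circ(1_b\,\xd)$, and the second identity is the identical computation with $\xdx$ and $\xli$ in place of $\xd$ and $\sli$. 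This one-line computation is really the only genuine content of the Lemma.

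Next I would treat part~(2). Fix $\diamond\in\{\uparrow,\downarrow\}$. Since the right $A$-action on $_\diamond A$ is ordinary multiplication and $x^\diamond$ is post-composition with the fixed morphism $1_a\,\xd$ (resp.\ $1_a\,\xdx$), right $A$-linearity is just associativity of composition. For left $A$-linearity I would feed the explicit formula for the left action from \eqref{act123}, namely $a\cdot m = (a\otimes 1_\diamond)\circ m$, into part~(1): commuting $1_a\,\xd$ (resp.\ $1_a\,\xdx$) past $a\otimes 1_\uparrow$ (resp.\ $a\otimes 1_\downarrow$) is exactly the relevant identity of part~(1), so left $A$-linearity drops out immediately.

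For part~(3) I would first observe that $\overline{1_a\,\xd}$ and $\overline{1_a\,\xdx}$ are well-defined elements of $\bar A^\circ$, being the images of the morphisms $1_a\,\xd$ and $1_a\,\xdx$; hence $x^\diamond_L$ and $x^\diamond_R$, defined by composing on the appropriate side with these fixed elements inside the algebra $\bar A^\circ$, are well-defined $\Bbbk$-linear maps. One then repeats the argument of part~(2) verbatim inside $\bar A^\circ$, using the bimodule structures recorded in \eqref{weights123circ}: for each of $x^\diamond_L$ and $x^\diamond_R$, exactly one of left or right $\bar A^\circ$-linearity is associativity of composition, while the other follows by applying part~(1) in $A$ and pushing the resulting identity through the quotient map $A\to\bar A^\circ$.

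The only thing to watch, and the closest thing to an obstacle, is bookkeeping: in each of the maps $x^\uparrow, x^\downarrow, x^\uparrow_L, x^\downarrow_L, x^\uparrow_R, x^\downarrow_R$ one must keep track of which side the plain strand sits on and in which order the morphisms are stacked, so as to invoke the correct one of the two identities in part~(1) with the correct orientation. There is no conceptual difficulty---as the paper notes, this is an easy exercise whose substance is the interchange law together with the explicit descriptions of the bimodule actions in \eqref{act123} and \eqref{weights123circ}.
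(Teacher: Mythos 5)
Your proposal is correct and follows the same route as the paper, whose proof simply records that (1) is a trivial consequence of the interchange law and that (2)--(3) follow immediately from (1) together with the definitions of the bimodule actions in \eqref{act123} and \eqref{weights123circ}. You have merely written out the bookkeeping that the paper leaves as an exercise.
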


  \begin{proof} (1) is trivial and (2)-(3) follow immediately from  (1).
     \end{proof}
 For any $i\in \Bbbk$, define   \begin{equation}\label{Eii} E_i=(_\uparrow A)_i\otimes_A?, \quad    F_i=(_\downarrow A)_i\otimes_A?,\end{equation}  where  $(_\diamond A)_i$ is the generalized $i$-eigenspace of $x^\diamond$ on $_\diamond A$, $\diamond\in \{\uparrow, \downarrow\}$.
  \begin{Lemma}\label{Ei}
  $E=\bigoplus_{i\in\Bbbk} E_i$ and $ F=\bigoplus_{i\in\Bbbk} F_i$.
   \end{Lemma}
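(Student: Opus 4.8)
The plan is to deduce the statement from two ingredients: that the dot endomorphisms $x^\uparrow,x^\downarrow$ act locally finitely on the bimodules ${_\uparrow A},{_\downarrow A}$, and that tensoring with $?$ over $A$ is additive. First I would record, using Theorem~\ref{bcnr}, that $A$ is locally finite dimensional, so that for each pair $a,b\in J$ the space $1_a({_\uparrow A})1_b=1_{a\uparrow}A1_b$ (see \eqref{weights123}) is finite dimensional. By Lemma~\ref{jme}(2) the map $x^\uparrow\in\End_\Bbbk({_\uparrow A})$ is an $(A,A)$-bimodule endomorphism; in particular it restricts to an endomorphism of each such finite-dimensional component $1_{a\uparrow}A1_b$, since precomposing with $1_a\xd$ does not change source or target.

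Next, because $\Bbbk$ is algebraically closed, the restriction of $x^\uparrow$ to each finite-dimensional component $1_{a\uparrow}A1_b$ decomposes it as a finite direct sum of the generalized eigenspaces of $x^\uparrow$ (i.e.\ its primary/Fitting decomposition). Summing these decompositions over all $a,b\in J$ yields
${_\uparrow A}=\bigoplus_{i\in\Bbbk}({_\uparrow A})_i$
as a $\Bbbk$-space, where $({_\uparrow A})_i$ is the generalized $i$-eigenspace of $x^\uparrow$ as in \eqref{Eii}. Since $x^\uparrow$ is an $(A,A)$-bimodule map, each generalized eigenspace $({_\uparrow A})_i$ is itself an $(A,A)$-subbimodule (if $(x^\uparrow-i)^n m=0$ then $(x^\uparrow-i)^n(a m)=a(x^\uparrow-i)^n m=0$ and likewise on the right), so the above is in fact a decomposition of $(A,A)$-bimodules.

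Finally, tensoring with $?$ over $A$ preserves arbitrary direct sums, so applying $-\otimes_A?$ to the bimodule decomposition gives $E={_\uparrow A}\otimes_A?=\bigoplus_{i\in\Bbbk}\big(({_\uparrow A})_i\otimes_A?\big)=\bigoplus_{i\in\Bbbk}E_i$ as functors on $A$-mod, with $E_i$ as in \eqref{Eii} and $E$ as in \eqref{EF}. The argument for $F$ and $x^\downarrow$ is word for word the same, using the other half of Lemma~\ref{jme}(2). I do not expect a genuine obstacle here; the only point requiring a little care is that the index set is not finite — infinitely many $i\in\Bbbk$ can occur as dot eigenvalues — so the decomposition must be treated as an honest (possibly infinite) direct sum of bimodules, and the reduction to $E=\bigoplus_i E_i$ should be phrased via additivity of the tensor functor rather than by any finitary argument.
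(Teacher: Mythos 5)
Your proof is correct and follows essentially the same route as the paper: the paper's own (very terse) argument is precisely that each $1_a({_\diamond A})1_b$ is finite dimensional and preserved by $x^\diamond$, so the generalized eigenspace decomposition of the bimodule yields the decomposition of the functors. Your additional checks (that the generalized eigenspaces are subbimodules and that the possibly infinite index set causes no trouble) are just the details the paper leaves implicit.
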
  \begin{proof} Thanks to Lemma~\ref{cellbasis}(1),  $1_a({_\diamond A})1_b$  is a finite dimensional $\Bbbk$-space for any  $a,b\in J$ and any $\diamond\in \{\uparrow, \downarrow\}$. Since $x^\diamond$  preserves any  $1_a({_\diamond A})1_b$, the result follows.\end{proof}
\begin{Lemma}\label{bimoudisomah}  For any $i\in\Bbbk$,  $E_i$ and $F_i$ are biadjoint to each other.
\end{Lemma}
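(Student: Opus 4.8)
The plan is to identify the endomorphisms $x^\uparrow\in\End(E)$ and $x^\downarrow\in\End(F)$ from Lemma~\ref{jme} as mates under both adjunctions of Lemma~\ref{bijiont}, and then to split the four (co)units of Definition~\ref{etaep} along the generalized eigenspace decompositions $E=\bigoplus_i E_i$ and $F=\bigoplus_i F_i$ of Lemma~\ref{Ei}. The first step is the identities of natural transformations
$$(x^\downarrow E)\circ\eta=(Fx^\uparrow)\circ\eta,\quad(Ex^\downarrow)\circ\eta'=(x^\uparrow F)\circ\eta',\quad\varepsilon\circ(x^\uparrow F)=\varepsilon\circ(Ex^\downarrow),\quad\varepsilon'\circ(Fx^\uparrow)=\varepsilon'\circ(x^\downarrow E),$$
where $x^\downarrow E$, $Fx^\uparrow$, etc.\ denote the operations of adjoining a dot to the indicated strand. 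Since $\eta,\eta'$ are induced by $\alpha,\alpha'$ and $\varepsilon,\varepsilon'$ by $\beta,\beta'$, each of these reduces to the statement that a dot on one boundary point of a single cup or cap equals the dot on the other boundary point, which is Lemma~\ref{dots}; the only bookkeeping is that the cup occurring in $\alpha$ is the dotted one of \eqref{gen1}, so one must also pass the dot through one crossing using \eqref{relation 6} (or \eqref{relation 11}) and check that the lower-order terms do not survive.

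Next I would split the (co)units. By Lemma~\ref{cellbasis}(1) the spaces $1_a({_\diamond A})1_b$ are finite dimensional, so on every object the commuting operators $x^\downarrow E$ and $Fx^\uparrow$ on $FE$ admit spectral projections $p_i$ (with image $F_iE$) and $q_i$ (with image $FE_i$). The first displayed identity gives $p_i\circ\eta=q_i\circ\eta$, whence $\eta=\sum_i\eta^{(i)}$ with $\eta^{(i)}:=p_iq_i\circ\eta\colon \mathrm{Id}\to F_iE_i$; the same reasoning produces $\eta'=\sum_i\eta'^{(i)}$ with $\eta'^{(i)}\colon\mathrm{Id}\to E_iF_i$, $\varepsilon=\sum_i\varepsilon^{(i)}$ with $\varepsilon^{(i)}\colon E_iF_i\to\mathrm{Id}$, and $\varepsilon'=\sum_i\varepsilon'^{(i)}$ with $\varepsilon'^{(i)}\colon F_iE_i\to\mathrm{Id}$. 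Here one uses that each mate relation forces the corresponding (co)unit to kill the off-diagonal summands $F_jE_i$ or $E_jF_i$ with $i\ne j$, since on such a summand the difference of the two dot operators is invertible while, after composing with the (co)unit, it acts as zero.

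Finally I would restrict the triangle identities of Lemma~\ref{bijiont} to the summands. Using $E=\bigoplus_i E_i$, $F=\bigoplus_i F_i$ and the fact that $\varepsilon^{(j)}$ only detects the summand $E_jF_j$, the identities $\varepsilon E\circ E\eta=\mathrm{Id}_E$ and $F\varepsilon\circ\eta F=\mathrm{Id}_F$ restrict on $E_i$, respectively $F_i$, to $\varepsilon^{(i)}E_i\circ E_i\eta^{(i)}=\mathrm{Id}_{E_i}$ and $F_i\varepsilon^{(i)}\circ\eta^{(i)}F_i=\mathrm{Id}_{F_i}$, so that $(\eta^{(i)},\varepsilon^{(i)})$ exhibits $E_i$ as left adjoint to $F_i$; the identical bookkeeping applied to $(\eta',\varepsilon')$ exhibits $F_i$ as left adjoint to $E_i$, and hence $E_i$ and $F_i$ are biadjoint. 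I expect the only real obstacle to be the first step: getting the orientations in Lemma~\ref{dots} exactly right and confirming that no spectral shift is introduced when the dot is slid past the crossing hidden in the dotted cup of \eqref{gen1}; once the mate relations are in hand, the remaining two steps are purely formal.
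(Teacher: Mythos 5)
Your argument is correct, and it reaches the biadjunction by a different formal route than the paper, although both proofs run on exactly the same inputs: the dot-sliding identities of Lemma~\ref{dots}(5)--(8), the eigenspace decompositions of Lemma~\ref{Ei} (resting on the finite dimensionality from Lemma~\ref{cellbasis}(1)), and the biadjunction of Lemma~\ref{bijiont}. You split the units and counits of Definition~\ref{etaep}: the mate relations kill the off-diagonal components $\mathrm{Id}\to F_jE_i$ and $E_jF_i\to\mathrm{Id}$ for $i\neq j$, because the difference of the two commuting dot operators is $(j-i)$ plus a locally nilpotent operator and hence bijective on those summands, and then the triangle identities restrict to the diagonal pieces $(\eta^{(i)},\varepsilon^{(i)})$ and $(\eta'^{(i)},\varepsilon'^{(i)})$. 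The paper instead works at the level of Hom spaces: it takes the explicit bijections $\alpha_{M,N},\beta_{M,N}$ of \eqref{EEE} and checks element-wise, using Lemma~\ref{dots}(7)--(8) for $(E_i,F_i)$ and Lemma~\ref{dots}(5)--(6) for $(F_i,E_i)$, that they restrict to isomorphisms $\Hom_A(E_iM,N)\cong\Hom_A(M,F_iN)$ and $\Hom_A(F_iM,N)\cong\Hom_A(M,E_iN)$; your nilpotency-plus-nonzero-scalar argument appears there in the guise of choosing $s\gg0$ with $(x^\downarrow-i)^sf_k=0$ uniformly in $k$. Your packaging is more structural and makes the diagonal triangle identities explicit, at the small cost of having to justify the spectral projections and local invertibility on arbitrary modules (which is fine, since the decompositions come from the bimodule decompositions of ${_\uparrow}A$ and ${_\downarrow}A$), while the paper's version is a self-contained element chase in the diagrammatic calculus. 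One simplification: your worry about sliding a dot past the crossing hidden in $\lcupl$ via \eqref{relation 6} or \eqref{relation 11} is unnecessary, since Lemma~\ref{dots}(5)--(8) already record that dots pass the leftward cups and caps with no correction terms, which is precisely what your mate relations require.
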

\begin{proof} Recall $\epsilon, \epsilon', \eta$ and $\eta'$ in Definition~\ref{etaep}. Suppose  $M$ and $N$ are two left  $A$-modules. Thanks to the proof of
 Lemma~\ref{bijiont},  there are  $\Bbbk$-linear isomorphisms\begin{equation} \label{EEE}\begin{aligned}&\Hom_A(EM, N)\overset{\alpha_{M, N}}\rightarrow  \Hom_A(M, FN), \qquad \Hom_A(M, FN)\overset{\alpha_{M, N}^{-1}}\rightarrow  \Hom_A(EM, N),\\& \Hom_A(FM, N)\overset{\beta_{M, N}}\rightarrow  \Hom_A(M, EN), \qquad \Hom_A(M, EN)\overset{\beta_{M, N}^{-1}}\rightarrow  \Hom_A(FM, N),\end{aligned}\end{equation} such that $$\begin{aligned}&\alpha_{M,N}(h)=F(h)\circ \eta_M,\qquad\alpha^{-1}_{M,N}(h')=\varepsilon_N\circ E(h'),\\&
     \beta_{M,N}(h_1)=E(h_1)\circ \eta'_M,\qquad \beta^{-1}_{M,N}(h'_1)=\varepsilon'_N\circ F(h'_1), \end{aligned}$$ for all $(h, h', h_1, h'_1)\in \Hom_A(EM, N)\times \Hom_A(M, FN)\times \Hom_A(FM, N)\times  \Hom_A(M, EN)$.

     For any $(h, f, m)\in \Hom_A(E_iM, N) \times  1_aA\times M$, we have (under the  isomorphism $A\otimes_AM\cong M$)
     \begin{equation}\label{E}\begin{aligned}\alpha_{M, N}(h)(f\otimes m)&=F(h)\circ\eta_M(f\otimes m)\\&=F(h)(1_a\text{ }\xli\otimes f\text{ }\lcupl\otimes m)=F(h)(f\text{ }\xli\otimes 1_a\text{ }\lcupl\otimes m).\end{aligned}\end{equation}
     Write $f~\xli=\sum_{j\in\Bbbk}f_j~\xli$,
     where $f_j~\xli\in ({_\downarrow}A)_j$.  Thanks to  Lemma~\ref{dots}(7), $f_j\text{ }\lcupl\in ({_\uparrow}A)_j$.
     Note that  $h(E_jM)=0$ if $i\neq j$.  By \eqref{E}, $$\alpha_{M,N}(h)(f\otimes m)= f_i~\xli\otimes h( 1_a\text{ }\lcupl\otimes m)\in F_iN.$$ So,
 $\alpha_{M,N}(h)\in \Hom_A(M, F_iN)$.

 For any $(h, m)\in \Hom_A(M, F_iN)\times M$, there are  finite numbers of  $f_k\otimes n_k$'s $\in ({_\downarrow A})_i\otimes N$ such that \begin{equation} \label{hm}  h(m)=\sum_kf_k\otimes n_k.\end{equation}
  For all $(f, m)\in{_\uparrow A}\times M$,  we have (under the  isomorphism $A\otimes_AM\cong M$)
 \begin{equation}\label{F}\alpha_{M, N}^{-1}(h)(f\otimes m)=\varepsilon_NE(h)(f\otimes m)=\varepsilon_N(f\otimes \sum_k f_k\otimes n_k)=\sum_k\begin{tikzpicture}[baseline = 3mm, color=\clr]
                \draw (-0.1,0.4) rectangle (1.2,0);
                \draw(0.5, 0.2) node{$f_k$};
                \draw (0.9,0.55) rectangle (-0.1,0.85); \draw[-,thick] (0, 0.4)to[out=up,in=down](0,0.55); \draw(0.3, 0.48) node{$ \cdots$};   \draw[-,thick] (0.6,0.4)to[out=up,in=down](0.6,0.55);
                \draw(0.45, 0.7) node{$f$}; \draw[<-,thick] (1.06,0.4)to[out=up,in=down](1.06,0.85);
                 \draw[-,thick,darkblue] (0.7,0.85) to[out=up,in=left] (0.88,1.05) to[out=right,in=up] (1.06,0.85);
    \end{tikzpicture}\otimes n_k.\end{equation}
We claim $\alpha_{M, N}^{-1}(h)(f\otimes m)=0$ if   $f\in ( {_\uparrow A})_j$ for any $j\ne i$. If so,  $\alpha_{M, N}^{-1}(h)\in\Hom_A(E_iM, N)$ and the restriction  $\alpha_{M, N}$ in \eqref{EEE} to $\Hom_A(E_iM, N)$ is an isomorphism between $\Hom_A(E_iM, N)$ and $\Hom_A(M, F_iN)$.  This proves that $(E_i, F_i)$ is an adjoint pair.

    In fact,      $$(x^\uparrow-j)^t\alpha_{M, N}^{-1}(h)(f\otimes m)=\alpha_{M, N}^{-1}(h)((x^\uparrow-j)^tf\otimes n)=0$$ for some integer  $t\gg 0$ if   $f\in ( {_\uparrow A})_j$ and $i\neq j$.
    Similarly,    $(x^\downarrow-i)^sf_k=0$ for some integer  $s\gg 0$, where $f_k$'s  are  given \eqref{hm}.
     Since there are only finite number of $f_k$, we can find an $s\gg 0$ which is independent of $f_k$  such that $(x^\downarrow-i)^sf_k=0$ for all admissible $k$.
     Thanks to \eqref{F} and Lemma~\ref{dots}(8), $$(x^\uparrow-i)^s\alpha_{M, N}^{-1}(h)(f\otimes m)=\alpha_{M, N}^{-1}(h)((x^\uparrow-i)^sf\otimes n)=\sum_k
    \begin{tikzpicture}[baseline = 1mm, color=\clr]
                \draw[-,thick] (0,0.5)to[out=up,in=down](0,0.8); \draw(0.2, 0.6) node{\tiny$ \cdots$};  \draw[-,thick] (0.4,0.5)to[out=up,in=down](0.4,.8);  \draw[-,thick,darkblue] (0.5,0.5) to[out=up,in=left] (0.78,0.78) to[out=right,in=up] (1.06,0.5);\draw (-0.1,0.5) rectangle (0.9,0);
                \draw(0.4, 0.25) node{$f$}; \draw[-<,thick] (1.06,0.5)to[out=up,in=down](1.06,0.01);
    \end{tikzpicture}\circ (x^\downarrow-i)^sf_k\otimes n_k=0,$$  forcing $\alpha_{M, N}^{-1}(h)(f\otimes m)=0$. This proves our claim.

  Similarly, the restriction of $\beta_{M, N}$ in \eqref{EEE} to $\Hom_A(F_iM, N)$ induces an isomorphism between $\Hom_A(F_iM, N)$ and $\Hom_A(M, E_iN)$. The only difference is that one has to replace Lemma~\ref{dots}(7)--\ref{dots}(8)  by Lemma~\ref{dots}(5)--(6). This proves that $(F_i, E_i)$ is an adjoint pair.
\end{proof}

 Recall $x^\diamond_L$ and $x^\diamond_R$ in Lemma~\ref{jme}(3) for any  $\diamond\in \{\uparrow, \downarrow\}$. For any  $i\in \Bbbk$  let $(_\diamond \bar A^\circ)_i$ (resp., $ (\bar A^\circ_\diamond)_i$) be the generalized $i$-eigenspace of $x^\diamond_L$ (resp., $x^\diamond_R$) on ${_\diamond \bar A^\circ}$ (resp., $ \bar A^\circ_\diamond$). Define  \begin{equation} \label{efsx1} E^\diamond={_\diamond \bar A^\circ}\otimes _{\bar A^\circ}?, \quad F^\diamond={ \bar A^\circ_\diamond}\otimes _{\bar A^\circ}?.\end{equation}

\begin{Lemma}\label{EFSX}  $E^\diamond=\bigoplus_{i\in\Bbbk} E_i^\diamond$ and $ F^\diamond=\bigoplus_{i\in\Bbbk} F_i^\diamond$,
     where $E_i^\diamond=(_\diamond \bar A^\circ)_i\otimes _{\bar A^\circ}?$ and $F_i=(\bar A^\circ_\diamond )_i\otimes _{\bar A^\circ}?$.\end{Lemma}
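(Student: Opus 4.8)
The plan is to mirror the proof of Lemma~\ref{Ei}. First I would record that $\bar A^\circ$ is locally finite dimensional: by Lemma~\ref{isomhecke}(1) together with Lemma~\ref{Aa}, each block $\bar A_\ob a$ is isomorphic to a matrix algebra over a tensor product of degenerate cyclotomic Hecke algebras, which is finite dimensional; hence $\bar 1_a \bar A^\circ \bar 1_b$ is finite dimensional for all $a,b\in J$. In particular, so are the spaces $\bar 1_a({_\diamond\bar A^\circ})\bar 1_b=\bar 1_{a\diamond}\bar A^\circ\bar 1_b$ and $\bar 1_a(\bar A^\circ_\diamond)\bar 1_b=\bar 1_a\bar A^\circ\bar 1_{b\diamond}$ for every $a,b\in J$ and every $\diamond\in\{\uparrow,\downarrow\}$.

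Next I would observe that the operators $x^\diamond_L$ and $x^\diamond_R$ of Lemma~\ref{jme}(3) respect this decomposition into finite dimensional pieces. Indeed, $x^\uparrow_L$ acts by left composition with the dotted identity endomorphism of $a\uparrow$; this morphism has neither cups nor caps, so it lies in the diagonal block $\bar 1_{a\uparrow}\bar A^\circ\bar 1_{a\uparrow}$, and therefore $x^\uparrow_L$ maps $\bar 1_{a\uparrow}\bar A^\circ\bar 1_b$ into itself. The same holds for $x^\downarrow_L$ and, symmetrically, for $x^\diamond_R$ acting on $\bar 1_c\bar A^\circ\bar 1_{a\diamond}$.

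Finally, since $\Bbbk$ is algebraically closed, for each $a,b\in J$ the restriction of $x^\diamond_L$ to the finite dimensional space $\bar 1_{a\diamond}\bar A^\circ\bar 1_b$ decomposes as a finite direct sum of generalized eigenspaces, all but finitely many of which vanish. Summing over all $a,b\in J$ yields ${_\diamond\bar A^\circ}=\bigoplus_{i\in\Bbbk}(_\diamond\bar A^\circ)_i$ as right $\bar A^\circ$-modules (the right $\bar A^\circ$-action preserves the eigenspaces because $x^\diamond_L$ is a right $\bar A^\circ$-module map by Lemma~\ref{jme}(3)). Since ${_\diamond\bar A^\circ}\otimes_{\bar A^\circ}?$ commutes with arbitrary direct sums, $E^\diamond=\bigoplus_{i\in\Bbbk}(_\diamond\bar A^\circ)_i\otimes_{\bar A^\circ}?=\bigoplus_{i\in\Bbbk}E_i^\diamond$. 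The identical argument with $x^\diamond_R$ in place of $x^\diamond_L$ and $\bar A^\circ_\diamond$ in place of ${_\diamond\bar A^\circ}$ gives $F^\diamond=\bigoplus_{i\in\Bbbk}F_i^\diamond$. No genuine obstacle arises here; the only points requiring a line of justification are the finite dimensionality of the idempotent-graded pieces (coming from finite dimensionality of the degenerate cyclotomic Hecke algebras via Lemmas~\ref{isomhecke} and \ref{Aa}) and the fact that the dot operators preserve those pieces (immediate once one notes the dot morphism lies in the relevant diagonal block), after which the eigenspace-decomposition-plus-additivity-of-tensor argument is purely formal, exactly as for Lemma~\ref{Ei}.
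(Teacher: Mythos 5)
Your proposal is correct and follows essentially the same route as the paper: the paper's proof is exactly the observation that $x^\diamond_L$ and $x^\diamond_R$ preserve the finite dimensional pieces $\bar 1_a({_\diamond\bar A^\circ})\bar 1_b$ and $\bar 1_a(\bar A^\circ_\diamond)\bar 1_b$, after which the generalized-eigenspace decomposition over the algebraically closed field $\Bbbk$ and the additivity of the tensor functor give the claim, with Lemma~\ref{jme}(3) guaranteeing (as you note) that the eigenspaces are $(\bar A^\circ,\bar A^\circ)$-sub-bimodules. Your expansion of the finite dimensionality via Lemmas~\ref{isomhecke} and \ref{Aa} is a faithful elaboration of the argument the paper leaves implicit.
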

\begin{proof}The result follows from the fact that $x^\diamond_L$ (resp., $x^\diamond_R$ ) preserves the finite dimensional $\Bbbk$-spaces $\bar1_a({_\diamond \bar A^\circ})\bar1_b$'s  and  $\bar1_a({ \bar A^\circ_\diamond})\bar1_b$'s.
 \end{proof}

  Thanks to Lemma~\ref{isomhecke}(2) and  \eqref{morif}, we have
\begin{equation}\label{contss} E^\uparrow\sim\bigoplus_{r, s\in\mathbb N} \text{res}^{r, s+1}_{r, s},\quad  E^\downarrow\sim \bigoplus_{r, s\in\mathbb N} \text{ind}^{r+1, s}_{r, s},\quad F^\uparrow\sim\bigoplus_{r, s\in\mathbb N} \text{ind}^{r, s+1}_{r, s},\quad F^\downarrow\sim \bigoplus_{r, s\in\mathbb N} \text{res}^{r+1, s}_{r, s}.
\end{equation}
So,  both $E^\diamond$ and $F^\diamond$ are exact. By Lemma~\ref{Aa},  \eqref{inde} and \eqref{fen},    \begin{equation}\label{word}\begin{aligned}
&E^\uparrow_i\sim\bigoplus_{r, s\in\mathbb N} i\text{-res}^{r, s+1}_{r, s},\text{ }  E^\downarrow_i\sim \bigoplus_{r, s\in\mathbb N} i\text{-ind}^{r+1, s}_{r, s},\text{ }\\&  F^\uparrow_i\sim\bigoplus_{r, s\in\mathbb N} i\text{-ind}^{r, s+1}_{r, s},\text{ }  F^\downarrow_i\sim \bigoplus_{r, s\in\mathbb N} i\text{-res}^{r+1, s}_{r, s}.\end{aligned}
\end{equation}

\begin{Lemma}\label{intailh1}
There are two short exact sequence of functors from $\bar A^\circ$-fdmod to $A$-lfdmod:
\begin{equation}\begin{aligned}\label{diejdhd1}
&0\rightarrow \Delta\circ  F^\downarrow\rightarrow  F\circ \Delta\rightarrow\Delta\circ  F^\uparrow\rightarrow0;\\&0\rightarrow \Delta\circ  E^\uparrow\rightarrow  E\circ \Delta\rightarrow\Delta\circ  E^\downarrow\rightarrow0
.
\end{aligned}\end{equation}
\end{Lemma}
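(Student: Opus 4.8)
The plan is to realize both sequences in \eqref{diejdhd1} as short exact sequences of bimodules and then tensor. First note that all six functors occurring there are exact: $F={_\downarrow A}\otimes_A?$ and $E={_\uparrow A}\otimes_A?$ are exact because, by Lemma~\ref{cellbasis}(1) and \eqref{weights123}, ${_\downarrow A}=\bigoplus_{a\in J}1_{a\downarrow}A$ and ${_\uparrow A}=\bigoplus_{a\in J}1_{a\uparrow}A$ are projective (hence flat) as right $A$-modules; $\Delta$ is exact by \eqref{exa}; and $F^\diamond,E^\diamond$ are exact by \eqref{contss}. Since $\bar A^\circ=\bigoplus_{\ob a\in I}\bar A_{\ob a}$, it is therefore enough, for each $\ob a=(r,s)\in I$, to produce a short exact sequence of $(A,\bar A_{\ob a})$-bimodules
\[0\longrightarrow Q^\downarrow_{\ob a}\longrightarrow {_\downarrow A}\otimes_A(A_{\preceq\ob a}\bar 1_{\ob a})\longrightarrow Q^\uparrow_{\ob a}\longrightarrow 0\]
in which $Q^\downarrow_{\ob a}\otimes_{\bar A_{\ob a}}?$ and $Q^\uparrow_{\ob a}\otimes_{\bar A_{\ob a}}?$ are naturally isomorphic to the restrictions of $\Delta\circ F^\downarrow$ and $\Delta\circ F^\uparrow$ to $\bar A_{\ob a}$-fdmod respectively. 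Because ${_\downarrow A}\otimes_A(A_{\preceq\ob a}\bar 1_{\ob a})=\bigoplus_{a\in J}\bar 1_{a\downarrow}A_{\preceq\ob a}\bar 1_{\ob a}$ is a direct summand, as a right $\bar A_{\ob a}$-module, of $A_{\preceq\ob a}\bar 1_{\ob a}$ (which is projective over $\bar A_{\ob a}$, this being exactly the flatness making $j^{\ob a}_!$ exact in \eqref{exa}), the functor $?\otimes_{\bar A_{\ob a}}N$ keeps the above sequence exact, and summing over $\ob a\in I$ gives the first sequence in \eqref{diejdhd1}. The second sequence follows in the same way with $\uparrow$ in place of $\downarrow$, or by transporting the first one through the contravariant functor $\tau$ of Lemma~\ref{sigma} together with the duality $\circledast$ of subsection~\ref{irrddjs}.

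To build the bimodule sequence I would work with the diagram basis. By Theorem~\ref{bcnr} and Lemma~\ref{cellbasis}, $\bar 1_{a\downarrow}A_{\preceq\ob a}\bar 1_{\ob a}$ has a basis of classes $\bar y\bar h\bar x$ of normally ordered dotted oriented Brauer diagrams of type $c\to a\downarrow$ with $c\in\ob a$. Split this basis according to the fate of the strand meeting the extra $\downarrow$ at the right of the top boundary: either (P) that strand is a through‑strand which, after pulling crossings downward, remains at the rightmost position, so the diagram ``factors through'' the canonical $(A,\bar A_{(r+1,s)})$-bimodule map obtained by tensoring a diagram on the right with a vertical $\downarrow$; or (C) that strand is joined to another top point by a cup, so the diagram factors as an element of $\bar 1_{a\downarrow}A_{\preceq\ob a}\bar 1_{c'\downarrow}$ with a $\lcupl$ glued at the top‑right for some $c'\in J$. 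The span of the (P)-classes is the sub‑bimodule $Q^\downarrow_{\ob a}$: stability under the right $\bar A_{\ob a}$-action is clear (it acts on the $c$-boundary, away from the new strand), and stability under the left $A$-action follows from the defining relations — composing with an arbitrary morphism on top and renormalizing by \eqref{relation 6}, \eqref{relation 11} and Lemma~\ref{dots} never converts a propagating new strand into a cup term once one works modulo $A^{\not\preceq\ob a}$. The quotient is then spanned by the (C)-classes, giving $Q^\uparrow_{\ob a}$.

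Identifying the pieces: matching (P) with the bimodule defining $\Delta\circ F^\downarrow$ — a new vertical $\downarrow$-strand on the right is precisely the diagrammatic incarnation of the structure map $\bar A_{\ob a}\to\bar A_{(r+1,s)}$ underlying $F^\downarrow=\bar A^\circ_\downarrow\otimes_{\bar A^\circ}?$ — gives a natural isomorphism $Q^\downarrow_{\ob a}\otimes_{\bar A_{\ob a}}?\cong\Delta\circ F^\downarrow|_{\bar A_{\ob a}\text{-fdmod}}$, with \eqref{relation 7}--\eqref{relation 8} used to see the relevant tensor is balanced. Matching (C) with the cup $\lcupl$ appearing in $\eta$ of Definition~\ref{etaep} and with $F^\uparrow=\bar A^\circ_\uparrow\otimes_{\bar A^\circ}?$ gives $Q^\uparrow_{\ob a}\otimes_{\bar A_{\ob a}}?\cong\Delta\circ F^\uparrow|_{\bar A_{\ob a}\text{-fdmod}}$. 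Here Proposition~\ref{isob} and Lemma~\ref{dots} are what let us slide dots freely so that the normally ordered form respects the splitting, and Lemma~\ref{Aa} together with Lemma~\ref{isomhecke} is what pins down the Hecke-algebra side of the identifications.

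The main obstacle is exactly the two verifications just flagged: that $Q^\downarrow_{\ob a}$ is a left $A$-submodule, and that both identifications are natural in $N$. Concretely one must show that the reduction of a diagram to its normally ordered form — isotopy plus the relations \eqref{relation 1}--\eqref{relation 11} — preserves the (P)/(C) dichotomy modulo $A^{\not\preceq\ob a}$, which is a bookkeeping argument about where dots and crossings may be pushed; this is the part that following \cite{Br,Re,GRS3} can be organized cleanly but is genuinely lengthy. An alternative that sidesteps much of this bookkeeping is to exhibit the three maps in \eqref{diejdhd1} directly as natural transformations built from $\lcup,\lcap$ and the crossings, the middle one being an eigenspace component of the adjunction unit/counit of Definition~\ref{etaep}, and then to check exactness after evaluating at the standard modules $\tilde\Delta(\lambda)=\Delta(S(\lambda))$, where by Lemmas~\ref{isomhecke} and \ref{Aa} the statement reduces to the well-known induction/restriction short exact sequences for the cyclotomic Hecke algebras $H_{\ell,r}(-\mathbf u')\otimes H_{\ell,s}(\mathbf u)$ relating levels $r$ and $r\pm1$ (and likewise in the $\uparrow$-slot for $E$); exactness on all of $\bar A^\circ$-fdmod then follows since $\Delta,F,F^\diamond$ are exact and every object of $\bar A^\circ$-fdmod is a quotient of a direct sum of such $\tilde\Delta$-type modules.
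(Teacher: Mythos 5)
Your overall strategy is the same as the paper's: for each stratum $\ob a$ the paper produces a short exact sequence of $(A,\bar A^\circ)$-bimodules (see \eqref{shoretmor1}) whose middle term is ${_\downarrow A}\otimes_A A_{\preceq\ob a}\bar 1_{\ob a}$ and whose sub and quotient realize $\Delta\circ F^\downarrow$ and $\Delta\circ F^\uparrow$, and the sub/quotient split is exactly your (P)/(C) dichotomy (new strand propagating versus new strand caught in a cup, the latter carrying the extra dotted data recorded in $D(\uparrow^{r+1})$). The problem is that your write-up stops precisely where the proof starts. The two verifications you defer as ``lengthy bookkeeping'' --- that the span of the (P)-classes is stable under the left $A$-action modulo $A^{\not\preceq\ob a}$, and that sub and quotient are naturally isomorphic to $\Delta\circ F^\downarrow$ and $\Delta\circ F^\uparrow$ --- are the entire content of the lemma. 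The paper never proves stability of a span of basis vectors at all: it sidesteps this by writing down explicit bimodule homomorphisms $\varphi$ (tensor with a vertical strand) and $\psi$ (close the new strand into a cup), whose images are automatically sub-bimodules, checks $\psi\circ\varphi=0$ because the created cap dies in $A_{\preceq\ob c}$, and then proves exactness by exhibiting explicit bases of all three terms (via Lemmas~\ref{nonzero} and \ref{cellbasis}, the sets $Y_1$, $D(\uparrow^{r+1})$, $H_1(\uparrow^{r+1}\downarrow^s)(\sigma)$), after reducing to the components $a=\uparrow^r\downarrow^s$, $d=\uparrow^m\downarrow^n$ by the $_a\sigma_b$-conjugation diagram \eqref{conmuf1}, and concluding with a dimension count. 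Without carrying out that computation (or an equivalent one) your proposal is a plan rather than a proof.

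Two further steps are wrong as stated. First, you claim that $?\otimes_{\bar A_{\ob a}}N$ preserves your bimodule sequence because the middle term is a summand of the projective right module $A_{\preceq\ob a}\bar 1_{\ob a}$; but flatness of the middle term is irrelevant --- the obstruction is $\operatorname{Tor}_1^{\bar A_{\ob a}}(Q^\uparrow_{\ob a},N)$, so what you need is flatness (or projectivity, or a right-module splitting) of the \emph{quotient} term, which is visible from the paper's explicit basis (c) but requires its own argument. Second, your fallback route is circular and rests on a false premise: the assertion that $F\tilde\Delta(\lambda)$ has a filtration with sections $\Delta(F^\downarrow S(\lambda))$ and $\Delta(F^\uparrow S(\lambda))$ is not a ``well-known induction/restriction sequence for cyclotomic Hecke algebras'' --- the Hecke-algebra branching rules concern $\bar A^\circ$ alone and say nothing about how the endofunctor $F$ of $A$-lfdmod interacts with $\Delta$ (in the paper they are used only \emph{after} this lemma, in Lemma~\ref{bran} and Proposition~\ref{charcter1}) --- so evaluating at cell modules is essentially the statement to be proved. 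Moreover it is false in general that every object of $\bar A^\circ$-fdmod is a quotient of a direct sum of cell modules (the $\tilde\Delta(\lambda)$ you invoke are not even objects of $\bar A^\circ$-fdmod): already for $H_{2,1}(\mathbf u)$ with $u_1=u_2$, i.e.\ $\Bbbk[x]/(x^2)$, all cell modules are one-dimensional with $x$ acting by zero, so maps out of their direct sums land in the socle and never surject onto the regular module. The bootstrap itself (a long-exact-sequence argument using exactness of the three functors) would be legitimate if you evaluated on a projectively generating family, but on projectives the statement is even further from any known Hecke-algebra fact, so this route does not avoid the bimodule analysis.
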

\begin{proof} Suppose $\ob a=(s, r)$, $\ob b=( s-1,r)$, $\ob c=(s,r+1)$, where   $ r, s$ are any admissible non-negative integers. We claim that  there are  short exact sequences of $(A,\bar A^\circ)$-bimodules
\begin{equation}\label{shoretmor1}\begin{aligned}&
0\rightarrow A_{\preceq \ob b}\otimes _{\bar A_{\ob b}^\circ} ({_\downarrow \bar A}^\circ)\bar 1_\ob a\overset {\varphi}\rightarrow {_\downarrow A}\otimes _{A}A_{\preceq \ob a}\bar 1_\ob a\overset{\psi}\rightarrow A_{\preceq \ob c}\otimes _{ \bar A_{\ob c}^\circ}\bar A_\uparrow^\circ\bar 1_ \ob a\rightarrow0,
\\&
0\rightarrow A_{\preceq \ob b}\otimes _{\bar A_{\ob b}^\circ} ({_\uparrow \bar A}^\circ)\bar 1_\ob a\overset {\eta}\rightarrow {_\uparrow A}\otimes _{A}A_{\preceq \ob a}\bar 1_\ob a\overset{\varepsilon}\rightarrow A_{\preceq \ob c}\otimes _{ \bar A_{\ob c}^\circ}\bar A_\downarrow^\circ\bar 1_ \ob a\rightarrow0, \end{aligned}
\end{equation}
where the required morphisms  $\varphi, \psi, \eta$ and $\varepsilon$ satisfy:
$$\begin{aligned}&\varphi(\bar f\otimes \bar g)= ( f~\begin{tikzpicture}[baseline = 10pt, scale=0.5, color=\clr]
                \draw[<-,thick] (0,0.5)to[out=up,in=down](0,1.2);
    \end{tikzpicture})\otimes \bar g, \ \ \text{    $\psi( f_1\otimes \bar g_1)=\bar{\begin{tikzpicture}[baseline = 1mm, color=\clr]
                \draw[-,thick] (-0.06,0.5)to[out=up,in=down](-0.06,0.8); \draw(0.2, 0.6) node{$ \tiny\cdots$};  \draw[-,thick] (0.4,0.5)to[out=up,in=down](0.4,.8);  \draw[<-,thick,darkblue] (0.5,0.5) to[out=up,in=left] (0.78,0.78) to[out=right,in=up] (1.06,0.5);\draw (-0.1,0.5) rectangle (0.9,0);
                \draw(0.4, 0.25) node{$ f_1$}; \draw[-,thick] (1.06,0.5)to[out=up,in=down](1.06,0.01);
    \end{tikzpicture}}\otimes (  \bar{g_1~\begin{tikzpicture}[baseline = 10pt, scale=0.5, color=\clr]
                \draw[->,thick] (0,0.5)to[out=up,in=down](0,1.2);
    \end{tikzpicture}~})$ },\\&
    \eta(\bar h\otimes \bar k)= ( h~\begin{tikzpicture}[baseline = 10pt, scale=0.5, color=\clr]
                \draw[->,thick] (0,0.5)to[out=up,in=down](0,1.2);
    \end{tikzpicture})\otimes \bar k, \ \ \text{    $\varepsilon( h_1\otimes \bar k_1)=\bar{\begin{tikzpicture}[baseline = 1mm, color=\clr]
                \draw[-,thick] (-0.06,0.5)to[out=up,in=down](-0.06,0.8); \draw(0.2, 0.6) node{$ \tiny\cdots$};  \draw[-,thick] (0.4,0.5)to[out=up,in=down](0.4,.8);  \draw[->,thick,darkblue] (0.5,0.5) to[out=up,in=left] (0.78,0.78) to[out=right,in=up] (1.06,0.5);\draw (-0.1,0.5) rectangle (0.9,0);
                \draw(0.4, 0.25) node{$ h_1$}; \draw[<-,thick] (1.06,0.5)to[out=up,in=down](1.06,0.01);
    \end{tikzpicture}}\otimes (  \bar{k_1~\begin{tikzpicture}[baseline = 10pt, scale=0.5, color=\clr]
                \draw[<-,thick] (0,0.5)to[out=up,in=down](0,1.2);
    \end{tikzpicture}~})$ },
    \end{aligned}
$$
for any admissible basis diagrams  $f,g, f_1,g_1, h,k,h_1,k_1$ of $A$ in  Lemma~\ref{cellbasis}(1).
If the claim is true, then   \eqref{diejdhd1} follows immediately.

It is routine  to check that   $\varphi $, $\psi$, $\varepsilon $ and $\eta$ are well-defined  $(A,\bar A^\circ)$-homomorphisms.
We are going to prove the exactness of the  first  sequence in \eqref{shoretmor1}.    One can  verify the second one   similarly.

    Suppose  $d\sim c \in J$ and  $a, b\in \ob a$.  Thanks to  Lemma~\ref{unie} and \eqref{isomor}, there is a commutative diagram

\begin{equation}\label{conmuf1}
\begin{aligned}
0\rightarrow\bar 1_dA_{\preceq \ob b}\otimes _{\bar A_{\ob b}^\circ} & ({_\downarrow \bar A}^\circ)\bar 1_a
\overset{\varphi_{a,d}}\longrightarrow 1_d ({_\downarrow A}\otimes _{A}A_{\preceq \ob a})\bar 1_a\overset{\psi_{a,d}}\longrightarrow \bar 1_dA_{\preceq \ob c}\otimes _{ \bar A_{\ob c}^\circ}\bar A_\uparrow^\circ\bar 1_a\rightarrow 0\\
\wr\downarrow\tau_1\quad\quad & \quad\quad\quad\quad\quad \quad\wr\downarrow \tau_2\quad\quad  \quad\quad\quad\quad\quad \quad\wr\downarrow \tau_{3}\\0\rightarrow
\bar 1_{c}A_{\preceq \ob b}\otimes _{\bar A_{\ob b}^\circ} & ({_\downarrow \bar A}^\circ)\bar 1_b
\overset{\varphi_{b,c}}\longrightarrow 1_c({_\downarrow A}\otimes _{A}A_{\preceq \ob a})\bar 1_b\overset{\psi_{b, c}}\longrightarrow \bar 1_cA_{\preceq \ob c}\otimes _{ \bar A_{\ob c}^\circ}\bar A_\uparrow^\circ\bar 1_b\rightarrow 0\\
\end{aligned},
\end{equation}
where  $\varphi_{a, d}$ and $\varphi_{b,c}$ (resp., $\psi_{a, d}$ and $\psi_{b,c}$) are restrictions of $\varphi$ (resp., $\psi$) in \eqref{shoretmor1}.
Here, the $\Bbbk$-linear isomorphisms $\tau_i$ are defined so that
$$\tau_2( f\otimes \bar g):={_c\sigma_d}(f\otimes \bar g) \bar{_a\sigma_b}, \quad    \tau_i(\bar f\otimes \bar g):={_c\sigma_d}(\bar f\otimes \bar g) \bar{_a\sigma_b},\quad  i\in\{1,3\},$$
for all admissible  basis diagrams $ f,   g $ of $A$,   where  ${_c}\sigma_d$ is given in Lemma~\ref{unie}.
Therefore,   it's enough to   verify  the exactness of the first   sequence in \eqref{conmuf1} as $\Bbbk$-spaces under the assumptions  $a=\uparrow^r\downarrow^s$ and $d=\uparrow^m\downarrow^n$, $\forall m, n\in \mathbb N$.
If so, we immediately obtain the first short exact sequence in \eqref{shoretmor1}.

We define $B_1\otimes B_2=\{b_1\otimes b_2\mid b_1\in B_1, b_2\in B_2\}$ for any  sets $B_1, B_2$.
For any $B\subset A$, and any quotient of $A$, we denote by
$\bar B=\{\bar t\mid t\in B\}$ the corresponding subset  in the quotient algebra.  For convenience, $H(h, e)$ will be denoted by $H(e)$ if $h=e$.

By Lemma~\ref{nonzero},  three $\Bbbk$-spaces in the first sequence of \eqref{conmuf1} are zero if  $\uparrow^{r}\downarrow^{s-1}\nsucceq d$  and hence there is nothing to be proved.
 Suppose  $\uparrow^{r}\downarrow^{s-1}\succeq d$ and define
  $$H_1(\uparrow^{r+1}\downarrow^{s})=\left\{{\begin{tikzpicture}[baseline = 4mm, color=\clr]
               \draw (-0.18,0.4) rectangle (0.7,0);
                \draw(0.2, 0.2) node{$h_1$};
                  \draw(0.4, 0.7) node{$g$};
               \draw (0.98,0.55) rectangle (-0.18,0.85); \draw[->,thick] (-0.07, 0.4)to[out=up,in=down](-0.07,0.55); \draw(0.23, 0.48) node{$ \tiny\cdots$};  \draw[->,thick] (0.5,0.4)to[out=up,in=down](0.5,0.55);
                \draw[->,thick] (0.9, -0)to[out=up,in=down](0.9,0.55);
                 \draw (-0.18+1.25,0.4) rectangle (0.7+1.25,0);
                   \draw(-0.3+0.7+1.25, 0.2) node{$h_2$};
                 \draw[<-,thick] (-0.18+1.25+0.8,0.4)to[out=up,in=down](-0.18+1.25+0.8,0.8);
                 \draw[<-,thick] (1.2,0.4)to[out=up,in=down](1.2,0.8);
                \draw(1.5, 0.6) node{$ \tiny\cdots$};  
    \end{tikzpicture}}|(g, h_1, h_2)\in D(\uparrow^{r+1})\times H(\uparrow^r)\times H(\downarrow^s)\right\},$$
  where
     $$D(\uparrow^{r+1})=\left\{\begin{tikzpicture}[baseline = 25pt, scale=0.35, color=\clr]
       \draw[->,thick](0,1.1)to(0,3.9);
       \draw(0.75,1.4) node{$ \cdots$}; \draw(0.75,3.5) node{$ \cdots$};
       \draw[->,thick](1.5,1.1)to(1.5,3.9);
       \draw(2,4.5)node{\tiny$i$}; 
        \draw[->,thick] (5,1) to[out=up, in=down] (2,3.9);
         \draw(4.9,1.4)\bdot;
           \draw(5.4,1.4)node{\tiny$j$};
         \draw[->,thick](2.8,1.1)to(2.8,3.9);
         \draw[->,thick](5.5-1.2,1.1)to(5.5-1.2,3.9);
         \draw(5,0.6)node{\tiny$r+1$};
          \draw(4.8-1.2,1.4) node{$ \cdots$}; \draw(4.8-1.2,3.5) node{$ \cdots$};
           \end{tikzpicture}~|~0\leq j\leq \ell-1, 1\leq i\leq r+1\right\}. $$ Thanks to Lemma~\ref{Aa} and the general result on the basis of degenerate cyclotomic Hecke algebra, it is easy to check that $\bar H_1(\uparrow^{r+1} \downarrow^s)$ is a basis of $\bar A_{\uparrow^{r+1} \downarrow^s}$.   Let $$\bar H_1(\uparrow^{r+1}\downarrow^{s})(\sigma):=\{\bar{g\circ {_{\uparrow^{r+1}\downarrow^s}}\sigma_{\uparrow^r\downarrow^s\uparrow}}~|~g\in H_1(\uparrow^{r+1}\downarrow^s)\}.$$
   By  Lemmas~\ref{nonzero}, ~\ref{cellbasis} and \eqref{isomor},  we have
    \begin{itemize}
    \item [(a)] $ \bar 1_dA_{\preceq \ob b}\otimes _{\bar A_{\ob b}^\circ} ({_\downarrow \bar A}^\circ)\bar 1_a$ has basis $ \bar Y( \uparrow^m\downarrow^n, \uparrow^{r}\downarrow^{s-1})\otimes \bar H(\uparrow^{r}\downarrow^{s})$,
        \item [(b)] $1_d~({_\downarrow A})\otimes _{A}A_{\preceq \ob a}~\bar 1_a$ has basis $ Y(\uparrow^m\downarrow^{n+1}, \uparrow^r\downarrow^{s})\otimes \bar H(\uparrow^r\downarrow^{s})$,
            \item [(c)] $ \bar 1_dA_{\preceq \ob c}\otimes _{ \bar A_{\ob c}^\circ}\bar A_\uparrow^\circ\bar 1_a$ has basis $ \bar Y(\uparrow^{m}\downarrow^{n}, \uparrow^{r+1}\downarrow^{s})\otimes   \bar H_1(\uparrow^{r+1}\downarrow^{s})(\sigma)$.
    \end{itemize}
     Thanks to $\bar {f~\lcap}=0$ and $\bar {f~\rcap}=0$ in $\bar 1_dA_{\preceq \ob c}$ for any $f\in Y( \uparrow^m\downarrow^n, \uparrow^{r}\downarrow^{s-1})$, we have $\psi_{a, d} \varphi_{a, d}=0$ and $\varepsilon_{a, d}\eta_{a, d}=0$.
Since $\{g~~\begin{tikzpicture}[baseline = 10pt, scale=0.5, color=\clr]
                \draw[<-,thick] (0,0.5)to[out=up,in=down](0,1.2);
    \end{tikzpicture}~ | g\in Y( \uparrow^m\downarrow^n, \uparrow^{r}\downarrow^{s-1})\}\subseteq Y( \uparrow^m\downarrow^{n+1}, \uparrow^{r}\downarrow^{s})$, $\varphi_{a, d}$ is injective. Define
    $$ Y_1( \uparrow^m\downarrow^{n+1}, \uparrow^{r}\downarrow^{s})=\left\{{ \begin{tikzpicture}[baseline = 1mm, color=\clr]
                \draw[-,thick,darkblue] (0.5,0) to[out=down,in=left] (1.25,-0.5) to[out=right,in=down] (2,0);\draw (-0.18,0.4) rectangle (0.7,0);
                \draw(0.2, 0.2) node{$g$}; \draw[<-,thick] (2,0.01)to[out=up,in=down](2,0.85);
                \draw (1.8,0.55) rectangle (-0.18,0.85); \draw[->,thick] (-0.07, 0.4)to[out=up,in=down](-0.07,0.55); \draw(0.23, 0.48) node{$ \tiny\cdots$};  \draw[->,thick] (0.5,0.4)to[out=up,in=down](0.5,0.55);
                \draw[<-,thick] (0.9, -0.7)to[out=up,in=down](0.9,0.55); \draw(1.25, 0.2) node{$ \tiny\cdots$};  \draw[<-,thick] (1.5,-0.7)to[out=up,in=down](1.5,0.55); \draw(0.9, 0.7) node{$f$};
    \end{tikzpicture}}~|~(f, g)\in Y(\uparrow^{m}\downarrow^{n}, \uparrow^{r+1}\downarrow^{s})\times D(\uparrow^{r+1})\right\}.$$
 So $Y_1( \uparrow^m\downarrow^{n+1}, \uparrow^{r}\downarrow^{s})\otimes \bar H(\uparrow^r\downarrow^s)\subseteq 1_d~({_\downarrow A})\otimes _{A}A_{\preceq \ob a}~\bar 1_a$. If $f\in Y(\uparrow^{m}\downarrow^{n}, \uparrow^{r+1}\downarrow^{s})$, $g\in D(\uparrow^{r+1})$ and $h\in  H(\uparrow^r\downarrow^s)$, then  $$ \begin{aligned}\psi_{a, d}&\left(\begin{tikzpicture}[baseline = 1mm, color=\clr]
                \draw[-,thick,darkblue] (0.5,0) to[out=down,in=left] (1.25,-0.5) to[out=right,in=down] (2,0);\draw (-0.18,0.4) rectangle (0.7,0);
                \draw(0.2, 0.2) node{$g$}; \draw[<-,thick] (2,0.01)to[out=up,in=down](2,0.85);
                \draw (1.8,0.55) rectangle (-0.18,0.85); \draw[->,thick] (-0.07, 0.4)to[out=up,in=down](-0.07,0.55); \draw(0.23, 0.48) node{$ \tiny\cdots$};  \draw[->,thick] (0.5,0.4)to[out=up,in=down](0.5,0.55);
                \draw[<-,thick] (0.9, -0.7)to[out=up,in=down](0.9,0.55); \draw(1.25, 0.2) node{$ \tiny\cdots$};  \draw[<-,thick] (1.5,-0.7)to[out=up,in=down](1.5,0.55); \draw(0.9, 0.7) node{$f$};
    \end{tikzpicture}\otimes \bar h\right)=\psi_{a, d}\left(\begin{tikzpicture}[baseline = 1mm, color=\clr]
                \draw[-,thick,darkblue] (0.5,0) to[out=down,in=left] (1.25,-0.5) to[out=right,in=down] (2,0);\draw (-0.18,0.4) rectangle (0.7,0);
                \draw(0.2, 0.2) node{$g_1$}; \draw[<-,thick] (2,0.01)to[out=up,in=down](2,0.85);
                \draw (1.8,0.55) rectangle (-0.18,0.85); \draw[->,thick] (-0.07, 0.4)to[out=up,in=down](-0.07,0.55); \draw(0.23, 0.48) node{$ \tiny\cdots$};  \draw[->,thick] (0.5,0.4)to[out=up,in=down](0.5,0.55);
                \draw[<-,thick] (0.9, -0.7)to[out=up,in=down](0.9,0.55); \draw(1.25, 0.2) node{$ \tiny\cdots$};  \draw[<-,thick] (1.5,-0.7)to[out=up,in=down](1.5,0.55);
                \draw(2, 0.5) \bdot;
                \draw(2.2, 0.4)  node{$j$};
               \draw(0.9, 0.7) node{$f$};
    \end{tikzpicture}\otimes \bar h\right)\qquad\text{by Lemma~\ref{dots}(2),}\\&=
   \bar{ \begin{tikzpicture}[baseline = 1mm, color=\clr]
                \draw[-,thick,darkblue] (0.5,0) to[out=down,in=left] (1.25,-0.5) to[out=right,in=down] (2,0);\draw (-0.18,0.4) rectangle (0.7,0);
                \draw(0.2, 0.2) node{$g_1$}; \draw[-,thick] (2,0.01)to[out=up,in=down](2,0.85);
                \draw (1.8,0.55) rectangle (-0.18,0.85); \draw[->,thick] (-0.07, 0.4)to[out=up,in=down](-0.07,0.55); \draw(0.23, 0.48) node{$ \tiny\cdots$};  \draw[->,thick] (0.5,0.4)to[out=up,in=down](0.5,0.55);
                \draw[<-,thick] (0.9, -0.7)to[out=up,in=down](0.9,0.55); \draw(1.25, 0.2) node{$ \tiny\cdots$};  \draw[<-,thick] (1.5,-0.7)to[out=up,in=down](1.5,0.55);
                \draw(2, 0.5) \bdot;

                \draw[-,thick,darkblue] (0.5+1.5,0.5+0.35) to[out=up,in=left] (0.78+1.5,0.78+0.35) to[out=right,in=up] (1.06+1.5,0.5+0.35);
                \draw[->,thick] (1.06+1.5,0)to[out=up,in=down](1.06+1.5,0.5+0.35);

                \draw(2.2, 0.4)  node{$j$};
               \draw(0.9, 0.7) node{$f$};
    \end{tikzpicture}}\otimes \bar {h~\sli}\\&=
   \bar{ \begin{tikzpicture}[baseline = 1mm, color=\clr]
               \draw (-0.18,0.4) rectangle (0.7,0);
                \draw(0.2, 0.2) node{$g$};
                \draw (1.8,0.55) rectangle (-0.18,0.85); \draw[->,thick] (-0.07, 0.4)to[out=up,in=down](-0.07,0.55); \draw(0.23, 0.48) node{$ \tiny\cdots$};  \draw[->,thick] (0.5,0.4)to[out=up,in=down](0.5,0.55);
                \draw[<-,thick] (0.9, -0.2)to[out=up,in=down](0.9,0.55); \draw(1.25, 0.2) node{$ \tiny\cdots$};  \draw[<-,thick] (1.5,-0.2)to[out=up,in=down](1.5,0.55); \draw(0.9, 0.7) node{$f$};
    \end{tikzpicture}}\otimes \bar {{_{\uparrow^{r+1}\downarrow^s}}\sigma_{\uparrow^{r}\downarrow^s\uparrow} \circ( h~\sli)}\qquad\text{by Lemma~\ref{dots}(2) and \eqref{relation 1},}\\&=
   \bar{ \begin{tikzpicture}[baseline = 1mm, color=\clr]
               \draw (-0.18,0.4) rectangle (0.7,0);
                \draw(0.2, 0.2) node{$g$};
                \draw (1.8,0.55) rectangle (-0.18,0.85); \draw[->,thick] (-0.07, 0.4)to[out=up,in=down](-0.07,0.55); \draw(0.23, 0.48) node{$ \tiny\cdots$};  \draw[->,thick] (0.5,0.4)to[out=up,in=down](0.5,0.55);
                \draw[<-,thick] (0.9, -0.2)to[out=up,in=down](0.9,0.55); \draw(1.25, 0.2) node{$ \tiny\cdots$};  \draw[<-,thick] (1.5,-0.2)to[out=up,in=down](1.5,0.55); \draw(0.9, 0.7) node{$f$};
    \end{tikzpicture}}\otimes \bar {( h_1~\sli~h_2)\circ {_{\uparrow^{r+1}\downarrow^s}}\sigma_{\uparrow^r\downarrow^s\uparrow}}
    \\&=\bar f\otimes\bar{\left(
    \begin{tikzpicture}[baseline = 4mm, color=\clr]
               \draw (-0.18,0.4) rectangle (0.7,0);
                \draw(0.2, 0.2) node{$h_1$};
                  \draw(0.4, 0.7) node{$g$};
               \draw (0.98,0.55) rectangle (-0.18,0.85); \draw[->,thick] (-0.07, 0.4)to[out=up,in=down](-0.07,0.55); \draw(0.23, 0.48) node{$ \tiny\cdots$};  \draw[->,thick] (0.5,0.4)to[out=up,in=down](0.5,0.55);
                \draw[->,thick] (0.9, -0)to[out=up,in=down](0.9,0.55);
                 \draw (-0.18+1.25,0.4) rectangle (0.7+1.25,0);
                   \draw(-0.3+0.7+1.25, 0.2) node{$h_2$};
                 \draw[<-,thick] (-0.18+1.25+0.8,0.4)to[out=up,in=down](-0.18+1.25+0.8,0.8);
                 \draw[<-,thick] (1.2,0.4)to[out=up,in=down](1.2,0.8);
                \draw(1.5, 0.6) node{$ \tiny\cdots$};  
    \end{tikzpicture}\right)\circ
     {_{\uparrow^{r+1}\downarrow^s}\sigma_{\uparrow^r\downarrow^s\uparrow}}
     }\in \bar Y(\uparrow^{m}\downarrow^{n}, \uparrow^{r+1}\downarrow^{s})\otimes \bar H_1(\uparrow^{r+1}\downarrow^{s})(\sigma),
    \end{aligned}$$ where  $g_1$ is obtained from $g$ by removing all dots on $g$, and $(h_1, h_2)\in H(\uparrow^r)\times H(\downarrow^s)$ such that $h=h_1\otimes h_2$.
     Thanks to (c),
     $\psi_{a,d}$ is surjective.
   Now, the exactness of the  first   sequence in \eqref{conmuf1} follows immediately since
   $$\begin{aligned}\text{dim}\bar 1_dA_{\preceq \ob b}\otimes _{\bar A_{\ob b}^\circ} {_\downarrow \bar A}^\circ\bar 1_a
 &= |Y( \uparrow^m\downarrow^{n+1}, \uparrow^{r}\downarrow^{s})| |H(\uparrow^{r}\downarrow^{s})|\\&=\left (|Y( \uparrow^m\downarrow^{n}, \uparrow^{r}\downarrow^{s-1})|+|Y_1( \uparrow^m\downarrow^{n+1}, \uparrow^{r}\downarrow^{s})|\right )
 |H(\uparrow^{r}\downarrow^{s})|\\&=|Y( \uparrow^m\downarrow^{n}, \uparrow^{r}\downarrow^{s-1})| |H(\uparrow^{r}\downarrow^{s})|+|Y( \uparrow^m\downarrow^{n}, \uparrow^{r+1}\downarrow^{s})||D(\uparrow^{r+1})| |H(\uparrow^{r}\downarrow^{s})|\\&
  = \text{dim}1_d ({_\downarrow A}\otimes _{A}A_{\preceq \ob a})\bar 1_a+\text{dim}\bar 1_dA_{\preceq \ob c}\otimes _{ \bar A_{\ob c}^\circ}\bar A_\uparrow^\circ\bar 1_a.\end{aligned}$$
   \end{proof}

\begin{Lemma}\label{msikdjde1}  Suppose  $\varphi$, $\psi$, $\eta$ and $\varepsilon$ are $(A,\bar A^\circ)$-homomorphisms    in \eqref{shoretmor1}. Then
\begin{multicols}{2}
\item[(1)] $( x^\downarrow\otimes \text{Id})\circ \varphi=\varphi \circ (\text{Id}\otimes x_L^\downarrow )$,\item[(2)]$(\text{Id}\otimes  x_R^\uparrow )\circ \psi=\psi\circ ( x^\downarrow\otimes \text{Id})$,
 \item [(3)] $( x^\uparrow\otimes \text{Id})\circ \eta=\eta \circ (\text{Id}\otimes x_L^\uparrow )$,\item[(4)]$(\text{Id}\otimes  x_R^\downarrow )\circ \varepsilon=\varepsilon\circ ( x^\uparrow\otimes \text{Id})$.
    \end{multicols}
\end{Lemma}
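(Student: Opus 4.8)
The plan is to prove all four identities simultaneously by a single diagrammatic dot-sliding argument, and then to point out the one bookkeeping issue that needs attention.

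First I would reduce to basis elements. Since $\varphi,\psi,\eta,\varepsilon$ are $(A,\bar A^\circ)$-bimodule homomorphisms (this is part of Lemma~\ref{intailh1}) and $x^\diamond$, $x^\diamond_L$, $x^\diamond_R$ are bimodule endomorphisms of the bimodules on which they act (Lemma~\ref{jme}(2)--(3)), it suffices to check each of (1)--(4) on the basis diagrams $\bar f\otimes\bar g$, $f_1\otimes\bar g_1$, $\bar h\otimes\bar k$, $h_1\otimes\bar k_1$ occurring in the formulas of \eqref{shoretmor1}. On such an element each identity asserts an equality of two normally ordered dotted oriented Brauer diagrams that differ only by the placement of one dot: on one side the dot is put on the outer strand of the left tensor factor before the map is applied, on the other side it is put, after the map, on the outer strand of the designated factor of the target. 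Thus the only tools I need are the dot-slide relations of Lemma~\ref{dots} -- in particular parts (5)--(8), which carry a dot across a cup or a cap -- together with the elementary observation (Lemma~\ref{jme}(1)) that a dot on an outer strand may be pushed past any box sitting on the other strands, so that it is determined by the segment on which it lies.

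For the injections $\varphi$ and $\eta$ (parts (1) and (3)) the argument is short. Here $\varphi(\bar f\otimes\bar g)=(f\,\xli)\otimes\bar g$ and $\eta(\bar h\otimes\bar k)=(h\,\sli)\otimes\bar k$ merely adjoin an identity strand at the right. Applying $x^\downarrow\otimes\text{Id}$ to $\varphi(\bar f\otimes\bar g)$ puts a dot on that adjoined $\downarrow$-strand; in the tensor product this strand is glued onto the outer $\downarrow$-strand of $g$, so the two together form one segment of the resulting diagram, and sliding the dot along that segment -- legitimate in the quotient, the truncation to at most $\ell-1$ dots being respected because $x^\downarrow$ and $x^\downarrow_L$ are defined by composing with a dotted strand and then reducing modulo the same set of relations -- produces $\varphi(\bar f\otimes x^\downarrow_L(\bar g))$. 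This gives (1), and (3) is the identical computation with $\uparrow$ in place of $\downarrow$.

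For the surjections $\psi$ and $\varepsilon$ (parts (2) and (4)) one extra move is needed. The map $\psi$ bends the outer $\downarrow$-strand of $f_1$ over a cap and opens a new $\uparrow$-strand running down to the boundary, continued there by the $\sli$ in $\bar{g_1\,\sli}$; similarly $\varepsilon$ bends the outer $\uparrow$-strand of $h_1$ over a cap and opens a new $\downarrow$-strand continued by the $\xli$ in $\bar{k_1\,\xli}$. For (2): evaluating $\psi\circ(x^\downarrow\otimes\text{Id})$ first puts a dot on the outer $\downarrow$-strand of $f_1$; after $\psi$ this dot sits against the new cap, and by Lemma~\ref{dots}(6) (or (8)) it crosses the cap onto the new $\uparrow$-strand, slides down it and across the gluing onto $\sli$, reproducing the effect of $\text{Id}\otimes x^\uparrow_R$ on $\psi(f_1\otimes\bar g_1)$. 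Part (4) is the mirror computation, using Lemma~\ref{dots}(5) (or (7)) instead. Taking the direct sum over all admissible pairs $r,s$ then yields the identities for the maps of \eqref{shoretmor1} verbatim, which is precisely what is needed to refine the short exact sequences \eqref{diejdhd1} (and their $E$-counterparts) to their $i$-graded versions. Nothing deep enters; the only real difficulty is bookkeeping -- keeping orientations straight and, in each case, checking that $x^\diamond$, $x^\diamond_L$, $x^\diamond_R$ genuinely dot the two strands joined by the cup or cap introduced by the map, so that the applicable part of Lemma~\ref{dots} is the correct one.
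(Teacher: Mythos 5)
Your argument is essentially the paper's proof: (1) and (3) are exactly the computation via Lemma~\ref{jme}(1) (slide the dot along the adjoined identity strand and move the dotted identity, which is an element of $A$, across the $\otimes_A$ seam onto the second factor), and (2) and (4) are the same cap-crossing dot slides, just read in the opposite direction from the paper's chain of equalities. The one slip is the citation for (4): the strand that $\varepsilon$ bends over is a cap whose left leg points up and whose right leg points down, so the relevant slide is Lemma~\ref{dots}(8) -- precisely what the paper invokes -- whereas parts (5) and (7) of that lemma concern cups, which never occur in $\psi$ or $\varepsilon$; similarly in (2) your first citation, Lemma~\ref{dots}(6), is the correct one and the parenthetical alternative (8) is not. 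With those citations corrected, your proof coincides with the paper's.
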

\begin{proof}
Suppose $ (\bar f, \bar g)\in A_{\preceq \ob b}\bar 1_b\times \bar 1_{b \downarrow} { \bar A}^\circ\bar 1_\ob a$, where  $\ob a=(s,r)$, $b\in\ob b=(s-1, r)$ and $s\geq 1$. Thanks to Lemma~\ref{jme}(1),  $$( x^\downarrow\otimes \text{Id})\circ \varphi(\bar f\otimes \bar g)=( x^\downarrow\otimes \text{Id})(( f~\begin{tikzpicture}[baseline = 10pt, scale=0.5, color=\clr]
                \draw[<-,thick] (0,0.5)to[out=up,in=down](0,1.2);
    \end{tikzpicture})\otimes \bar g)=(f~\xdx )\otimes \bar g=f~\begin{tikzpicture}[baseline = 10pt, scale=0.5, color=\clr]
                \draw[<-,thick] (0,0.5)to[out=up,in=down](0,1.2);
    \end{tikzpicture}\otimes  \bar {1_b~\xdx}\circ \bar g =\varphi \circ (\text{Id}\otimes x_L^\downarrow )(\bar f\otimes \bar g)
 ,$$ proving (1). One can check  (3) similarly. If $ (f, \bar g)\in 1_c {_\downarrow A}\otimes _{A}A_{\preceq \ob a}\bar 1_\ob a$ and $c\in J$, then
    $$ \begin{aligned}
    &(\text{Id}\otimes  x_R^\uparrow  )\circ \psi(f\otimes \bar g)=(\text{Id}\otimes  x_R^\uparrow  )( \bar{\begin{tikzpicture}[baseline = 1mm, color=\clr]
                \draw[-,thick] (-0.06,0.5)to[out=up,in=down](-0.06,0.8); \draw(0.2, 0.6) node{$ \tiny\cdots$};  \draw[-,thick] (0.4,0.5)to[out=up,in=down](0.4,.8);  \draw[<-,thick,darkblue] (0.5,0.5) to[out=up,in=left] (0.78,0.78) to[out=right,in=up] (1.06,0.5);\draw (-0.1,0.5) rectangle (0.9,0);
                \draw(0.4, 0.25) node{$ f$}; \draw[-,thick] (1.06,0.5)to[out=up,in=down](1.06,0.01);
    \end{tikzpicture}}\otimes (  \bar{g~\begin{tikzpicture}[baseline = 10pt, scale=0.5, color=\clr]
                \draw[->,thick] (0,0.5)to[out=up,in=down](0,1.2);
    \end{tikzpicture}~}))
    =  \bar{\begin{tikzpicture}[baseline = 1mm, color=\clr]
                \draw[-,thick] (-0.06,0.5)to[out=up,in=down](-0.06,0.8); \draw(0.2, 0.6) node{$ \tiny\cdots$};  \draw[-,thick] (0.4,0.5)to[out=up,in=down](0.4,.8);  \draw[<-,thick,darkblue] (0.5,0.5) to[out=up,in=left] (0.78,0.78) to[out=right,in=up] (1.06,0.5);\draw (1.06,0.3) {\bdot};\draw (-0.1,0.5) rectangle (0.9,0);
                \draw(0.4, 0.25) node{$ f$}; \draw[-,thick] (1.06,0.5)to[out=up,in=down](1.06,0.01);
    \end{tikzpicture}}\otimes (  \bar{g~\begin{tikzpicture}[baseline = 10pt, scale=0.5, color=\clr]
                \draw[->,thick] (0,0.5)to[out=up,in=down](0,1.2);
    \end{tikzpicture}~})
    \overset{\text{Lemma~\ref{dots}(6)}}= \bar{\begin{tikzpicture}[baseline = 1mm, color=\clr]
                \draw[-,thick] (-0.06,0.5)to[out=up,in=down](-0.06,0.8); \draw(0.2, 0.6) node{$ \tiny\cdots$};  \draw[-,thick] (0.4,0.5)to[out=up,in=down](0.4,.8);  \draw[<-,thick,darkblue] (0.5,0.5) to[out=up,in=left] (0.78,0.78) to[out=right,in=up] (1.06,0.5);\draw (-0.1,0.5) rectangle (0.9,0);
                \draw (0.55,0.61) node{$ \bullet$};
                \draw(0.4, 0.25) node{$ f$}; \draw[-,thick] (1.06,0.5)to[out=up,in=down](1.06,0.01);
    \end{tikzpicture}}\otimes (  \bar{g~\begin{tikzpicture}[baseline = 10pt, scale=0.5, color=\clr]
                \draw[->,thick] (0,0.5)to[out=up,in=down](0,1.2);
    \end{tikzpicture}~})\\
    = &\psi\circ ( x^\downarrow\otimes \text{Id}) (f\otimes  \bar g),
    \end{aligned} $$
   proving (2). Replacing  Lemma~\ref{dots}(6) by  Lemma~\ref{dots}(8), one can verify (4) by arguments similar to those for (2).
\end{proof}

\begin{Theorem}\label{usuactifuc1} For each $i\in \Bbbk$,
   there are two short exact sequences of functors  from $\bar A^\circ$-fdmod to $A$-lfdmod:
 \begin{equation}\label{keyses}\begin{aligned}& 0\rightarrow \Delta\circ F_i^\downarrow\rightarrow  F_i\circ \Delta\rightarrow\Delta\circ   F_{i}^\uparrow\rightarrow0,\\&0\rightarrow \Delta\circ E_i^\uparrow\rightarrow  E_i\circ \Delta\rightarrow\Delta\circ   E_{i}^\downarrow\rightarrow0.
 \end{aligned}\end{equation}
\end{Theorem}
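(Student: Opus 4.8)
The plan is to refine the two short exact sequences of $(A,\bar A^\circ)$-bimodules in \eqref{shoretmor1} — from which the two sequences of Lemma~\ref{intailh1} were deduced — by passing to generalized $i$-eigenspaces of suitable ``dot'' endomorphisms, and then to re-run the identifications in the proof of Lemma~\ref{intailh1} while carrying the label $i$ throughout. First I would equip the three terms of the first sequence in \eqref{shoretmor1} with the endomorphisms $\text{Id}\otimes x_L^\downarrow$, $x^\downarrow\otimes\text{Id}$ and $\text{Id}\otimes x_R^\uparrow$ respectively, and the three terms of the second sequence with $\text{Id}\otimes x_L^\uparrow$, $x^\uparrow\otimes\text{Id}$ and $\text{Id}\otimes x_R^\downarrow$. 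By Lemma~\ref{jme} these are $(A,\bar A^\circ)$-bimodule endomorphisms, and by Lemma~\ref{msikdjde1} the structure maps $\varphi,\psi$ (resp.\ $\eta,\varepsilon$) intertwine the endomorphisms attached to the terms they connect.

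Next I would observe that, since $A$ is locally finite dimensional (Theorem~\ref{bcnr}), each term of \eqref{shoretmor1} is a direct sum of finite dimensional pieces $1_d(\cdots)\bar 1_e$, each preserved by the relevant dot endomorphism; hence, exactly as in Lemmas~\ref{Ei} and~\ref{EFSX}, each term is the direct sum over $i\in\Bbbk$ of the generalized $i$-eigenspaces of its dot endomorphism. Because $\varphi,\psi$ (resp.\ $\eta,\varepsilon$) intertwine these endomorphisms, they carry the generalized $i$-eigenspace of the source into that of the target, so \eqref{shoretmor1} decomposes as the direct sum over $i\in\Bbbk$ of the short exact sequences obtained by restricting to generalized $i$-eigenspaces (a direct summand of an exact sequence being exact). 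It then remains to identify the $i$-th summand with the corresponding sequence in \eqref{keyses}.

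For the identification I would argue as follows. On the middle bimodule ${_\downarrow A}\otimes_A A_{\preceq \ob a}\bar 1_{\ob a}$ the operator $x^\downarrow\otimes\text{Id}$ involves only the left factor, and ${_\downarrow A}=\bigoplus_{j}({_\downarrow A})_j$ as $(A,A)$-bimodules by Lemma~\ref{jme}(2); hence its generalized $i$-eigenspace is $({_\downarrow A})_i\otimes_A A_{\preceq \ob a}\bar 1_{\ob a}$, which on tensoring with the input module and summing over $\ob a\in I$ produces $F_i\circ\Delta$ in exactly the way that ${_\downarrow A}\otimes_A A_{\preceq \ob a}\bar 1_{\ob a}$ produced $F\circ\Delta$ in Lemma~\ref{intailh1}. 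Running the identifications from the proof of Lemma~\ref{intailh1} with the eigenvalue label in place then turns the left and right terms into the summands $\Delta\circ F_i^\downarrow$ and $\Delta\circ F_i^\uparrow$ of $\Delta\circ F^\downarrow$ and $\Delta\circ F^\uparrow$; here one uses the isomorphisms of Lemma~\ref{Aa} together with the dot formulas \eqref{inde} to see that $x_L^\diamond$ and $x_R^\diamond$ correspond to the Jucys--Murphy-type elements whose generalized eigenvalue indexes the decompositions $E^\diamond=\bigoplus_i E_i^\diamond$ and $F^\diamond=\bigoplus_i F_i^\diamond$ behind \eqref{word}. This gives the first sequence of \eqref{keyses}, and the second follows identically from the second sequence of \eqref{shoretmor1} using parts~(3)--(4) of Lemma~\ref{msikdjde1} in place of~(1)--(2). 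The main obstacle, and the step I expect to need the most care, is this last piece of bookkeeping: verifying that the boundary dot appearing in the definitions of $\varphi,\psi,\eta,\varepsilon$ is precisely the one whose generalized eigenvalue records the index $i$ of \eqref{word}. Everything else is a formal direct-sum argument.
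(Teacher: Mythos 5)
Your proposal is correct and follows essentially the same route as the paper, whose proof of Theorem~\ref{usuactifuc1} consists precisely of invoking Lemma~\ref{msikdjde1} and passing to generalized eigenspaces in the sequences of Lemma~\ref{intailh1} (i.e.\ in \eqref{shoretmor1}). The final bookkeeping you flag as the main obstacle is essentially definitional, since $E_i^\diamond$, $F_i^\diamond$ and $E_i$, $F_i$ are defined (Lemma~\ref{EFSX}, \eqref{Eii}) exactly as generalized $i$-eigenspaces of the operators $x_L^\diamond$, $x_R^\diamond$, $x^\diamond$ that appear in the intertwining relations of Lemma~\ref{msikdjde1}.
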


\begin{proof} Thanks to  Lemma~\ref{msikdjde1}, the short exact sequences in \eqref{keyses} follow from  those in \eqref{diejdhd1} by
 passing to appropriate generalized eigenspaces.\end{proof}

\subsection{Characters}Suppose $a=a_1\cdots a_{r+s}\in \ob a=(r, s)$.  For any $i$, $1\leq i\leq r+s$, define
$$X_i1_a=\begin{cases} 1_{a_1\cdots a_{i-1}}~\xd~1_{a_{i+1}\cdots a_{r+s}}, & \text{if $a_i=\sli$~,}\\
1_{a_1\cdots a_{i-1}}~\xdx~1_{a_{i+1}\cdots a_{r+s}}, & \text{if $a_i=\xli$~.}\\
   \end{cases}
   $$
   Then     $\{ X_i1_{a}\mid 1\le i\le {r+s}, a\in \ob a\}$ generates a finite dimensional  commutative subalgebra of  $ A_\ob a$.
 For any $\mathbf i=(i_1,i_2,\ldots, i_{r+s})\in \Bbbk^{r+s}$, there is an idempotent $1_{a;\mathbf i}\in  A_\ob a$ which projects any $M\in  A_\ob a$-fdmod
onto $M_{a;\mathbf i}$,  the simultaneous generalized eigenspace of $ X_11_{ a} ,\ldots, X_{ r+s}1_{ a}$ with respect to $\mathbf i$.
When $r=s=0$, $\bar A_\ob a\cong \Bbbk$. In this case, there is a unique idempotent $1_{\emptyset; \emptyset}$.
\begin{Defn} For any $V\in A$-lfdmod, define
\begin{equation}
\text{ch}V=\sum_{a\in J, \mathbf i\in \Bbbk^{\ell(a)}}(\dim 1_{a;\mathbf i}V)e^{a;\mathbf i},
\end{equation}
where $e^{a;\mathbf i}$ are formal  symbols, $\ell(a)=\ell_\uparrow(a)+\ell_\downarrow(a)$, and $1_{a; \mathbf i}V$ is the   simultaneous generalized eigenspace of $ X_11_{ a} ,\ldots, X_{\ell(a)} 1_{ a}$ corresponding to $\mathbf i$.
\end{Defn}

 Suppose $\ob a=(r, s)$ and $\lambda=(\lambda^\downarrow, \lambda^\uparrow)\in \Lambda_\ob a$, where  $\Lambda_\ob a$ is given in \eqref{bipar}. Motivated by
  Lemma~\ref{Aa} and \eqref{contss}-\eqref{word}, We define  the content $c(x)$  of a node $x$  with respect to  $\lambda$  as follow:
\begin{equation}\label{colr}  c(x)=\begin{cases} c_\uparrow(x), & \text{if $x$ is in $[\lambda^\uparrow]$,}\\
     c_\downarrow(x),   & \text{if $x$ is in $[\lambda^\downarrow]$,} \end{cases} \end{equation} where
$c_\uparrow(x)=u_j+k-l$ (resp., $c_\downarrow(x)=u_j'-k+l$)  if $x$ is at the $l$th row and $k$th column of   the $j$th component of the Young diagram $[\lambda^\uparrow]$ (resp., $[\lambda^\downarrow]$).
Recall the 
cell modules $S(\lambda)$'s of $\bar A_\ob a$ in subsection~\ref{irrddjs}.  The following result follows from Lemma~\ref{Aa}, \eqref{word} and  the branching rules  of the cell modules for degenerate cyclotomic Hecke algebras.

\begin{Lemma}\label{bran}Suppose $i\in\Bbbk$ and $\lambda=(\lambda^\downarrow, \lambda^\uparrow)\in \Lambda_{(r, s)} $, where $r, s\in \mathbb N$. We have
\begin{itemize}
\item [(1)] $F_i^\downarrow S(\lambda)$ (resp., $E_i^\uparrow  S(\lambda)$) has a multiplicity-free filtration with sections $S(\mu)$, where  $ \mu\in\Lambda_{(r-1, s)}$ (resp., $\mu\in\Lambda_{(r, s-1)}$)
is obtained by removing  a box in $[\lambda^\downarrow]$ (resp., $[\lambda^\uparrow]$) of content $i$.
\item [(2)]$ E_i^\downarrow  S(\lambda)$ (resp., $F_i^\uparrow S(\lambda)$) has a multiplicity-free filtration with sections $S(\mu)$, where  $ \mu\in\Lambda_{(r+1, s)}$ (resp., $\mu\in\Lambda_{(r, s+1)}$)
is obtained by adding  a box in $[\lambda^\downarrow]$ (resp., $[\lambda^\uparrow]$) of content $i$.
\end{itemize}
\end{Lemma}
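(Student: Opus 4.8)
The plan is to transport the statement to the side of degenerate cyclotomic Hecke algebras and invoke their branching rules. Fix $\ob a=(r,s)$. By Lemma~\ref{isomhecke}(1)--(2) we may replace $\bar A_{\ob a}$-fdmod by $\bar A_{\downarrow^r\uparrow^s}$-fdmod, and by Lemma~\ref{Aa} we have $\bar A_{\downarrow^r\uparrow^s}\cong H_{\ell,r}(-\mathbf u')\otimes H_{\ell,s}(\mathbf u)$, under which the cell module $S(\lambda)=\beta(S(\lambda^\downarrow)\boxtimes S(\lambda^\uparrow))$ corresponds to $S(\lambda^\downarrow)\boxtimes S(\lambda^\uparrow)$, where $S(\lambda^\downarrow)$ is a cell module of $H_{\ell,r}(-\mathbf u')$ and $S(\lambda^\uparrow)$ a cell module of $H_{\ell,s}(\mathbf u)$. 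By \eqref{word}, under this equivalence the functors $E_i^\uparrow$ and $F_i^\uparrow$ act only in the $H_{\ell,s}(\mathbf u)$-variable, as the $i$-restriction and $i$-induction functors defined through the operators $1\otimes L_j^\uparrow$ of \eqref{inde}, while $E_i^\downarrow$ and $F_i^\downarrow$ act only in the $H_{\ell,r}(-\mathbf u')$-variable, through the operators $-L_i^\downarrow\otimes 1$. Since $\boxtimes$ with a fixed module is exact and the functor $\beta$ is exact, everything reduces to the corresponding branching statement for a single degenerate cyclotomic Hecke algebra $H_{\ell,n}(\mathbf e)$.

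For that, I would use the branching rules for cell modules of $H_{\ell,n}(\mathbf e)$ in the refined form that records the action of $L_n$ (see \cite{AMR}). Restriction along $H_{\ell,n-1}(\mathbf e)\hookrightarrow H_{\ell,n}(\mathbf e)$ sends $S(\mu)$ to a module with a multiplicity-free filtration whose sections are the cell modules $S(\nu)$, exactly one for each $\ell$-partition $\nu$ obtained from $\mu$ by removing a removable box; moreover the filtration can be chosen so that the section attached to removing the box at row $l$, column $k$ of the $j$-th component lies in the generalized $(e_j+k-l)$-eigenspace of $L_n$. Dually, using that $H_{\ell,n}(\mathbf e)$ is free over $H_{\ell,n-1}(\mathbf e)$ together with its cellular structure (e.g.\ \cite[Theorem~6.1]{AMR}), the induced module of $S(\mu)$ carries a multiplicity-free filtration with sections $S(\nu)$ over all $\nu$ obtained by adding an addable box, again compatibly with the generalized $L_n$-eigenspace decomposition. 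Passing to the generalized $i$-eigenspace of the relevant operator then retains precisely the sections for which the box removed or added has the prescribed eigenvalue.

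It remains to match that eigenvalue with the content function $c(x)$ of \eqref{colr}. On the $H_{\ell,s}(\mathbf u)$-factor the relevant operator is a shift of $1\otimes L^\uparrow$, whose eigenvalue on a box at row $l$, column $k$ of the $j$-th component of $[\lambda^\uparrow]$ equals $u_j+k-l=c_\uparrow(x)$; on the $H_{\ell,r}(-\mathbf u')$-factor the operator is a shift of $-L^\downarrow\otimes 1$, whose eigenvalue on the analogous box of $[\lambda^\downarrow]$ equals $-((-u_j')+k-l)=u_j'-k+l=c_\downarrow(x)$, which is exactly how \eqref{colr} is set up. Tensoring the one-variable filtration with the untouched factor and applying $\beta$ then yields the four multiplicity-free filtrations asserted in (1) and (2). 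I expect the only non-formal ingredient to be the compatibility of the Hecke-algebra branching filtration with the $L_n$-eigenspace decomposition: this requires the refined branching behaviour of the $L_i$-operators, available from the Murphy-type cellular bases and \cite[Lemma~6.6]{AMR}, rather than merely the fact that the filtration sections are cell modules; everything else is bookkeeping through the Morita equivalence and the content formula.
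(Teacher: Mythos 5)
Your argument is exactly the paper's: the authors prove Lemma~\ref{bran} by a one-line appeal to Lemma~\ref{Aa}, \eqref{word} and the branching rules for cell modules of degenerate cyclotomic Hecke algebras, which is precisely the reduction you carry out (Morita transfer via Lemma~\ref{isomhecke}, identification of $E_i^\diamond$, $F_i^\diamond$ with $i$-restriction/$i$-induction through $1\otimes L^\uparrow$ and $-L^\downarrow\otimes 1$, and the eigenvalue--content matching with \eqref{colr}). Your write-up just makes explicit the eigenvalue bookkeeping, including the sign on the $H_{\ell,r}(-\mathbf u')$ factor, that the paper leaves to the cited branching rules.
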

Recall $\mathbb I, \mathbb I_\mathbf u$ and $\mathbb I_{\mathbf u'}$ in \eqref{iu}.
By Lemma~\ref{bran}, $E_i^\uparrow$ and $F_i^\uparrow$ (resp., $E_i^\downarrow$ and $F_i^\downarrow$) are non-zero only if $i\in\mathbb I_\mathbf u$ (resp., $i\in \mathbb I_{\mathbf u'}$).
Thanks to  Theorem~\ref{usuactifuc1} and  Lemma~\ref{Ei}, \begin{equation}\label{EFi}
E=\bigoplus_{i\in \mathbb I} E_i, \quad \quad     F=\bigoplus_{i\in \mathbb I} F_i.
\end{equation}

Recall that $\Lambda=\bigcup_{\ob a\in I}\Lambda_\ob a$ in \eqref{lambdada}. Let  $\Xi$ be  the graph such that the set of  vertices is  $\Lambda$ and  any edge is of form   $\lambda$---$\mu$
whenever $\mu$ is obtained from $\lambda$ by either   adding   a box or removing a box.
A path $\gamma:\lambda \rightsquigarrow\mu$ in $\Xi$ is a finite sequence of vertices $\lambda=\lambda_0, \lambda_1,\ldots,\lambda_m=\mu$ with each $\lambda_{j}$---$\lambda_{j+1}$ connected
by an edge.  We color the edge  $\lambda_{j}$---$\lambda_{j+1}$ by $c(x)$ if
$\lambda_{j+1}$ is obtained  from $\lambda_i$ by either   adding an addable node $x$  or  removing a removable  node $x$.

 \begin{Defn}\label{typer}Suppose $\mathbf i=(i_1,\ldots, i_m)$. A path   $\gamma$ is of type  $(a; \mathbf i)$ if  $i_j$ is the color of the $j$th edge of $\gamma$, and if $a=a_1\cdots a_m \in J$  such that
 \begin{itemize}
\item [(1)] $a_{j+1}=\uparrow$ if $\lambda_{j+1}$ is obtained from $\lambda_j$ by adding a box to $\lambda_j^\uparrow$ or removing a box from $\lambda_j^\downarrow$,
\item [(2)] $a_{j+1}=\downarrow$ if  $\lambda_{j+1}$ is obtained from $\lambda_j$ by adding a box to $\lambda_j^\downarrow$   or removing a box from  $\lambda_j^\uparrow$.
 \end{itemize}
 \end{Defn}
When $\lambda=\mu=(\emptyset, \emptyset)$, we say there is a unique path from $\lambda$ to $\mu$ with type $(\emptyset; \emptyset)$.

\begin{Prop}\label{charcter1}
For any $\lambda\in\Lambda$,
$\text{ch}\tilde\Delta(\lambda)=\sum_{\gamma}e^{\text{type}(\gamma)}$
where the summation ranges over all paths $\gamma: (\emptyset, \emptyset) \rightsquigarrow \lambda$ and $\tilde\Delta(\lambda)$ is given in Definition~\ref{stanpro}.
\end{Prop}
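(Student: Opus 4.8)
The strategy is to compute the character of $\tilde\Delta(\lambda) = \Delta(S(\lambda))$ by iterating the branching rules from Lemma~\ref{bran}, translated from the combinatorics of the cell modules $S(\lambda)$ of $\bar A^\circ$ to the module $\Delta(S(\lambda))$ over $A$ via the exact sequences of Theorem~\ref{usuactifuc1}. The key observation is that $\operatorname{ch}\tilde\Delta(\lambda)$ is entirely determined by how the functors $E_i, F_i$ act, because by the general theory of upper finite fully stratified categories (Corollary~\ref{irr} and Theorem~\ref{COBMW1}), the simultaneous generalized eigenspace $1_{a;\mathbf i}\tilde\Delta(\lambda)$ can be read off by successively applying the divided-power functors $E_i$ and $F_i$ and taking dimensions. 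More precisely, for $a = a_1\cdots a_m \in J$ and $\mathbf i = (i_1,\ldots,i_m)$, one has
\begin{equation}\label{ijplan1}
\dim 1_{a;\mathbf i}\tilde\Delta(\lambda) = \dim\left( G_{a_m, i_m}\circ\cdots\circ G_{a_1, i_1}\,\tilde\Delta(\lambda)\right)_{\emptyset;\emptyset},
\end{equation}
where $G_{\uparrow, i} := E_i$ and $G_{\downarrow, i} := F_i$ (one checks the identification of $X_j 1_a$ with the generator $x^\diamond$ acting in the $j$th slot, using Lemma~\ref{jme}(1) to slide past the other strands). So the plan reduces to: (i) set up this reduction carefully; (ii) compute the right-hand side by induction on $m$ using Theorem~\ref{usuactifuc1} and Lemma~\ref{bran}.

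\textbf{Step 1: reduce to counting paths on $\bar A^\circ$-modules.} First I would argue by induction on the length $m$ of the word $a$. The base case $m=0$ is the statement that $\tilde\Delta(\lambda) \ne 0$ and $1_\emptyset\tilde\Delta(\lambda) = S(\lambda)$ when $\lambda \in \Lambda_{(0,0)}$, i.e. $\lambda = (\emptyset,\emptyset)$, which is built into the construction $\Delta = \bigoplus j^{\ob a}_!$. For the inductive step, I would peel off the last letter $a_m$ of $a$: applying $G_{a_m, i_m}$ to $\tilde\Delta(\lambda)$ and using the appropriate exact sequence from \eqref{keyses}, one gets that $G_{a_m, i_m}\circ\Delta(S(\lambda))$ has a two-step filtration with subquotients $\Delta(E_{i_m}^{\uparrow}S(\lambda))$ and $\Delta(E_{i_m}^{\downarrow}S(\lambda))$ (for $a_m = \uparrow$; dually for $a_m = \downarrow$ with $F^{\downarrow}, F^{\uparrow}$). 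Since $\Delta$ is exact (it is $j^{\ob a}_! = A_{\preceq\ob a}\bar 1_{\ob a}\otimes_{\bar A_{\ob a}} ?$ applied to projectives, or more simply one invokes exactness from \cite{GRS3}) and $\operatorname{ch}$ is additive on short exact sequences, $\operatorname{ch}(G_{a_m,i_m}\tilde\Delta(\lambda))$ equals the sum over the two subquotients. Combined with Lemma~\ref{bran}, $E_{i_m}^{\uparrow}S(\lambda)$ (resp.\ $E_{i_m}^{\downarrow}S(\lambda)$) has a multiplicity-free filtration by $S(\mu)$ where $\mu$ is obtained from $\lambda$ by removing (resp.\ adding) a box of content $i_m$ in the up-component (resp.\ down-component), which matches exactly clauses (1) and (2) of Definition~\ref{typer} for the letter $a_m = \uparrow$. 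The two cases for $a_m = \downarrow$ match the other halves of those clauses.

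\textbf{Step 2: assemble the path count.} Putting the induction together, $\operatorname{ch}\tilde\Delta(\lambda)$ is a sum over pairs $(a;\mathbf i)$ weighted by $\dim 1_{a;\mathbf i}\tilde\Delta(\lambda)$, and the recursion above shows this dimension counts precisely the number of sequences $\lambda = \lambda_0, \lambda_1, \ldots, \lambda_m = (\emptyset,\emptyset)$ where each step removes a box (when reading $a$ right-to-left in the "annihilation" direction) or adds a box, with colors and up/down labels prescribed by $\mathbf i$ and $a$ according to Definition~\ref{typer}. Reversing the order of the sequence turns this into a count of paths $\gamma: (\emptyset,\emptyset)\rightsquigarrow\lambda$ in the graph $\Xi$ of the prescribed type $(a;\mathbf i)$ — noting that the up/down conventions in Definition~\ref{typer} are stated symmetrically with respect to adding/removing, so reversal is compatible. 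Hence $\operatorname{ch}\tilde\Delta(\lambda) = \sum_{a,\mathbf i}(\#\{\gamma:(\emptyset,\emptyset)\rightsquigarrow\lambda \text{ of type }(a;\mathbf i)\})\,e^{a;\mathbf i} = \sum_\gamma e^{\operatorname{type}(\gamma)}$, as claimed.

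\textbf{Main obstacle.} The delicate point is Step 1's reduction \eqref{ijplan1}: one must verify that applying $E_i$ or $F_i$ to an $A$-module and then restricting to the trivial weight space $1_\emptyset(?)$ genuinely extracts the multiplicity data of $1_{a;\mathbf i}(?)$, i.e.\ that the grading/eigenvalue bookkeeping from the definition of $E_i = (_\uparrow A)_i \otimes_A ?$ (the generalized $i$-eigenspace of $x^\uparrow$) is compatible with the action of the $X_j 1_a$ on the original module. This requires knowing that $x^\uparrow$ (adding a dot on the new strand) corresponds, after the tensor-hom adjointness, to the operator $X_{m}1_a$ in the last slot, and that the earlier operators $X_1 1_{a'},\ldots,X_{m-1}1_{a'}$ commute past the cap/cup data — which is exactly the content of Lemma~\ref{jme}(1) and the bimodule structure in \eqref{act123}. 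I would carry this out by a careful but routine induction tracking eigenvalues through the functors $E, F$ and their restriction to blocks, citing \cite{GRS3} for the general locally unital machinery. The branching combinatorics (Step 2) is then bookkeeping, and the exact sequences of Theorem~\ref{usuactifuc1} do the real work of transporting the Hecke-algebra branching rule to the category $A$-lfdmod.
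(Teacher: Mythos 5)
Your proposal is correct and takes essentially the same route as the paper: reduce the statement to showing $\dim 1_{a;\mathbf i}\tilde\Delta(\lambda)$ equals the number of paths $(\emptyset,\emptyset)\rightsquigarrow\lambda$ of type $(a;\mathbf i)$, use $1_a(EV)\cong 1_{a\uparrow}V$ and $1_a(FV)\cong 1_{a\downarrow}V$ (so $\dim 1_{a;\mathbf i}E_iV=\dim 1_{a\uparrow;\mathbf i\,i}V$, etc.), and induct on the length of $a$ via the exact sequences of Theorem~\ref{usuactifuc1} together with the branching rules of Lemma~\ref{bran}. The only blemish is notational: in your displayed reduction the composition should be $G_{a_1,i_1}\circ\cdots\circ G_{a_m,i_m}$ (the functor attached to the \emph{last} letter applied first), which is exactly what your inductive narrative of peeling off $a_m$ actually does.
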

\begin{proof}Suppose $\lambda\in\Lambda_\ob a$ and $\ob a\in I$. Thanks to  \eqref{exa}, $\tilde\Delta(\lambda)=\bigoplus_{\ob d\succeq \ob a}1_{ {\ob d}}\tilde\Delta(\lambda)$ and  $ 1_{\ob a} \tilde\Delta(\lambda)= S(\lambda)$.
In order to prove the required formula on $\text{ch}\tilde\Delta(\lambda)$, it
suffices to show that \begin{equation}\label{kkkey} \dim 1_{a;\mathbf i}\tilde \Delta(\lambda)=| \{\gamma: (\emptyset, \emptyset) \rightsquigarrow\lambda\mid \gamma\text{ is of type }(a;\mathbf i)\}|\end{equation}  for all $\mathbf  i\in  \Bbbk^m$ and all $a=a_1\cdots a_m\in J$ and all $m\in \mathbb N$.
We prove \eqref{kkkey}  by induction on $m\in \mathbb N $.

Suppose that  $m=0$. If $\lambda\neq(\emptyset, \emptyset)$, then $1_{\emptyset;\emptyset} \tilde\Delta(\lambda)=0$. Otherwise,   $1_{\emptyset; \emptyset} \tilde\Delta(\lambda)=\Bbbk$. So, the result holds for $m=0$. In general,  by \eqref{EF}, $$ 1_a(EV)=1_{a} {_{\uparrow}A}\otimes_A V \cong 1_{a\uparrow}V, \quad 1_a(FV)= 1_a {_{\downarrow} A}\otimes_A V \cong 1_{a\downarrow}V$$
for  any $V\in A$-lfdmod. So, \begin{equation}\label{cdcedccde1}\dim 1_{a; \mathbf i} E_i V=\dim 1_{a \uparrow; \mathbf i i} V,\qquad \dim 1_{a; \mathbf i} F_i V=\dim 1_{a \downarrow; \mathbf i i} V.\end{equation}
Thanks to \eqref{keyses} and Lemma~\ref{bran},  $ E_{i}\tilde \Delta(\lambda)$ (resp., $ F_{i}\tilde \Delta(\lambda)$) has a multiplicity-free
$\tilde\Delta$-filtration such that $\tilde\Delta(\mu)$ appears as a section if and only if  $ \mu$
is obtained by either removing  a box in $[\lambda^\uparrow]$ (resp., $[\lambda^\downarrow]$) of content $i$ or adding  a box in $[\lambda^\downarrow]$ (resp., $[\lambda^\uparrow]$) of content $i$.
Now \eqref{kkkey} follows from \eqref{cdcedccde1} and induction on $m$, immediately.
\end{proof}

\begin{Cor}\label{twopath1}
Suppose  $(\lambda, \mu)\in\Lambda\times \bar \Lambda_\ob a$, $\ob a\in I$.  If $[\tilde \Delta(\lambda):L(\mu)]\neq0$, then there are two  paths $\gamma: (\emptyset, \emptyset)  \rightsquigarrow \lambda$
and $\delta: (\emptyset, \emptyset)\rightsquigarrow\mu$  such that  $\gamma$ and $\delta$ are of the same type $(a; \mathbf i)$ and $a\in \ob a$.
\end{Cor}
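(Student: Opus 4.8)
The plan is to derive everything from the character formula of Proposition~\ref{charcter1}, using only the exactness of the functors $V\mapsto 1_{a;\mathbf i}V$ and the positivity of characters. First I would attach a suitable object and weight to $\mu$. Since $\mu\in\bar\Lambda_\ob a$, Corollary~\ref{irr}(1) gives $\bar 1_\ob a L(\mu)=D(\mu)\neq 0$, so $1_aL(\mu)\neq 0$ for some $a\in\ob a$. The elements $X_i1_a$ ($1\le i\le\ell(a)$) generate a finite dimensional commutative subalgebra of $A_\ob a$ and $1_aL(\mu)$ is finite dimensional, so it decomposes into simultaneous generalised eigenspaces; I fix $\mathbf i\in\Bbbk^{\ell(a)}$ with $1_{a;\mathbf i}L(\mu)\neq 0$. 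Thus $e^{a;\mathbf i}$ occurs in $\text{ch}\,L(\mu)$ with positive coefficient, and $a\in\ob a$.

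Next I would compare characters. Each functor $V\mapsto 1_{a;\mathbf i}V$ is exact, so $\text{ch}$ is additive along any finite filtration in $A\text{-lfdmod}$ and all coefficients of a character are non-negative. The hypothesis $[\tilde\Delta(\lambda):L(\mu)]\neq 0$ supplies a finite filtration of $\tilde\Delta(\lambda)$ by submodules one of whose sections is isomorphic to $L(\mu)$; additivity then shows that $\text{ch}\,\tilde\Delta(\lambda)-\text{ch}\,L(\mu)$ has non-negative coefficients, so $\dim 1_{a;\mathbf i}\tilde\Delta(\lambda)\ge\dim 1_{a;\mathbf i}L(\mu)>0$. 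I claim the same inequality holds with $\mu$ in place of $\lambda$. Indeed, $\Delta$ is right exact and the cell module $S(\mu)$ has $D(\mu)$ as a quotient, so $\tilde\Delta(\mu)=\Delta(S(\mu))$ surjects onto $\bar\Delta(\mu)=\Delta(D(\mu))$, which is nonzero (its $\ob a$-part is $D(\mu)$) and is a quotient of $\Delta(\mu)$, hence has $L(\mu)$ as a quotient by Corollary~\ref{irr}(1). Therefore $[\tilde\Delta(\mu):L(\mu)]\neq 0$, and running the previous argument for $\tilde\Delta(\mu)$ gives $\dim 1_{a;\mathbf i}\tilde\Delta(\mu)\ge\dim 1_{a;\mathbf i}L(\mu)>0$.

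Finally I would invoke Proposition~\ref{charcter1} twice. It identifies $\dim 1_{a;\mathbf i}\tilde\Delta(\lambda)$ with the number of paths $(\emptyset,\emptyset)\rightsquigarrow\lambda$ of type $(a;\mathbf i)$, and likewise $\dim 1_{a;\mathbf i}\tilde\Delta(\mu)$ with the number of paths $(\emptyset,\emptyset)\rightsquigarrow\mu$ of type $(a;\mathbf i)$. By the previous paragraph both counts are positive, so there exist paths $\gamma\colon(\emptyset,\emptyset)\rightsquigarrow\lambda$ and $\delta\colon(\emptyset,\emptyset)\rightsquigarrow\mu$ of the same type $(a;\mathbf i)$ with $a\in\ob a$, which is exactly the assertion. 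No step is genuinely hard; the only point demanding a little care is the first one, namely choosing a representative object $a$ inside the class $\ob a$ itself (not merely some object with the right $\sim$-class) at which a simultaneous $X$-eigenspace of $L(\mu)$ is nonzero, so that the common type $(a;\mathbf i)$ really satisfies $a\in\ob a$.
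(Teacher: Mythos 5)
Your proof is correct and follows essentially the same route as the paper, which establishes this corollary by combining Corollary~\ref{irr}(1) with the character formula of Proposition~\ref{charcter1} (deferring the details to the analogous argument in \cite{GRS3}). The character-positivity comparison you spell out --- choosing $a\in\ob a$ and $\mathbf i$ with $1_{a;\mathbf i}L(\mu)\neq 0$, using exactness of generalized eigenspace functors, and noting that the surjection $\tilde\Delta(\mu)\twoheadrightarrow\bar\Delta(\mu)$ forces $[\tilde\Delta(\mu):L(\mu)]\neq 0$ --- is precisely the argument being invoked there.
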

\begin{proof}Mimicking arguments in the proof of \cite[Corollary~5.11]{GRS3}, one can verify this result by using
Corollary~\ref{irr}(1) and  Proposition~\ref{charcter1}, immediately.
\end{proof}

\begin{Theorem}\label{sjdsdh}
Suppose  $\mathbb I_{\mathbf u}\bigcap \mathbb I_{\mathbf u'}=\emptyset$, where  $\mathbb I_{\mathbf u}$ and $\mathbb I_{\mathbf u'}$ are in  ~\eqref{iu}. Then
 \begin{itemize}\item [(1)] $\mathbf P(\mu)=\Delta(\mu)$ for all  $\mu\in \bar\Lambda$,
\item [(2)]  $\Delta: \bar A^\circ\text{-mod}\rightarrow A\text{-mod} $ is an equivalence of categories. \end{itemize}
\end{Theorem}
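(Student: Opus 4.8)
The plan is to prove (1) first and to deduce (2) from it. For (1), recall from Corollary~\ref{ijxxexeu}(1) that $\mathbf P(\mu)$ has a finite $\Delta$-flag with $(\mathbf P(\mu):\Delta(\mu))=1$ and $(\mathbf P(\mu):\Delta(\nu))=[\bar\Delta(\nu):L(\mu)]$ for $\nu\neq\mu$, the latter being non-zero only if $\rho(\nu)\succ\rho(\mu)$. Hence it is enough to show $[\bar\Delta(\nu):L(\mu)]=0$ whenever $\nu\neq\mu$; equivalently, that every proper standard module is irreducible, $\bar\Delta(\mu)\cong L(\mu)$ for all $\mu\in\bar\Lambda$.

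The key input is the path combinatorics of Proposition~\ref{charcter1}--Corollary~\ref{twopath1}. Suppose $\mu\in\bar\Lambda_{\ob a}$ with $\ob a=(r,s)$ and $\nu\neq\mu$ with $[\bar\Delta(\nu):L(\mu)]\neq0$. Since $D(\nu)$ is a quotient of $S(\nu)$ and $\Delta$ is exact, $\bar\Delta(\nu)$ is a quotient of $\tilde\Delta(\nu)$, so $[\tilde\Delta(\nu):L(\mu)]\neq0$, and Corollary~\ref{twopath1} yields paths $\gamma:(\emptyset,\emptyset)\rightsquigarrow\nu$ and $\delta:(\emptyset,\emptyset)\rightsquigarrow\mu$ of a common type $(a;\mathbf i)$ with $a\in\ob a$, hence of length $r+s$. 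Comparing the number of edges of $\delta$ with the net change in the number of boxes shows that $\delta$ consists only of box-additions. Here the hypothesis $\mathbb I_\mathbf u\cap\mathbb I_{\mathbf u'}=\emptyset$ enters: by \eqref{colr} a box in the up-part has content in $\mathbb I_\mathbf u$ and a box in the down-part has content in $\mathbb I_{\mathbf u'}$, so under disjointness the colour $i_j$ of the $j$th edge already determines in which of the two parts that edge operates; together with the letter $a_j$ and Definition~\ref{typer} this determines whether the edge adds or removes a box. Reading this off from $\delta$ (all additions) and transporting it to $\gamma$, which has the same $a$ and the same $\mathbf i$, forces every edge of $\gamma$ to be an addition as well. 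Thus $\nu$ is reached from $(\emptyset,\emptyset)$ by $r+s$ additions, so $|\nu^\downarrow|+|\nu^\uparrow|=r+s$; but $\rho(\nu)\succ\rho(\mu)=(r,s)$ forces $|\nu^\downarrow|+|\nu^\uparrow|=r+s-2k<r+s$ for some $k\geq1$, a contradiction. Hence $[\bar\Delta(\nu):L(\mu)]=0$ for all $\nu\neq\mu$, so the $\Delta$-flag of $\mathbf P(\mu)$ has a single section $\Delta(\mu)$, i.e. $\mathbf P(\mu)=\Delta(\mu)$, proving (1).

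For (2), I would feed (1) back in. Part (1) gives $\bar\Delta(\mu)=L(\mu)$ for all $\mu\in\bar\Lambda$ together with $\mathbf P(\mu)=\Delta(P(\mu))$. Since $\Delta$ is exact (see \eqref{exa}) and carries each $D(\theta)$ to the simple module $\bar\Delta(\theta)=L(\theta)$, it is faithful and preserves composition multiplicities, $[\Delta(N):L(\mu)]=[N:D(\mu)]$ for any finite-length $\bar A^\circ$-module $N$. In particular, for indecomposable projectives one has $\dim\Hom_A(\Delta P(\mu),\Delta P(\nu))=[\mathbf P(\nu):L(\mu)]=[P(\nu):D(\mu)]=\dim\Hom_{\bar A^\circ}(P(\mu),P(\nu))$, and since the natural map $\Hom_{\bar A^\circ}(P(\mu),P(\nu))\to\Hom_A(\Delta P(\mu),\Delta P(\nu))$ is injective by faithfulness of $\Delta$, it is an isomorphism; moreover $\Delta$ takes indecomposable projectives bijectively onto the indecomposable projectives of $A$ (these are exactly the $\mathbf P(\mu)$). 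Hence $\Delta$ restricts to an equivalence of the categories of projectives carrying a projective generator to a projective generator, and an exact, colimit-preserving functor between module categories of locally unital, locally finite-dimensional algebras with this property is an equivalence; being exact, it coincides with $\Delta$ on all of $\bar A^\circ$-mod. (Alternatively, one may quote the general statement for upper finite fully stratified categories with all proper standard modules irreducible, as in \cite{GRS3,BS}, once (1) is known.)

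The main obstacle is the combinatorial core of (1): making precise that, under $\mathbb I_\mathbf u\cap\mathbb I_{\mathbf u'}=\emptyset$, a single type $(a;\mathbf i)$ cannot be realised both by an all-additions path to $\mu$ in the stratum $(r,s)$ and by a path to some $\nu$ in a strictly higher stratum, because disjointness rigidifies the add/remove pattern of a path from its type. The promotion in (2) from an equivalence on projectives to an equivalence of the full module categories is routine but needs mild care in the locally unital setting.
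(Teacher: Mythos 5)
Your proof is correct and follows essentially the same route as the paper: part (1) is exactly the paper's argument (pass from $[\bar\Delta(\nu):L(\mu)]\neq 0$ to $[\tilde\Delta(\nu):L(\mu)]\neq 0$, invoke Corollary~\ref{twopath1}, and use the disjointness $\mathbb I_{\mathbf u}\cap\mathbb I_{\mathbf u'}=\emptyset$ to see that a path's type rigidly determines its add/remove pattern, contradicting $\rho(\nu)\succ\rho(\mu)$ from Corollary~\ref{ijxxexeu}(1)), with only a cosmetic difference in how the contradiction is phrased (you show $\gamma$ is all additions and compare box counts, the paper locates a removal edge whose colour would lie in $\mathbb I_{\mathbf u}\cap\mathbb I_{\mathbf u'}$). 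For part (2) the paper simply quotes \cite[Corollary~2.5]{BD} together with $\Delta(P(\lambda))=\mathbf P(\lambda)$, whereas you unpack that citation into an explicit Morita-type argument (faithfulness, multiplicity preservation, full faithfulness on projectives by a dimension count, colimit preservation); this is the same idea in expanded form and is fine.
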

\begin{proof} Take an arbitrary $\mu\in \bar \Lambda_\ob a$. If  $\mathbf P(\mu)\neq \Delta(\mu)$, by  Corollary~\ref{ijxxexeu}(1), $[\bar\Delta(\lambda):L(\mu)]\neq 0$ for some $ \lambda\in \bar\Lambda_\ob b$ and $\lambda\neq \mu$.
Since $\Delta$ is exact and $D(\lambda)$ is the simple head of $S(\lambda)$,   there is an epimorphism from $\tilde \Delta(\lambda)$ to $\bar \Delta(\lambda)$. So,  $[\tilde\Delta(\lambda):L(\mu)]\neq 0$.
By Corollary~\ref{twopath1}, there are    two paths   $\gamma:(\emptyset, \emptyset) \rightsquigarrow \lambda$  and $\delta: (\emptyset, \emptyset)\rightsquigarrow\mu$ such that $\gamma$ and $\delta$ are of the same type $(a; \mathbf i)$ and $a\in \ob a$.

We claim     $\ob a=\ob b$.  Otherwise, $\ob b\succ \ob a$. By Definition~\ref{typer},   there is an edge, say $\lambda_{j-1}$---$\lambda_j$ such that
$\lambda_j$ is obtained by removing a box $x$ either in  $\lambda_{j-1}^\downarrow$   with $c(x)\in \mathbb I_{\mathbf u}$  or
in  $\lambda_{j-1}^\uparrow $ with   $c(x)\in \mathbb I_{\mathbf u'}$. In  any case,
  $c(x)\in  \mathbb I_{\mathbf u}\bigcap \mathbb I_{\mathbf u'}\neq\emptyset$, a contradiction.
By the definition of $\Delta$ in \eqref{exa}, $1_{ \ob a}\bar\Delta(\lambda)=D(\lambda)$. Since $1_\ob a L(\mu)=D(\mu)$, it is a composition factor of the simple  $\bar A_\ob a$-module $D(\lambda)$, forcing $\lambda=\mu$, a contradiction.  So, $\mathbf P(\mu)=\Delta(\mu)$ for all  $\mu\in \bar\Lambda$.
Now (2) immediately follows from  \cite[Corollary~2.5]{BD} and (1),
    since the exact functor $\Delta$ sends  projective $\bar A^\circ$-modules $P(\lambda)$'s  to  projective $A$-modules $\mathbf P(\lambda)$'s for any $\lambda\in \bar\Lambda$.
\end{proof}

\begin{Cor}\label{ssimple} The category  $A$-mod is completely reducible if
  \begin{itemize} \item[(1)] $u_i- u_j'\not \in \mathbb Z1_\Bbbk$ for all $1\le i, j\le \ell$,
   \item [(2)] $u_i-u_j\not\in \mathbb Z 1_\Bbbk$, $u_i'-u_j'\not\in \mathbb Z 1_\Bbbk$  for all $1\le i<j\le\ell$  and $p=0$.
       \end{itemize}\end{Cor}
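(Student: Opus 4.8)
The plan is to reduce complete reducibility of $A\text{-mod}$ to the semisimplicity of degenerate cyclotomic Hecke algebras, via the category equivalence of Theorem~\ref{sjdsdh}. First I would check that condition~(1) is exactly the hypothesis $\mathbb I_{\mathbf u}\cap\mathbb I_{\mathbf u'}=\emptyset$ of that theorem: if $x$ lay in the intersection, then $x=u_i+m=u_j'+n$ for some $1\le i,j\le\ell$ and $m,n\in\mathbb Z$, whence $u_i-u_j'=(n-m)1_\Bbbk\in\mathbb Z1_\Bbbk$, contrary to~(1). Hence Theorem~\ref{sjdsdh}(2) applies and $\Delta\colon\bar A^\circ\text{-mod}\to A\text{-mod}$ is an equivalence of abelian categories; since being completely reducible is preserved by such an equivalence, it suffices to show that $\bar A^\circ\text{-mod}$ is completely reducible.

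For this I would show that each block $\bar A_{\ob a}$ of $\bar A^\circ=\bigoplus_{\ob a\in I}\bar A_{\ob a}$ is a finite-dimensional semisimple algebra under hypothesis~(2). For $\ob a=(r,s)$, Lemma~\ref{isomhecke}(1) identifies $\bar A_{\ob a}$ with a matrix algebra over $\bar A_{\downarrow^r\uparrow^s}$, and Lemma~\ref{Aa} identifies $\bar A_{\downarrow^r\uparrow^s}$ with $H_{\ell,r}(-\mathbf u')\otimes H_{\ell,s}(\mathbf u)$. Condition~(2) together with $p=0$ says that $u_1,\dots,u_\ell$ (and likewise $-u_1',\dots,-u_\ell'$) lie in pairwise distinct $\mathbb Z$-orbits, so that in $\Bbbk$ one has $u_i-u_j\notin\{1-n,\dots,n-1\}$ for $i\ne j$ and every $n\ge1$; this is precisely the classical semisimplicity criterion for the degenerate cyclotomic Hecke algebra in characteristic $0$, so $H_{\ell,n}(\mathbf u)$ and $H_{\ell,n}(-\mathbf u')$ are split semisimple for all $n$. (In this ``separated'' case every $\ell$-partition is $\mathbf u$-restricted and the cell modules $S(\lambda)$ recalled in subsection~\ref{hecke} are all irreducible, which is what forces semisimplicity.) Since a tensor product of split semisimple $\Bbbk$-algebras is semisimple over the algebraically closed field $\Bbbk$, the algebra $\bar A_{\downarrow^r\uparrow^s}$, and hence $\bar A_{\ob a}$, is semisimple.

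Finally I would conclude: the $\bar 1_{\ob a}$ are central idempotents of $\bar A^\circ$, so any $M\in\bar A^\circ\text{-mod}$ splits as $M=\bigoplus_{\ob a\in I}\bar 1_{\ob a}M$ with each summand a module over the semisimple algebra $\bar A_{\ob a}$, hence semisimple; thus $\bar A^\circ\text{-mod}$ is completely reducible, and transporting along $\Delta$ gives the same for $A\text{-mod}$. The only genuinely non-formal ingredient is the semisimplicity criterion for $H_{\ell,n}$, which I would pin down by citation (e.g.\ \cite{AMR}, or the cellular-algebra structure referenced in subsection~\ref{hecke}) and then match carefully to hypothesis~(2); everything else is bookkeeping, so I expect that matching to be the main point to get right.
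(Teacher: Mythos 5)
Your proposal is correct and follows essentially the same route as the paper: condition (1) gives $\mathbb I_{\mathbf u}\cap\mathbb I_{\mathbf u'}=\emptyset$ so Theorem~\ref{sjdsdh}(2) reduces the claim to $\bar A^\circ$-mod, and condition (2) with the semisimplicity criterion of \cite[Theorem~6.11]{AMR} plus Lemmas~\ref{isomhecke} and \ref{Aa} handles each block $\bar A_{\ob a}$. Your extra verifications (the translation of (1) into the disjoint-orbit condition and the matrix-algebra/tensor-product bookkeeping) are exactly the details the paper leaves implicit.
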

       \begin{proof} Thanks to (1) and  Theorem~\ref{sjdsdh}(2),  $ \bar A^\circ\text{-mod}$ is Morita equivalent to $ A\text{-mod} $. By (2) and \cite[Theorem~6.11]{AMR}, both $ H_{\ell,r}(\mathbf u)$ and $ H_{\ell, s}(-\mathbf u')$ are semisimple for all $r, s\in \mathbb N$. Now, the result follows immediately from   Lemma~\ref{isomhecke}(2) and
       Lemma~\ref{Aa}.
       \end{proof}
We expect that Corollary~\ref{ssimple}(1)-(2) are necessary and sufficient conditions
for $A$-mod being completely reducible.
\subsection{Categorical actions }\label{CA} Let $\mathfrak g$ be   the complex Kac-Moody Lie algebra $\mathfrak g$ associated to  Cartan matrix $(a_{i,j})_{i,j\in \mathbb I}$   defined by \eqref{lie}.
 Then $\mathfrak g$ is the Lie algebra generated by its Cartan subalgebra and  Chevalley generators $\{e_i,f_i\mid i\in\mathbb I\}$ subject to the usual Serre relations. Furthermore,  $\mathfrak g$ is isomorphic to  a direct sum of certain $\mathfrak {sl}_\infty$ (resp., $\hat{\mathfrak {sl}}_p$) if $p=0$ (resp., $p>0$) depending on both $\mathbf u $ and $\mathbf u'$.
Let \begin{equation}\label{Pi} \Pi=\{\alpha_i\mid i\in \mathbb I\}, \end{equation} the set of simple roots. The weight lattice  is
\begin{equation}\label{wei}P:= \{\lambda\in \mathfrak h^*\mid \langle h_i,\lambda\rangle\in \mathbb Z \text{ for all } i\in\mathbb I\}, \end{equation}
where $h_i:=[e_i,f_i]$. Let
 \begin{equation}\label{wei1}
 P^+=\{\lambda\in \mathfrak h^*\mid \langle h_i,\lambda\rangle\in \mathbb N \text{ for all } i\in\mathbb I\},\end{equation}
 and   $\{\omega_i\mid i\in\mathbb I\}$ be the set of  fundamental weights of $\mathfrak g$.
There is a  usual dominance order on $P$ in the sense that $\lambda\leq \mu$ if   $\mu-\lambda\in \sum_{i\in \mathbb I}\mathbb N \alpha_i$.
  The partial order on $P$ induces a partial order on $P\times P$
such that \begin{equation}\label{preceq1} (\lambda_1, \lambda_2)\succeq (\mu_1, \mu_2)\ \  \text{if  $\lambda_1+\lambda_2=\mu_1+\mu_2$ and $\lambda_1\leq \mu_1$.}\end{equation}
 Define
\begin{equation}\label{nsjwhfun}
\text{wt}: \Lambda  \rightarrow P\times P, \quad
   \lambda  \rightarrow (\text{wt}_\downarrow(\lambda) , \text{wt}_\uparrow(\lambda  ))
   \end{equation}
   where
$   \text{wt}_\downarrow(\lambda )=-\omega_{\mathbf u'}+\sum_{y\in[\lambda^\downarrow]}\alpha_{c (y)}$,  $   \text{wt}_\uparrow(\lambda )=\omega_\mathbf u-\sum_{x\in[\lambda^\uparrow]}\alpha_{c(x)}$, and
   $\omega_\mathbf u,  \omega_{\mathbf u'}$ are given in \eqref{param}.
\begin{Prop}\label{dedji}
Suppose  $\lambda\in\Lambda$ and $\mu\in\bar \Lambda$. If
$[\tilde \Delta(\lambda):L(\mu)] \neq 0$, then   $\text{wt}(\mu) \preceq \text{wt}(\lambda)$.
\end{Prop}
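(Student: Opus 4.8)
The plan is to read the statement off from the path combinatorics supplied by Corollary~\ref{twopath1}. First I would apply that corollary to obtain paths $\gamma: (\emptyset,\emptyset)\rightsquigarrow\lambda$ and $\delta: (\emptyset,\emptyset)\rightsquigarrow\mu$ of a common type $(a;\mathbf i)$, where $a=a_1\cdots a_m\in\ob a$, $\mathbf i=(i_1,\dots,i_m)$ and $\mu\in\bar\Lambda_\ob a$. The crucial preliminary remark is that $a\in\ob a$ together with $\mu\in\bar\Lambda_\ob a$ forces $m=|a|=\ell_\downarrow(a)+\ell_\uparrow(a)=|\mu^\downarrow|+|\mu^\uparrow|$; since each edge of $\Xi$ changes the total number of boxes by exactly one and $\delta$ has exactly $|\mu^\downarrow|+|\mu^\uparrow|$ edges running from the empty multipartition to $\mu$, every step of $\delta$ must add a box. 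Hence, by Definition~\ref{typer}, the $j$-th step of $\delta$ adds a box of content $i_j$ to $[\mu^\uparrow]$ if $a_j=\uparrow$ and to $[\mu^\downarrow]$ if $a_j=\downarrow$; since the content of a box depends only on its position, this gives
\[
\sum_{x\in[\mu^\uparrow]}\alpha_{c(x)}=\sum_{j\,:\,a_j=\uparrow}\alpha_{i_j},\qquad
\sum_{y\in[\mu^\downarrow]}\alpha_{c(y)}=\sum_{j\,:\,a_j=\downarrow}\alpha_{i_j}.
\]

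For $\gamma$, which may also contain removal steps, I would run the same bookkeeping. Writing $\gamma=(\nu_0,\dots,\nu_m)$, exactly one of four situations occurs at step $j$, according to $a_j$ and to whether a box is added or removed: (1) $a_j=\uparrow$ and a box is added to $[\nu_j^\uparrow]$; (2) $a_j=\uparrow$ and a box is removed from $[\nu_j^\downarrow]$; (3) $a_j=\downarrow$ and a box is added to $[\nu_j^\downarrow]$; (4) $a_j=\downarrow$ and a box is removed from $[\nu_j^\uparrow]$; in each case the box involved has content $i_j$. Letting $T_{(1)},\dots,T_{(4)}$ be the corresponding index sets, one has $T_{(1)}\sqcup T_{(2)}=\{j:a_j=\uparrow\}$ and $T_{(3)}\sqcup T_{(4)}=\{j:a_j=\downarrow\}$, and telescoping yields $\sum_{y\in[\lambda^\downarrow]}\alpha_{c(y)}=\sum_{j\in T_{(3)}}\alpha_{i_j}-\sum_{j\in T_{(2)}}\alpha_{i_j}$ and the analogous identity for $[\lambda^\uparrow]$. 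Substituting into \eqref{nsjwhfun} and subtracting, the terms indexed by $T_{(3)}$ cancel against part of $\sum_{a_j=\downarrow}\alpha_{i_j}$, leaving
\[
\text{wt}_\downarrow(\mu)-\text{wt}_\downarrow(\lambda)=\sum_{j\in T_{(4)}}\alpha_{i_j}+\sum_{j\in T_{(2)}}\alpha_{i_j}\ \in\ \sum_{i\in\mathbb I}\mathbb N\,\alpha_i .
\]
A parallel computation shows that $\text{wt}_\downarrow+\text{wt}_\uparrow$, evaluated at either $\lambda$ or $\mu$, equals $-\omega_{\mathbf u'}+\omega_{\mathbf u}+\sum_{a_j=\downarrow}\alpha_{i_j}-\sum_{a_j=\uparrow}\alpha_{i_j}$, so $\text{wt}(\lambda)$ and $\text{wt}(\mu)$ have the same coordinate sum. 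By the definition \eqref{preceq1} of $\preceq$ on $P\times P$, these two facts together say exactly that $\text{wt}(\mu)\preceq\text{wt}(\lambda)$, which is the claim.

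The displayed identities are routine telescopings once the set-up is fixed, so the only substantive point — and the step I would take the most care over — is the opening remark that the hypothesis $a\in\ob a$ pins the length of $\delta$ down to $|\mu^\downarrow|+|\mu^\uparrow|$ and hence forces $\delta$ to consist entirely of additions. This is precisely where the asymmetry between $\lambda$ and $\mu$ (and thus the one-sidedness of the conclusion) enters: $\gamma$ may be longer than $|\lambda^\downarrow|+|\lambda^\uparrow|$ and may involve removals, and it is this that makes $\text{wt}_\downarrow(\mu)-\text{wt}_\downarrow(\lambda)$ a \emph{nonnegative} combination of simple roots rather than an arbitrary one. (One also uses tacitly that each edge colour $i_j$ lies in $\mathbb I=\mathbb I_{\mathbf u}\cup\mathbb I_{\mathbf u'}$, so that $\alpha_{i_j}$ is a simple root, which is immediate from \eqref{colr} and \eqref{iu}.)
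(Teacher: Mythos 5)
Your proof is correct, and it rests on the same key input as the paper, namely Corollary~\ref{twopath1}, but it organizes the remaining combinatorics differently. The paper argues by induction on $r+s$: it strips the last edge off both $\gamma$ and $\delta$, asserts in \eqref{weiw} (justified only by the brief remark ``since $\mu\in\Lambda_{(r,s)}$'') that the final edge of $\delta$ must be a box addition of the same colour as the final edge of $\gamma$, and then transfers the inductive inequality $\text{wt}(\mu')\preceq\text{wt}(\lambda')$ to the pair $(\lambda,\mu)$ case by case according to whether $\gamma$'s last step adds or removes a box. You instead make the crucial point explicit at the outset --- that $\delta$ has exactly $|\mu^\downarrow|+|\mu^\uparrow|$ edges, hence consists entirely of additions --- and then compute $\text{wt}(\lambda)$ and $\text{wt}(\mu)$ in closed form by telescoping contents along the two paths, so that both conditions in \eqref{preceq1} (equal coordinate sums, and $\text{wt}_\downarrow(\lambda)\le\text{wt}_\downarrow(\mu)$) fall out of one bookkeeping identity; your case split $T_{(1)},\dots,T_{(4)}$ is consistent with Definition~\ref{typer} and the displayed cancellations are right. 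The conclusions agree, but your version isolates exactly where the asymmetry between $\lambda$ and $\mu$ enters (removal steps can occur only along $\gamma$, contributing the nonnegative terms indexed by $T_{(2)}\sqcup T_{(4)}$), a point the paper leaves implicit inside \eqref{weiw}; the cost is a bit more notation, the gain is that no induction hypothesis for the truncated pairs $(\lambda',\mu')$ has to be formulated, and the one-sidedness of the inequality is visible at a glance.
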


\begin{proof}
Suppose  $\mu\in\bar\Lambda_{(r, s)}$. Thanks to  Corollary ~\ref{twopath1}, there are two  paths $\gamma: (\emptyset, \emptyset)\rightsquigarrow \lambda$
and $\delta: (\emptyset, \emptyset)\rightsquigarrow\mu$  such that  $\gamma$ and $\delta$ are of the same type $(a;\mathbf i)$, where $a=a_1\cdots a_{r+s}$ and $\mathbf i=(i_1,\ldots, i_{r+s})$.
 We are going to prove the result  by induction on $r+s$.

 If $r+s=0$, then $\lambda=\mu=(\emptyset, \emptyset)$ and there is nothing to prove.  Otherwise,  $r+s>0$. Removing the last edge in  both $\gamma$  and $\delta$ yields two shorter paths $\gamma': (\emptyset, \emptyset)\rightsquigarrow \lambda'$
and $\delta': (\emptyset, \emptyset)\rightsquigarrow \mu'$  such that  $\gamma'$ and $\delta'$ are of the same type $(b; \mathbf j)$, where $b=a_1\cdots a_{r+s-1}$ and $\mathbf j=(i_1,\ldots, i_{r+s-1})$.
We deal with the case  $a_{r+s}=\uparrow$ and leave the details on $a_{r+s}=\downarrow$ to the reader since the proof is similar.

There are two cases we need to consider.
If $\lambda$ is obtained from $\lambda'$ by adding a box $x$ in $[\lambda'^\uparrow]$, then  $\text{wt}_\uparrow(\lambda')=\text{wt}_\uparrow(\lambda)+\alpha_{c(x)}$ and $\text{wt}_\downarrow(\lambda')=\text{wt}_\downarrow(\lambda)$.
 Since $\mu\in \Lambda_{(r, s)}$, \begin{equation}\label{weiw}
\text{wt}_\uparrow(\mu')=\text{wt}_\uparrow(\mu)+\alpha_{c(x)},\qquad \text{wt}_\downarrow(\mu')=\text{wt}_\downarrow(\mu).
\end{equation}
By induction assumption, we have  $\text{wt}(\mu') \preceq \text{wt}(\lambda')$, forcing   $\text{wt}(\mu) \preceq \text{wt}(\lambda)$. Otherwise,  $\lambda$ is obtained from $\lambda'$ by removing a box $x$ in $[\lambda'^\downarrow]$. So,  $\text{wt}_\uparrow(\lambda')=\text{wt}_\uparrow(\lambda)$,  $\text{wt}_\downarrow(\lambda')=\text{wt}_\downarrow(\lambda)+\alpha_{c(x)}$,   and  \eqref{weiw} still holds true.
 By induction assumption, we have   $\text{wt}(\mu) \preceq \text{wt}(\lambda)$ since
 $$\text{wt}_\uparrow(\mu)+\text{wt}_\downarrow(\mu) =\text{wt}_\uparrow(\lambda)+\text{wt}_\downarrow(\lambda), \quad  \text{wt}_\downarrow(\lambda)\leq \text{wt}_\downarrow(\mu).$$
\end{proof}
\begin{Cor}\label{hlink}
Suppose  $\lambda,\mu\in\bar\Lambda$. \begin{itemize}\item [(1)]  If  $L(\lambda)$ and $L(\mu)$ are in the  same block of $A$-mod,  then $\text{wt}_\uparrow(\lambda)+\text{wt}_\downarrow(\lambda)=\text{wt}_\uparrow(\mu)+\text{wt}_\downarrow(\mu)$.
\item [(2)]   If  $[\bar \Delta(\lambda):L(\mu)]\neq0$ and  $\lambda\neq\mu$, then $\text{wt}(\mu)\prec\text{wt}(\lambda)$.\end{itemize}
\end{Cor}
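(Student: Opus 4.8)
The plan is to deduce both statements from Proposition~\ref{dedji}, using the following bookkeeping device. Set $\theta(\nu):=\text{wt}_\downarrow(\nu)+\text{wt}_\uparrow(\nu)$ for $\nu\in\Lambda$. If $\nu\in\Lambda$, $\kappa\in\bar\Lambda$ and $[\tilde\Delta(\nu):L(\kappa)]\neq0$, then Proposition~\ref{dedji} gives $\text{wt}(\kappa)\preceq\text{wt}(\nu)$, and by the definition \eqref{preceq1} of the order on $P\times P$ this already forces $\theta(\kappa)=\theta(\nu)$. In other words, $\theta$ is constant on the set of all $\kappa\in\bar\Lambda$ with $[\tilde\Delta(\nu):L(\kappa)]\neq0$.

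To prove (1), I would show that $\theta$ is constant on each block of $A$-mod. By the standard description of blocks via linkage --- $L(\mu)$ is a composition factor of the indecomposable projective $\mathbf P(\nu)$ whenever $\mathrm{Ext}^1_A(L(\nu),L(\mu))\neq0$, so the block of $L(\lambda)$ is obtained by connecting simples that are composition factors of a common indecomposable projective --- it suffices to prove $\theta(\nu)=\theta(\mu)$ whenever $L(\mu)$ is a composition factor of $\mathbf P(\nu)$. By Corollary~\ref{ijxxexeu}(2), $\mathbf P(\nu)$ has a finite $\tilde\Delta$-flag, and the sections occurring are exactly the $\tilde\Delta(\kappa)$ with $[\tilde\Delta(\kappa):L(\nu)]\neq0$; since $L(\mu)$ is a composition factor of $\mathbf P(\nu)$, it is a composition factor of some such section $\tilde\Delta(\kappa)$, so $[\tilde\Delta(\kappa):L(\nu)]\neq0$ and $[\tilde\Delta(\kappa):L(\mu)]\neq0$. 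Two applications of the observation of the first paragraph give $\theta(\nu)=\theta(\kappa)=\theta(\mu)$, and chaining along a linkage path from $\lambda$ to $\mu$ gives $\theta(\lambda)=\theta(\mu)$, which is (1).

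To prove (2), note first that $D(\lambda)$ is the simple head of $S(\lambda)$ and $\Delta$ is exact, so applying $\Delta$ to the surjection $S(\lambda)\twoheadrightarrow D(\lambda)$ produces an epimorphism $\tilde\Delta(\lambda)\twoheadrightarrow\bar\Delta(\lambda)$; hence $[\bar\Delta(\lambda):L(\mu)]\neq0$ implies $[\tilde\Delta(\lambda):L(\mu)]\neq0$, and Proposition~\ref{dedji} gives $\text{wt}(\mu)\preceq\text{wt}(\lambda)$. To upgrade $\preceq$ to $\prec$ when $\lambda\neq\mu$, I would exclude $\text{wt}(\mu)=\text{wt}(\lambda)$. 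If $\text{wt}(\mu)=\text{wt}(\lambda)$, comparing the two components and using \eqref{nsjwhfun} yields $\sum_{y\in[\mu^\downarrow]}\alpha_{c(y)}=\sum_{y\in[\lambda^\downarrow]}\alpha_{c(y)}$ and $\sum_{x\in[\mu^\uparrow]}\alpha_{c(x)}=\sum_{x\in[\lambda^\uparrow]}\alpha_{c(x)}$; since the simple roots $\{\alpha_i\mid i\in\mathbb I\}$ of $\mathfrak g$ are linearly independent, the multisets of contents coincide, so in particular $|\mu^\downarrow|=|\lambda^\downarrow|$ and $|\mu^\uparrow|=|\lambda^\uparrow|$, i.e. $\rho(\mu)=\rho(\lambda)=:\ob a$. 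Then, since $\bar 1_\ob a(-)$ is exact with $\bar 1_\ob a\bar\Delta(\lambda)=D(\lambda)$ (from \eqref{exa}) and $\bar 1_\ob a L(\mu)=D(\mu)$ (Corollary~\ref{irr}(1)), the module $D(\mu)$ is a composition factor of the simple $\bar A_\ob a$-module $D(\lambda)$, forcing $\mu=\lambda$, a contradiction. Hence $\text{wt}(\mu)\prec\text{wt}(\lambda)$.

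The individual steps are routine given Proposition~\ref{dedji} and Corollary~\ref{ijxxexeu}(2); the two places where I expect to need care are the precise translation of ``in the same block'' into a combinatorial chain through common indecomposable projectives (standard block theory, combined with the exact multiplicity statement in Corollary~\ref{ijxxexeu}(2)) and, in part~(2), the exclusion of the equality case, which rests on linear independence of the simple roots of $\mathfrak g$ and on the structure of the functor $\Delta=j_!$.
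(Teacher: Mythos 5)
Your proposal is correct and matches the paper's (sketched) proof: part (1) is Proposition~\ref{dedji} together with the standard linkage argument through indecomposable projectives with finite $\tilde\Delta$-flags from Corollary~\ref{ijxxexeu}(2), which is exactly what the paper means by ``mimicking the arguments of \cite[Theorem~5.17]{GRS3}''. For (2) you exclude the equality case $\text{wt}(\mu)=\text{wt}(\lambda)$ via linear independence of the simple roots and the truncations $\bar 1_{\ob a}\bar\Delta(\lambda)=D(\lambda)$, $\bar 1_{\ob a}L(\mu)=D(\mu)$ --- the same device the paper uses in the proof of Theorem~\ref{sjdsdh} --- which is an equally valid and essentially equivalent alternative to reading off $\rho(\lambda)\succ\rho(\mu)$ from Corollary~\ref{ijxxexeu}(1).
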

\begin{proof}Mimicking arguments in the proof of  \cite[Theorem~5.17]{GRS3} and using
Proposition~\ref{dedji} and Corollary~\ref{ijxxexeu}(2) yields   (1). We leave the details to the reader. (2) follows from Proposition~\ref{dedji} and Corollary~\ref{ijxxexeu}.
\end{proof}

Let
$ j^{ \ob a}: A_{\preceq  \ob a}\text{-lfdmod}\rightarrow   \bar A_{\ob a} \text{-fdmod}
$ be the exact idempotent truncation functor. Then   $j^{ \ob a} V  = \bar 1_{\ob a} V$
 for any $V\in A_{\preceq \ob a}\text{-lfdmod}$. Recall  $j^{\ob a}_!$ and $ j^{\ob a}_*$ in  \eqref{exa}. Then
$(j^{\ob a}_!,j^{\ob a} , j^{\ob a}_*)$ agree with the  adjoint triple between $A\text{-lfdmod}_{\preceq \ob a}$ and  its Serre quotient $A\text{-lfdmod}_{\preceq \ob a}/A\text{-lfdmod}_{\prec \ob a} $. In fact, this result is available for any locally unital algebra associated to an upper finite  weakly triangular category (see the proof of \cite[Theorem~3.5]{GRS3}).

\begin{Theorem}\label{COBMW11} The
  $A$-lfdmod is an upper finite fully  stratified category in the sense of \cite[Definition~3.36]{BS} with respect to the stratification $\text{wt}:\bar \Lambda\rightarrow P\times P$ in \eqref{nsjwhfun} with the order $\preceq$ on $P\times P$. \end{Theorem}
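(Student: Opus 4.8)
The plan is to transfer the upper finite fully stratified structure we already have for the coarse stratification $\rho$ (Theorem~\ref{COBMW1}) to the finer stratification $\text{wt}$, with Proposition~\ref{dedji} and Corollary~\ref{hlink} supplying the essential new input, in the spirit of \cite[\S~5]{GRS3}.

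First I would record the order-theoretic compatibility between $\rho$ and $\text{wt}$. Since $\{\alpha_i\mid i\in\mathbb I\}$ is linearly independent in $P$, the coefficient sums in $\text{wt}_\downarrow(\lambda)=-\omega_{\mathbf u'}+\sum_{y\in[\lambda^\downarrow]}\alpha_{c(y)}$ and $\text{wt}_\uparrow(\lambda)=\omega_{\mathbf u}-\sum_{x\in[\lambda^\uparrow]}\alpha_{c(x)}$ recover $|[\lambda^\downarrow]|$ and $|[\lambda^\uparrow]|$, hence $\rho(\lambda)$. A short computation with \eqref{preceq1} and Definition~\ref{ocbap} then gives $\text{wt}(\mu)\succeq\text{wt}(\lambda)\Rightarrow\rho(\mu)\succeq\rho(\lambda)$, with $\rho(\mu)=\rho(\lambda)$ whenever $\text{wt}(\mu)=\text{wt}(\lambda)$. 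In particular $\text{wt}$ refines $\rho$, and for each $\lambda$ the set $\{\mu\in\bar\Lambda\mid\text{wt}(\mu)\succeq\text{wt}(\lambda)\}$ is contained in $\bigcup_{\ob c\succeq\rho(\lambda)}\bar\Lambda_{\ob c}$, a finite union of finite sets by Proposition~\ref{WT}(1); thus $\text{wt}$ takes values in a poset which is upper finite in the sense required by \cite[Definition~3.36]{BS}.

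Next I would check the stratified axioms for $\text{wt}$ directly, reusing the modules already constructed: the projective covers $\mathbf P(\lambda)$ of Corollary~\ref{ijxxexeu}, the standard, proper standard, costandard and proper costandard modules $\Delta(\lambda),\bar\Delta(\lambda),\nabla(\lambda),\bar\nabla(\lambda)$, and the sign function, which is induced from the one on $I$ via the compatibility $\text{wt}(\lambda)=\text{wt}(\mu)\Rightarrow\rho(\lambda)=\rho(\mu)$ just noted (so $\text{wt}$ and $\rho$ see the same strata as semisimple, resp.\ of Hecke type). By Corollary~\ref{ijxxexeu} the module $\mathbf P(\lambda)$ has a finite $\Delta$-flag with $(\mathbf P(\lambda):\Delta(\mu))=[\bar\Delta(\mu):L(\lambda)]$ and a finite $\tilde\Delta$-flag with multiplicities $[\tilde\Delta(\mu):L(\lambda)]$; combining these with Corollary~\ref{hlink}(2) and Proposition~\ref{dedji} shows that for $\mu\neq\lambda$ a section $\Delta(\mu)$, resp.\ $\tilde\Delta(\mu)$, occurs only when $\text{wt}(\mu)\succ\text{wt}(\lambda)$, resp.\ $\text{wt}(\mu)\succeq\text{wt}(\lambda)$. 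The dual conditions for $\nabla(\lambda)$ and $\bar\nabla(\lambda)$ follow from the same inputs together with the exact duality $\circledast$ of subsection~\ref{irrddjs}, under which $L(\lambda)^\circledast\cong L(\lambda)$. This yields all the flag- and multiplicity-conditions of an upper finite fully stratified category with respect to $\text{wt}$.

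The main obstacle is the behaviour \emph{inside} a single $\rho$-stratum: passing to the finer order $\text{wt}$ one must ensure that no new comparability among the finitely many $\lambda$ with a fixed $\rho(\lambda)$ is created which the axioms would then have to respect. Concretely, one needs the stratum category $\bar A_{\ob a}\text{-fdmod}\cong(H_{\ell,r}(-\mathbf u')\otimes H_{\ell,s}(\mathbf u))\text{-fdmod}$ of Lemma~\ref{Aa} to be itself fully stratified by the restriction of $\text{wt}$, i.e.\ that $[S(\nu):D(\lambda)]\neq0$ forces $\text{wt}(\nu)\preceq\text{wt}(\lambda)$ — the familiar fact that decomposition numbers of cyclotomic Hecke algebras vanish between residue blocks and that within a block the dominance order on cell modules refines the content order. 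Granting this, as in \cite[\S~5, \S~6.2]{GRS3}, the $\text{wt}$-layers lying inside each $\rho$-layer are themselves stratified, the refinement machinery of \cite{GRS3} applies, and the theorem follows by the arguments of \cite[Theorem~5.17]{GRS3}.
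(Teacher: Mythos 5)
Much of your outline coincides with the proof in the paper: the upper finiteness of the image of $\text{wt}$, the compatibility $\text{wt}(\mu)\succeq\text{wt}(\lambda)\Rightarrow\rho(\mu)\succeq\rho(\lambda)$ (with $\rho(\mu)=\rho(\lambda)$ when the weights coincide), and the use of Corollary~\ref{ijxxexeu} together with Proposition~\ref{dedji} and Corollary~\ref{hlink}(2) to control the sections of the flags of $\mathbf P(\lambda)$ are exactly the ingredients used there. Your ``main obstacle'' is also resolved the way the paper resolves it: inside a fixed $\rho$-stratum the refinement is precisely the block decomposition of the degenerate cyclotomic Hecke algebras of Lemma~\ref{Aa}, and since $\text{wt}$ (equivalently, the multiset of contents) is a block invariant, the vanishing $[S(\nu):D(\lambda)]\neq 0\Rightarrow\text{wt}(\nu)\preceq\text{wt}(\lambda)$ you require holds automatically, in fact with equality; distinct $\text{wt}$-values inside one $\rho$-stratum are even incomparable under \eqref{preceq1}, so no statement about dominance refining the content order is needed.

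The gap is in how you pass from these numerical facts to the conclusion. Being fully stratified with respect to $\text{wt}$ is not just a list of flag and multiplicity conditions on $\mathbf P(\lambda)$ phrased through the already constructed $\Delta(\lambda)$, $\bar\Delta(\lambda)$: one must know that these modules really are the standard and proper standard objects attached to the $\text{wt}$-strata, i.e.\ that for each $(\rho,\sigma)$ in the image of $\text{wt}$ the subquotient $A\text{-lfdmod}_{\preceq(\rho,\sigma)}/A\text{-lfdmod}_{\prec(\rho,\sigma)}$ is equivalent to the block $\bar A_{\ob a,(\rho,\sigma)}\text{-fdmod}$ and that the left adjoint of the quotient functor is the restriction of $j^{\ob a}_!$ from \eqref{exa}. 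You delegate exactly this point to an unspecified ``refinement machinery'' of \cite{GRS3} and to the arguments of Theorem~5.17 there; but that theorem is a linkage (block) statement --- it is what the present paper imitates to prove Corollary~\ref{hlink}(1) --- not a refinement-of-stratifications theorem, so this step is not actually established in your write-up. The paper does it by hand: it restricts $j^{\ob a}$ and $j^{\ob a}_!$ to the block, uses Corollary~\ref{hlink}(2) to check that $j^{\ob a,(\rho,\sigma)}_!(M)$, which has a $\bar\Delta$-flag whose sections all have weight exactly $(\rho,\sigma)$, lies in $A\text{-lfdmod}_{\preceq(\rho,\sigma)}$, observes that adjointness passes to the restrictions, and only then identifies the new standard and proper standard objects with those of Definition~\ref{stanpro}; after that identification your conditions from Corollary~\ref{ijxxexeu}(1) and Corollary~\ref{hlink}(2), plus the fact that each $\Delta(\mu)$ has a finite $\bar\Delta$-flag with sections of the same weight, do finish the proof. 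Supplying this identification (or an honest statement and proof of the refinement lemma you invoke) is what is missing from your proposal.
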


  \begin{proof}
  It is clear that wt is a new stratification  of $A\text{-lfdmod}$ in the sense of \cite[Definition~3.1]{BS}. Moreover, the image of wt (denoted by $\bar P$)
 is upper finite by its definition in \eqref{nsjwhfun}.
 By the well-known results on block decomposition of degenerate   cyclotomic Hecke algebras, $\bar P$ gives a block decomposition of $\bar A^\circ$-lfdmod.

 Suppose that $\bar A_{\ob a,(\rho,\sigma)}$-fdmod  is the block of $\bar A_\ob a$ indexed by $(\rho,\sigma)\in \bar P$  with $\ob a=(r,s)$ (i.e., $\rho$ is obtained from $-\omega_{\mathbf u'}$ by adding
 $r$ simple roots and $\sigma$ is obtained from $\omega_{\mathbf u}$ by subtracting $s$ simple roots).
  Let $A\text{-lfdmod}_{\preceq (\rho,\sigma)}$ be the Serre subcategory of $A\text{-lfdmod}$ generated by
 $\{L(\lambda)\mid \text{wt}(\lambda)\preceq (\rho,\sigma)\}$.
Note that $\lambda\in\bar\Lambda_{(r+k, s+k)}$ for some $k\in \mathbb N$ if
 $\text{wt}(\lambda)\preceq (\rho,\sigma)$. So, $A\text{-lfdmod}_{\preceq (\rho,\sigma)}$ is actually a Serre subcategory of $A_{\preceq \ob a}\text{-lfdmod}$.
 Similarly we have $A\text{-lfdmod}_{\prec (\rho,\sigma)}$.

 Let $j^{\ob a,(\rho,\sigma)}$ be the restriction of $j^\ob a$ to $A\text{-lfdmod}_{\preceq(\rho,\sigma)}$.
Since  $j^{\ob a,(\rho,\sigma)}(M)\in \bar A_{\ob a,(\rho,\sigma)}$-fdmod for any $M\in A_{\preceq \ob a}\text{-lfdmod}_{\preceq (\rho,\sigma)}$, $j^{\ob a,(\rho,\sigma)}$ is actually a functor from  $A\text{-lfdmod}_{\preceq(\rho,\sigma)}$ to $\bar A_{\ob a,(\rho,\sigma)}$-fdmod.
Moreover,  $j^{\ob a,(\rho,\sigma)}$ induces an equivalence of categories between $A \text{-lfdmod}_{\preceq(\rho,\sigma)}/A \text{-lfdmod}_{\prec(\rho,\sigma)}$ and $\bar A_{\ob a,(\rho,\sigma)}$-fdmod.

Let $j^{\ob a,(\rho,\sigma)}_!$  be the restriction of  $j^{\ob a }_!$   to $\bar A_{\ob a,(\rho,\sigma)}$-fdmod.
 Then  $j^{\ob a,(\rho,\sigma)}_!$ is actually  a functor from $\bar A_{\ob a,(\rho,\sigma)}$-fdmod to $A\text{-lfdmod}_{\preceq(\rho,\sigma)}$.
In fact, for any $M\in \bar A_{\ob a,(\rho,\sigma)}$-fdmod, $j^{\ob a,(\rho,\sigma)}_!(M)$ has a $\bar\Delta$-flag, and  $\bar\Delta(\mu)$ appears as a section if  $[M: D(\mu)]\neq 0$. In this case,  $\text{wt}(\mu)=(\rho,\sigma)$. By Corollary~\ref{hlink}(2),
we see that $j^{\ob a,(\rho,\sigma)}_!(M)\in A\text{-lfdmod}_{\preceq(\rho,\sigma)}$.
Furthermore, since  $(j^{\ob a}_!,j^{\ob a})  $ is an  adjoint  pair, so is
  $(j^{\ob a,(\rho,\sigma)}_!,j^{\ob a,(\rho,\sigma)})$.
  Hence the required standard and proper standard  objects coincide with those in Definition~\ref{stanpro}.

  Suppose $\lambda\in \bar \Lambda$. Thanks to
  Corollaries~\ref{ijxxexeu}(1) and \ref{hlink}(2), $\mathbf P(\lambda)$ has  a finite $\Delta$-flag such that $\Delta(\lambda)$ appears as the top section
and  other sections $\Delta(\mu)$  with $\text{wt}(\lambda)\preceq\text{wt}(\mu)$. So, $A$-lfdmod is an upper finite $+$-stratified category in the sense of \cite[Definition~3.36]{BS} with respect to the stratification $\text{wt}$.
It is   fully stratified  since $\Delta(\mu)$  has a finite $\bar \Delta$-flag
 with sections $\bar \Delta(\nu)$
such that  $\text{wt}(\nu)=\text{wt}(\mu)$.
  \end{proof}

  Let $K_0(\bar A^\circ \text{-pmod})$ be the Grothendieck group of $\bar  A^\circ \text{-pmod}$.
 Recall  $V(\omega_{\mathbf u})$ (resp., $\tilde V(-\omega_{\mathbf u'})$) is the integrable highest (resp., lowest) weight $\mathfrak g$-module of weight $\omega_{\mathbf u}$ (resp., $-\omega_{\mathbf u'}$).
Let $ \mathfrak g^\uparrow=\{y^\uparrow\mid y\in \mathfrak g\}$ and $\mathfrak g^\downarrow=\{y^\downarrow\mid y\in \mathfrak g\}$ be the two copies of $\mathfrak g$, where both $y^\uparrow$ and $y^\downarrow$ are $y$.
\begin{Prop}\label{dddddcate}
As   $\mathfrak g^\downarrow  \oplus \mathfrak g^\uparrow$-modules,     $$ \mathbb C\otimes_{\mathbb Z}K_0(\bar A^\circ\text{-pmod})  \cong \tilde V(-\omega_{\mathbf u'})\boxtimes V(\omega_\mathbf u),$$ where
the Chevalley generators $e_i^\uparrow$ and $ f_i^\uparrow $ (resp.,  $  e_i^\downarrow$ and $  f_i^\downarrow$)   act  on $  \mathbb C\otimes_{\mathbb Z}K_0(\bar A^\circ\text{-pmod})  $ via the endomorphisms induced  by  $E_i^\uparrow$ and  $F_i^\uparrow$ (resp.,  $E_i^\downarrow$ and $F_i^\downarrow$) if $i\in \mathbb I_{\mathbf u}$ (resp., $\mathbb I_{\mathbf u'}$) and $0$, otherwise.
 \end{Prop}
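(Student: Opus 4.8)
The plan is to reduce to the categorification theorem of Ariki for degenerate cyclotomic Hecke algebras, applied separately on each of two tensor factors, and then to twist by a Lie algebra automorphism so as to produce the lowest weight module on one side. First I would combine Lemma~\ref{isomhecke}(2) with Lemma~\ref{Aa} and the hypothesis that $\Bbbk$ is algebraically closed (so that indecomposable projectives over a tensor product of finite‑dimensional $\Bbbk$‑algebras are outer tensor products of indecomposable projectives) to obtain an isomorphism of abelian groups
$$K_0(\bar A^\circ\text{-pmod})\;\cong\;\Big(\bigoplus_{s\in\mathbb N} K_0(H_{\ell,s}(-\mathbf u')\text{-pmod})\Big)\otimes_{\mathbb Z}\Big(\bigoplus_{r\in\mathbb N} K_0(H_{\ell,r}(\mathbf u)\text{-pmod})\Big).$$
Using Lemma~\ref{EFSX} and \eqref{contss}--\eqref{word}, the exact functors $E_i^\uparrow,F_i^\uparrow$ act, under this identification, only on the second (the ``$\mathbf u$'') factor and $E_i^\downarrow,F_i^\downarrow$ only on the first (the ``$-\mathbf u'$'') factor; hence on $\mathbb C\otimes_{\mathbb Z}K_0(\bar A^\circ\text{-pmod})$ the induced operators exhibit it as an external tensor product of a module for $\mathfrak g^\uparrow$ and a module for $\mathfrak g^\downarrow$, and it remains to identify the two factors.

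For the second factor, \eqref{word} identifies $E_i^\uparrow$ with $i$‑restriction and $F_i^\uparrow$ with $i$‑induction on $\bigoplus_r H_{\ell,r}(\mathbf u)\text{-pmod}$ (and both vanish for $i\notin\mathbb I_\mathbf u$ by Lemma~\ref{bran}). By the Ariki--Grojnowski categorification theorem for degenerate cyclotomic Hecke algebras, together with its extension to positive characteristic and to base parameters forming a disjoint union of $\mathbb Z$‑orbits (cf. \cite[\S 6.2]{GRS3}, \cite{K}, \cite{AMR}), $\mathbb C\otimes_{\mathbb Z}\bigoplus_r K_0(H_{\ell,r}(\mathbf u)\text{-pmod})$ with $e_i=[i\text{-res}]$ and $f_i=[i\text{-ind}]$ is isomorphic to the integrable highest weight module $V(\omega_\mathbf u)$; this is exactly the required $\mathfrak g^\uparrow$‑module structure.

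For the first factor, \eqref{word} identifies $E_i^\downarrow$ with $i$‑induction and $F_i^\downarrow$ with $i$‑restriction on $\bigoplus_s H_{\ell,s}(-\mathbf u')\text{-pmod}$, but with respect to the \emph{negated} content $c_\downarrow$ of \eqref{colr}; this sign originates from the $-L_1^\downarrow$ of Lemma~\ref{Aa} and the use of $-L^\downarrow$ in the eigenspace definitions of $i$‑res and $i$‑ind. Consequently the operators $e_i^\downarrow,f_i^\downarrow$ coincide, on $\mathbb C\otimes_{\mathbb Z}\bigoplus_s K_0(H_{\ell,s}(-\mathbf u')\text{-pmod})$, with $\hat f_{-i}$ and $\hat e_{-i}$, where $\hat e_i=[i\text{-res}]$, $\hat f_i=[i\text{-ind}]$ are the standard operators in the $(-\mathbf u')$‑content normalization, for which Ariki's theorem gives the highest weight module $V(\omega_{-\mathbf u'})=V\big(\sum_j\omega_{-u_j'}\big)$. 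Now $\Psi\colon e_i\mapsto f_{-i},\,f_i\mapsto e_{-i},\,h_i\mapsto-h_{-i}$ is an automorphism of $\mathfrak g$ — the composite of the diagram automorphism $i\mapsto-i$ (legitimate on each summand $\mathfrak{sl}_\infty$, resp.\ $\widehat{\mathfrak{sl}}_p$) with the automorphism $\theta\colon e_i\mapsto f_i,\,f_i\mapsto e_i,\,h_i\mapsto-h_i$ (itself the composite of the Chevalley involution with the torus automorphism $e_i\mapsto-e_i$, $f_i\mapsto-f_i$). Thus the $\mathfrak g^\downarrow$‑module on the first factor is $\Psi^*V(\omega_{-\mathbf u'})$; a direct check shows the highest weight vector of $V(\omega_{-\mathbf u'})$ becomes, in $\Psi^*V(\omega_{-\mathbf u'})$, a lowest weight vector of weight $-\omega_{\mathbf u'}$, whence this factor is $\cong\tilde V(-\omega_{\mathbf u'})$. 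Combining the two factors yields $\mathbb C\otimes_{\mathbb Z}K_0(\bar A^\circ\text{-pmod})\cong\tilde V(-\omega_{\mathbf u'})\boxtimes V(\omega_\mathbf u)$ as $\mathfrak g^\downarrow\oplus\mathfrak g^\uparrow$‑modules.

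The step I expect to be the main obstacle is precisely the sign bookkeeping in the previous paragraph: one must keep track of the several competing sign conventions (the $-L_1^\downarrow$ of Lemma~\ref{Aa}, the $-L^\downarrow$ appearing in the definition of $i$‑restriction and $i$‑induction, and the negated content $c_\downarrow$ of \eqref{colr}) to conclude that the ``$-\mathbf u'$'' factor carries the \emph{lowest} weight module $\tilde V(-\omega_{\mathbf u'})$ rather than $V(\omega_{-\mathbf u'})$ or $\tilde V(-\omega_{-\mathbf u'})$. Everything else should be routine: exactness of the functors and their descent to $K_0$ follow from \eqref{contss} and Lemma~\ref{EFSX}, and the $\mathfrak g$‑module relations are automatic, since twisting an integrable module by a Lie algebra automorphism again yields an integrable module.
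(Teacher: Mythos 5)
Your argument is correct and follows essentially the paper's own route: the paper's proof is a one-line citation of the degenerate cyclotomic Hecke categorification (\cite[\S~5.3]{K}, \cite[Remark~6.2]{GRS3}) combined with \eqref{word}, which is precisely the reduction you carry out — Morita equivalence plus Lemma~\ref{Aa} to pass to $\bigoplus_r H_{\ell,r}(-\mathbf u')$ and $\bigoplus_s H_{\ell,s}(\mathbf u)$, the $\mathbf u$-factor handled directly and the $\mathbf u'$-factor via the sign/negation bookkeeping and lowest-weight twist that the paper leaves implicit. The only cosmetic imprecision is calling $\Psi$ an automorphism of $\mathfrak g$: since $-\mathbb I_{\mathbf u'}=\mathbb I_{-\mathbf u'}$ need not be contained in $\mathbb I$, it is really an isomorphism from the summand of $\mathfrak g$ generated by the $e_i,f_i$ with $i\in\mathbb I_{\mathbf u'}$ onto the Kac--Moody algebra indexed by $\mathbb I_{-\mathbf u'}$, which is all your pullback argument needs.
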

\begin{proof}This follows from \cite[\S~5.3]{K} (see also \cite[Remark~6.2]{GRS3}) and  \eqref{word}.
\end{proof}
Let $U(\mathfrak t)$ be  the universal enveloping  algebra of any Lie algebra $\mathfrak t$.  There is a Lie algebra homomorphism
from $\mathfrak g $ to $\mathfrak g^\downarrow  \oplus \mathfrak g^\uparrow$ sending $y$ to $y^\downarrow+y^\uparrow$.
This homomorphism induces  the  usual comultiplication  on
$U(\mathfrak g)$ since   $U(\mathfrak g)\otimes U(\mathfrak g)$ can be identified  with $U(\mathfrak g^\downarrow \oplus \mathfrak g^\uparrow )$. So,   $\tilde V(-\omega_{\mathbf u'})\boxtimes V(\omega_\mathbf u)$ becomes the $\mathfrak g$-module $\tilde V(-\omega_{\mathbf u'})\otimes V(\omega_\mathbf u)$
via the above homomorphism. Let $K_0(A \text{-mod}^\Delta)$ be the Grothendieck group of $A\text{-mod}^\Delta$.

\begin{Theorem}\label{categoricalactofa}
As $\mathfrak g$-modules, $\mathbb C\otimes_{\mathbb Z}K_0(A\text{-mod}^\Delta) \cong \tilde V(-\omega_{\mathbf u'})\otimes V(\omega_\mathbf u)$, where the Chevalley generators
$e_i,f_i$ act on $\mathbb C\otimes_{\mathbb Z}K_0(A\text{-mod}^\Delta)$ via the endomorphisms induced  by the  $E_i$ and $F_i$ for all $i\in\mathbb I$.
\end{Theorem}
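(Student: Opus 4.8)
The plan is to push the categorical $\mathfrak g$-action on $K_0(\bar A^\circ\text{-pmod})$ obtained in Proposition~\ref{dddddcate} across the exact functor $\Delta$ of \eqref{exa}, using the short exact sequences of functors in Theorem~\ref{usuactifuc1} to record how $\Delta$ conjugates the relevant operators. First I would set up the two Grothendieck groups. Since each block $\bar A_{\ob a}$ ($\ob a\in I$) is finite dimensional, $\bar A^\circ$ is Krull--Schmidt and $K_0(\bar A^\circ\text{-pmod})$ is free abelian on $\{[P(\mu)]\mid \mu\in\bar\Lambda\}$; on the other side, by Theorem~\ref{COBMW1} the category $A\text{-lfdmod}$ is upper finite fully stratified (see also \cite{BS}), so the $\Delta$-flag multiplicities $(V:\Delta(\lambda))$ are well defined and additive on short exact sequences inside $A\text{-mod}^\Delta$, whence $K_0(A\text{-mod}^\Delta)$ is free abelian on $\{[\Delta(\lambda)]\mid\lambda\in\bar\Lambda\}$. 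Because $\Delta(\mu)=\Delta(P(\mu))$ by Definition~\ref{stanpro} and every finitely generated projective $\bar A^\circ$-module is a finite direct sum of $P(\mu)$'s, the exact functor $\Delta$ restricts to $\bar A^\circ\text{-pmod}\to A\text{-mod}^\Delta$ and induces a homomorphism $\theta\colon K_0(\bar A^\circ\text{-pmod})\to K_0(A\text{-mod}^\Delta)$ with $\theta[P(\mu)]=[\Delta(\mu)]$; as it carries a $\mathbb Z$-basis onto a $\mathbb Z$-basis it is an isomorphism, and hence so is $\mathbb C\otimes_{\mathbb Z}\theta$.

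Next I would verify that $E_i$ and $F_i$ restrict to exact endofunctors of $A\text{-mod}^\Delta$ and that they correspond to the operators on $K_0(\bar A^\circ\text{-pmod})$ through $\theta$. The functors $E_i^\uparrow,E_i^\downarrow,F_i^\uparrow,F_i^\downarrow$ send projective $\bar A^\circ$-modules to projective ones (they are given by tensoring with bimodules that are one-sided projective, cf.\ \eqref{efsx1}), so by exactness of $E_i$ and the first sequence in Theorem~\ref{usuactifuc1}, $E_i\Delta(P(\mu))$ is an extension of two objects of $A\text{-mod}^\Delta$; applying this along a $\Delta$-flag and using closure of $A\text{-mod}^\Delta$ under extensions shows $E_i$ (and likewise $F_i$) preserves $A\text{-mod}^\Delta$. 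The same sequences give, for any projective $P$,
\[
[E_i]\,\theta[P]=[E_i\Delta P]=[\Delta E_i^\uparrow P]+[\Delta E_i^\downarrow P]=\theta\bigl([E_i^\uparrow]+[E_i^\downarrow]\bigr)[P],
\]
together with the analogous identity $[F_i]\,\theta=\theta\bigl([F_i^\uparrow]+[F_i^\downarrow]\bigr)$; thus $\theta$ intertwines the operators $[E_i^\uparrow]+[E_i^\downarrow]$ and $[F_i^\uparrow]+[F_i^\downarrow]$ with $[E_i]$ and $[F_i]$.

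Finally I would assemble the pieces. By Proposition~\ref{dddddcate}, $\mathbb C\otimes_{\mathbb Z}K_0(\bar A^\circ\text{-pmod})\cong\tilde V(-\omega_{\mathbf u'})\boxtimes V(\omega_\mathbf u)$ as $\mathfrak g^\downarrow\oplus\mathfrak g^\uparrow$-modules, with $e_i^\uparrow,f_i^\uparrow$ (resp.\ $e_i^\downarrow,f_i^\downarrow$) realized by $E_i^\uparrow,F_i^\uparrow$ (resp.\ $E_i^\downarrow,F_i^\downarrow$). Restricting along $y\mapsto y^\downarrow+y^\uparrow$, as in the paragraph just before the theorem, identifies the right-hand side with the $\mathfrak g$-module $\tilde V(-\omega_{\mathbf u'})\otimes V(\omega_\mathbf u)$ on which $e_i$ acts by $[E_i^\uparrow]+[E_i^\downarrow]$ and $f_i$ by $[F_i^\uparrow]+[F_i^\downarrow]$ (here $E_i^\uparrow,F_i^\uparrow$ vanish unless $i\in\mathbb I_\mathbf u$ and $E_i^\downarrow,F_i^\downarrow$ unless $i\in\mathbb I_{\mathbf u'}$ by Lemma~\ref{bran}, and $E=\bigoplus_i E_i$, $F=\bigoplus_i F_i$ by \eqref{EFi}). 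Transporting this structure through the isomorphism $\mathbb C\otimes_{\mathbb Z}\theta$ and using the previous step, the induced $\mathfrak g$-action on $\mathbb C\otimes_{\mathbb Z}K_0(A\text{-mod}^\Delta)$ has $e_i$, $f_i$ acting exactly by the endomorphisms induced by $E_i$, $F_i$, which is the asserted isomorphism.

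I expect the only genuinely delicate point to be the middle step: one must be sure that the short exact sequences of Theorem~\ref{usuactifuc1} really take place inside $A\text{-mod}^\Delta$ (so that they descend to $K_0$), and that the generalized-eigenspace bookkeeping behind $E=\bigoplus_i E_i$, together with the vanishing of $E_i^\diamond,F_i^\diamond$ for $i\notin\mathbb I$, is handled coherently. Everything else is either the abstract structure theory of upper finite fully stratified categories (for the freeness of $K_0(A\text{-mod}^\Delta)$ on standards) or is already supplied by Proposition~\ref{dddddcate}, which pins down the target $\mathfrak g$-module.
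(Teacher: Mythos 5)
Your proposal is correct and follows essentially the same route as the paper, whose proof simply cites Proposition~\ref{dddddcate} together with the exact sequences \eqref{keyses}: you transport the $(\mathfrak g^\downarrow\oplus\mathfrak g^\uparrow)$-structure through the $K_0$-isomorphism induced by $\Delta$ (sending $[P(\mu)]$ to $[\Delta(\mu)]$) and use \eqref{keyses} to get $[E_i]=\theta([E_i^\uparrow]+[E_i^\downarrow])\theta^{-1}$, then restrict along $y\mapsto y^\downarrow+y^\uparrow$ exactly as in the paragraph preceding the theorem. Your write-up just makes explicit the bookkeeping (freeness of the two Grothendieck groups, preservation of $A$-mod$^\Delta$ by $E_i,F_i$) that the paper leaves implicit.
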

\begin{proof}This follows from Proposition~\ref{dddddcate} and  \eqref{keyses}.
\end{proof}
\section{Proof of Theorem~\ref{main}}

 Recall that $\mathfrak g$ is  the    Kac-Moody Lie algebra in subsection~\ref{CA}.
The quiver Hecke category   $\mathcal {QH}$ associated to $\mathfrak g$
is a $\Bbbk$-linear strict monoidal category generated
 by objects $\mathbb I$ in \eqref{iu} and morphisms $$\begin{tikzpicture}[baseline=-.5mm]
\draw[->,thick,darkred] (0,-.3) to (0,.3);
      \draw (0,0) \rdot;
      \node at (0,-.4) {$\color{darkblue}\scriptstyle i$};
\end{tikzpicture}:  i\rightarrow  i, \ \ \    \begin{tikzpicture}[baseline = 2.5mm]
	\draw[->,thick,darkred] (0.28,0) to[out=90,in=-90] (-0.28,.6);
 \node at (-0.29,-.1) {$\color{darkblue}\scriptstyle i$};\node at (0.29,-.1) {$\color{darkblue}\scriptstyle j$};
	\draw[->,thick,darkred] (-0.28,0) to[out=90,in=-90] (0.28,.6);
\end{tikzpicture}: i\otimes j\rightarrow j\otimes i$$
subject to certain relations  in \cite[Definition~3.4]{BD}, where the parameters  $\{t_{ij}\in\Bbbk^\times\mid i,j\in\mathbb I\} $  and $\{s_{ij}^{mq}\in \Bbbk\mid 0<m<-a_{i,j}, 0<q<-a_{j,i}\}$ (only appear when $p=2$) are given   as follows:
$$t_{ij}= \left\{
     \begin{array}{ll}
       -1, & \hbox{$i=j-1$;} \\
       1, & \hbox{otherwise.}
     \end{array}
   \right., \ \ \ ~ s_{ij}^{11}=s_{ji}^{11}=2 \text{ for } i=j\pm1.
$$
For any $\mathbf i=(i_d,i_{d-1},\ldots, i_1)\in \mathbb I^d$,  $d>0$, we identify $\mathbf i$ with the object $i_d\otimes i_{d-1}\otimes\ldots\otimes  i_1\in \text{ob }\mathcal {QH}$.
The locally unital algebra associated to $\mathcal{QH}$ is $\bigoplus_{d\in \mathbb N} QH_d$, where
$$QH_d:= \bigoplus _{\mathbf i,\mathbf i'\in \mathbb I^d}\Hom_{\mathcal {QH}}(\mathbf i,\mathbf i'). $$
When $d>0$,
$QH_d$ is known as  the quiver Hecke algebra associated to $\mathfrak g$\cite{KL,Ro}.
It is  generated by
$$ \{e(\mathbf i)\mid \mathbf i\in \mathbb I^d\}\bigcup \{y_1,\ldots,y_d\}\bigcup \{\psi_1,\ldots,\psi_{d-1}\}$$
subject to the relations (1.7)--(1.15) in \cite[Theorem~1.1]{BK},
  where
$$e(\bar {\mathbf i})= \begin{tikzpicture}[baseline = -.5mm] \draw[->,thick,darkred](0,-.3) to (0,.3);\draw(0,-.4)node{\tiny$i_d$};\draw(0.8,-.4)node{\tiny$i_2$};\draw(1.2,-.4)node{\tiny$i_1$};
\draw(0.5,0) node{$ \cdots$};\draw[->,thick,darkred](0.8,-.3)to (0.8,.3);\draw[->,thick,darkred](1.1,-.3) to (1.1,.3);
 \end{tikzpicture},
~~ y_r e(\bar{\mathbf i})= \begin{tikzpicture}[baseline = -.5mm] \draw[->,thick,darkred](0,-.3) to (0,.3);\draw(0,-.4)node{\tiny$i_d$};\draw(0.8,-.4)node{\tiny$i_r$}; \draw(1.5,-.4)node{\tiny$i_1$};\draw (0.8,0) \rdot;\draw(1.1,0) node{$ \cdots$};
\draw(0.5,0) node{$ \cdots$};\draw[->,thick,darkred](0.8,-.3)to (0.8,.3);\draw[->,thick,darkred](1.5,-.3) to (1.5,.3);
 \end{tikzpicture}, ~~\psi_re({\bar {\mathbf i}})=\begin{tikzpicture}[baseline = -.5mm] \draw[->,thick,darkred](0,-.3) to (0,.3);\draw(0,-.4)node{\tiny$i_d$}; \draw(1.5,-.4)node{\tiny$i_1$};\draw(0.83,-.4)node{\tiny$i_r$};
\draw[->,thick,darkred] (0.85,-.3) to[out=90,in=-90] (0.55,.3);\draw[->,thick,darkred](0.95,-.3)to (0.95,.3);\draw[->,thick,darkred](0.4,-.3)to (0.4,.3);
\draw[->,thick,darkred] (0.55,-.3) to[out=90,in=-90] (0.85,.3);
\draw(1.2,0) node{$ \cdots$};
\draw(0.2,0) node{$ \cdots$};
\draw[->,thick,darkred](1.5,-.3) to (1.5,.3);
 \end{tikzpicture},
 $$
and $\bar {\mathbf i}=(i_1, i_2, \ldots, i_d)$.
For any $\mu\in P^+$ given in \eqref{wei1}, let $QH_d(\mu)$ be the cyclotomic quotient of $QH_d$ by the two-sided ideal generated by $\{y_1^{\langle h_{i_1},\mu\rangle} e(\bar {\mathbf i})\mid \mathbf i\in\mathbb I^d\}$.

Following \cite{Dav}, let $\mathcal{AH}$ be the $\Bbbk$-linear strict monoidal category generated by the single
  object $\downarrow$ and  two morphisms  \begin{tikzpicture}[baseline = 2.5mm]
	\draw[<-,thick,darkblue] (0.28,0) to[out=90,in=-90] (-0.28,.6);
	\draw[<-,thick,darkblue] (-0.28,0) to[out=90,in=-90] (0.28,.6);
\end{tikzpicture} and \begin{tikzpicture}[baseline=-.5mm]
\draw[<-,thick,darkblue] (0,-.3) to (0,.3);
      \draw (0,0) \bdot;
\end{tikzpicture} satisfying the relations \eqref{relation 9}--\eqref{relation 11}.
Then   $\mathcal{AH}$,   known as the  degenerate affine Hecke category,  can be considered as
 the subcategory of $\AOB$ generated by the single object $\downarrow$ and morphisms  \begin{tikzpicture}[baseline = 2.5mm]
	\draw[<-,thick,darkblue] (0.28,0) to[out=90,in=-90] (-0.28,.6);
	\draw[<-,thick,darkblue] (-0.28,0) to[out=90,in=-90] (0.28,.6);
\end{tikzpicture} and \begin{tikzpicture}[baseline=-.5mm]
\draw[<-,thick,darkblue] (0,-.3) to (0,.3);
      \draw (0,0) \bdot;
\end{tikzpicture}. Let
   $$AH_d:=\Hom_{\mathcal{AH}}(\downarrow^{\otimes d},\downarrow^{\otimes d} ),$$
     the degenerate affine Hecke algebra~\cite{Dav}. Following \cite{Dav}, define
$$ x_r=  \begin{tikzpicture}[baseline = 12.5, scale=0.35, color=\clr]
       \draw[>-,thick](0,1.1)to[out= down,in=up](0,2.6);
       \draw(0.6,1) node{$ \cdots$}; \draw(0.6,2.5) node{$ \cdots$};
       \draw[<-,thick](1.8,0.8)to (1.8,2.8);
       \draw(3,3)node{\tiny$r$}; \draw(5.5,3.2)node{\tiny$1$};
        \draw[<-,thick] (3,0.8) to[out=up, in=down] (3,2.8);
         \draw[>-,thick](4,1.1)to[out= down,in=up](4,2.7);\draw (3, 1.8) \bdot;
         \draw(4.6,1.1) node{$ \cdots$}; \draw(4.6,2.5) node{$ \cdots$};
         \draw[>-,thick](5.5,1.1)to[out= down,in=up](5.5,2.7);
     \end{tikzpicture}\in AH_d.$$
For any $\mu\in P^+$, let $AH_d(\mu)$ be the cyclotomic quotient of $AH_d$ by the two-sided ideal generated by $$g(x_1)=\prod_{i\in \mathbb I}(x_1-i)^{\langle h_i,\mu\rangle}.$$
 The algebra $AH_d(\mu)$ is isomorphic to some  degenerated  cyclotomic Hecke algebra in subsection~\ref{hecke}.  At moment, we use $AH_d(\mu)$ so as to  compare it with $QH_d(\mu)$.
It is known that there is a  set of mutually orthogonal idempotents of  $AH_d(\mu)$,    denoted by $\{1_\mathbf i\mid \mathbf i\in\mathbb I^d\}$ such that, for any  $AH_d(\mu)$-module $M$
$$1_\mathbf i M=\bigcap_{k=1}^d  M_{i_k},$$ where $M_{i_k}$ is the
$i_k$-generalized eigenspace of $M$ of $x_k$.
If $\langle h_i,\mu, \rangle\leq \langle h_i,\mu'\rangle$ for all $i\in \mathbb I$, then there are epimorphisms $$QH_d(\mu')\twoheadrightarrow QH_d(\mu), \quad  AH_d(\mu')\twoheadrightarrow AH_d(\mu).$$ So,    $\{QH_d(\mu)\mid \mu\in P^+\}$ and $\{AH_d(\mu)\mid \mu\in P^+\}$  form two  inverse systems of locally unital algebras.
 Taking inverse limits yields two   completions
$$\widehat {AH}_d:= \lim_{\leftarrow}AH_d(\mu), ~\widehat {QH}_d:= \lim_{\leftarrow}QH_d(\mu).$$
It is proved in \cite[Theorem~1.1]{BK} that  there is an isomorphism of locally unital algebras $QH_d(\mu) \cong {AH}_d(\mu)$
such that
\begin{equation}\label{isohecke}
 \begin{tikzpicture}[baseline = -.5mm] \draw[->,thick,darkred](0,-.3) to (0,.3);\draw(0,-.4)node{\tiny$i_d$};\draw(0.8,-.4)node{\tiny$i_2$};\draw(1.2,-.4)node{\tiny$i_1$};
\draw(0.5,0) node{$ \cdots$};\draw[->,thick,darkred](0.8,-.3)to (0.8,.3);\draw[->,thick,darkred](1.1,-.3) to (1.1,.3);
 \end{tikzpicture}\mapsto 1_\mathbf i, \ \ \  \begin{tikzpicture}[baseline = -.5mm] \draw[->,thick,darkred](0,-.3) to (0,.3);\draw(0,-.4)node{\tiny$i_d$};\draw(0.8,-.4)node{\tiny$i_r$}; \draw(1.5,-.4)node{\tiny$i_1$};\draw (0.8,0) \rdot;\draw(1.1,0) node{$ \cdots$};
\draw(0.5,0) node{$ \cdots$};\draw[->,thick,darkred](0.8,-.3)to (0.8,.3);\draw[->,thick,darkred](1.5,-.3) to (1.5,.3);
 \end{tikzpicture}\mapsto (x_r-i_r)1_\mathbf i.
\end{equation}
These isomorphisms induces an isomorphism of locally unital algebras \begin{equation}\label{isowideaff}\widehat{QH}_d \cong\widehat{AH}_d. \end{equation}
See also \cite[\S~3.2.6]{Ro} and  \cite[Theorem~3.10]{We}. Moreover, there is a   locally unital  embedding  \begin{equation}\label{emb1} QH_d\hookrightarrow\widehat{QH}_d.\end{equation}

Now, we go on studying the cyclotomic oriented Brauer category $\OB(\mathbf u,\mathbf u')$. Recall  $A$  is  the locally unital $\Bbbk$-algebra   associated to $\OB(\mathbf u,\mathbf u')$ in  \eqref{Ba1}.
There are endofunctors $E$, $F$ in  \eqref{EF}. For any  $i\in\mathbb I$ there are endofunctors  $E_i$, $F_i$ in ~\eqref{EFi}. Thanks to Lemma~\ref{jme}(1),  there is an $(A, A)$-homomorphism $x_\downarrow: A_\downarrow\rightarrow A_\downarrow $ defined on $A_{\downarrow} 1_b$ by right multiplication by $1_b \xdx$. So, $x^\uparrow$ and $x_\downarrow$ are intertwined by the isomorphism $ {_\uparrow}A \cong A_\downarrow$ in Proposition~\ref{isob}, where $x^\uparrow$ is given
in Lemma~\ref{jme}(2). Moreover,  $x_\downarrow$  induces a natural transformation  $ \begin{tikzpicture}[baseline=-.5mm]
\draw[<-,thick,darkblue] (0,-.3) to (0,.3);
     \draw (0,0) \bdot;
\end{tikzpicture}: E\rightarrow E$ such that $$\left(\begin{tikzpicture}[baseline=-.5mm]
\draw[<-,thick,darkblue] (0,-.3) to (0,.3);
     \draw (0,0) \bdot;
\end{tikzpicture}\right)_M= x_\downarrow\otimes \text{Id}:  A_\downarrow\otimes_AM \rightarrow  A_{\downarrow}\otimes_AM.$$
Similarly, there is  a natural transformation denoted by $\begin{tikzpicture}[baseline = 2.5mm]
	\draw[<-,thick,darkblue] (0.28,0) to[out=90,in=-90] (-0.28,.6);
	\draw[<-,thick,darkblue] (-0.28,0) to[out=90,in=-90] (0.28,.6);
\end{tikzpicture}: E^2\rightarrow E^2$ such that
$$\left(\begin{tikzpicture}[baseline = 2.5mm]
	\draw[<-,thick,darkblue] (0.28,0) to[out=90,in=-90] (-0.28,.6);
	\draw[<-,thick,darkblue] (-0.28,0) to[out=90,in=-90] (0.28,.6);
\end{tikzpicture}\right)_M: A_{\downarrow\downarrow}\otimes_AM \rightarrow ~A_{\downarrow\downarrow}\otimes_AM, \quad y\otimes m \mapsto y\circ(1_a \begin{tikzpicture}[baseline = 2.5mm]
	\draw[<-,thick,darkblue] (0.28,0) to[out=90,in=-90] (-0.28,.6);
	\draw[<-,thick,darkblue] (-0.28,0) to[out=90,in=-90] (0.28,.6);
\end{tikzpicture})\otimes m, $$
 where $(y, m)\in A 1_{a\downarrow\downarrow} \times M$ and   $A_{\downarrow\downarrow}= A_\downarrow\otimes_A  A_\downarrow  $ in the obvious way.

\begin{Lemma}\label{affhecke}
There is a strict monoidal functor $ \Psi: \mathcal {AH}\rightarrow \END( A\text{-lfdmod})$ such that
$$ \Psi(\downarrow)= E,\quad  \Psi(\begin{tikzpicture}[baseline = 2.5mm]
	\draw[<-,thick,darkblue] (0.28,0) to[out=90,in=-90] (-0.28,.6);
	\draw[<-,thick,darkblue] (-0.28,0) to[out=90,in=-90] (0.28,.6);
\end{tikzpicture} )=\begin{tikzpicture}[baseline = 2.5mm]
	\draw[<-,thick,darkblue] (0.28,0) to[out=90,in=-90] (-0.28,.6);
	\draw[<-,thick,darkblue] (-0.28,0) to[out=90,in=-90] (0.28,.6);
\end{tikzpicture},\quad  \Psi(\begin{tikzpicture}[baseline=-.5mm]
\draw[<-,thick,darkblue] (0,-.3) to (0,.3);
      \draw (0,0) \bdot;
\end{tikzpicture} )=\begin{tikzpicture}[baseline=-.5mm]
\draw[<-,thick,darkblue] (0,-.3) to (0,.3);
    \draw (0,0) \bdot;
\end{tikzpicture}. $$
\end{Lemma}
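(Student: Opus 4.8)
The plan is to invoke the presentation of $\mathcal{AH}$. By definition $\mathcal{AH}$ is the strict monoidal category generated by the single object $\downarrow$ and the two morphisms $\dswap$ and $\xdx$ subject to the relations \eqref{relation 9}--\eqref{relation 11}, and, as noted just before the lemma, this $\mathcal{AH}$ may be identified with the monoidal subcategory of $\AOB$ these generate. Consequently, to build the strict monoidal functor $\Psi$ with the prescribed values on generators it suffices to check (i) that $E$ lies in $\END(A\text{-lfdmod})$, and (ii) that the natural transformations $\xdx: E\to E$ and $\dswap: E^2\to E^2$ constructed in the paragraph preceding the lemma satisfy the images of \eqref{relation 9}, \eqref{relation 10} and \eqref{relation 11} in $\END(A\text{-lfdmod})$. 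Point (i) is immediate: $1_a(EM)\cong 1_{a\uparrow}M$ is finite dimensional for $M\in A\text{-lfdmod}$, and $E$ is exact by Lemma~\ref{bijiont}.

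For (ii) I would first record a uniform description of $\Psi$ on diagrams. Using Proposition~\ref{isob} together with the identification $A_{\downarrow^{m+n}}\cong A_{\downarrow^m}\otimes_A A_{\downarrow^n}$ already employed in the text, one has $E^n\cong A_{\downarrow^n}\otimes_A ?$ as functors. I claim that, under this identification, for every morphism $\gamma$ of $\mathcal{AH}$ obtained by composing and tensoring the two generators, the natural transformation of $E^n$ built from $\xdx$ and $\dswap$ according to the monoidal and compositional structure is given by $y\otimes m\mapsto\bigl(y\circ(1_a\otimes\bar\gamma)\bigr)\otimes m$ for $y\in A1_{a\downarrow^n}$, $a\in J$, where $\bar\gamma$ is the image in $A=\bigoplus_{a,b}\Hom_{\OB(\mathbf u,\mathbf u')}(b,a)$ of the corresponding diagram of $\AOB$ under the quotient functor $\AOB\twoheadrightarrow\OB(\mathbf u,\mathbf u')$. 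For the two generators this is precisely how $\xdx$ and $\dswap$ were defined; the general case follows by induction on the length of $\gamma$, using the interchange law $1_a\otimes(\bar\gamma_1\circ\bar\gamma_2)=(1_a\otimes\bar\gamma_1)\circ(1_a\otimes\bar\gamma_2)$ in $A$ (and its analogue for $\otimes$), which also shows each such $1_a\otimes\bar\gamma$ acts by an $(A,A)$-bimodule endomorphism of $A_{\downarrow^n}$, Lemma~\ref{jme}(1) underlying the well-definedness of the generator transformations.

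Granting this description, the verification of (ii) is immediate: if $\gamma$ and $\gamma'$ are equal as morphisms of $\mathcal{AH}$, then their images in $\AOB$, hence in its quotient $\OB(\mathbf u,\mathbf u')$, coincide, so $\bar\gamma=\bar\gamma'$ in $A$ and therefore the associated natural transformations agree. Since \eqref{relation 9}, \eqref{relation 10} and \eqref{relation 11} hold already in $\AOB$, hence in $\OB(\mathbf u,\mathbf u')$, applying this to the two sides of each relation shows that $\Psi$ respects every defining relation of $\mathcal{AH}$; thus $\Psi$ is a well-defined strict monoidal functor. The one genuinely delicate point is bookkeeping rather than mathematics: pinning down the identification $E^n\cong A_{\downarrow^n}\otimes_A ?$ so that horizontal composition of the generator natural transformations corresponds exactly to tensoring the underlying diagrams on the newly added strands, with the orders of composition matched so that $\Psi$ comes out covariant (this costs nothing extra since the relations \eqref{relation 9}--\eqref{relation 11} are invariant under reversing composition order). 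Once that normalization is fixed, no further computation is required.
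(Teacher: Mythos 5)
Your proposal is correct and follows essentially the same route as the paper: the paper's proof is the one-line observation that, since $\mathcal{AH}$ is presented by the generators $\downarrow$, the crossing and the dot subject to \eqref{relation 9}--\eqref{relation 11}, and these relations hold in $\OB(\mathbf u,\mathbf u')$, the prescribed natural transformations automatically satisfy them, so $\Psi$ is well defined. You merely spell out the bookkeeping (the identification $E^n\cong A_{\downarrow^n}\otimes_A ?$, the description of $\Psi(\gamma)$ as precomposition with $1_a\otimes\bar\gamma$, and the covariance normalization, which is indeed harmless since the relation set is stable under reversing composition), all of which the paper leaves implicit.
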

\begin{proof}The  result follows directly from  \eqref{relation 9}--\eqref{relation 11}.
\end{proof}
\begin{Lemma}\label{quvier hecke action} For all   $i,j\in\mathbb I$, there
 are natural transformations
$$\begin{tikzpicture}[baseline=-.5mm]
\draw[->,thick,darkred] (0,-.3) to (0,.3);
     \draw (0,0) \rdot;
      \node at (0,-.4) {$\color{darkblue}\scriptstyle i$};
\end{tikzpicture}: E_i\rightarrow E_i, \text{ and }   \begin{tikzpicture}[baseline = 2.5mm]
	\draw[->,thick,darkred] (0.28,0) to[out=90,in=-90] (-0.28,.6);
 \node at (-0.29,-.1) {$\color{darkblue}\scriptstyle i$};\node at (0.29,-.1) {$\color{darkblue}\scriptstyle j$};
	\draw[->,thick,darkred] (-0.28,0) to[out=90,in=-90] (0.28,.6);
\end{tikzpicture}: E_i\circ E_j\rightarrow E_j\circ E_i,$$
 which induce  a strict monoidal functor from $\mathcal {QH}$ to $\END( A\text{-lfdmod})$.
\end{Lemma}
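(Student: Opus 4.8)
The plan is to bootstrap the degenerate affine Hecke action $\Psi$ of Lemma~\ref{affhecke} into a quiver Hecke action, using the Brundan--Kleshchev isomorphisms recalled in \eqref{isohecke}--\eqref{emb1}.

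First I would unwind Lemma~\ref{affhecke}: for every $d\in\mathbb N$ the strict monoidal functor $\Psi$ restricts to an algebra homomorphism $\rho_d\colon AH_d=\Hom_{\mathcal{AH}}(\downarrow^{\otimes d},\downarrow^{\otimes d})\to\End(E^{d})$, where $E^{d}$ is the $d$-fold composite of the endofunctor $E$ and $\End(-)$ means natural endotransformations; under $\rho_d$ the generator $x_r$ of $AH_d$ acts by the blue dot on the $r$th strand and the affine crossings act by the blue crossings. Since $1_a(EM)\cong 1_{a\uparrow}M$ is finite dimensional for $M\in A\text{-lfdmod}$, the functor $E$ (hence $E^{d}$) restricts to an endofunctor of $A\text{-lfdmod}$; and since $E=\bigoplus_{i\in\mathbb I}E_i$ by \eqref{EFi}, the $d$-fold composite decomposes as $E^{d}=\bigoplus_{\mathbf i\in\mathbb I^{d}}E_{\mathbf i}$ with $E_{\mathbf i}=E_{i_d}\circ\cdots\circ E_{i_1}$, namely $E_{\mathbf i}M$ is the locus in $E^{d}M$ where $x_r$ acts with generalized eigenvalue $i_r$ for every $r$.

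Next I would pass to completions. For fixed $M$ and $a\in J$ the finite dimensional space $1_a(E^{d}M)$ is $AH_d$-stable, because $\rho_d$ takes values in natural transformations, which commute with the $A$-module structure and hence with the idempotents $1_a$; on this space $x_1$ is annihilated by $\prod_{i\in\mathbb I}(x_1-i)^{N}$ for $N\gg0$, so $\rho_d$, evaluated at $M$ and restricted to $1_a(E^{d}M)$, factors through the cyclotomic quotient $AH_d(\mu)$ as soon as $\langle h_i,\mu\rangle\ge N$ for all $i\in\mathbb I$. It follows that the $AH_d$-action on $E^{d}M$ extends uniquely to an action of $\widehat{AH}_d$, naturally in $M$. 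Composing with the isomorphism $\widehat{QH}_d\cong\widehat{AH}_d$ of \eqref{isowideaff} and the embedding $QH_d\hookrightarrow\widehat{QH}_d$ of \eqref{emb1} yields algebra homomorphisms $QH_d\to\End(E^{d})$, and since $\Psi$ is strict monoidal and the isomorphisms \eqref{isowideaff} respect the monoidal structures on $\bigoplus_d\widehat{QH}_d$ and $\bigoplus_d\widehat{AH}_d$, these homomorphisms are compatible with horizontal composition and with the idempotent inclusions; that is, they assemble into a homomorphism out of the locally unital algebra $\bigoplus_d QH_d$ of $\mathcal{QH}$, i.e.\ into a strict monoidal functor $\mathcal{QH}\to\END(A\text{-lfdmod})$. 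Finally, tracing the generators of $\mathcal{QH}$ through \eqref{isohecke}: the idempotent $e(\mathbf i)$ maps to the projection onto $E_{\mathbf i}$, the red dot on the $r$th strand with label $i_r$ maps to the natural transformation $x_r-i_r$ of $E_{\mathbf i}$ --- taking $d=1$ this is the asserted $E_i\to E_i$ --- and the red crossing maps to the Brundan--Kleshchev renormalisation of the blue crossing --- taking $d=2$ this is the asserted $E_i\circ E_j\to E_j\circ E_i$.

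The step I expect to be the main obstacle is the passage to the completion: one must verify carefully that the degenerate affine Hecke action on $E^{d}M$ is pro-nilpotent in exactly the form needed to invoke \eqref{isowideaff} functorially --- that a single cyclotomic level $\mu$ can be chosen to annihilate the relevant polynomial in $x_1$ on every weight space $1_a(E^{d}M)$ lying in a fixed block, that the resulting $QH_d$-action does not depend on this choice, and that it is natural in $M$ and compatible with composition. Everything else is formal, following from Lemma~\ref{affhecke}, the eigenspace decomposition \eqref{EFi}, and the isomorphisms \eqref{isohecke}, \eqref{isowideaff} and \eqref{emb1} quoted from \cite{BK}.
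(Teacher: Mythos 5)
Your proposal is correct, and its skeleton coincides with the paper's: start from the homomorphisms $\Psi_d\colon AH_d\to\End(E^d)$ coming from Lemma~\ref{affhecke}, show they factor through the completion $\widehat{AH}_d$, then transport along \eqref{isowideaff} and \eqref{emb1} and check monoidal compatibility via the explicit Brundan--Kleshchev formulae \eqref{isohecke} (the paper handles that last point by invoking the argument of \cite[(6.13)]{BD1}, which is essentially what you assert when you say the isomorphisms respect the monoidal structure). Where you genuinely diverge is the factorization step. The paper evaluates at finitely generated projectives: since $E$ and $F$ are biadjoint, $E$ is a sweet endofunctor, so by \cite[Theorem~2.11]{BD} each $E^dP$ is finitely generated projective, $\End_A(E^dP)$ is finite dimensional, $(x_1)_P$ has a minimal polynomial with roots in $\mathbb I$ by \eqref{EFi}, and the statement is then spread to all of $A$-lfdmod by the principle that natural transformations of $E^d$ are determined by their values on projectives. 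You instead work with an arbitrary $M\in A$-lfdmod directly, using that each weight space $1_a(E^dM)\cong 1_{a\uparrow^d}M$ is finite dimensional and stable under the $AH_d$-action (natural transformations are $A$-module maps, hence commute with the idempotents $1_a$), so that the action factors through a cyclotomic quotient weight space by weight space. This is more elementary: it avoids the sweetness input and the determination-on-projectives principle (which, for locally finite dimensional modules that are not finitely generated, itself needs a small colimit argument). Moreover, the obstacle you flag is not actually an obstacle: no single cyclotomic level $\mu$ uniform over all weight spaces of $M$ is required. To let $\widehat{AH}_d=\varprojlim AH_d(\mu)$ act on $E^dM$ it suffices that for each $a$ some level annihilates the relevant polynomial in $x_1$ on $1_a(E^dM)$; well-definedness, independence of the level, compatibility with products and naturality in $M$ then all follow from the compatibility of the inverse system together with the fact that lifts act by natural $A$-module endomorphisms -- exactly the routine checks you list. (The paper's projective-based formulation does yield a single level per finitely generated module, but that extra uniformity is only used later, in verifying the nilpotency condition (TPC2)(5) in the proof of Theorem~\ref{main}, where it is re-derived anyway.)
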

\begin{proof}
By Lemma~\ref{affhecke}, there is an algebra homomorphism $ \Psi_d: AH_d\rightarrow \text{End}(E^d)$ for any $d>0$. Thanks to Lemma~\ref{bijiont}, $E$ and $F$ are biadjoint to each other. So,
$E$ is a sweet endofunctor of $A$-lfdmod in the sense of \cite[Definition~2.10]{BD}.
  By \cite[Theorem~2.11]{BD},
  $E$ preserve the set of finitely generated projective modules.
  Therefore, $E^dP$ is finitely generated projective for any  $P\in A$-pmod. In particular, $E^dP\in A$-lfdmod since any finitely generated left $A$-module has to be locally finite dimensional (see \cite[Section~2.2]{BD}).  It is proved in \cite[Section~2.2]{BD} that $dim\text{Hom}_A (V, W)<\infty$ for any finitely generated left $A$-module $V$, and any locally finite dimensional left $A$-module $W$. In particular,  $\text{End}_{ A}(E^dP)$ is finite dimensional for each  $P\in A$-pmod and hence  $(x_1)_P$ has a minimal polynomial with roots in $\mathbb I$ (see \eqref{EFi}). So,   the composition
$$ \text{ev}_{P}\circ \Psi_d: AH_d\rightarrow \text{End}_{ A}(E^dP)$$
of  $ \Psi_d$ with the evaluation at $P$ factors through some cyclotomic quotients $AH_d(\mu)$ and hence factors through all sufficient large cyclotomic quotients $AH_d(\mu')$.
Note that any two natural transformations $\eta,\xi: E^d\rightarrow E^d$ are equal  if $\eta_P=\xi_P$  for each $P\in A$-pmod.
So, $ \text{ev}_{M}\circ \Psi_d$  factors through all sufficient large cyclotomic quotients $AH_d(\mu')$ for any $M\in A$-lfdmod.
This shows that $\Psi_d$ factors through $\widehat{AH}_d$. Let
\begin{equation}\label{sjwjsss} \hat \Psi_d: \widehat{AH}_d\rightarrow \text{End}(E^d).\end{equation}
    Composing $\hat \Psi_d$ with the isomorphism in \eqref{isowideaff}  and the inclusion in \eqref{emb1}
yields  an algebra homomorphism $$\Phi_d: QH_d\rightarrow\text{End}(E^d).$$ Moreover, by  the argument for the  proof of \cite[(6.13)]{BD1} (i.e., by Lemma~\ref{affhecke} and  the explicit formulae of the isomorphism $\widehat{QH}_d \cong\widehat{AH}_d $   induced by  \cite[Theorem~1.1]{BK} or    the non-degenerate analogue in \cite[Proposition~3.10]{We}), these maps are compatible with the monoidal structure in the categories $\mathcal {QH}$ and  $\END( A\text{-lfdmod})$ such that  there is   a monoidal functor $\Phi: \mathcal {QH}\mapsto\END( A\text{-lfdmod})$ given by $\Phi(i)=E_i$ and $\Phi(h)=\Phi_d(h)$ if $h\in QH_d$.
\end{proof}

 \begin{Defn}\label{defioftpc}\cite[Definition~3.2, Remark~3.6]{LW} Fix $\omega_\mathbf u$ and $\omega_{\mathbf u'}$ in \eqref{param}.
The data of a tensor product categorification of $\tilde V(-\omega_{\mathbf u'})\otimes V(\omega_\mathbf u)$ consists of two parts:
\begin{itemize}
\item [(TPC1)] A (locally) Schurian category $\mathcal C$ such that the isomorphism classes of irreducible objects labeled by the indexing set for the basis of $\tilde V(-\omega_{\mathbf u'})\otimes V(\omega_\mathbf u)$.
\item [(TPC2)] A nilpotent categorical action (in the sense of \cite[Definition~4.25]{BD}) making $\mathcal C$ into a 2-representation (e.g., \cite[Definition~5.1.1]{Ro} or \cite[Definition~4.1]{BD}) of the associated Kac-Moody 2-category $ \mathfrak{U}(\mathfrak g)$ (e.g., \cite[\S~4.1.3]{Ro} or \cite[Definition~3.8]{BD}).
\end{itemize}
These two pieces of data need to satisfy some compatibility conditions as follows:
\begin{itemize}
\item[(TPC3)] The category $\mathcal C$ is an  upper finite fully stratified category in the sense of Brundan and Stroppel\cite{BS} with poset $P\times P$.
    Moreover, the poset structure  of $P\times P$ is given by the ``inverse dominance order" $\preceq$ on $P\times P$
    in \eqref{preceq1}.
\item[(TPC4)]There is a categorical $(\mathfrak g^\downarrow\oplus \mathfrak g^\uparrow)$-action on $ \text{gr }\mathcal C:= \bigoplus_{(\lambda,\mu)\in P\times P}\mathcal C_{(\lambda,\mu)}$ such that
    $$\mathbb C\otimes K_0(\text{proj-}\text{gr}~ \mathcal C) \cong\tilde V(-\omega_{\mathbf u'})\boxtimes V(\omega_\mathbf u)$$
     as $(\mathfrak g^\downarrow\oplus \mathfrak g^\uparrow)$-modules,
where \begin{itemize}\item[(1)] $\mathcal C_{(\lambda,\mu)}:=\mathcal C_{\preceq(\lambda,\mu)}/\mathcal C_{\prec(\lambda,\mu)} $,
\item [(2)]
 $ \mathfrak g^\uparrow=\{x^\uparrow\mid x\in \mathfrak g\}$ and $\mathfrak g^\downarrow=\{x^\downarrow\mid x\in \mathfrak g\}$ are  two copies of $\mathfrak g$,
  \item [(3)]  $\text{proj-}\text{gr}~\mathcal C$ is the subcategory of finite generated projective modules of  $\text{gr}~\mathcal C$, \item [(4)]  $\mathcal C_{(-\omega_\mathbf u',~ \omega_{\mathbf u})}\cong \text{Vec}_ \Bbbk$ (the category of vector space over $\Bbbk$).\end{itemize}
The categorification functors for $\mathfrak g^\diamond$  will be denoted by  $E_i^\diamond$ and $F_i^\diamond$, where $i\in\mathbb I$ and  $\diamond \in\{\uparrow,\downarrow\}$.
\item[(TPC5)] For any  $i\in\mathbb I$, denote by   $E_i$ and $F_i$  the categorification functors for $\mathfrak g$. There is a compatibility  between the categorical $\mathfrak g$-action on $\mathcal C$ and the categorical $(\mathfrak g^\downarrow\oplus \mathfrak g^\uparrow)$-
    action on $\text{gr}\  \mathcal C$ in the sense that there are short exact sequences
    \begin{equation} \label{sss123} \begin{aligned}
    &0\rightarrow \Delta\circ E_i^\uparrow\rightarrow E_i\circ \Delta \rightarrow \Delta\circ E_i^\downarrow\rightarrow 0,\\
    &0\rightarrow \Delta\circ F_i^\downarrow\rightarrow F_i\circ \Delta \rightarrow \Delta\circ F_i^\uparrow\rightarrow 0,
    \end{aligned} \end{equation}
where $\Delta=\bigoplus_{(\lambda,\mu)\in P\times P} \Delta_{(\lambda,\mu)}$ and $\Delta_{(\lambda,\mu)}: \mathcal C_{\preceq(\lambda,\mu) }\rightarrow \mathcal C_{(\lambda,\mu) }$ is the corresponding  standardization functor of the stratified category $\mathcal C$.
\end{itemize}
\end{Defn}

 \textbf{ Proof of   Theorem~\ref{main}}:  Thanks to Definition~\ref{defioftpc}, we need to   verify that  $ A$-lfdmod  admits the structures in  (TPC1)--(TPC5).

By  Corollary~\ref{irr} and Lemma~\ref{dddddcate}, we have results on the classification of irreducible objects in $A$-lfdmod. Now, (TPC1) follows from Theorem~\ref{categoricalactofa} which gives a bijection between labellings.
 (TPC3) follows from  Theorem~\ref{COBMW11} which says that  $ A$-lfdmod is an  upper finite fully stratified category in the sense of Brundan and Stroppel\cite{BS} with respect to the stratification function in \eqref{nsjwhfun}.
   (TPC4) follows from Lemma~\ref{dddddcate} and the well-known categorical $\mathfrak g$-action on  the representations of    degenerate cyclotomic Hecke algebras~\cite[Theorem~ 4.18]{BK1}~\cite[Theorem~4.25]{Ro1}.
The short exact sequences in \eqref{sss123}       follow from \eqref{keyses} in Theorem ~\ref{usuactifuc1} and hence  (TPC5) follows. So, it remains to verify (TPC2). Thanks to  \cite[Theorem~4.27]{BD}, it suffices  to check the following conditions in  \cite[Definition~4.25]{BD}:
\begin{itemize}
\item[(1)] A weight decomposition of the category $A$-lfdmod$=\bigoplus_{\sigma\in P\times P} A\text{-lfdmod}_\sigma$.
\item[(2)] Biadjoint endofunctors $E=\bigoplus_{i\in\mathbb I}E_i$ and $F=\bigoplus_{i\in \mathbb I}F_i$ such that $(E_i, F_i)$ are biadjoint functors.
\item[(3)]For all   $i,j\in\mathbb I$, there
 are natural transformations
$$\begin{tikzpicture}[baseline=-.5mm]
\draw[->,thick,darkred] (0,-.3) to (0,.3);
     \draw (0,0) \rdot;
      \node at (0,-.4) {$\color{darkblue}\scriptstyle i$};
\end{tikzpicture}: E_i\rightarrow E_i, \text{ and }   \begin{tikzpicture}[baseline = 2.5mm]
	\draw[->,thick,darkred] (0.28,0) to[out=90,in=-90] (-0.28,.6);
 \node at (-0.29,-.1) {$\color{darkblue}\scriptstyle i$};\node at (0.29,-.1) {$\color{darkblue}\scriptstyle j$};
	\draw[->,thick,darkred] (-0.28,0) to[out=90,in=-90] (0.28,.6);
\end{tikzpicture}: E_i\circ E_j\rightarrow E_j\circ E_i,$$
 which induce  a strict monoidal functor from $\mathcal {QH}$ to $\END( A\text{-lfdmod})$.
\item[(4)] The endomorphisms $[E_i]$ and $[F_i]$ make $\mathbb C\otimes_\mathbb Z K_0( A\text{-pmod})$ into a well-defined $\mathfrak g$-module with $\sigma$-weight space $\mathbb C\otimes_\mathbb Z K_0(A\text{-pmod}_\sigma)$.
\item[(5)] For any $i\in \mathbb I$ and any finitely generated left $A$-module $M$, the endomorphism $$(\begin{tikzpicture}[baseline=-.5mm]
\draw[->,thick,darkred] (0,-.3) to (0,.3);
   \draw (0,0) \rdot;
      \node at (0,-.4) {$\color{darkblue}\scriptstyle i$};
\end{tikzpicture})_M: E_iM\rightarrow E_iM$$  is nilpotent.
\end{itemize}

In fact, (1) follows from  the partial result on the blocks of $A$-lfdmod in  Corollary~\ref{hlink}. In this case, we  set
$A$-lfdmod$_\sigma$ to be the Serre subcategory of $A$-lfdmod generated by $L(\lambda)$ such that $\text{wt}_\uparrow(\lambda)+\text{wt}_\downarrow(\lambda)=\sigma $.
The biadjoint functors in (2) are given in Lemma~\ref{bimoudisomah} and \eqref{EFi}. (3) follows from  Lemma~\ref{quvier hecke action}.
 Thanks to Corollary~\ref{ijxxexeu}, $\mathbb C\otimes_\mathbb Z K_0( A\text{-pmod})$ can be identified  with $\mathbb C\otimes _\mathbb Z K_0(A\text{-mod}^\Delta)$ and hence   (4) follows  Theorem~\ref{categoricalactofa}.
Finally, by \eqref{isohecke}, under the isomorphism  in \eqref{isowideaff}  we have
$$ \left( \begin{tikzpicture}[baseline=-.5mm]
\draw[->,thick,darkred] (0,-.3) to (0,.3);
     \draw (0,0) \rdot;
      \node at (0,-.4) {$\color{darkblue}\scriptstyle i$};
\end{tikzpicture}\right)_M=\left((\begin{tikzpicture}[baseline=-.5mm]
\draw[<-,thick,darkblue] (0,-.3) to (0,.3);
    \draw (0,0) \bdot;
\end{tikzpicture})_M-i \text{Id} \right)_{E_iM}.$$
Therefore, it  suffices to show that there is bound on the Jordan block sizes of $$x_\downarrow\otimes \text{Id}=(\begin{tikzpicture}[baseline=-.5mm]
\draw[<-,thick,darkblue] (0,-.3) to (0,.3);
     \draw (0,0) \bdot;
\end{tikzpicture})_M: A_\downarrow\otimes_A M\rightarrow A_\downarrow\otimes_A M$$
for any finitely generated left $A$-module $M$.
Since $E$ is a sweet functor (e.g., Lemma~\ref{bijiont})   of $A$-lfdmod in the sense of \cite[Definition~2.10]{BD},
  by \cite[Theorem~2.11]{BD},
  $E$ preserve the set of finitely generated   modules.
  Therefore, $EM=A_\downarrow\otimes_A M$ is finitely generated. Since $A$ is locally finite dimensional, $\End_{A}(EM)$ is finite dimensional and hence $\left(\begin{tikzpicture}[baseline=-.5mm]
\draw[<-,thick,darkblue] (0,-.3) to (0,.3);
    \draw (0,0) \bdot;
\end{tikzpicture}\right)_M$
    has a minimal polynomial. So, there is bound on the Jordan block sizes. This completes the proof of (5).\qed

\small

\end{document}